\definecolor{labelkey}{rgb}{0.6,0,0}
\numberwithin{equation}{section}
\def\eps{\varepsilon}
\newcommand{\N}{\mathbb{N}}
\newcommand{\R}{\mathbb{R}}
\newcommand{\T}{\mathbb{T}}
\newcommand{\Sz}{\abs{\mathcal{S}}}
\newcommand{\W}{\widetilde{\mathcal{W}}}
\newcommand{\A}{\mathcal{A}}
\newcommand{\B}{\mathcal{B}}
\newcommand{\Lam}{\mathbf{\Lambda}}
\newcommand{\U}{\mathcal{U}}
\newcommand{\Q}{\mathcal{Q}}
\newcommand{\M}{\mathcal{M}}
\newcommand{\m}{\mathfrak{m}}
\newcommand{\V}{\mathcal{V}}
\newcommand{\Ups}{\Upsilon}
\newcommand{\h}{\textnormal{h}}
\newcommand{\Rp}{\mathcal{R}}
\renewcommand{\div}{{\mathrm{div }}}
\newcommand{\curl}{{\mathrm{curl }}}
\providecommand{\ip}[1]{\langle#1\rangle}
\providecommand{\abs}[1]{\left\lvert#1\right\rvert}
\providecommand{\norm}[1]{\left\|#1\right\|}
\newtheorem{theorem}{Theorem}[section]
\newtheorem{lemma}[theorem]{Lemma}
\newtheorem{corollary}[theorem]{Corollary}
\newtheorem{proposition}[theorem]{Proposition}
\newtheorem{remark}[theorem]{Remark}
\newtheorem*{notation}{Notation}
\begin{document}

\title{On the stabilizing effect of rotation in the 3d Euler equations}

\author{Yan Guo}
\address{Brown University, Providence, RI, USA}
\email{yan\_guo@brown.edu}

\author{Chunyan Huang}
\address{School of Statistics and Mathematics, Central University of Finance and Economics, China}
\email{hcy@cufe.edu.cn}

\author{Benoit Pausader}
\address{Brown University, Providence, RI, USA}
\email{benoit\_pausader@brown.edu}

\author{Klaus Widmayer}
\address{\'Ecole Polytechnique F\'ed\'erale de Lausanne, Switzerland}
\email{klaus.widmayer@epfl.ch}

\subjclass[2000]{35Q35, 76B03, 76U05}


\begin{abstract}
While it is well known that constant rotation induces linear dispersive effects in various fluid models, we study here its effect on long time nonlinear dynamics in the inviscid setting. More precisely, we investigate stability in the 3d rotating Euler equations in $\R^3$ with a \emph{fixed} speed of rotation. We show that for any $\M > 0$, axisymmetric initial data of sufficiently small size $\eps$ lead to solutions that exist for a long time at least $\eps^{-\M}$ and disperse. This is a manifestation of the stabilizing effect of rotation, regardless of its speed.

To achieve this we develop an anisotropic framework that naturally builds on the available symmetries. This allows for a precise quantification and control of the geometry of nonlinear interactions, while at the same time giving enough information to obtain dispersive decay via adapted linear dispersive estimates.

\end{abstract}

\setcounter{tocdepth}{1}
\maketitle
\tableofcontents
\vspace*{-1cm}

\section{Introduction}
In this article we study the incompressible $3d$ Euler equations in a uniformly rotating frame of reference. By a simple choice of frame and scale, we  will henceforth assume the axis of rotation to be $\vec{e}_3=(0,0,1)^\intercal$,
so that the equations for the velocity field $u:\R\times\R^3\to\R^3$ read\footnote{We remark that these equations expressed in the vorticity $\omega:=\curl(u):\R\times\R^3\to\R^3$ read $$\partial_t \omega+u\cdot\nabla\omega=\omega\cdot\nabla u+\partial_3 u,\quad \div(\omega)=0.$$}
\begin{equation}\label{eq:IER}
\begin{cases}
&\partial_tu+u\cdot\nabla u+\vec{e}_3\times u+\nabla p=0,\\
&\div(u)=0.
\end{cases}
\end{equation}
These are the standard $3d$ Euler equations with an additional term $\vec{e}_3\times u$, which gives the Coriolis force experienced in the rotating frame. 
To close these equations, the scalar pressure $p:\R\times\R^3\to\R$ is recovered from $u$ via $\Delta p=-(\partial_1 u_2-\partial_2 u_1)-\div[u\cdot\nabla u]$.

A particular class of solutions to \eqref{eq:IER} is given by functions that are invariant under rotations about $\vec{e}_3$. We shall henceforth simply refer to this feature as ``axisymmetry''. We note here that this property is preserved by the nonlinear flow of \eqref{eq:IER}, and allows for solutions with non-vanishing swirl.

Our main result then shows that small, axisymmetric solutions to \eqref{eq:IER} are stable for a long time, \emph{independently} of the speed of rotation:
\begin{theorem}\label{MainThm}
There exists a norm $X$, finite for Schwartz data and $\varepsilon_0>0$ such that the following holds: For any $\M>0$ there exists $N_0>0$ such that if the initial data $u_0$ is \emph{axisymmetric} and satisfies
\begin{equation}
 \norm{u_0}_{X}+\norm{u_0}_{H^{N_0}}\le\varepsilon<\varepsilon_0,
\end{equation}
then there exists a solution $u\in C([0,T_\varepsilon]:H^{N_0})$ of \eqref{eq:IER} with initial data $u_0$, which exists for a long time $T_\varepsilon>\varepsilon^{-\M}$.
\end{theorem}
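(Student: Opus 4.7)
\medskip

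\noindent\textbf{Proof proposal.} The linear part of \eqref{eq:IER} with the divergence-free constraint can be diagonalized by the standard helical (Riesz–Beltrami) decomposition of a divergence-free vector field into $u=u_++u_-+u_0$, where $u_\pm$ oscillate along the Poincaré dispersion relation $\Lambda(\xi)=\xi_3/|\xi|$ and $u_0$ is the zero mode. The plan is to introduce the profiles $f_\pm(t):=e^{\mp it\Lambda(D)}u_\pm(t)$, and recast \eqref{eq:IER} as a system of Duhamel integrals
\[
\widehat{f_\sigma}(t,\xi)=\widehat{f_\sigma}(0,\xi)+\sum_{\sigma_1,\sigma_2}\int_0^t\!\!\int_{\R^3} e^{is\Phi_{\sigma,\sigma_1,\sigma_2}(\xi,\eta)}\,m_{\sigma,\sigma_1,\sigma_2}(\xi,\eta)\,\widehat{f_{\sigma_1}}(\eta)\widehat{f_{\sigma_2}}(\xi-\eta)\,d\eta\,ds,
\]
with phase $\Phi=\sigma\Lambda(\xi)-\sigma_1\Lambda(\eta)-\sigma_2\Lambda(\xi-\eta)$ and symbol $m$ inherited from the Leray projection of $u\cdot\nabla u$. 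Axisymmetry is propagated by the flow, so throughout $\widehat{f_\sigma}$ depends only on $(|\xi_h|,\xi_3)$, effectively reducing everything to an anisotropic two-dimensional problem.

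The first technical task is to build the norm $X$ so that it (i) quantifies the anisotropy adapted to $\Lambda$, controlling localizations in $|\xi_h|$, $\xi_3$, and the polar angle $\theta=\xi_3/|\xi|$ separately, (ii) implies a dispersive $L^\infty$-decay estimate for $e^{it\Lambda(D)}$ of the form $\|e^{it\Lambda(D)}P_kg\|_{L^\infty}\lesssim \langle t\rangle^{-\alpha}\|g\|_X$ (for some $\alpha>0$ on sufficiently frequency-localized axisymmetric inputs), and (iii) is controlled by a weighted Sobolev norm of moderate weight count. The decay estimate would be obtained from stationary phase applied to the oscillatory integral $\int e^{i(x\cdot\xi+t\Lambda(\xi))}\varphi(\xi)\,d\xi$; the Hessian of $\Lambda$ degenerates on the axes $\xi_h=0$ and at infinity in $\xi_h$, so the norm $X$ must quantify integrability/vanishing there. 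Axisymmetry enforces cancellation in the angular variable, producing an extra factor of $|x_h|^{-1/2}$ and yielding genuine (if slow) pointwise decay.

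With $X$ in hand I would run a bootstrap on $[0,T]$ for
\[
\|u(t)\|_{H^{N_0}}\le \varepsilon\langle t\rangle^{p\de},\qquad \|f(t)\|_X\le \varepsilon\langle t\rangle^\de,
\]
with $\de\ll 1/\M$ and $N_0\gg 1$. The high-regularity bound is improved by the standard energy identity for \eqref{eq:IER}, estimating $\int_0^t\|\nabla u\|_{L^\infty}\,ds$ by interpolating the dispersive decay of $u$ (coming from the $X$-control of $f$) with the $H^{N_0}$ norm; the choice $N_0=N_0(\M)$ is dictated by how small $\de$ has to be so that $p\delta\cdot T\le $ constant $\ll 1$ at $T=\varepsilon^{-\M}$. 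The $X$-bound is the heart of the matter: one expands the Duhamel term with sharp frequency localizations in $|\xi_h|,\xi_3,|\eta_h|,\eta_3$, and the cone angle, and then on each piece integrates by parts in $s$ using $e^{is\Phi}=(i\Phi)^{-1}\partial_s e^{is\Phi}$ whenever $|\Phi|$ dominates, or in $\eta$ when $|\nabla_\eta\Phi|$ dominates (space-time resonance philosophy). Each non-resonant piece yields a cubic-in-$f$ remainder whose size is $\varepsilon^3$ (times time growth absorbed by small $\de$), and a boundary term that is a quadratic correction to the profile.

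The main obstacle is the genuinely resonant set $\{\Phi=0,\nabla_\eta\Phi=0\}$, which for the Poincaré dispersion $\Lambda=\xi_3/|\xi|$ is unusually large (whole submanifolds of collinear frequencies and angular coincidences). The axisymmetric reduction is crucial here: imposing that $\widehat{f}$ is a function of $(|\xi_h|,\xi_3)$ collapses the angular integration and, combined with the divergence-free constraint and the specific algebraic structure of $m$, forces the symbol to vanish to the required order on the critical subset of the resonant manifold, so that the output is effectively of cubic type. To reach the $\varepsilon^{-\M}$ lifespan for arbitrary $\M$ I would then iterate this normal-form procedure $\sim\M$ times, each iteration trading a quadratic Duhamel term for a boundary correction to the profile plus a higher-order (and milder) integral; after $k$ rounds the remaining nonlinearity is of size $\varepsilon^{k+1}\langle t\rangle^{C_k\de}$, and choosing $k=k(\M)$ large enough (with $N_0,\de^{-1}$ correspondingly large) closes the bootstrap on $[0,\varepsilon^{-\M}]$.
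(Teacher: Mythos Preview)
Your proposal has a genuine strategic gap. The dispersion relation $\Lambda(\xi)=\xi_3/|\xi|$ gives decay at the \emph{critical} rate $t^{-1}$ (this is sharp, see Remark~\ref{rem:sharpdecay}), and with critical decay the iterated normal-form scheme you describe does not by itself gain powers of $\varepsilon$: after a normal form the new cubic terms contain $\partial_s f$, which is again of order $\varepsilon^2 t^{-1}$, so the integrand stays at the $\varepsilon^{k+1}t^{-1}$ level and you are back to logarithmic/slow growth at every stage. The claim that axisymmetry plus divergence-freeness forces the symbol to vanish to sufficient order on the full space-time resonant set is the crux, and it is not established here (nor is it how the paper proceeds); for this dispersion relation the resonant set is too large for a pure null-structure argument.

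The paper's mechanism is different in two essential ways. First, rather than iterating normal forms, it builds a \emph{hierarchy of vector-field norms}: many copies of the scaling field $S$ (which commutes with the equation), together with one copy of the non-commuting spherical field $\Upsilon=\partial_\phi$, are propagated in an anisotropic $B$-norm and in $L^2$. The $\mathcal{M}$-dependence of $N_0$ arises because each norm is allowed slow growth $\langle t\rangle^{C/N_0}$, and choosing $N_0\sim\mathcal{M}$ keeps this bounded on $[0,\varepsilon^{-\mathcal{M}}]$; no higher-order expansion of the nonlinearity is performed. Second, the replacement for ``symbol vanishing on resonances'' is a geometric dichotomy on the phase (Proposition~\ref{prop:phasevssigma}): whenever $|\Phi|\ll 2^{q_{\max}}$, one has a \emph{lower} bound $|\bar\sigma|\gtrsim 2^{q_{\max}+k_{\max}+k_{\min}}$ on $\bar\sigma=\xi_3\eta_h-\eta_3\xi_h$, which by Lemma~\ref{lem:vfsizes-mini} forces $|S_\eta\Phi|+|\Omega_\eta\Phi|$ to be large. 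So one can always either do a normal form or integrate by parts along $S_\eta,\Omega_\eta$; it is this vector-field integration by parts (done once or twice, feeding back into the $S$-hierarchy) that substitutes for your iterated normal forms. Your outline is missing both the vector-field framework and this phase/$\bar\sigma$ dichotomy, which are the load-bearing ideas.
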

A few comments are in order.
\begin{enumerate}[leftmargin=*]
 \item We highlight that other than smallness and axisymmetry, no assumptions are made on the initial data. In particular, our solutions may have non-vanishing, general localized swirl. According to the classical theory, such solutions would then a priori exist only for a short time proportional to $\eps^{-1}$. In fact, recent work \cite{ElgindiBU} shows that in a lower regularity setting, in the absence of rotation axisymmetric initial data may lead to finite time singularity formation.
 
 Our Theorem \ref{MainThm} is thus a strong manifestation of the stabilizing effect of rotation, and suggests that it may prevent singularity formation for sufficiently smooth solutions of small amplitude.

 \item Rotational effects play an important role in many physical processes, in particular in the geophysical setting \cite{GFD,mcwilliamsGFD}, which has also  been studied in the mathematical literature in a large variety of contexts  -- see \cite{GS2007} for some overview, and \cite{GILT2020} (and references therein) for a recent result in the context of the primitive equations.
 
 \item As is well known, at the \emph{linearized} level constant rotation as in \eqref{eq:IER} induces dispersion. In our setting of the full space $\R^3$ this leads to amplitude decay. However, the dispersion in \eqref{eq:IER} is anisotropic and degenerate, and in general only produces decay at a critical rate of $t^{-1}$ (see also Proposition \ref{DecayProp} and Remark \ref{rem:sharpdecay} below). 
 
 This contrasts with another common model for the effect of rotation, the so-called $\beta$-plane approximation: this is a tangent plane model for the effect of the Coriolis force \cite{Gal2008}. There the dispersion is still anisotropic, but less degenerate than for \eqref{eq:IER}, so that the critical decay rate $t^{-1}$ also holds in this $2d$ model. In particular, global stability for small data then holds \cite{globalbeta}.
 
 \item Prior works on the rotating Navier-Stokes equations (i.e.\ \eqref{eq:IER} with additional viscous force $\nu\Delta u$ on the right hand side) track the \emph{speed of rotation} through Strichartz estimates for the linear rotation operator \cite{CDGG2002a,CDGG2006,GS2007}, which was then used in various singular limits, and to show the global existence of certain solutions in the viscous setting, provided that the speed of rotation is high enough.
 
 In the rotating Euler equations \eqref{eq:IER}, decay and Strichartz estimates have been used to treat the case of \emph{fast} rotation \cite{Dut2005,KohE,MR3488136,WC2018,AF2018}. Here the (inverse) rotation speed plays the role of a small parameter that can be used to suppress the truly nonlinear dynamics. This allows for an extension of the lifespan of solutions by a factor that is logarithmic (provided some Sobolev regularity of solutions -- in more refined Besov spaces one can obtain a fixed polynomial \cite{WC2018}) in the speed of rotation, without even making use of the precise structure of the nonlinearity.
 
 Letting $u_\omega(t,x):=\omega\cdot u(\omega t,x)$ for $\omega>0$, we note that if $u$ is a solution to \eqref{eq:IER} on $[0,T]$, then $u_\omega$ solves \eqref{eq:IER} with $\vec{e}_3$ replaced by $\omega\cdot\vec{e}_3$, i.e.\ $u_\omega$ solves the rotating Euler equations with speed of rotation $\omega$, on a time interval $[0,\omega^{-1}T]$. While our assumptions on the initial data are more restrictive than those of \cite{KohE,MR3488136} (and in particular require axisymmetry), this rescaling shows that for initial data of size $\eps$ in \eqref{eq:IER} the increase in the lifespan of solutions due to the above works is of order $\log(\eps)$, whereas our result in Theorem \ref{MainThm} gives an improvement to any polynomial scale $\eps^{-\M}$, $\M>0$.

 \item Our work does \emph{not} make use of small parameters, and rather treats \eqref{eq:IER} from the perspective of nonlinear dispersive equations. Building on the dispersion in the linearized equation of \eqref{eq:IER}, we treat the full, highly quasilinear problem in a perturbative fashion. Here the precise structure of the nonlinearity plays a crucial role, and in particular is responsible for our restriction to axisymmetric initial data. 
 
 As is readily seen, axisymmetry is preserved by the nonlinear flow of \eqref{eq:IER}. Moreover, it leads to a simplification in the nonlinearity, which is essential for our arguments. Indeed, the structure of the equations without this restriction is much less favorable: as already observed in \cite{BMN1997} in the setting of \eqref{eq:IER} on a (generic) torus $\T^3$, the rotating Euler equations \eqref{eq:IER} contain the two-dimensional Euler equations (without rotation) as a resonant subsystem. From this perspective, the assumption of axisymmetry trivializes this dynamic.

 \item The main challenges we face lie in the anisotropy (which leads to fewer symmetries and conserved quantities), the critical rate of dispersive decay and the quasilinear nature of \eqref{eq:IER}. To address these, we are developing here a new framework that aims to maximally exploit the available symmetries. For this, we introduce an anisotropic functional setting, which allows us to control precisely the nonlinear interactions, but also requires an adapted (linear) dispersive analysis -- for more see Section \ref{ssec:method}.
 
 The methods developed here are natural and well-adapted to the current problem, but (we believe) are also versatile and flexible enough to be useful in many other contexts.
\end{enumerate}

\subsection{Our method}\label{ssec:method}
We build on classical techniques for small data/global regularity problems in nonlinear dispersive differential equations: vector fields \cite{Klainer85} and normal forms \cite{ShatahKG85} as unified in a spacetime resonance approach \cite{GMSGWW3d,GNT1,GermSTR} and developed in \cite{CapWW,KP,IP2013,IoPau1,BG2014,DIPau,Den2018}. The basic observation to start this analysis is that the linearization of \eqref{eq:IER} is dispersive with dispersion relation
\begin{equation}
 \Lambda(\xi)=\frac{\xi_3}{\abs{\xi}},\qquad \xi\in\R^3.
\end{equation}
This is anisotropic and degenerate, and leads to $L^\infty$ decay at the critical rate $t^{-1}$ -- see also Proposition \ref{DecayProp}. 

In the context of the nonlinear problem, this anisotropy also manifests itself in a loss of symmetries and conservation laws compared to the Euler equations without rotation. In particular, we observe that we have two vector fields for \eqref{eq:IER} that commute with each other \emph{and with the equation}, namely rotations $\Omega$ about the axis $\vec{e}_3$ and scaling $S$ (see Section \ref{sec:structure}). Our goal is to rely as much as possible on these symmetries. However, since they do not give control over all directions, we complement them with a third vector field $\Ups$. This is chosen such that it commutes with $\Omega$ and $S$, but it does not commute with the equation.

The resulting framework then balances the control needed to guarantee the dispersive decay with control that can be nonlinearly propagated, and establishes the relevant bounds in a bootstrap argument (see Theorem \ref{thm:btstrap} in Section \ref{sec:overview}).
For this it is important to quantify precisely the nonlinear interactions, and due to the anisotropy we work in a functional setting that localizes the horizontal and vertical components in frequency space, i.e.\ we introduce Littlewood-Paley decompositions relative to the horizontal $\abs{\xi_\h}/\abs{\xi}$, with $\xi_\h=(\xi_1,\xi_2)$, and vertical components $\abs{\xi_3}/\abs{\xi}$ of a vector $\xi=(\xi_1,\xi_2,\xi_3)\in\R^3$ (see also Section \ref{ssec:loc}).

\subsubsection*{Linear Decay}
The vector fields $S,\Omega$ commute with the equation and it is thus direct to derive energy estimates for them. Unfortunately, this does not suffice to obtain decay in $L^\infty$. Instead, we rely on a linear dispersive analysis (see Section \ref{sec:lin_decay}) which is fine tuned to the symmetries of the equation. More concretely, we use a stationary phase argument that is adapted to the vector fields, and only relies on one copy of the vector field $\Ups$. The required control is expressed in a norm $\norm{.}_D$. This in turn is based on a $B$ norm that we introduce, and which combines the advantages of an $L^2$-based norm with the scaling of the Fourier transform in $L^\infty$.

\subsubsection*{Nonlinear Analysis}
To understand how the control required for decay can be propagated nonlinearly, we carry out a detailed analysis of the structure of our problem (see Section \ref{sec:structure}). This inspires our choice of two scalar unknowns $U_+$ and $U_-$. The assumption of axisymmetry then plays a crucial role in simplifying the nonlinear interactions. Filtering out the linear evolution we can then reformulate \eqref{eq:IER} via bilinear Duhamel formulas for two scalar profiles $\U_+,\U_-$ (see \eqref{eq:IER_disp_Duham}).

The nonlinear analysis then revolves around bilinear estimates for these expressions. We follow a strategy in three bootstrap steps. In each step we successively upgrade control of a larger amount of vector fields $S$ on our unknowns to a stronger type of control for a lower amount of vector fields $S$:
\begin{enumerate}
 \item Building on the $L^\infty$ decay, we first derive energy estimates for a large amount of vector fields $S$ on our unknowns in Section \ref{sec:energy}.
 \item Next we develop bilinear estimates (Section \ref{sec:bilin}) that can be used in Section \ref{sec:B_VFprop} to upgrade the energy control to $B$ norm estimates for fewer vector fields $S$ on our unknowns. This proceeds through integration by parts along $S$, but remarkably does not make use of normal form.
 \item Finally, to obtain the last piece required for decay, we have to establish $L^2$ bounds for few vector fields $S$ on top of $\Ups$ on our unknowns. This is the most delicate part of our arguments, and is achieved by a combination of bilinear estimates (that again come from integrations by parts along the vector fields $S$, see Section \ref{sec:more_bilin}) and normal form methods. The relevant details are presented in Section \ref{sec:Ups_propagation}.
\end{enumerate}
For a more precise overview of these steps we refer the reader to Section \ref{sec:overview}.

\subsection{Some notations}
For future use we introduce some notations. For $v\in\R^3$ we write $v_h=(v_1,v_2,0)$ for its horizontal part, and $v_\h^\perp=(-v_2,v_1,0)$ for its rotation about the $x_3$ axis by 90 degrees. Similarly, we will use the following differential operators:
\begin{equation}
\nabla_\h=(\partial_1,\partial_2,0),\quad\div_\h(u)=\partial_1u_1+\partial_2u_2,\quad\nabla_\h^\perp=(-\partial_2,\partial_1,0),\quad\curl_\h(u)=\partial_1u_2-\partial_2u_1.
\end{equation}
Besides, repeated indices are summed. Roman indices run in $\{1,2\}$ and greek indices run in $\{1,2,3\}$. We thus see that
\begin{equation*}
\hbox{curl}_\h(u)=\in^{jk}\partial_ju^k,\qquad\hbox{div}_\h(u)=\partial_ju^j.
\end{equation*}

\subsubsection{Localizations}\label{ssec:loc}
In the following, we fix $\chi$ a $C^\infty$ nonincreasing radial bump function which equals $1$ when $\vert x\vert\le 1$ and which is supported on $B(0,2)$; we define
\begin{equation*}
\varphi(x)=\chi(x)-\chi(x/2)
\end{equation*}
and let $\varphi_k(x)=\varphi(2^{-k}x)$ and
\begin{equation*}
\varphi_k^\pm(x)=\varphi_k(x)\mathfrak{1}_{\{\pm x\ge0\}}.
\end{equation*}
We also introduce $\overline{\varphi}\in C^\infty(\mathbb{R}\setminus\{0\})$ a smooth function such that
\begin{equation}\label{Overlinephi}
\overline{\varphi}\varphi=\varphi.
\end{equation}

For $k\in\mathbb{Z}$ and $p,q\in\mathbb{Z}_-$, we define the Littlewood-Paley projection $P_k$ and the generalized projections $P_{k,p,q}$ by the formula
\begin{equation}\label{eq:loc_def}
\begin{split}
\mathcal{F}\left\{P_kf\right\}(\xi)&:=\varphi_k(\xi)\widehat{f}(\xi),\\
\mathcal{F}\left\{P_{k,p,q}f\right\}(\xi_1,\xi_2,\xi_3)&:=\varphi(2^{-k}\vert\xi\vert)\varphi(2^{-2(p+k)}(\xi_1^2+\xi_2^2))\varphi(2^{-q-k}\xi_3)\widehat{f}(\xi_1,\xi_2,\xi_3).
\end{split}
\end{equation}
Sometimes, we also denote by $P^\prime_{k,p,q}$ a multiplier with similar properties as $P_{k,p,q}$ such that $P_{k,p,q}P^\prime_{k,p,q}=P_{k,p,q}$. We note that $p,q$ are not independent parameters and in particular, we note that, on the support of $P_{k,p,q}$, there holds that $2^{2p+q}=\min\{2^{2p},2^q\}$ and $2^{2p}+2^q\simeq 1$.

Given $t\in[0,T]$, we choose a decomposition of the indicator function $\mathfrak{1}_{[0,t]}$ with functions $\tau_0,\ldots,\tau_{L+1}:\R \to [0,1]$, $\abs{L-\log_2(2+t)}\leq 2$, satisfying that
\begin{equation}
\begin{aligned}
& \mathrm{supp} \,\tau_0 \subseteq [0,2], \quad \mathrm{supp} \,\tau_{L+1}\subseteq [t-2,t],
  \quad \mathrm{supp}\,\tau_m\subseteq [2^{m-1},2^{m+1}]
  \quad \text{for} \quad  m \in \{1,\dots,L\},
\\
& \sum_{m=0}^{L+1}\tau_m(s) = \mathbf{1}_{[0,t]}(s), \quad \tau_m\in C^1(\R) \quad \text{and} \quad \int_0^t|\tau'_m(s)|\,ds\lesssim 1
  \quad \text{for} \quad m\in \{1,\ldots,L\}.
\end{aligned}
\end{equation}

\subsubsection{Multipliers}
Our nonlinearities are best understood in Fourier space and involve quadratic pseudo-products. Given a multiplier $\mathfrak{m}$, we define
\begin{equation}\label{eq:defQm}
\begin{split}
 \mathcal{F}\left\{Q_{\mathfrak{m}}[f,g]\right\}(\xi)&=\frac{1}{(2\pi)^\frac{3}{2}}\int_{\R^3}\mathfrak{m}(\xi,\eta)\widehat{f}(\xi-\eta)\widehat{g}(\eta) d\eta.
\end{split}
\end{equation}
We note the simple variant of H\"older's inequality
\begin{equation}\label{ProdRule}
\begin{split}
 \Vert Q_{\mathfrak{m}}[f,g]\Vert_{L^r}&\lesssim \Vert \mathcal{F}\mathfrak{m}\Vert_{L^1}\Vert f\Vert_{L^p}\Vert g\Vert_{L^q},\qquad \frac{1}{r}=\frac{1}{p}+\frac{1}{q},
\end{split}
\end{equation}
and we often consider a frequency-localized version of this:
\begin{equation*}
\begin{split}
\Vert \mathfrak{m}\Vert_{\mathcal{W}}&=\sup_{k,k_1,k_2}\Vert \mathfrak{m}\Vert_{\mathcal{W}_{kk_1k_2}},\qquad \Vert \mathfrak{m}\Vert_{\mathcal{W}_{kk_1k_2}}=\Vert \mathcal{F}\left\{\mathfrak{m}\varphi_{kk_1k_2}\right\}\Vert_{L^1},\\
\varphi_{kk_1k_2}(\xi_1,\xi_2)&:=\varphi(2^{-k}(\xi_1+\xi_2))\varphi(2^{-k_1}\xi_1)\varphi(2^{-k_2}\xi_2).
\end{split}
\end{equation*}
Our elementary multipliers are variants of the Riesz transforms:
\begin{equation*}
\begin{split}
 \mathcal{A}_\mathcal{R}&:=\{1,\,\frac{\xi_i}{\vert\xi\vert},\,\frac{\xi_j}{\vert\xi_\h\vert},\,\frac{\vert\xi_\h\vert}{\vert\xi\vert},\,\,\, 1\le i\le 3,\,\, 1\le j\le 2\},
\end{split}
\end{equation*}
and we shall also abbreviate $\Rp_j:=\frac{\xi_j}{\abs{\xi_\h}}$.
A special role will be played by the dispersion relation
\begin{equation*}
\Lambda(\xi)=\xi_3/\vert \xi\vert.
\end{equation*}
All multipliers in $\mathcal{A}_\mathcal{R}$ are bounded\footnote{For all but $\sqrt{1-\Lambda^2}$, this is a direct consequence of the boundedness of the Riesz transform; for $\sqrt{1-\Lambda^2}$ this follows from a linear variant of \eqref{ProdRule}.} on $L^p$, $1<p<\infty$.
In addition, one easily sees that these multipliers are bounded when localized in frequency
\begin{equation*}
\begin{split}
\Vert \Lambda P_kf\Vert_{L^p}+\Vert \sqrt{1-\Lambda^2}P_kf\Vert_{L^p}\lesssim \Vert f\Vert_{L^p},\quad 1\le p\le\infty.
\end{split}
\end{equation*}

\section{Structure of the Equations and Choice of Unknowns}\label{sec:structure}

\subsection{Symmetries and Vector Fields}
The equations \eqref{eq:IER} satisfy the following symmetries:
\begin{enumerate}
\item Scaling:
If $u$ is a solution, then so is
\begin{equation}
 u_\lambda(t,x)=\lambda u(t,\lambda^{-1}x).
\end{equation}
The generator of this symmetry is the vector field $S$ acting on vector fields $v$ and functions $f$ as
\begin{equation}\label{ScalingVF}
Sv=\sum_{j=1}^3x_j\partial_jv-v,\qquad Sf=x\cdot\nabla f.
\end{equation}

\item Rotation:
Notice that for any rotation matrix $\Theta\in O(3)$ on $\R^3$ we have $(v\cdot\nabla v)\circ\Theta=(\Theta^\intercal v\circ\Theta)\cdot\nabla(v\circ\Theta)$, so that if $v$ solves the Euler equations, then so does $\Theta^\intercal v\circ\Theta$ (with pressure $p\circ\Theta$). If moreover $\Theta$ is a rotation about the axis $\vec{e}_3\in\R$, then we obtain a rotation symmetry for \eqref{eq:IER}, which is generated by 
\begin{equation}\label{RotationVF}
\Omega v=(x_1\partial_2-x_2\partial_1)v-v_\h^\perp,\qquad \Omega f=(x_1\partial_2-x_2\partial_1)f.
\end{equation}

\end{enumerate}

In both cases, we observe that the vector field $V\in\{S,\Omega\}$ commutes with the Hodge decomposition and leads to the linearized equation:
\begin{equation}\label{eq:IER-VF}
 \partial_t V u+V u\cdot\nabla u+u\cdot\nabla V u+\vec{e}_3\times V u+\nabla V p=0,\quad \div\, V u=0.
\end{equation}

We note that both $S$ and $\Omega$ are natural in the sense that they correspond to flat derivatives in spherical coordinates $(\rho,\theta,\phi)$:
\begin{equation*}
\Omega=\partial_\theta,\qquad S=\rho\partial_\rho.
\end{equation*}
In particular, they commute and they both behave well under Fourier transform: we have 
\begin{equation}
 \widehat{Sf}=-3-S\hat{f}=S^*\hat{f},\qquad \widehat{\Omega f}=\Omega\hat{f}.
\end{equation}
In practice, we will thus be able to equivalently work with $\mathcal{F}^{-1}(V\hat{f})$ or $V f$, $V\in\{\Omega,S\}$ (since they differ by at most a multiple of $f$), and will henceforth ignore this distinction.

To complement these to have control of the full gradient, a natural choice is the spherical derivative corresponding to the vertical angle $\phi$,
\begin{equation}\label{DefUps}
 \Ups:=\partial_\phi,
\end{equation}
which commutes with $\Omega$ and $S$.
As we shall see, $\Ups$ is very well adapted to the structure of the equations and the geometry of the nonlinear phases we encounter (see also the discussion in Section \ref{sec:geom}).

\subsection{Choice of Unknowns and Nonlinearity in Axisymmetry}
To motivate our choice of variables, we first observe that the linear part of \eqref{eq:IER},
\begin{equation}
 \partial_t u+\vec{e}_3\times u+\nabla p=0, \quad \div\; u=0,
\end{equation}
is dispersive. Here $\Delta p=\curl_\h u$, so using the divergence condition one sees directly that the linear system is equivalent to
\begin{equation}\label{eq:linIER}
\begin{aligned}
\partial_t\curl_\h u-\partial_3u_3&=0,\qquad
\partial_tu_3+\partial_3\Delta^{-1}\curl_\h u=0.
\end{aligned}
\end{equation}
The dispersion relations $\pm i\Lambda(\xi)$ thus satisfy $(i\Lambda)^2=-\frac{\xi_3^2}{\abs{\xi}^2}$, and we choose
\begin{equation}
 \Lambda(\xi)=\frac{\xi_3}{\abs{\xi}}.
\end{equation}
Furthermore, we will denote by $\Lam$ the associated real operator
\begin{equation}
 \Lam=i\Lambda=\partial_3\vert\nabla\vert^{-1}.
\end{equation}

\subsubsection{Scalar unknowns}

Note that, thanks to the incompressibility condition, $u$ has two scalar degrees of freedom. In order to exploit this better, we will work with the variables
\begin{equation}\label{eq:a-c-def}
 a:=\abs{\nabla_\h}^{-1}\curl_\h u,\qquad c:=\abs{\nabla} \abs{\nabla_\h}^{-1}u_3
\end{equation}
which are motivated by the property \eqref{MainPropU}. Note that $u$ can be recovered from $(a,c)$:
\begin{equation}\label{Formu0}
 u=U_a+U_c,
\end{equation}
where
\begin{equation}\label{Formu}
 \begin{split}
 U_a&:=-\nabla_\h^\perp\abs{\nabla_\h}^{-1}a,\qquad U_a^j=\in^{jk}\vert\nabla_\h\vert^{-1}\partial_ka,\\
 U_c&:= \Lam\nabla_\h\abs{\nabla_\h}^{-1}c+\sqrt{1-\Lambda^2}c\;\vec{e}_3,\qquad U_c^j={\bf\Lambda}\vert\nabla_\h\vert^{-1}\partial_jc,\quad U_c^3=\sqrt{1-\Lambda^2}c
\end{split}
\end{equation}
and for any vector field $V\in\{S,\Omega\}$ and any Fourier multiplier $m:\mathbb{R}^3\to\mathbb{R}$,
\begin{equation}\label{MainPropU}
\begin{aligned}
 Vu&=U_{Va}+U_{Vc},\qquad V\in\{S,\Omega\},\\
 \norm{mu}_{L^2}^2&=\norm{ma}_{L^2}^2+\norm{mc}_{L^2}^2=\norm{mU_a}_{L^2}^2+\norm{mU_c}_{L^2}^2.\\
\end{aligned}
\end{equation}
Note that the linearization of $a$ coincides with (the derivative of) the pressure
\begin{equation}
 \Delta p=\vert\nabla_\h\vert a-\div\left[u\cdot\nabla u\right]=\vert\nabla_\h\vert a-\partial_\alpha\partial_\beta\left[u^\alpha u^\beta\right].
\end{equation}
Plugging in the equation, we obtain that
\begin{equation}\label{eq:IER2}
\begin{aligned}
\partial_ta-\Lam c&=-\vert\nabla_\h\vert^{-1}\partial_j\partial_p\in^{jk}\left\{u^pu^k\right\}-\vert\nabla\vert{\bf\Lambda}\in^{jk}\vert\nabla_\h\vert^{-1}\partial_j\left\{u^3u^k\right\},\\
\partial_tc-\Lam a&=-\vert\nabla\vert{\bf\Lambda}\sqrt{1-\Lambda^2}\left\{\vert\nabla_\h\vert^{-2}\partial_j\partial_k\left\{u^ju^k\right\}+u^3u^3\right\}-\vert\nabla\vert \vert\nabla_\h\vert^{-1}\partial_j(1-2\Lambda^2)\left\{u^ju^3\right\}.
\end{aligned}
\end{equation}
In this form the structure of the nonlinearity is more apparent: it is a quasilinear quadratic form in $a,c$ without singularities at low frequency. In addition, we will see next that in the case of axisymmetry, there is more useful structure.

\subsubsection{The equations in axisymmetry}\label{sec:axisymm}
Let us now consider the special class of axisymmetric initial data for \eqref{eq:IER}, namely those that are symmetric with respect to rotations about the $\vec{e}_3$ axis. This property is preserved by the nonlinear flow of the equation, as witnessed by the rotational symmetry.

Moreover, in this axisymmetric setting the nonlinearity of \eqref{eq:IER2} has a simplified form: using the \emph{dispersive unknowns}
\begin{equation}
 U_+:=a+c,\quad U_-:=a-c,
\end{equation}
the equations \eqref{eq:IER2} can be written as the following system
\begin{equation}\label{eq:IER_disp}
\begin{aligned}
 (\partial_t-\Lam)U_+&=\frac{1}{4}\left[Q_{m^{++}_1+m^{++}_2}[U_+,U_+]+Q_{m^{+-}_1+m^{+-}_2}[U_+,U_-]+Q_{-m^{++}_1+m^{++}_2}[U_-,U_-]\right],\\
 (\partial_t+\Lam)U_-&=\frac{1}{4}\left[Q_{m^{++}_1-m^{++}_2}[U_+,U_+]+Q_{m^{+-}_1-m^{+-}_2}[U_+,U_-]+Q_{-m^{++}_1-m^{++}_2}[U_-,U_-]\right].
\end{aligned}
\end{equation}

\begin{lemma}\label{lem:IERmult}
 In the axisymmetric case, the equations \eqref{eq:IER2} can equivalently be written as \eqref{eq:IER_disp}. 
 Here the multipliers are such that there exist constants $c_{\alpha\beta}^{jk}\in\{-1,0,1\}$ with $\alpha=(\alpha_1,\alpha_2)\in\{0,1,2\}^2$, $\beta=(\beta_1,\beta_2)\in\{0,1\}^2$ and $1\leq j,k\leq 2$, for which
\begin{equation}
 m_i^{\mu\nu}(\xi,\eta)=\abs{\xi}\cdot\!\!\!\!\prod_{\zeta\in\{\xi,\xi-\eta,\eta\}}\sum_{\substack{\alpha\in\{0,1,2\}^2,\\\beta\in\{0,1\}^2,1\leq j,k\leq 2}} c_\alpha^{jk} [\Lambda(\zeta)]^{\alpha_1}[\sqrt{1-\Lambda(\zeta)^2}]^{\alpha_2}[\Rp_j(\zeta)]^{\beta_1}[\Rp_k(\zeta)]^{\beta_2}.
\end{equation}
Their precise expressions are given below in \eqref{eq:mult1} and \eqref{eq:mult2} -- here we report on two key features:
\begin{enumerate}
 \item $m_i^{\mu\nu}$ always contains a factor $\Lambda(\zeta_1)\sqrt{1-\Lambda^2(\zeta_2)}$ for some $\zeta_1,\zeta_2\in\{\xi,\xi-\eta,\eta\}$, i.e.\ it contains at least one copy of $\Lambda$ and $\sqrt{1-\Lambda^2}$.
 \item The Riesz transforms $\Rp_j$ always appear as ``angles'' between an input and the output, i.e.\ in the form $\Rp_j(\xi)\in^{jk}\Rp_k(\vartheta)$ or $\Rp_j(\xi)\cdot\Rp_j(\vartheta)$ for some $\vartheta\in\{\xi-\eta,\eta\}$.
\end{enumerate}
\end{lemma}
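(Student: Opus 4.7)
The plan is a direct, bookkeeping-heavy computation carried out in three stages.

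\emph{Stage 1 (Derivation of the bilinear form).} I would substitute the decomposition $u=U_a+U_c$ of \eqref{Formu0}, together with $a=\tfrac12(U_++U_-)$ and $c=\tfrac12(U_+-U_-)$, into the nonlinear terms of \eqref{eq:IER2}. On the Fourier side this expresses each horizontal $\widehat{u^j}(\zeta)$ as a linear combination of $\in^{jl}\Rp_l(\zeta)$ and $\Lambda(\zeta)\Rp_j(\zeta)$ acting on $\widehat{U}_\pm$, while $\widehat{u^3}(\zeta)=\tfrac12\sqrt{1-\Lambda^2(\zeta)}(\widehat{U}_+-\widehat{U}_-)(\zeta)$. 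Each of the bilinear products $u^pu^k$, $u^3u^k$, $u^ju^k$, $u^3u^3$, $u^ju^3$ appearing in \eqref{eq:IER2} then becomes a sum of integrals $Q_m[U_\mu,U_\nu]$ over $(\mu,\nu)\in\{(+,+),(+,-),(-,-)\}$, with multipliers that are products of Riesz factors $\Rp_\cdot$ and dispersive factors $\Lambda,\sqrt{1-\Lambda^2}$ evaluated at the two input frequencies $\xi-\eta$ and $\eta$. Applying the outer operators of \eqref{eq:IER2}, whose Fourier symbols all factor as $|\xi|$ times further products of $\Rp_j(\xi)$, $\Lambda(\xi)$, $\sqrt{1-\Lambda^2(\xi)}$ and $\in^{jk}$, extracts the overall factor $|\xi|$ and produces candidate multipliers $m_i^{\mu\nu}$ in the general factorized form stated in the lemma.

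\emph{Stage 2 (Axisymmetric pruning).} For axisymmetric data, $\widehat{U}_\pm(\eta)$ depends only on $|\eta_h|$ and $\eta_3$, and both sides of \eqref{eq:IER_disp} are axisymmetric in $\xi$; this lets me symmetrize each multiplier by averaging $m(\xi,\eta)$ over simultaneous rotations of $\xi$ and $\eta$ about $\vec{e}_3$. After aligning $\xi_h$ with $\vec{e}_1$ and parametrizing $\eta_h=(|\eta_h|\cos\phi,|\eta_h|\sin\phi)$, both $\widehat{U}_\pm(\eta)$ and $\widehat{U}_\pm(\xi-\eta)$ become functions of $\phi$ only through $\cos\phi$. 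Any piece of the multiplier that in this reduction becomes odd in $\sin\phi$ integrates to zero and can be discarded. In particular, input-input contractions such as $\in^{jl}\Rp_j(\xi-\eta)\Rp_l(\eta)$ and transverse input-output pieces that reduce to $\sin\phi\cdot F(\cos\phi)$ vanish, while genuine input-output pairings $\Rp_j(\xi)\Rp_j(\zeta)$ and $\in^{jk}\Rp_j(\xi)\Rp_k(\zeta)$ with $\zeta\in\{\xi-\eta,\eta\}$ survive.

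\emph{Stage 3 (Reading off the structure).} Feature~(2) is then automatic: after the stage~2 pruning, every remaining Riesz factor pairs $\Rp_j(\xi)$ at the output with $\Rp_j$ at an input, possibly contracted against an $\in^{jk}$. Feature~(1) --- that each surviving multiplier contains at least one $\Lambda(\zeta_1)$ and one $\sqrt{1-\Lambda^2(\zeta_2)}$ --- is the delicate point and the main obstacle. The a priori problematic contributions are those where the combination of inputs and outer operator fails to supply one of these factors, typically the $U_a^pU_a^k$ part of the $u^pu^k$ term in $\partial_t a$ (no $\Lambda$), the $U_a^jU_c^3$ part of $u^ju^3$ in $\partial_t c$ (no $\Lambda$), and any $U_c^3U_c^3$ piece without a $\Lambda$. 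For each such case I would compute the reduced multiplier in the $(\cos\phi,\sin\phi)$ variables of stage~2 and verify it is odd in $\sin\phi$, hence killed by axisymmetry. What remains then automatically carries the stated $\Lambda(\zeta_1)\sqrt{1-\Lambda^2(\zeta_2)}$ factor (coming from a $U_c^j$ input or an external $\Lam$ paired with a $U_c^3$ input or an external $\sqrt{1-\Lambda^2}$), yielding the explicit expressions announced in \eqref{eq:mult1}--\eqref{eq:mult2}.
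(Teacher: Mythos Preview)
Your plan is correct and is essentially the paper's proof: both substitute $u=U_a+U_c$ into \eqref{eq:IER2}, use axisymmetry to kill the problematic self-interaction pieces, and then pass to $U_\pm$ to read off the explicit multipliers in \eqref{eq:mult1}--\eqref{eq:mult2}. The only organizational difference is that the paper stays in the $(a,c)$ variables through an intermediate step (the paper's \eqref{eq:IER3}--\eqref{eq:IER4}) and simply \emph{states} the axisymmetric cancellations---e.g.\ that $\in^{jk}\partial_j\partial_p\{U_a^pU_a^k\}$ and the $U_a^jU_c^3$ part of $\partial_j\{u^ju^3\}$ drop out---whereas your Stage~2 parity-in-$\sin\phi$ argument supplies exactly the mechanism behind those unexplained simplifications.
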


\begin{remark}
 This lemma shows that the multipliers $m_i^{\mu\nu}$ are of the form $m_i^{\mu\nu}=\abs{\nabla}n_i^{\mu\nu},$ with $i\in\{1,2\},\mu,\nu\in\{-,+\}$, for multipliers $n_i^{\mu\nu}$ that are products and linear combinations (in all three arguments of the output and the two inputs) of the basic multipliers $\Lambda$, $\sqrt{1-\Lambda^2}$ and $\Rp_k$ (and thus homogeneous of degree $0$).
\end{remark}

\begin{proof}
We remark that under the assumption of axisymmetry we have
\begin{equation*}
\begin{split}
\in^{jk}\partial_j\partial_p\left\{u^pu^k\right\}&=-\partial_j\partial_p\left\{\vert\nabla_\h\vert^{-1}\partial_ja\cdot {\bf\Lambda}\vert\nabla_\h\vert^{-1}\partial_pc\right\}+\in^{jk}\in^{pq}\partial_j\partial_p\left\{\partial_q(\vert\nabla_\h\vert^{-1}a)\cdot\partial_k({\bf\Lambda}\vert\nabla_\h\vert^{-1}c)\right\}\\
\partial_j\partial_k\left\{u^ju^k\right\}&=\partial_j\partial_k\left\{{\bf\Lambda}\vert\nabla_\h\vert^{-1}\partial_jc\cdot{\bf\Lambda}\vert\nabla_\h\vert^{-1}\partial_kc\right\}+\in^{jp}\in^{kq}\partial_j\partial_k\left\{\partial_p(\vert\nabla_\h\vert^{-1}a)\cdot\partial_q(\vert\nabla_\h\vert^{-1}a)\right\}\\
\in^{jk}\partial_j\left\{u^3u^k\right\}&=-\partial_j\left\{\vert\nabla_\h\vert^{-1}\partial_ja\cdot \sqrt{1-\Lambda^2}c\right\}\\
\partial_j\left\{u^ju^3\right\}&=\partial_j\left\{{\bf\Lambda}\vert\nabla_\h\vert^{-1}\partial_jc\cdot\sqrt{1-\Lambda^2}c\right\}
\end{split}
\end{equation*}
and the equations become
\begin{equation}\label{eq:IER3}
\begin{aligned}
\partial_ta-\Lam c&=\vert\nabla\vert\sqrt{1-\Lambda^2}\vert\nabla_\h\vert^{-2}\partial_j\partial_p\left\{\vert\nabla_\h\vert^{-1}\partial_ja\cdot {\bf\Lambda}\vert\nabla_\h\vert^{-1}\partial_pc\right\}+\vert\nabla\vert{\bf\Lambda}\vert\nabla_\h\vert^{-1}\partial_j\left\{\vert\nabla_\h\vert^{-1}\partial_ja\cdot\sqrt{1-\Lambda^2}c\right\}\\
&\quad-\vert\nabla\vert \sqrt{1-\Lambda^2}\in^{jk}\in^{pq}\vert\nabla_\h\vert^{-2}\partial_j\partial_p\left\{\vert\nabla_\h\vert^{-1}\partial_qa\cdot{\bf\Lambda}\vert\nabla_\h\vert^{-1}\partial_kc\right\}\\
\partial_tc-\Lam a
&=-\vert\nabla\vert\vert\nabla_\h\vert^{-1}\partial_j(1-2\Lambda^2)\left\{{\bf\Lambda}\vert\nabla_\h\vert^{-1}\partial_jc\cdot\sqrt{1-\Lambda^2}c\right\}-\vert\nabla\vert{\bf\Lambda}\sqrt{1-\Lambda^2}\left\{\sqrt{1-\Lambda^2}c\cdot\sqrt{1-\Lambda^2}c\right\}\\
&\quad-\vert\nabla\vert{\bf\Lambda}\sqrt{1-\Lambda^2}\vert\nabla_\h\vert^{-2}\partial_j\partial_k\left\{{\bf\Lambda}\vert\nabla_\h\vert^{-1}\partial_jc\cdot{\bf\Lambda}\vert\nabla_\h\vert^{-1}\partial_kc\right\}\\
&\quad-\vert\nabla\vert{\bf\Lambda}\sqrt{1-\Lambda^2}\vert\nabla_\h\vert^{-2}\partial_j\partial_k\in^{jp}\in^{kq}\left\{\vert\nabla_\h\vert^{-1}\partial_pa\cdot\vert\nabla_\h\vert^{-1}\partial_qa\right\}
\end{aligned}
\end{equation}
To simplify notation we introduce the bilinear expressions
\begin{equation}\label{eq:mult0}
\begin{aligned}
 Q_{m^1}[f,g]&:=\abs{\nabla}\sqrt{1-\Lambda^2}\Rp_j\Rp_p\{\Rp_j f\cdot\Rp_p g\},\\
 Q_{m^2}[f,g]&:=\abs{\nabla}\sqrt{1-\Lambda^2}\Rp_j\Rp_p\in^{jk}\in^{pq}\{\Rp_k f\cdot\Rp_q g\},\\
 Q_{m^3}[f,g]&:=\abs{\nabla}\Rp_j\{\Rp_j f\cdot\sqrt{1-\Lambda^2}g\},\\
 Q_{m^4}[f,g]&:=\abs{\nabla}\sqrt{1-\Lambda^2}\{\sqrt{1-\Lambda^2}f\cdot\sqrt{1-\Lambda^2}g\}
\end{aligned}
\end{equation}
with which we can express the equations \eqref{eq:IER3} as
\begin{equation}\label{eq:IER4}
\begin{aligned}
 \partial_ta-\Lam c&=Q_{m^1-m^2}[a,\Lam c]+\Lam Q_{m^3}[a,c],\\
 \partial_tc-\Lam a&=-\Lam Q_{m^1}[\Lam c,\Lam c]-\Lam Q_{m^2}[a,a]-(1-2\Lambda^2)Q_{m^3}[\Lam c,c]-\Lam Q_{m^4}[c,c].
\end{aligned} 
\end{equation}
Now we substitute the dispersive unknowns via
\begin{equation}
 U_+:=a+c,\quad U_-:=a-c,
\end{equation}
and simplify to obtain \eqref{eq:IER_disp} with
\begin{equation}\label{eq:mult1}
\begin{aligned}
 Q_{m^{++}_1}[f,g]&:=\abs{\nabla}\sqrt{1-\Lambda^2}\Rp_j\Rp_p\{\delta^{jk}\delta^{pq}-\in^{jk}\in^{pq}\}\{\Rp_k f\cdot\Lam\Rp_q g\}+\Lam\abs{\nabla}\Rp_j\{\Rp_j f\cdot\sqrt{1-\Lambda^2}g\},\\
 Q_{m^{+-}_1}[f,g]&:=\abs{\nabla}\sqrt{1-\Lambda^2}\Rp_j\Rp_p\{\delta^{jk}\delta^{pq}-\in^{jk}\in^{pq}\}\{\Lam\Rp_k f\cdot\Rp_q g-\Rp_k f\cdot\Lam\Rp_q g\}\\
 &\qquad -\Lam\abs{\nabla}\Rp_j\{\Rp_j f\cdot\sqrt{1-\Lambda^2}g-\sqrt{1-\Lambda^2}f\cdot\Rp_j g\},
\end{aligned}
\end{equation}
and
\begin{equation}\label{eq:mult2}
\begin{aligned}
 Q_{m^{++}_2}[f,g]&:=-\Lam\abs{\nabla}\sqrt{1-\Lambda^2}\Rp_j\Rp_p\{\Rp_j \Lam f\cdot\Rp_p \Lam g+\in^{jk}\in^{pq}\Rp_k f\cdot\Rp_q g\}\\
 &\qquad -(1-2\Lambda^2)\abs{\nabla}\Rp_j\{\Rp_j \Lam f\cdot\sqrt{1-\Lambda^2}g\}-\Lam\abs{\nabla}\sqrt{1-\Lambda^2}\{\sqrt{1-\Lambda^2}f\cdot\sqrt{1-\Lambda^2}g\},\\
 Q_{m^{+-}_2}[f,g]&:=\Lam\abs{\nabla}\sqrt{1-\Lambda^2}\Rp_j\Rp_p\{\Rp_j \Lam f\cdot\Rp_p \Lam g+\in^{jk}\in^{pq}\Rp_k f\cdot\Rp_q g\}\\
 &\qquad +(1-2\Lambda^2)\abs{\nabla}\Rp_j\{\Rp_j \Lam f\cdot\sqrt{1-\Lambda^2}g+\sqrt{1-\Lambda^2}f\cdot\Rp_j \Lam g\}\\
 &\qquad +2\Lam\abs{\nabla}\sqrt{1-\Lambda^2}\{\sqrt{1-\Lambda^2}f\cdot\sqrt{1-\Lambda^2}g\}.
\end{aligned} 
\end{equation}
\end{proof}

\begin{notation}[Notation for Multipliers]
 To simplify notation, in the rest of this article we shall reserve the notation $\m$ for any one of the multipliers of the above Lemma \ref{lem:IERmult}. We highlight again that these multipliers always contain a factor of $\abs{\xi}$, (at least) one factor of $\Lambda(\zeta_1)$ and (at least) one factor $\sqrt{1-\Lambda^2(\zeta_2)}$, for $\zeta_1,\zeta_2\in\{\xi,\xi-\eta,\eta\}$.
\end{notation}

\subsubsection{Profiles and Bilinear Expressions}
Next we introduce the \emph{profiles} $\U_\pm$ of our dispersive unknowns $U_\pm$ as
\begin{equation}\label{eq:def_profiles}
 \U_+:=e^{-t\Lam}U_+,\qquad \U_-:=e^{t\Lam}U_-.
\end{equation}
We can then express \eqref{eq:IER_disp} in terms of these profiles, and see that the bilinear terms are of the form
\begin{equation}\label{eq:def_Qm}
 \Q_\m[\U_\mu,\U_\nu]:=\mathcal{F}^{-1}\left(\int_{\R^3}\m(\xi,\eta)e^{\pm it\Phi_{\mu\nu}(\xi,\eta)}\widehat{\U_\mu}(\xi-\eta)\widehat{\U_\nu}(\eta)d\eta\right),\qquad \mu,\nu\in\{-,+\},
\end{equation}
for a phase function
\begin{equation}\label{eq:def_phi}
 \Phi_{\mu\nu}(\xi,\eta):=\Lambda(\xi)+\mu\Lambda(\xi-\eta)+\nu\Lambda(\eta),\qquad \mu,\nu\in\{-,+\}
\end{equation}
and with $\m$ being one of the multipliers of Lemma \ref{lem:IERmult}. By Duhamel's formula we thus have from \eqref{eq:IER_disp} that
\begin{equation}\label{eq:IER_disp_Duham}
\begin{aligned}
 \U_+(t)=\U_+(0)+\frac{1}{4}\int_0^t  \left[\Q_{m^{++}_1+m^{++}_2}[\U_+,\U_+]+\Q_{m^{+-}_1+m^{+-}_2}[\U_+,\U_-]+\Q_{-m^{++}_1+m^{++}_2}[\U_-,\U_-]\right] ds,\\
 \U_-(t)=\U_-(0)+\frac{1}{4}\int_0^t  \left[\Q_{m^{++}_1-m^{++}_2}[\U_+,\U_+]+\Q_{m^{+-}_1-m^{+-}_2}[\U_+,\U_-]+\Q_{-m^{++}_1-m^{++}_2}[\U_-,\U_-]\right] ds.
\end{aligned} 
\end{equation}
We will use this expression as the basis for our bootstrap arguments.

\begin{remark}[More on the role of axisymmetry]\label{rem:role_axisym}
 Another important consequence of the axisymmetry assumption is that if $f$ is axisymmetric, then automatically 
 \begin{equation}
  \Omega f=0.
 \end{equation}
 This simplifies some more terms (see e.g.\ Lemma \ref{lem:VFcross}). However, the vector field $\Omega$ still plays an important role, since it does not vanish on expressions of several arguments, such as the phase functions $\Phi$ (see e.g.\ Lemma \ref{lem:vfsizes-mini}).
\end{remark}

\paragraph{Localizations}
To quantify the nonlinear interactions we will often localize both input and output frequencies in \eqref{eq:def_Qm} in both horizontal and vertical directions, as in \eqref{eq:loc_def}. This will be done frequently and not always explicitly, so we fix here the notational convention that for expressions involving frequencies $\xi$, $\xi-\eta$ and $\eta$ we write their localization parameters when relevant as 
\begin{equation}
 \begin{alignedat}{3}
  &\abs{\xi}\sim 2^k,\quad &&\sqrt{1-\Lambda^2(\xi)}\sim 2^{p},\quad &&\abs{\Lambda(\xi)}\sim 2^{q},\\
  &\abs{\xi-\eta}\sim 2^{k_1},\quad &&\sqrt{1-\Lambda^2(\xi-\eta)}\sim 2^{p_1},\quad &&\abs{\Lambda(\xi-\eta)}\sim 2^{q_1},\\
  &\abs{\eta}\sim 2^{k_2},\quad &&\sqrt{1-\Lambda^2(\eta)}\sim 2^{p_2},\quad &&\abs{\Lambda(\eta)}\sim 2^{q_2}.
 \end{alignedat}
\end{equation}
For $w\in\{k,p,q\}$ we will then write
\begin{equation}
 w_{\max}:=\max\{w,w_1,w_2\},\qquad w_{\min}:=\min\{w,w_1,w_2\}.
\end{equation}

\section{Norms and Overview of Proof}\label{sec:overview}
We introduce the norms
\begin{equation}\label{eq:defnorms}
\begin{aligned}
 \norm{f}_B&:=\sup_{k,p,q} 2^{-p-\frac{q}{2}}\norm{P_{k,p,q}f}_{L^2},\\
 \norm{f}_D&:=\sup_{0\le \vert a\vert\le 3}\left\{\norm{S^af}_B+\norm{\Ups S^af}_{L^2}\right\}.
\end{aligned}
\end{equation}

Our Theorem \ref{MainThm} then follows from the following bootstrap:
\begin{theorem}\label{thm:btstrap}
 Let $\mathcal{M}>0$ be given, and define $N_0:=25\mathcal{M}$. Then there exists $\eps_0^\ast \in(0,\frac{1}{8})$ such that the following holds: For $\eps_0\in(0,\eps_0^\ast)$ and $\eps_1:=2\eps_0$, assume that $U_\pm$ are solutions to \eqref{eq:IER_disp} 
 with profiles $\U_{\pm}:=e^{\mp it\Lambda}U_{\pm}$ and satisfy for some $N\geq 5$ that
\begin{equation}
\begin{aligned}\label{eq:assump_id}
 \norm{U_\pm(t=0)}_{H^{2N_0}\cap H^{-1}}+\norm{S^aU_\pm(t=0)}_{L^2\cap H^{-1}}&\le\varepsilon_0,\qquad 0\le a\le 4N,\\
 \norm{S^a\U_\pm(t=0)}_{B}&\le\varepsilon_0,\qquad 0\le a\le 2N,\\
 \norm{\Ups S^a\U_\pm(t=0)}_{L^2}&\le\varepsilon_0,\qquad 0\le a\le N+3.  
\end{aligned}
\end{equation}
Then if for $0\leq t\lesssim \varepsilon_0^{-\mathcal{M}}$ there holds that
\begin{equation}\label{eq:assump_btstrap}
\begin{aligned}
 \norm{U_\pm(t)}_{H^{2N_0}\cap H^{-1}}+\norm{S^aU_\pm(t)}_{L^2\cap H^{-1}}&\le 2\varepsilon_1,\qquad 0\le a\le 4N,\\
 \norm{S^a\U_\pm(t)}_{B}&\le 2\varepsilon_1,\qquad 0\le a\le 2N,\\
 \norm{\Ups S^a\U_\pm(t)}_{L^2}&\le  2\varepsilon_1,\qquad 0\le a\le N+3,
\end{aligned}
\end{equation}
it follows that in fact there hold the improved estimates
\begin{align}
 \norm{U_\pm(t)}_{H^{2N_0}\cap H^{-1}}+\norm{S^aU_\pm(t)}_{L^2\cap H^{-1}}&\le\varepsilon_1,\qquad 0\le a\le 4N,\\
 \norm{S^a\U_\pm(t)}_{B}&\le\varepsilon_1,\qquad 0\le a\le 2N,\\
 \norm{\Ups S^a\U_\pm(t)}_{L^2}&\le  \varepsilon_1,\qquad 0\le a\le N+3.
\end{align}

\end{theorem}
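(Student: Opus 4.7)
My plan is to run a bootstrap improving each of the three estimates in \eqref{eq:assump_btstrap} in the order listed, matching the hierarchy of Section \ref{ssec:method}. The engine driving all three tiers is the linear dispersive bound from Section \ref{sec:lin_decay}: the assumed control on $\|\U_\pm\|_D$ yields pointwise decay $\|S^aU_\pm(t)\|_{L^\infty}\lesssim\eps_1(1+t)^{-1}$ for $a$ small enough, which feeds into every bilinear estimate.

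\emph{Energy tier.} Since $S$ commutes with \eqref{eq:IER} (see \eqref{eq:IER-VF}) and axisymmetry kills $\Omega$, $S^a U_\pm$ with $a\leq 4N$ satisfies an equation of the same shape as \eqref{eq:IER_disp}, with the low-frequency non-singular multipliers of Lemma \ref{lem:IERmult}. Pairing in $L^2$ and using the Leibniz rule produces bilinear terms $Q_\m[S^{a_1}U_\mu,S^{a_2}U_\nu]$ with $a_1+a_2\leq 4N$. For $N$ large one can always arrange that the factor carrying the smaller number of $S$ copies has at most $N$ of them, hence decays in $L^\infty$ at rate $(1+t)^{-1}$; the other factor is placed in $L^2$. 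Gr\"onwall gives growth at most $(1+t)^{C\eps_1}$, which stays below the bootstrap factor of $2$ in the range $t\lesssim \eps_0^{-\mathcal{M}}$ thanks to the choice $N_0=25\mathcal{M}$.

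\emph{$B$-norm tier.} Apply $S^a$ with $a\leq 2N$ to \eqref{eq:IER_disp_Duham} and localize via $P_{k,p,q}$. The $B$-weight $2^{-p-q/2}$ is exactly tuned to the factors $\sqrt{1-\Lambda^2(\zeta_2)}$ and $\Lambda(\zeta_1)$ that Lemma \ref{lem:IERmult} places in every multiplier, making the bilinear estimates of Section \ref{sec:bilin} applicable. To control the time integral, slice $[0,t]$ by the dyadic partition $\tau_m$; on each piece $s\sim 2^m$, integrate by parts in $s$ using $[S,\partial_s\mp\Lam]=0$, so that one factor of $2^m$ is paid for by a single additional $S$ on a profile. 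This loss is absorbed by the energy budget ($4N\gg 2N$), and the sum over $m$ is geometric. No normal form is needed at this stage.

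\emph{$\Ups$-tier --- the main obstacle.} Apply $\Ups S^a$ with $a\leq N+3$ to \eqref{eq:IER_disp_Duham}. Unlike $S$, the vector field $\Ups$ does not commute with $\Lam$, and it also acts nontrivially on $\m$: these commutator contributions are nevertheless of the same bilinear type as the leading one, thanks to the angular structure of $\m$ in Lemma \ref{lem:IERmult}, and do not introduce new difficulties. The main bilinear term is genuinely harder: a naive $L^\infty\cdot L^2$ energy bound is not sharp because there are fewer $\Ups$-copies available than $S$-copies. I would therefore perform a normal form in $s$ via $e^{\pm is\Phi_{\mu\nu}}=(\pm i\Phi_{\mu\nu})^{-1}\partial_s e^{\pm is\Phi_{\mu\nu}}$; integration by parts in $s$ yields a boundary term and a cubic obtained by substituting \eqref{eq:IER_disp} into $\partial_s\U_\pm$, both handled by the refined bilinear tools of Section \ref{sec:more_bilin}. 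The real difficulty is the time-resonant region $\{|\Phi_{\mu\nu}|\ll 1\}$, where dividing by $\Phi_{\mu\nu}$ loses control; there one reverts to spatial integrations by parts along $S$ and $\Ups$, exploiting that these vector fields applied to $\Phi_{\mu\nu}$ are of definite size away from the $(k,p,q)$-degeneracies, and using the favorable geometry that $\Ups$ has with respect to $\Phi_{\mu\nu}$ noted after \eqref{DefUps}. The slack $2N-(N+3)$ in vector-field budget pays the losses incurred in these maneuvers and closes the bootstrap.
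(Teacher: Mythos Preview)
Your three-tier structure and the energy tier are essentially the paper's argument. The main gap is in the $B$-norm tier, and there is a secondary issue in the $\Ups$-tier.

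\textbf{$B$-norm tier.} You write that on each dyadic time slab $s\sim 2^m$ you ``integrate by parts in $s$ using $[S,\partial_s\mp\Lam]=0$, so that one factor of $2^m$ is paid for by a single additional $S$ on a profile.'' There is no such mechanism: $S=x\cdot\nabla_x$ is purely spatial (see \eqref{ScalingVF}), and integration by parts in $s$ produces $\Phi^{-1}$ and a $\partial_s$ on a profile---a normal form, which you correctly say is \emph{not} used here. The paper's actual device is integration by parts in the \emph{frequency} variable $\eta$ along $V_\eta\in\{S_\eta,\Omega_\eta\}$ inside the bilinear integral \eqref{eq:def_Qm}: since $V_\eta e^{is\Phi}=is\,V_\eta\Phi\,e^{is\Phi}$, dividing by $isV_\eta\Phi$ gains a factor $s^{-1}$. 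The point is that $|V_\eta\Phi|$ has the concrete lower bound \eqref{eq:vflobound_0} in terms of $\bar\sigma$, and the multiplier structure of Lemma \ref{lem:IERmult} (the factor $\Lambda(\zeta_1)\sqrt{1-\Lambda^2(\zeta_2)}$ you noted) exactly compensates the losses from $|V_\eta\Phi|^{-1}$. This is Proposition \ref{prop:bilin}; the extra $S$ (and one $\Ups$) on an input come from the cross terms of Lemma \ref{lem:VFcross}, not from trading time for $S$.

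\textbf{$\Ups$-tier.} Your dichotomy (normal form where $|\Phi|$ is large, vector-field integration by parts where it is small, via Proposition \ref{prop:phasevssigma}) matches the paper. But you miss the genuinely hard term: when $\Ups_\xi$ hits the phase it produces an \emph{extra} factor of $s$ (see \eqref{eq:Upsterms_k}), so one must recover $s^{-2}$. The paper does this with \emph{two} successive integrations by parts along $V_\eta$ and then $V'_{\xi-\eta}$ (Lemma \ref{lem:2ibp}), with a separate treatment (Lemma \ref{lem:bdry}) when one input carries too many $S$'s to afford this. Also, one does \emph{not} integrate by parts along $\Ups$ in the nonlinear analysis---$\Ups$ does not commute with the equation and appears only through the cross-term formula \eqref{eq:Ups_cross}; all integrations by parts are along $S$ and $\Omega$ in either the $\eta$ or $\xi-\eta$ variable.
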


\subsection{Proof of Theorem \ref{thm:btstrap}} 
Note that under the above assumptions \eqref{eq:assump_btstrap}, by definition of the norm $D$ in \eqref{eq:defnorms} there also holds that
 \begin{equation}\label{eq:btstrap_D}
  \norm{S^a\U_\pm(t)}_{D}\lesssim \norm{(1,S^3)S^a \U_\pm(t)}_B+\norm{\Ups (1,S^3)S^a\U_\pm(t)}_{L^2}\lesssim \varepsilon_1,\qquad 0\leq a\leq N.
 \end{equation}
Moreover, we can interpolate to obtain that
\begin{equation}\label{eq:vfHNinterpol}
 \norm{S^a\U_\pm(t)}_{H^{N_0}}\le\varepsilon_1,\qquad 0\le a\le 2N.
\end{equation}

We then show that:
\begin{enumerate}
 \item\label{it:disp_decay} (Dispersive Decay) We have pointwise decay for $S^a\U_\pm(t)$, $0\leq a\leq N$, since in Section \ref{sec:lin_decay}, Proposition \ref{DecayProp}, we show that
 \begin{equation}
  \norm{e^{\pm it\Lambda}P_{k,p,q}f}_{L^\infty}\lesssim \ip{t}^{-1}2^{\frac{3}{2}k}\norm{f}_{D}.
 \end{equation}
 \item\label{it:en_est} (Energy estimates) We have the energy bounds
 \begin{equation}
  \norm{U_\pm(t)}_{H^{2N_0}\cap H^{-1}}+\norm{S^aU_\pm(t)}_{L^2\cap H^{-1}}\le \varepsilon_1,\qquad 0\le a\le 4N,
 \end{equation}
 as we show in Section \ref{sec:energy}, Corollary \ref{cor:EE}.
 
 \item\label{it:BVF_prop} (Upgrade of $L^2$ to $B$ bounds on the vector fields $S$)
 Then there holds that
 \begin{equation}
  \norm{S^a\U_\pm(t)}_{B}\leq \varepsilon_1,\qquad 0\leq a\leq 2N,
 \end{equation}
 as we show in Section \ref{sec:B_VFprop}, Proposition \ref{prop:B_VFprop}.
 
 \item\label{it:Ups_prop} (Propagation of the $L^2$ norm of $\Ups S^a \U_\pm$) We have that
 \begin{equation}
  \norm{\Ups S^a\U_\pm(t)}_{L^2}\leq \varepsilon_1,\qquad 0\leq a\leq N+3,
 \end{equation}
 as is shown in Section \ref{sec:Ups_propagation}, Proposition \ref{prop:Ups_prop}.
\end{enumerate}

The other sections not mentioned above provide the foundation for these arguments: In Section \ref{sec:geom} we discuss how the vector fields $V$ and $\Ups$ interact with the nonlinear expressions we encounter. Section \ref{sec:bilin} gives a bilinear estimate that enables the propagation of vector fields $V\in\{S,\Omega\}$ in the $B$ norm as in Step \eqref{it:BVF_prop}. Next, in Section \ref{sec:more_bilin} we give more bilinear estimates as they are relevant for repeated integration by parts needed for the propagation of $\Ups$ in $L^2$, Step \eqref{it:Ups_prop}. Finally, in the appendix we collect some basic, useful computations that are used at various points of the article, but pose no conceptual challenges.

\section{Linear Decay}\label{sec:lin_decay}

In this section we demonstrate the following dispersive decay estimate for the semigroup $e^{it\Lambda}$ generated by the linear flow of our equations:
\begin{proposition}\label{DecayProp}
Let $f$ be axisymmetric. Then there holds that
\begin{equation}\label{eq:Linfdecay}
 \norm{P_{k,p,q}e^{it\Lambda}f}_{L^\infty}\lesssim t^{-1}2^{\frac{3k}{2}}\norm{f}_D.
\end{equation}
\end{proposition}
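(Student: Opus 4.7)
The plan is to reduce the problem to a two-dimensional oscillatory integral in spherical coordinates, exploiting two structural features: the axisymmetry of $\hat f$ lets us integrate out the azimuthal angle $\theta$, and the dispersion relation $\Lambda(\xi)=\xi_3/\abs{\xi}=\cos\phi$ depends only on the polar angle $\phi$, so that $\Ups=\partial_\phi$ is the natural ``dispersive'' vector field. The crucial observation is that after the substitution $\zeta=\cos\phi$, the dispersive phase becomes \emph{linear} in $\zeta$, so a single integration by parts in $\zeta$ yields the full $t^{-1}$ decay while using only one copy of $\Ups$ on the data.

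First I would pass to spherical coordinates $\xi=\rho(\sin\phi\cos\theta,\sin\phi\sin\theta,\cos\phi)$, use the axisymmetry of $\hat f$ together with the Bessel identity $\int_0^{2\pi}e^{i\rho r\sin\phi\cos\theta}d\theta=2\pi J_0(\rho r\sin\phi)$, and substitute $\zeta=\cos\phi$ (WLOG $x_\h=(\abs{x_\h},0)$) to arrive at
\begin{equation*}
 P_{k,p,q}e^{it\Lambda}f(x) = c\int_0^\infty\int_{-1}^{1}\widetilde\varphi_{k,p,q}(\rho,\zeta)\,g(\rho,\zeta)\,J_0\!\left(\rho\abs{x_\h}\sqrt{1-\zeta^2}\right)\,e^{iA\zeta}\,\rho^2\,d\zeta\,d\rho,
\end{equation*}
where $A:=\rho x_3+t$ and $g(\rho,\zeta):=\hat f(\rho,\arccos\zeta)$. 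For $t\leq 1$, Cauchy--Schwarz using the support volume $\abs{\mathrm{supp}\,\varphi_{k,p,q}}\lesssim 2^{3k+2p+q}$ and $\norm{P_{k,p,q}f}_{L^2}\leq 2^{p+q/2}\norm{f}_B$ gives the trivial bound $\norm{P_{k,p,q}e^{it\Lambda}f}_{L^\infty}\lesssim 2^{3k/2}\norm{f}_B$, which is $\leq t^{-1}2^{3k/2}\norm{f}_D$ in this range.

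For $t\geq 1$ in the main regime $\abs{A}\gtrsim t$, I would integrate by parts once in $\zeta$, producing the factor $\abs{A}^{-1}\lesssim t^{-1}$ (with no boundary contributions thanks to the cutoff). The derivative $\partial_\zeta$ distributes over three factors: (i) the cutoff, contributing $\abs{\partial_\zeta\widetilde\varphi_{k,p,q}}\lesssim 2^{-(2p+q)}$ (the reciprocal of the $\zeta$-width of the support); (ii) the profile $g$, where the identity $\partial_\zeta=-(\sin\phi)^{-1}\partial_\phi$ produces $\sim 2^{-p}\widehat{\Ups f}$ -- this is where the single copy of $\Ups$ enters; (iii) the Bessel factor, which is estimated using $\abs{J_0(s)}+\abs{J_0'(s)}\lesssim (1+s)^{-1/2}$ together with case-splitting in the size of $s=\rho\abs{x_\h}\sqrt{1-\zeta^2}$. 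Each contribution is then bounded via Cauchy--Schwarz in $L^2(\rho^2\,d\rho\,d\zeta)$, using $\norm{P_{k,p,q}f}_{L^2}\leq 2^{p+q/2}\norm{f}_B$ for (i) and (iii) and $\norm{P_{k,p,q}\Ups f}_{L^2}\leq \norm{f}_D$ for (ii). In each case the final bound comes out as $t^{-1}2^{3k/2}\norm{f}_D$.

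In the residual regime $\abs{A}\ll t$, which pins down $\abs{x_3}\sim t\cdot 2^{-k}$ together with a narrow range of $\rho$, a single $\zeta$-IBP no longer suffices. I would handle this either (a) by exploiting the oscillation of $J_0$ at large argument via the asymptotic $J_0(s)\sim\sqrt{2/\pi s}\cos(s-\pi/4)$, combining it with the near-stationary phase $A\zeta$ into a genuine 2D stationary phase with non-degenerate Hessian of determinant $\sim \rho\abs{x_\h}(1-\zeta^2)^{-3/2}$, or (b) by integrating by parts using the scaling vector field $S=\rho\partial_\rho$, drawing on the control of $\norm{S^a f}_B$ for $0\leq a\leq 3$ available from $\norm{f}_D$.

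The main obstacle is making the Bessel case-analysis in step (iii) tight enough so that no losses appear in either sub-regime $s\lesssim 1$ or $s\gtrsim 1$, and closely related, handling the residual regime efficiently: in the absence of a single clean IBP direction there one must either invoke a more delicate stationary-phase computation with $J_0$'s oscillation, or pay with several copies of $S$. The conceptual enabler throughout is the reduction to a two-dimensional integral with a $\zeta$-linear phase, which is only possible because of the axisymmetry of $\hat f$ and the special form $\Lambda=\cos\phi$.
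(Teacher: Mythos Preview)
Your setup coincides with the paper's: spherical coordinates, the Bessel reduction, and the observation that the dispersive part of the phase is linear in $\zeta=\Lambda$. The gap is in step (iii), where you treat $J_0(s)$, $s=\rho\abs{x_\h}\sqrt{1-\zeta^2}$, as a non-oscillatory symbol via $\abs{J_0(s)}+\abs{J_0'(s)}\lesssim(1+s)^{-1/2}$. But $J_0$ oscillates with unit speed in $s$, and this is not seen by the size of $J_0'$. Your case split only tracks the partial phase $A\zeta$, so the following scenario lies in your ``main regime'' yet breaks the argument: take $k=0$, $p\sim q\sim 0$, $x_3=0$, and $\abs{x_\h}=R$ with $t\le R\le t^2$. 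Then $A=t$, but when $\partial_\zeta$ falls on $J_0(s)$ you collect $J_0'(s)\,\partial_\zeta s$ with $\abs{\partial_\zeta s}=\rho R\abs{\zeta}/\sqrt{1-\zeta^2}\sim R$ and only $\abs{J_0'(s)}\lesssim R^{-1/2}$, netting a factor $R^{1/2}$. After dividing by $\abs{A}=t$ and Cauchy--Schwarz, contribution (iii) is of size $t^{-1}R^{1/2}\ge t^{-1/2}$, not $t^{-1}$. The crude bound (no IBP) using $\abs{J_0}\lesssim R^{-1/2}$ also gives only $R^{-1/2}\ge t^{-1}$ in this range. Your fallback (b) --- IBP via $S=\rho\partial_\rho$ --- does not help either: with $x_3=0$ your phase $A\zeta=t\zeta$ is $\rho$-independent, so the only $\rho$-oscillation sits inside $J_0$ and is invisible to this scheme.

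The paper resolves exactly this by first expanding $J_0(s)=e^{is}H_+(s)+e^{-is}H_-(s)$ with genuinely symbolic amplitudes $\abs{(d/ds)^aH_\pm(s)}\lesssim\langle s\rangle^{-1/2-a}$, thereby moving the Bessel oscillation into the full phase $\Psi=t\Lambda+\rho\bigl[\Lambda x_3\pm\sqrt{1-\Lambda^2}\,\abs{x_\h}\bigr]$. The case analysis is then organized by $\nabla_{(\rho,\Lambda)}\Psi$ rather than by $A$ alone. In the scenario above (the paper's Case~2) one has $\abs{\partial_\rho\Psi}=\abs{\Lambda x_3\pm\sqrt{1-\Lambda^2}\,\abs{x_\h}}\gtrsim t\,2^{2p+q}$, and a single IBP in $\rho$ (now legitimate because $H_\pm$ behave as symbols) already yields the full $t^{-1}$. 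You effectively recognize this mechanism in your residual-regime option (a), but it is needed across your main regime as well, whenever $s\gtrsim 1$; once you do this, the natural case split becomes the paper's trichotomy on $(\abs{x_\h},\abs{x_3})$ relative to $t$.
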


\begin{remark}\label{rem:sharpdecay}
Proposition \ref{DecayProp} implies a translation invariant estimate by writing $e^{it\Lambda}P_{k,p,q}f=\left(e^{it\Lambda}P_{k,p,q}\delta\right)\ast f$ so that we get
\begin{equation*}
\begin{split}
\Vert e^{it\Lambda}P_{k,p,q}f\Vert_{L^\infty}&\lesssim \Vert e^{it\Lambda}P^\prime_{k,p,q}\delta\Vert_{L^\infty}\Vert P_{k,p,q}f\Vert_{L^1}\lesssim 2^{3k}\langle t\rangle^{-1}\Vert P_{k,p,q}f\Vert_{L^1}.
\end{split}
\end{equation*}
It can be seen that this decay rate is sharp: If $f\in L^2\cap C^0(\R^3)$ is radial, then one computes that $(e^{it\Lambda}f)(0)=\frac{\sin(t)}{t}f(0)$.
\end{remark}

The proof of Proposition \ref{DecayProp} establishes that the oscillatory integral
\begin{equation}\label{Int}
 I_{k,p,q}(f)({\bf x},t):=\int_{\mathbb{R}^3}e^{i\left[t\Lambda(\xi)+\langle x,\xi\rangle\right]}\widehat{P_{k,p,q}f}(\xi_1,\xi_2,\xi_3)d\xi=P_{k,p,q}e^{it\Lambda}f({\bf x})
\end{equation}
is bounded as
\begin{equation}
 \abs{I_{k,p,q}(f)({\bf x},t)}\lesssim \ip{t}^{-1}2^\frac{3k}{2}\sum_{n=0}^3\left\{\norm{\Upsilon S^nf}_{L^2}+\norm{S^nf}_B\right\}.
\end{equation}

\begin{proof}[Proof of Proposition \ref{DecayProp}]
 Recall that
\begin{equation*}
\Upsilon=\partial_\phi=-\sqrt{1-\Lambda^2}\partial_\Lambda,
\end{equation*}
which follows from the change of variables
\begin{equation*}
\begin{split}
 {\bf x}&=(\rho\cos\theta\sin\varphi,\rho\sin\theta\sin\varphi,\rho\cos\varphi)=\rho(\sqrt{1-\Lambda^2}\cos\theta,\sqrt{1-\Lambda^2}\sin\theta,\Lambda),\qquad\Lambda=\cos\varphi,
\end{split}
\end{equation*}
from which we deduce moreover that
\begin{equation*}
\begin{split}
 d{\bf x}=\rho^2\sin\varphi d\theta d\varphi d\rho=\rho^2d\theta d\Lambda d\rho.
\end{split}
\end{equation*}

By scaling and rotation symmetry, we may assume that $k=0$ and ${\bf x}=(x,0,z)$ for some $x\ge0$. If $t2^{2p+q}\le C$, we simply use a crude integration to get
\begin{equation*}
\begin{split}
\vert I_{0,p,q}\vert\lesssim 2^{2p+q}\Vert \widehat{f}\Vert_{L^\infty}
\end{split}
\end{equation*}
and we can use Lemma \ref{lem:ControlLinfty}. If $t2^{2p+q}\ge1$, we switch to spherical coordinates $\xi\mapsto(\rho,\Lambda,\theta)$ and, after integrating in $\theta$, we need to consider the integral
\begin{equation*}
\begin{split}
I(x,z,t)&:=\int_{\mathbb{R}_+\times[-1,1]}e^{i\left[t\Lambda+\rho\Lambda z\right]}\varphi(2^{-p}\rho\sqrt{1-\Lambda^2})\varphi(2^{-q}\rho\Lambda)\cdot J_0(\rho\sqrt{1-\Lambda^2}x)\cdot \widehat{f}\rho^2\varphi(\rho)d\rho d\Lambda.
\end{split}
\end{equation*}
By standard results on Bessel functions, this reduces to studying
\begin{equation*}
\begin{split}
I^\pm(x,z,t)&:=\int_{\mathbb{R}_+\times[-1,1]}e^{i\Psi}\varphi(2^{-p}\rho\sqrt{1-\Lambda^2})\varphi(2^{-q}\rho\Lambda)\cdot H_\pm(\rho\sqrt{1-\Lambda^2}x)\cdot \widehat{f}\rho^2\varphi(\rho)d\rho d\Lambda,\\
\Psi&:=t\Lambda+\rho\left[\Lambda z\pm\sqrt{1-\Lambda^2}x\right],
\end{split}
\end{equation*}
where
\begin{equation*}
\left\vert\left(\frac{d}{dx}\right)^aH_\pm(x)\right\vert\lesssim \langle x\rangle^{-\frac{1}{2}-a}.
\end{equation*}
We focus on the case with sign $+$; the other estimate is similar. We can compute the gradient
\begin{equation}\label{SPgradients}
\begin{split}
\partial_\Lambda\Psi&=t+\rho\left[z-\frac{\Lambda}{\sqrt{1-\Lambda^2}} x\right],\qquad\partial_\rho\Psi=\Lambda z+\sqrt{1-\Lambda^2}x,\\
\partial_\Lambda^2\Psi&=-\frac{\rho x}{\left[1-\Lambda^2\right]^\frac{3}{2}},\qquad\partial_\Lambda\partial_\rho\Psi=z-\frac{\Lambda}{\sqrt{1-\Lambda^2}} x,\qquad\partial_\rho^2\Psi=0.
\end{split}
\end{equation}

\medskip

{\bf Case 1}:
\begin{equation*}
\begin{split}
0\le x\le C^{-1}t2^{p+q},\qquad\hbox{ and }\qquad \vert z\vert\le C^{-1} t2^{2p}
\end{split}
\end{equation*}
In these conditions, there holds that
\begin{equation*}
\begin{split}
\vert\partial_\Lambda\Psi\vert\ge t/2
\end{split}
\end{equation*}
and we can integrate by parts to get
\begin{equation*}
\begin{split}
I^+(x,z,t)&=i(I_1+I_2),\\
I_1&:=\int_{\mathbb{R}_+\times[-1,1]}e^{i\Psi}\frac{1}{\partial_\Lambda\Psi}\varphi(2^{-p}\rho\sqrt{1-\Lambda^2})\varphi(2^{-q}\rho\Lambda)\cdot H_\pm(\rho\sqrt{1-\Lambda^2}x)\cdot \partial_\Lambda\widehat{f}\rho^2\varphi(\rho)d\rho d\Lambda\\
I_2&:=\int_{\mathbb{R}_+\times[-1,1]}e^{i\Psi}\partial_\Lambda\left\{\frac{1}{\partial_\Lambda\Psi}\varphi(2^{-p}\rho\sqrt{1-\Lambda^2})\varphi(2^{-q}\rho\Lambda)\cdot H_\pm(\rho\sqrt{1-\Lambda^2}x)\right\}\cdot \widehat{f}\rho^2\varphi(\rho)d\rho d\Lambda
\end{split}
\end{equation*}
and using also \eqref{SPgradients}, we see that
\begin{equation*}
\begin{split}
\left\vert\frac{\partial^2_\Lambda\Psi}{(\partial_\Lambda\Psi)^2}\right\vert+\left\vert\partial_\Lambda (\varphi(2^{-q}\Lambda))\right\vert\lesssim t^{-1}2^{-2p-q}
\end{split}
\end{equation*}
and therefore
\begin{equation*}
\begin{split}
\vert I_1\vert&\lesssim 2^{p+\frac{q}{2}}\Vert \partial_\Lambda \widehat{f}\Vert_{L^2}\lesssim 2^\frac{q}{2}\Vert \Upsilon f\Vert_{L^2},\\
\vert I_2\vert&\lesssim t^{-1}2^{-2p}\cdot 2^{2p+q}\cdot\Vert\widehat{f}\Vert_{L^\infty}
\end{split}
\end{equation*}
and we can use Lemma \ref{lem:ControlLinfty} again.

\medskip

{\bf Case 2}:
\begin{equation*}
\begin{split}
x\ge C^{-1}t2^{p+q},\,\hbox{ and }\,\vert z\vert\le C^{-2}t2^{2p}\qquad\hbox{ or }\qquad  \vert x\vert\le C^{-2}t2^{p+q},\,\hbox{ and }\,\vert z\vert\ge C^{-1}t2^{2p}
\end{split}
\end{equation*}
in this case, we have that
\begin{equation*}
\vert\partial_\rho\Psi\vert\gtrsim t2^{2p+q}
\end{equation*}
 and we can integrate by parts with respect to $\rho$ to obtain
 \begin{equation*}
\begin{split}
I^+(x,z,t)&=i(I_1+I_2),\\
I_1&:=\int_{\mathbb{R}_+\times[-1,1]}e^{i\Psi}\frac{1}{\partial_\rho\Psi}\varphi(2^{-p}\rho\sqrt{1-\Lambda^2})\varphi(2^{-q}\rho\Lambda)\cdot H_\pm(\rho\sqrt{1-\Lambda^2}x)\cdot \partial_\rho\widehat{f}\rho^2\varphi(\rho)d\rho d\Lambda\\
I_2&:=\int_{\mathbb{R}_+\times[-1,1]}e^{i\Psi}\frac{1}{\partial_\rho\Psi}\partial_\rho\left\{\varphi(2^{-p}\rho\sqrt{1-\Lambda^2})\varphi(2^{-q}\rho\Lambda)\cdot H_\pm(\rho\sqrt{1-\Lambda^2}x)\right\}\cdot \widehat{f}\rho^2\varphi(\rho)d\rho d\Lambda
\end{split}
\end{equation*}
and a crude integration gives
\begin{equation*}
\begin{split}
\vert I_1\vert+\vert I_2\vert&\lesssim (t2^{2p+q})^{-1}\cdot 2^{2p+q}\cdot\left[\Vert \widehat{f}\Vert_{L^\infty}+\Vert \partial_\rho\widehat{f}\Vert_{L^\infty}\right]
\end{split}
\end{equation*}
and we can use Lemma \ref{lem:ControlLinfty} once more.

\medskip

{\bf Case 3}:
\begin{equation*}
\begin{split}
x\ge C^{-2}t2^{p+q},\,\hbox{ and }\,\vert z\vert\ge C^{-2}t2^{2p}
\end{split}
\end{equation*}
we observe that, in these conditions, there holds that
\begin{equation*}
\begin{split}
2^q\vert \partial_\rho\Psi\vert+\vert\partial_\rho\partial_\Lambda\Psi\vert\gtrsim t
\end{split}
\end{equation*}
which follows from
\begin{equation*}
\begin{split}
\Lambda\partial_\Lambda\partial_\rho\Psi-\partial_\rho\Psi&=-2\frac{\Lambda}{\sqrt{1-\Lambda^2}}x,\qquad\partial_\Lambda\partial_\rho\Psi+\frac{\Lambda}{1-\Lambda^2}\partial_\rho\Psi=-\frac{z}{1-\Lambda^2}
\end{split}
\end{equation*}
using the first estimate if $2^{-p}\ll 1$ and the second otherwise. We can now decompose
\begin{equation*}
\begin{split}
I=\sum_{\ell\ge0}I_\ell,\qquad
I_\ell(x,z,t)&:=\int_{\mathbb{R}_+\times[-1,1]}e^{i\Psi}\chi\cdot H_\pm(\rho\sqrt{1-\Lambda^2}\vert x\vert)\cdot\ \varphi(2^{-\ell}\partial_\rho\Psi)\widehat{f}\rho^2\varphi(\rho)d\rho d\Lambda.\\
\chi&:=\varphi(2^{-p}\rho\sqrt{1-\Lambda^2})\varphi(2^{-q}\rho\Lambda).
\end{split}
\end{equation*}
On the support of $I_0$, we have that $\vert\partial_\Lambda\partial_\rho\Psi\vert\gtrsim t$, and
\begin{equation*}
\begin{split}
\vert I_0\vert&\lesssim \Vert \widehat{f}\Vert_{L^\infty}\iint \vert H^+(\rho\sqrt{1-\Lambda^2}x)\vert\cdot \varphi(g)\varphi(t^{-1}\partial_\Lambda g)\cdot\rho^2\varphi(\rho)d\rho d\Lambda,\qquad g(\rho,\Lambda)=\partial_\rho\Psi\\
&\lesssim t^{-1}\cdot (2^{2p+q}t)^{-\frac{1}{2}}\Vert \widehat{f}\Vert_{L^\infty}.
\end{split}
\end{equation*}
If we integrate by parts twice in $\rho$, we find that
\begin{equation*}
\begin{split}
I_\ell(x,z,t)&:=\int_{\mathbb{R}_+\times[-1,1]}e^{i\Psi}\frac{1}{(\partial_\rho\Psi)^2}\varphi(2^{-\ell}\partial_\rho\Psi)\partial_\rho^2\left\{\chi\cdot H_\pm(\rho\sqrt{1-\Lambda^2}x)\cdot\widehat{f}\rho^2\varphi(\rho)\right\}d\rho d\Lambda
\end{split}
\end{equation*}
and we deduce that
\begin{equation*}
\begin{split}
\vert I_\ell\vert&\lesssim 2^{-2\ell}\int_{\mathbb{R}_+\times[-1,1]}\vert \widetilde{H}(\rho\sqrt{1-\Lambda^2} x)\vert \cdot\ \varphi(2^{-\ell}\partial_\rho\Psi)\overline{\chi}F\overline{\varphi}(\rho)d\rho d\Lambda,\\
\overline{\chi}&:=\overline{\varphi}(2^{-p}\rho\sqrt{1-\Lambda^2})\overline{\varphi}(2^{-q}\rho\Lambda),\qquad \widetilde{H}(x):=\vert H^+(x)\vert+\langle x\rangle\vert \frac{dH^+}{dx}(x)\vert+\langle x\rangle^2\vert\frac{d^2H^+}{dx^2}(x)\vert,\\
F&:=\vert \widehat{f}\vert+\vert \partial_\rho\widehat{f}\vert+\vert \partial_\rho^2\widehat{f}\vert,
\end{split}
\end{equation*}
so that
\begin{equation*}
\begin{split}
\vert I_\ell\vert&\lesssim (t2^{2p+q})^{-\frac{1}{2}}\sum_{1\le q+\ell\le \ln(t)}2^{-2\ell}\Vert F\Vert_{L^\infty}\int_{\mathbb{R}_+\times[-1,1]}  \varphi(2^{-\ell}g)\varphi(t^{-1}\partial_\Lambda g)\overline{\chi}\overline{\varphi}(\rho)d\rho d\Lambda\\
&\quad+(t2^{2p+q})^{-\frac{1}{2}}\sum_{q+\ell\ge \ln(t)}2^{-2\ell}\Vert F\Vert_{L^\infty}\int_{\mathbb{R}_+\times[-1,1]}  \overline{\chi}\overline{\varphi}(\rho)d\rho d\Lambda\\
&\lesssim (t2^{2p+q})^{-\frac{1}{2}}\Vert F\Vert_{L^\infty}\left\{\sum_{1\le q+\ell\le \ln(t)}2^{-\ell} t^{-1}+\sum_{q+\ell\ge \ln(t)}2^{-2\ell}2^{2p+q}\right\}
\end{split}
\end{equation*}
and summing and using Lemma \ref{lem:ControlLinfty} finishes the proof.

\end{proof}

\section{Energy Estimates}\label{sec:energy}
Here we establish energy estimates for \eqref{eq:IER}. We first show how increments of $\norm{u}_{H^m}$, $\norm{u}_{H^{-1}}$ and $\norm{S^a u}_{L^2}$ can be controlled by $u,\nabla u$ in $L^\infty$.
\begin{proposition}\label{prop:EnergyIncrement}
Assume that $u$ solves \eqref{eq:IER} on $0\le t\le T$, then there holds that
\begin{equation}\label{EnergyIncrementEq}
\begin{split}
 \Vert u(t)\Vert_{H^{m}}^2-\Vert u(0)\Vert_{H^{m}}^2&\lesssim \int_{s=0}^tA_1(s)\cdot \Vert u(s)\Vert_{H^m}^2\cdot \frac{ds}{1+s},\\
 \Vert S^nu(t)\Vert_{L^2}^2-\Vert S^nu(0)\Vert_{L^2}^2&\lesssim \int_{s=0}^t\left[A_0(s)+A_1(s)\right]\cdot \left\{\Vert u(s)\Vert_{H^n}^2+\sum_{b=0}^n\Vert S^bu(s)\Vert_{L^2}^2\right\}\cdot \frac{ds}{1+s},\\
 \Vert \vert\nabla\vert^{-1}S^nu(t)\Vert_{L^2}^2-\Vert \vert\nabla\vert^{-1}S^nu(0)\Vert_{L^2}^2&\lesssim \int_{s=0}^t A_{0}(s)\cdot \sum_{b=0}^n\Vert S^bu(s)\Vert_{L^2}^2\cdot \frac{ds}{1+s},\\
\end{split}
\end{equation}
where
\begin{equation}
\begin{split}
 A_{0}(s)&=(1+s)\Vert u(s)\Vert_{L^\infty},\qquad
 A_1(s)=(1+s)\Vert \nabla_x u(s)\Vert_{L^\infty}.
\end{split}
\end{equation}
\end{proposition}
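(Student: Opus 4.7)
The plan is to carry out three $L^2$-type energy estimates on \eqref{eq:IER}, exploiting three structural features of the equation that persist under differentiation: the Coriolis term $\vec{e}_3\times u$ is pointwise skew-symmetric and thus inert in $L^2$ inner products; the pressure $\nabla p$ is orthogonal in $L^2$ to any divergence-free field; and the transport term satisfies $\langle v, u\cdot\nabla v\rangle_{L^2}=0$ whenever $\div u=0$. The scaling vector field $S$ is fully compatible with these cancellations: it is divergence-preserving ($\div(Sv)=S(\div v)$) and, being homogeneous of degree zero, commutes with the Leray projection, so the linear part of \eqref{eq:IER} commutes with $S^n$ as in \eqref{eq:IER-VF}. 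The only commutator one encounters, $[S,\partial_j]=-\partial_j$, produces strictly lower-order terms of the same form and is harmless throughout.

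For the first estimate I would apply $|\nabla|^m$ to \eqref{eq:IER} and pair with $|\nabla|^m u$. The three linear terms vanish by the cancellations above, and so does the principal transport term $\langle |\nabla|^m u, u\cdot\nabla |\nabla|^m u\rangle$ by $\div u=0$. The remaining commutator $\langle|\nabla|^m u,\,[|\nabla|^m,u\cdot\nabla]u\rangle$ is controlled by a Kato--Ponce-type commutator estimate by $\|\nabla u\|_{L^\infty}\|u\|_{H^m}^2$; inserting the trivial factor $(1+s)/(1+s)$ yields the stated form with weight $A_1$. For the second estimate I would apply $S^n$ to the equation and use Leibniz to expand the nonlinearity as $\sum_{a+b=n}\binom{n}{a}\,S^a u\cdot\nabla S^b u$ (modulo harmless $[S,\nabla]$ commutators), then pair with $S^nu$. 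The linear terms and the diagonal $a=0$ contribution vanish exactly as before (using that $S^nu$ is still divergence-free). For the cross terms with $a\geq 1$, I would split according to which factor carries the fewer vector fields and place that factor in $L^\infty$ via Sobolev embedding, interpolated between the norms on the right-hand side: when the smaller factor is $S^au$ itself this extracts $\|u\|_{L^\infty}$, contributing an $A_0$ weight, and when it is $\nabla S^b u$ it extracts $\|\nabla u\|_{L^\infty}$, contributing an $A_1$ weight. Summing these gives the second line of \eqref{EnergyIncrementEq}.

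For the third estimate I would first rewrite the nonlinearity in divergence form, $u\cdot\nabla u=\div(u\otimes u)$, then apply $|\nabla|^{-1}S^n$ to \eqref{eq:IER} and pair with $|\nabla|^{-1}S^nu$. The Coriolis term still vanishes because $|\nabla|^{-1}$ commutes with $\vec{e}_3\times$, and the pressure term still vanishes because $|\nabla|^{-1}S^nu$ is divergence-free while $|\nabla|^{-1}\nabla S^n p=\nabla(|\nabla|^{-1}S^np)$ is a pure gradient. The operator $|\nabla|^{-1}\div$ is a vector of Riesz transforms and hence $L^2$-bounded, so the nonlinear contribution reduces to an $L^2$ estimate on $S^n(u\otimes u)$; distributing $S^n$ by Leibniz and placing the factor with the fewer vector fields in $L^\infty$ via Sobolev yields $\|u\|_{L^\infty}\sum_{b=0}^n\|S^bu\|_{L^2}$, with \emph{no factor of $\nabla u$ anywhere}. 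This last point is the main obstacle of the proposition: to obtain an $A_0$-only bound (as opposed to $A_0+A_1$) one must never lose a derivative onto either factor of the nonlinearity, and the combination of the divergence-form rewriting with the $L^2$-boundedness of $|\nabla|^{-1}\div$ is precisely what supplies the free derivative needed to make this possible.
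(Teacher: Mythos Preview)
Your overall strategy --- three separate $L^2$ estimates, exploiting the skew-symmetry of the Coriolis term, the orthogonality of the pressure, the transport cancellation for the top-order term, and (crucially for the $H^{-1}$ estimate) the divergence-form rewriting combined with the $L^2$-boundedness of $|\nabla|^{-1}\div$ --- is exactly what the paper does. In particular, your explanation of why the third estimate needs only an $A_0$ weight is on point and matches the paper.

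The one place where your proposal is too vague is the handling of the intermediate cross-terms $S^a u\cdot\nabla S^b u$ (or $S^a u\cdot S^b u$ in the third estimate) with $1\le a,b\le n-1$. You write that you would ``place the factor with fewer vector fields in $L^\infty$ via Sobolev embedding, interpolated between the norms on the right-hand side'', and that this ``extracts $\|u\|_{L^\infty}$'' or ``$\|\nabla u\|_{L^\infty}$''. But Sobolev embedding applied to $S^a u$ gives you control of $\|S^a u\|_{L^\infty}$ by $\|S^a u\|_{H^2}$, not by $\|u\|_{L^\infty}$, and neither of these mixed quantities is on the right-hand side. What is actually needed --- and what the paper isolates as a separate lemma --- is a Gagliardo--Nirenberg interpolation \emph{along the scaling vector field}:
\[
\|S^a f\|_{L^{2n/a}}\lesssim \|f\|_{L^\infty}^{1-a/n}\Big(\sum_{b=0}^{n}\|S^b f\|_{L^2}\Big)^{a/n},
\]
proved by integration by parts in $S$ and a discrete convexity argument. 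With this in hand, one places $S^{a}u$ in $L^{2n/a}$ and $\nabla S^{n-a}u$ in $L^{2n/(n-a)}$ (applying the lemma to $f=u$ and $f=\nabla u$ respectively), and the product of the two interpolation bounds yields exactly $\|u\|_{L^\infty}^{?}\|\nabla u\|_{L^\infty}^{?}$ times the $L^2$ sums, giving the $(A_0+A_1)$ weight. So the missing ingredient is not Sobolev embedding but this $S$-adapted interpolation; once you state and prove it, the rest of your outline goes through as written.
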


From this we can directly obtain the following bootstrap:
\begin{corollary}\label{cor:EE}
Assume that $U_\pm$ are solutions to \eqref{eq:IER_disp} with initial data satisfying \eqref{eq:assump_id}, with corresponding profiles $\U_\pm$, and satisfy for $0\le t\le T^{\ast}$
\begin{equation}\label{AssID_en}
\begin{aligned}
 \norm{U_\pm(t)}_{H^{2N_0}\cap H^{-1}}+\norm{S^aU_\pm(t)}_{L^2\cap H^{-1}}&\le 2\eps_1,\qquad 0\le a\le 4N,\\
 \norm{S^a\U_\pm(t)}_{B}&\le 2\eps_1,\qquad 0\le a\le 2N,\\
 \norm{S^a\U_\pm(t)}_{D}&\le 2\eps_1,\qquad 0\le a\le N.
\end{aligned}
\end{equation}
Then for $0\leq t\leq T^\ast$ there holds that
\begin{equation}\label{eq:EE_bd}
 \norm{U_\pm(t)}_{H^{2N_0}\cap H^{-1}}+\norm{S^aU_\pm(t)}_{L^2\cap H^{-1}}\le \eps_0+C\eps_1^2\ip{t}^{\frac{2}{N_0}},\qquad 0\le a\le 4N.
\end{equation}
In particular, for $T^\ast=\eps_1^{-\M}$ and $N_0=4\M$ we have if $\eps_1$ is sufficiently small that
\begin{equation}\label{eq:EE_btstrap}
 \norm{U_\pm(t)}_{H^{2N_0}\cap H^{-1}}+\norm{S^aU_\pm(t)}_{L^2\cap H^{-1}}\le \eps_1,\qquad 0\le a\le 4N.
\end{equation}
\end{corollary}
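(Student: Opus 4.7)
The plan is to combine the dispersive $L^\infty$-decay from Proposition \ref{DecayProp} with the energy increment formulas of Proposition \ref{prop:EnergyIncrement}, using a Littlewood--Paley interpolation between low-frequency decay and high-frequency Sobolev regularity to bound the quantities $A_0,A_1$ that drive those increments.

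First, I would translate the bootstrap hypotheses \eqref{AssID_en} into pointwise decay for $u$ and $\nabla u$. Since $u=U_a+U_c$ is reconstructed from $a=(U_++U_-)/2$ and $c=(U_+-U_-)/2$ via the Riesz-type multipliers in \eqref{Formu} (which are bounded per Littlewood--Paley piece on $L^\infty$) with $U_\pm=e^{\pm t\Lam}\U_\pm$, Proposition \ref{DecayProp} yields $\Vert P_k u(t)\Vert_{L^\infty}\lesssim \langle t\rangle^{-1}2^{3k/2}\eps_1$ after summation over the admissible $(p,q)$ parameters. To bound $\Vert\nabla u\Vert_{L^\infty}\le \sum_k 2^k\Vert P_k u\Vert_{L^\infty}$ I would split the sum at a dyadic threshold $2^{k_0}$: at low frequencies the dispersive bound gives $\lesssim \eps_1\langle t\rangle^{-1}2^{5k_0/2}$, while at high frequencies Bernstein combined with the bootstrap bound $\Vert u\Vert_{H^{2N_0}}\le 2\eps_1$ gives $\lesssim \eps_1\,2^{-k_0(2N_0-5/2)}$. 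The balanced choice $2^{k_0}\sim \langle t\rangle^{1/(2N_0)}$ then produces
\begin{equation*}
 \Vert u(s)\Vert_{L^\infty}+\Vert\nabla u(s)\Vert_{L^\infty}\lesssim \eps_1\langle s\rangle^{-1+O(1/N_0)},
\end{equation*}
whence $A_0(s)+A_1(s)\lesssim \eps_1\langle s\rangle^{O(1/N_0)}$.

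Substituting these bounds together with the bootstrap control $\Vert u\Vert_{H^{2N_0}}^2+\sum_b\Vert S^b u\Vert_{L^2}^2\lesssim \eps_1^2$ into each of the three inequalities of Proposition \ref{prop:EnergyIncrement}, and applying Gronwall to the first line (to preserve the smallness of the initial datum $\eps_0$), each squared energy is bounded by $\eps_0^2\cdot\exp(C\eps_1\langle t\rangle^{O(1/N_0)})$. Expanding the exponential for small argument gives $\Vert U_\pm(t)\Vert_{H^{2N_0}}\le \eps_0+C\eps_0\eps_1\langle t\rangle^{O(1/N_0)}\le \eps_0+C\eps_1^2\langle t\rangle^{2/N_0}$, with the exponent $2/N_0$ being a convenient overestimate of the actual $O(1/N_0)$ exponent coming from the interpolation; the analogous treatment of the other two increments yields \eqref{eq:EE_bd}. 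For the bootstrap closure with $T^*=\eps_1^{-\M}$ and $N_0=4\M$ one then checks $C\eps_1^2 (T^*)^{2/N_0}=C\eps_1^{3/2}\le \eps_0$ for $\eps_1$ sufficiently small, which gives \eqref{eq:EE_btstrap}.

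The principal technical point is the Littlewood--Paley interpolation in step two, where one must carefully balance the dispersive gain at low frequencies against the Sobolev loss at high frequencies and verify that the resulting growth exponent stays below what $N_0=4\M$ can absorb over the time interval $[0,\eps_1^{-\M}]$. Once this bookkeeping is in place, the remainder of the argument reduces to a mechanical application of Proposition \ref{prop:EnergyIncrement} and a standard Gronwall integration; in particular, the $H^{-1}$ and $\Vert S^a u\Vert_{L^2}$ bounds follow by the same integration against $A_0$ (and $A_1$) without any new ideas.
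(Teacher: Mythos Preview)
Your approach is essentially the same as the paper's: bound $A_0+A_1$ by splitting $\sum_k(1+2^k)\Vert P_kU_\pm\Vert_{L^\infty}$ into a low-frequency range (dispersive decay via Proposition~\ref{DecayProp} and the $D$-norm) and a high-frequency range (Bernstein plus the $H^{2N_0}$ bootstrap), then feed the resulting estimate into Proposition~\ref{prop:EnergyIncrement}. Two small remarks: first, the paper does not invoke Gr\"onwall but simply substitutes the bootstrap hypothesis $\Vert u(s)\Vert_{H^m}\le 2\eps_1$ on the right-hand side of the increment formulas, which is slightly more direct and avoids the exponential expansion; second, your statement ``Proposition~\ref{DecayProp} yields $\Vert P_k u(t)\Vert_{L^\infty}\lesssim \langle t\rangle^{-1}2^{3k/2}\eps_1$ after summation over $(p,q)$'' skips a step---the paper splits the $(p,q)$ sum into the region $2^{2p+q}t\le 1$ (where a crude volume bound and Lemma~\ref{lem:ControlLinfty} suffice) and the region $2^{2p+q}t\ge 1$ (which contains only $O(\log t)$ pairs since $2^{2p}+2^q\simeq 1$), picking up a harmless logarithmic loss that is absorbed into $\langle t\rangle^{2/N_0}$.
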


\begin{proof}[Proof of Corollary \ref{cor:EE}]
Note that since $S$ and derivatives commute with the Hodge decomposition, it suffices to control $u$ instead of $U_\pm$. Estimate \eqref{eq:EE_btstrap} follows directly from \eqref{eq:EE_bd} provided $\eps_1$ is sufficiently small.

The estimate \eqref{eq:EE_bd} follows from Proposition \ref{prop:EnergyIncrement} once we establish
\begin{equation}\label{EstimA01}
\begin{split}
 A_0(t)+A_1(t)&\lesssim  \eps_1\ip{t}^{\frac{2}{N_0}}.
\end{split}
\end{equation} 
Estimate \eqref{EstimA01} is a direct consequence of Sobolev estimates if $0\le t\le 1$. From now on, we assume $t\ge1$. We remark that
\begin{equation*}
\begin{split}
A_0+A_1\le\sum_k \sum_\pm (1+2^k)\Vert P_kU_\pm(t)\Vert_{L^\infty}.
\end{split}
\end{equation*}
Since $\norm{P_kU_\pm(t)}_{L^\infty}\lesssim 2^{-k(2N_0-2)}\norm{P_kU_\pm(t)}_{H^{N_0}}$ the bound \eqref{EstimA01} also holds if $2^k> t^{\frac{1}{N_0}}$. Now, for fixed $k$, we can estimate
\begin{equation*}
\begin{split}
\sum_{\substack{p,q,\\ 2^{q+2p}t\le 1}}\Vert P_{k,p,q}e^{it\Lambda}\mathcal{U}_\pm\Vert_{L^\infty}&\lesssim 2^{3k}\cdot \sum_{\substack{p,q,\\ 2^{q+2p}t\le 1}}2^{q+2p}\Vert \widehat{\mathcal{U}_\pm}\Vert_{L^\infty}\lesssim t^{-1} 2^{\frac{3k}{2}} \norm{\U_\pm}_{D}\lesssim \varepsilon_1 t^{-1+\frac{3}{2N_0}},
\end{split}
\end{equation*}
where we also used Lemma \ref{lem:ControlLinfty}. It remains to observe that there are $\langle\log(t)\rangle$ choices for $p,q$ such that $2^{q+2p}t\ge1$, therefore, it suffices to show that for fixed $p,q$ as above,
\begin{equation*}
\Vert P_{k,p,q}e^{it\Lambda}\mathcal{U}_\pm\Vert_{L^\infty}\lesssim \varepsilon_1 t^{-1}
\end{equation*}
which follows from Proposition \ref{DecayProp}.
\end{proof}

\begin{proof}[Proof of Proposition \ref{prop:EnergyIncrement}]
We can recast the equation as
\begin{equation*}
\begin{split}
\partial_tu^\alpha+(\vec{e}_3\times u)^\alpha+(\nabla p)^\alpha+\partial_\beta\left\{u^\alpha u^\beta\right\}=0
\end{split}
\end{equation*}
and commuting with the vector field $S$, we find that
\begin{equation*}
\begin{split}
\partial_t(Su)^\alpha+(\vec{e}_3\times Su)^\alpha+(\nabla Sp)^\alpha+\partial_\beta\left\{S\left\{u^\alpha u^\beta\right\}\right\}=0,
\end{split}
\end{equation*}
where we have extended $S$ on tensor products so that $S \left\{u^\alpha u^\beta\right\}=(Su)^\alpha u^\beta+u^\alpha (Su)^\beta$. Iterating, we obtain that
\begin{equation*}
\begin{split}
\partial_t(S^nu)^\alpha+(\vec{e}_3\times S^nu)^\alpha+(\nabla S^np)^\alpha+\partial_\beta\left\{S^n\left\{u^\alpha u^\beta\right\}\right\}=0
\end{split}
\end{equation*}
and we deduce that
\begin{equation*}
\begin{split}
\frac{d}{dt}\Vert \vert\nabla\vert^{-1}S^nu\Vert_{L^2}^2&=-\Vert \vert\nabla\vert^{-1}S^nu\Vert_{L^2}\Vert S^n\left\{u\cdot u\right\}\Vert_{L^2}
\end{split}
\end{equation*}
and using Leibniz rule and \eqref{Interpol1}, we see that
\begin{equation*}
\begin{split}
\Vert S^n\left\{ u\cdot u\right\}\Vert_{L^2}&\lesssim \Vert u\Vert_{L^\infty}\sum_{a=0}^n\Vert S^nu\Vert_{L^2}
\end{split}
\end{equation*}
and this gives the last equation in \eqref{EnergyIncrementEq}. We can also rewrite this as
\begin{equation*}
\begin{split}
0&=\partial_t (S^nu)^\alpha+(\vec{e}_3\times  S^nu)^\alpha+(\nabla S^np)^\alpha+u^\beta \partial_\beta\left\{ S^nu\right\}^\alpha
+(S^nu)^\beta\partial_\beta u^\alpha+\sum_{\substack{n_1+n_2\le n,\\
n_1,n_2\ge1}} c_{n_1,n_2}(S^{n_1}u)^\beta \cdot S^{n_2}\partial_\beta u^\alpha
\end{split}
\end{equation*}
which gives, after symmetrization
\begin{equation*}
\begin{split}
\frac{d}{dt}\Vert S^nu\Vert_{L^2}^2&\lesssim \Vert S^nu\Vert_{L^2}\Vert \nabla u\Vert_{L^\infty}\Vert S^nu\Vert_{L^2}+\Vert S^nu\Vert_{L^2}\sum_{1\le n_1\le n-1} \Vert S^{n_1}u\Vert_{L^\frac{2n}{n_1}}\Vert \nabla S^{n_1}u\Vert_{L^\frac{2n}{n-n_1}}
\end{split}
\end{equation*}
and we can use \eqref{Interpol1} again. Similarly, commuting with gradients,
\begin{equation*}
\begin{split}
0&=\partial_t(\partial_x^\mu u)^\alpha+(\vec{e}_3\times \partial_x^\mu u)^\alpha+(\nabla \partial_x^\mu p)^\alpha+u^\beta \partial_\beta\left\{\partial_x^\mu u^\alpha\right\}+(\partial_x^\mu u)^\beta\partial_{\beta}u^\alpha+\sum_{\substack{\mu_1+\mu_2=\mu,\\
\vert\mu\vert_1,\vert\mu\vert_2\ge1}}\partial_x^{\mu_1}u^\beta\cdot\partial_\beta\partial_x^{\mu_2}u^\alpha\\
\end{split}
\end{equation*}
and standard estimates give
\begin{equation*}
\begin{split}
\frac{d}{dt}\Vert \partial_x^\mu u\Vert_{L^2}^2&\le \Vert \nabla_xu\Vert_{L^\infty}\Vert u\Vert_{H^{\vert\mu\vert}}^2.
\end{split}
\end{equation*}

\end{proof}

\begin{lemma}

There holds that
\begin{equation}\label{Interpol1}
\begin{split}
\Vert S^af\Vert_{L^\frac{2n}{a}}&\lesssim \Vert f\Vert_{L^\infty}^{1-\frac{a}{n}}\left\{\sum_{a=0}^n\Vert S^af\Vert_{L^2}\right\}^\frac{a}{n}
\end{split}
\end{equation}

\end{lemma}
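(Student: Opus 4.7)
The plan is to establish this Gagliardo--Nirenberg-type interpolation inequality for the scaling vector field $S = x\cdot\nabla$ by reducing it to a weighted one-dimensional Gagliardo--Nirenberg inequality via spherical coordinates, in which $S$ acts as a simple derivative along one variable.

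First, I will introduce spherical coordinates $x = \rho\omega$ with $\omega\in\mathbb{S}^2$ and substitute $\rho = e^t$, defining $G(t,\omega) := f(e^t\omega)$. Then $(S^bf)(e^t\omega) = \partial_t^b G(t,\omega)$ for every $b\ge 0$, together with the identities
\begin{equation*}
\|f\|_{L^p(\mathbb{R}^3)}^p = \int_{\mathbb{S}^2}\int_\mathbb{R} |G(t,\omega)|^p\, e^{3t}\,dt\, d\omega, \qquad \|f\|_{L^\infty(\mathbb{R}^3)} = \|G\|_{L^\infty(\mathbb{R}\times\mathbb{S}^2)}.
\end{equation*}

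Second, for each fixed $\omega\in\mathbb{S}^2$, I will apply a weighted one-dimensional Gagliardo--Nirenberg inequality to $G(\cdot,\omega)$: for $p = 2n/a$,
\begin{equation*}
\int_\mathbb{R} e^{3t}|\partial_t^a G|^{p}\, dt \;\lesssim\; \|G\|_{L^\infty}^{p - 2}\sum_{b=0}^n\int_\mathbb{R} e^{3t}|\partial_t^b G|^2\, dt.
\end{equation*}
This follows from the classical scheme of iterated integration by parts: writing $|\partial_t^a G|^p = |\partial_t^a G|^{p-2}\,(\partial_t^a G)^2$ and integrating one copy of $\partial_t$ by parts against the weight $e^{3t}$, then applying H\"older with exponents $2n/(a\pm 1)$ to the resulting bilinear factors, yields the log-convexity estimate $A_a^2 \lesssim A_{a-1}A_{a+1}$ for $A_b := (\int e^{3t}|\partial_t^b G|^{2n/b}dt)^{b/(2n)}$. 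Iterating this between the endpoints $a=0$ (giving $A_0 = \|G\|_{L^\infty}$) and $a=n$ (giving $A_n$ equal to the weighted $L^2$ norm of $\partial_t^n G$) produces the claimed 1D bound.

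Third, I will integrate the previous estimate over $\omega\in\mathbb{S}^2$, using H\"older in $\omega$ as needed to match exponents, and undo the change of variables to obtain
\begin{equation*}
\|S^a f\|_{L^{2n/a}(\mathbb{R}^3)}^{2n/a} \;\lesssim\; \|f\|_{L^\infty}^{2n/a - 2}\Big(\sum_{b=0}^n\|S^b f\|_{L^2}\Big)^2,
\end{equation*}
from which the claim follows by taking the $(a/(2n))$-th root.

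The main obstacle is the second step, namely the weighted 1D Gagliardo--Nirenberg inequality with weight $w(t)=e^{3t}$. Because $w$ is an eigenfunction of $\partial_t$ satisfying $[\partial_t, w] = 3w$, the commutator terms produced by each integration by parts are comparable in size to $w$ itself, and they lead either to terms that can be absorbed into the left-hand side by Young's inequality, or to lower-order contributions of the form $\int e^{3t}|\partial_t^j G|^2 dt$ with $j\le n$, which are precisely what is retained on the right-hand side of the claim. This explains why the statement of the lemma involves the full sum $\sum_{b=0}^n\|S^bf\|_{L^2}$ rather than the top-order term alone, since the latter would require a more delicate endpoint analysis to handle the weighted commutators.
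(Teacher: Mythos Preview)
Your proposal is essentially correct and follows the same strategy as the paper. Both arguments proceed by (i) integrating by parts one copy of $S$ in $\int |S^a f|^{2n/a}$, (ii) applying H\"older with exponents $2n/(a\pm 1)$ to obtain a log-convexity relation, and (iii) iterating via discrete convexity between the endpoints $a=0$ and $a=n$. Your change of variables $\rho=e^t$ simply makes $S=\partial_t$ and turns Lebesgue measure into $e^{3t}\,dt\,d\omega$; the commutator $[\partial_t,e^{3t}]=3e^{3t}$ that you flag is then exactly the contribution of the adjoint $S^*=-S-3$ that the paper uses when it integrates by parts directly on $\mathbb{R}^3$. So the detour through one dimension is cosmetic rather than a genuinely different route.

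One point of imprecision: the log-convexity $A_a^2\lesssim A_{a-1}A_{a+1}$ does not quite close with your single-term definition $A_b=(\int e^{3t}|\partial_t^b G|^{2n/b}\,dt)^{b/(2n)}$, because the commutator term produces a factor $\|\partial_t^a G\|_{L^{2n/(a+1)}(e^{3t}dt)}$, which carries the ``wrong'' Lebesgue exponent for order $a$ and is neither $A_a$ nor $A_{a+1}$. The paper resolves this by working instead with the cumulative quantity $A(a,n):=\sum_{b\le a}\|S^b f\|_{L^{2n/a}}$, so that $\|S^a f\|_{L^{2n/(a+1)}}\le A(a+1,n)$ is automatic and the log-convexity \emph{does} close for $A(\cdot,n)$. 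Your final paragraph anticipates that such lower-order terms must be retained, but you should state the fix precisely rather than appealing to absorption by Young's inequality, which does not straightforwardly work here.
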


\begin{proof}

We define, for $0\le a\le n$
\begin{equation*}
A(a,n):=\sum_{0\le b\le a}\Vert S^bf\Vert_{L^\frac{2n}{a}}
\end{equation*}
and we first observe that \eqref{Interpol1} holds for $a=0$ or $a=n$, while for $1\le a\le n-1$,
\begin{equation}\label{Recurr}
\begin{split}
A(a,n)\le \sqrt{A(a-1,n)\cdot A(a+1,n)}.
\end{split}
\end{equation}
We now observe that a nonnegative sequence (in $a$) satisfying \eqref{Recurr} can be bounded by its boundary values:
\begin{equation}\label{ConvexityEE}
\begin{split}
A(a,n)\le A(0,n)^{1-\frac{a}{n}}A(n,n)^\frac{a}{n}.
\end{split}
\end{equation}
which finishes the proof of \eqref{Interpol1}.

It only remains to prove \eqref{Recurr} and \eqref{ConvexityEE}. For the first statement, \eqref{Recurr}, it suffices to consider the case $b=a$ and integrate by parts then use H\"older's inequality to get
\begin{equation*}
\begin{split}
\Vert S^af\Vert_{L^\frac{2n}{a}}^\frac{2n}{a}&=\lim_{\varepsilon\to0}\int_{\mathbb{R}^3}(\varepsilon+\vert S^af\vert^2)^{\frac{n}{a}-1}\cdot S^af\cdot S^afdx=\lim_{\varepsilon\to0}\int_{\mathbb{R}^3}S^{a-1}f\cdot S^\ast \left\{S^af\cdot (\varepsilon+\vert S^af\vert^2)^{\frac{n}{a}-1}\right\}dx\\
&\lesssim \Vert S^{a-1}f\Vert_{L^\frac{2n}{a-1}}(\Vert S^{a+1}f\Vert_{L^\frac{2n}{a+1}}+\Vert S^af\Vert_{L^\frac{2n}{a+1}})\Vert S^af\Vert_{L^\frac{2n}{a}}^\frac{2(n-a)}{a}.
\end{split}
\end{equation*}

Now \eqref{ConvexityEE} follows from the fact that $b_a=\log A(a,n)$ is discretely superharmonic:
\begin{equation*}
b_{a-1}-2b_a+b_{a+1}\ge0,\quad a\in\{1,n-1\}
\end{equation*}
and is upper bounded at its extremum by
\begin{equation*}
c_a=(1-\frac{a}{n})b_0+\frac{a}{n}b_n.
\end{equation*}
If the extremum of $d_a=b_a-c_a$ is attained at $1\le a\le n-1$, then we see that $d_a=d_{a-1}=d_{a+1}$ is constant.

\end{proof}

\section{On the Geometry of the Phase and Vector Fields}\label{sec:geom}

With the usual notation for $\Lambda(\xi)=\frac{\xi_3}{\abs{\xi}}$ we consider vector fields related to spherical coordinates
\begin{equation}
 \xi=(\rho\cos\theta\sin\phi,\rho\sin\theta\sin\phi,\rho\cos\phi)=(\rho\cos\theta\sqrt{1-\Lambda^2},\rho\sin\theta\sqrt{1-\Lambda^2},\rho\Lambda),
\end{equation}
with
\begin{equation}
 \Lambda=\cos\phi,\quad \sqrt{1-\Lambda^2}=\sin\phi.
\end{equation}
Then we have
\begin{equation}
\begin{split}
\partial_\rho\xi&=\rho^{-1}\xi,\qquad \partial_\theta\xi=\xi_\h^\perp,\\
\partial_\phi\xi&=(\rho\cos\theta\cos\phi,\rho\sin\theta\sin\phi,-\rho\sin\phi)=\frac{\Lambda}{\sqrt{1-\Lambda^2}}\xi_\h-\sqrt{1-\Lambda^2}\rho\,\vec{e}_3,\\
\partial_\Lambda\xi&=-\frac{\Lambda}{\sqrt{1-\Lambda^2}}(\rho\cos\theta,\rho\sin\theta,0)+(0,0,\rho).
\end{split}
\end{equation}
Hence there holds that
\begin{equation}\label{eq:vfs_coords}
 S_\xi=\xi\cdot\nabla_\xi=\rho\partial_\rho,\qquad \Omega_\xi=\xi_\h^\perp\cdot\nabla_\xi=\partial_\theta,
\end{equation}
and we define
\begin{equation}\label{eq:ups}
\begin{aligned}
 \Upsilon_\xi:=\partial_\phi=-\sqrt{1-\Lambda^2}\partial_\Lambda&=\frac{\Lambda}{\sqrt{1-\Lambda^2}}(\xi_1\partial_{\xi_1}+\xi_2\partial_{\xi_2})-\vert\xi\vert\sqrt{1-\Lambda^2}\partial_{\xi_3}\\
 &=\frac{\Lambda}{\sqrt{1-\Lambda^2}}S_{\xi,\h}-\vert\xi\vert\sqrt{1-\Lambda^2}\partial_{\xi_3}=\frac{1}{\sqrt{1-\Lambda^2}}\left[\Lambda S_\xi-\abs{\xi}\partial_{\xi_3}\right].
\end{aligned} 
\end{equation}
We thus have the approximate expressions
\begin{equation*}
\begin{split}
 \Ups_\xi&\simeq 2^{q-p}S_{\xi,\h}+2^{k+p}\partial_{\xi_3},
\end{split}
\end{equation*}
and $\Ups_\xi$ is horizontal if $\xi$ is vertical and vice-versa.

Alternatively, from the relation $\cos(\phi)=\Lambda$ we obtain that $\sin(\phi)\partial_{\xi_3}\phi=-\frac{\abs{\xi_\h}^2}{\abs{\xi}^3}$, so that $\partial_{\xi_3}\phi=-\rho^{-1}\sqrt{1-\Lambda^2}$ and we have the following simple expression for the vertical derivative
\begin{equation}\label{eq:dxi3}
 \partial_{\xi_3}=\Lambda\partial_\rho-\rho^{-1}\sqrt{1-\Lambda^2}\partial_\phi=\rho^{-1}[\Lambda S_\xi-\sqrt{1-\Lambda^2}\Upsilon_\xi].
\end{equation}
From \eqref{eq:dxi3} we can deduce additionally that
\begin{equation}
 \frac{\Lambda}{\sqrt{1-\Lambda^2}}S_{\xi,\h}=\Upsilon_\xi+\abs{\xi}\sqrt{1-\Lambda^2}\partial_{\xi_3}=\Lambda\cdot[\Lambda\Upsilon_\xi+\sqrt{1-\Lambda^2}S_\xi],
\end{equation}
so
\begin{equation}\label{eq:Sperp}
 S_{\xi,\h}=\sqrt{1-\Lambda^2}\left[\Lambda\Upsilon_\xi+\sqrt{1-\Lambda^2}S_\xi\right].
\end{equation}

\begin{notation}[For Vector Fields]
Here and in what follows we use the following notational convention for the vector fields $S,\Omega,\Ups$:
\begin{enumerate}[leftmargin=*,nosep]
 \item We shall denote by $V$ or $V'$ elements of $\{S,\Omega\}$ exclusively, $\Ups$ will always be denoted as such.
 \item When no specific variables are indicated, they act naturally on the arguments of a given function (e.g.\ $Sf(y)=y\cdot\nabla_yf(y)$); in all other cases the arguments of the vector fields shall be explicitly given, so that e.g.\
 \begin{equation}
  \Omega_{\xi-\eta}=(\xi_\h-\eta_\h)^\perp\cdot\nabla_{\eta_\h},\quad S_{\xi-\eta}=(\xi-\eta)\cdot\nabla_{\eta}.
 \end{equation}
 \item When the precise amount of vector fields $S$ does not matter, for $L\in\N$ we shall denote by $S^{\leq L}$ an expression of involving at most $L$ orders of $S$.
\end{enumerate}
\end{notation}

\subsection{Vector Fields and the Phase}
Let us now understand the interaction of the vector fields $S,\Omega,\Ups$ with the phase functions as they appear in \eqref{eq:IER_disp_Duham}. For notational convenience, here we fix the signs of our phase functions $\pm\Phi$ as\footnote{since all sign combinations occur this is no restriction -- see also \eqref{eq:def_phi}}
\begin{equation}\label{eq:def_phi2}
 \Phi(\xi,\eta)=\pm\Lambda(\xi)+\Lambda(\xi-\eta)\pm\Lambda(\eta).
\end{equation}
When the precise order of signs is of no consequence we will refer to any one of these options simply as ``the phase''.

By construction, since $S$ and $\Omega$ commute with the equation we have that
\begin{equation}
 S\Lambda=\Omega\Lambda=0.
\end{equation}
It thus follows that
\begin{equation}
  V_\eta\Phi(\xi,\eta)=V_\eta\Lambda(\xi-\eta),\qquad V\in\{S,\Omega\}.
\end{equation}
The following quantity plays a crucial role in our analysis: We let
\begin{equation}\label{eq:def_sigma}
  \bar\sigma\equiv\bar\sigma(\xi,\eta):=\xi_3\eta_\h-\eta_3\xi_\h=-(\xi\times\eta)_\h^\perp,
\end{equation}
and note that
\begin{equation}\label{eq:def_sigma2}
 \bar\sigma(\xi,\eta)=\bar\sigma(\xi-\eta,\eta)=-\bar\sigma(\xi,\xi-\eta).
\end{equation}

\begin{lemma}\label{lem:vfsizes-mini}
There holds that
 \begin{equation}\label{eq:vf_sigma_0}
  S_\eta\Phi=\bar\sigma(\xi,\eta)\cdot\frac{\xi_\h-\eta_\h}{\abs{\xi-\eta}^3},\quad \Omega_\eta\Phi=-\bar\sigma(\xi,\eta)\cdot\frac{(\xi_\h-\eta_\h)^\perp}{\abs{\xi-\eta}^3},
 \end{equation}
 and hence
 \begin{equation}\label{eq:vflobound_0}
  \abs{S_\eta\Phi}+\abs{\Omega_\eta\Phi}\sim 2^{-2k_1}2^{p_1}\abs{\bar\sigma(\xi,\eta)}.
 \end{equation}
\end{lemma}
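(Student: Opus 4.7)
The plan is to reduce both identities in \eqref{eq:vf_sigma_0} to a single algebraic computation by exploiting the invariance of $\Lambda$ under $S$ and $\Omega$. Since $S\Lambda=\Omega\Lambda=0$, only the middle term $\Lambda(\xi-\eta)$ contributes to $V_\eta\Phi$, so I would start from $V_\eta\Phi=-V'\cdot(\nabla\Lambda)(\xi-\eta)$ with coefficient $V'\in\{\eta,\eta_\h^\perp\}$. Then I would replace $\eta$ by $\xi$ (resp.\ $\eta_\h^\perp$ by $\xi_\h^\perp$) using the degree-zero homogeneity $\zeta\cdot\nabla\Lambda(\zeta)=0$ (resp.\ the axisymmetry $\zeta_\h^\perp\cdot\nabla\Lambda(\zeta)=0$) evaluated at $\zeta=\xi-\eta$. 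This yields
\begin{equation*}
S_\eta\Phi=-\xi\cdot(\nabla\Lambda)(\xi-\eta),\qquad \Omega_\eta\Phi=-\xi_\h^\perp\cdot(\nabla\Lambda)(\xi-\eta).
\end{equation*}

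Next I would substitute $\nabla\Lambda(\zeta)=\vec e_3/|\zeta|-\zeta_3\zeta/|\zeta|^3$ and simplify. For $S_\eta\Phi$, a short rearrangement of the numerator (using $|\xi-\eta|^2=|\xi|^2-2\xi\cdot\eta+|\eta|^2$ to cancel the pure $|\xi|^2\xi_3$ term) recasts $\xi\cdot(\nabla\Lambda)(\xi-\eta)$ as $-(\xi-\eta)\cdot(\xi_3\eta-\eta_3\xi)/|\xi-\eta|^3$. The crucial observation is then that $\xi_3\eta-\eta_3\xi=\bar\sigma$ is a horizontal vector (its third component vanishes), so the pairing collapses to $(\xi_\h-\eta_\h)\cdot\bar\sigma$, producing the first formula in \eqref{eq:vf_sigma_0}. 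For $\Omega_\eta\Phi$, the term $\vec e_3/|\xi-\eta|$ is killed by $\xi_\h^\perp$, and the remaining scalar $(\xi_3-\eta_3)\,\xi_\h^\perp\cdot\eta_\h$ is rewritten as $\bar\sigma\cdot(\xi_\h-\eta_\h)^\perp$ using $\xi_\h\cdot\xi_\h^\perp=\eta_\h\cdot\eta_\h^\perp=0$.

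For the size estimate \eqref{eq:vflobound_0}, the geometric fact to exploit is that $\bar\sigma$ lies in the horizontal plane while $\{\xi_\h-\eta_\h,(\xi_\h-\eta_\h)^\perp\}$ forms an orthogonal basis of that plane (whenever $\xi_\h\neq\eta_\h$). By Pythagoras,
\begin{equation*}
|S_\eta\Phi|^2+|\Omega_\eta\Phi|^2=|\bar\sigma|^2\,|\xi_\h-\eta_\h|^2/|\xi-\eta|^6,
\end{equation*}
and inserting $|\xi_\h-\eta_\h|=|\xi-\eta|\sqrt{1-\Lambda^2(\xi-\eta)}\sim 2^{k_1+p_1}$ together with $|\xi-\eta|\sim 2^{k_1}$ yields the claimed size $\sim 2^{-2k_1}2^{p_1}|\bar\sigma|$.

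I do not anticipate any serious obstacle here: once the invariance identities reduce each of $S_\eta\Phi,\Omega_\eta\Phi$ to a single pairing against $(\nabla\Lambda)(\xi-\eta)$, all that remains is elementary algebra. The one point deserving care is the cancellation by which $\bar\sigma$ emerges, and this rests on the easily verified observation that $\xi_3\eta-\eta_3\xi$ is horizontal, which in turn is what decouples the $\partial_{\xi_3}$-component of $\nabla\Lambda$ from the final expression.
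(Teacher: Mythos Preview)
Your proposal is correct and follows essentially the same approach as the paper: both compute $V_\eta\Lambda(\xi-\eta)$ directly from the gradient formula $\nabla\Lambda(\zeta)=\vec e_3/|\zeta|-\zeta_3\zeta/|\zeta|^3$ and factor out $\bar\sigma$. Your preliminary step of replacing $\eta$ by $\xi$ (resp.\ $\eta_\h^\perp$ by $\xi_\h^\perp$) via the homogeneity and rotation invariance of $\Lambda$ is a pleasant variant that the paper does not use---the paper just plugs in $\eta$ and factors directly---and your Pythagoras argument for \eqref{eq:vflobound_0} is slightly cleaner than the paper's $\max\ge\tfrac{1}{\sqrt 2}$ inequality, but these are cosmetic differences rather than a different route.
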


\begin{proof}
The proof is a direct computation, using that
\begin{equation}\label{eq:Lambda_grad}
 \nabla\Lambda(\xi)=-\frac{\xi_3}{\abs{\xi}^3}\xi_\h+\frac{\abs{\xi_\h}^2}{\abs{\xi}^3}\vec{e}_3.
\end{equation}
From this we obtain that
\begin{equation}
\begin{aligned}
 S_\eta\Phi&=\eta\cdot\nabla_\eta\Lambda(\xi-\eta)=\frac{1}{\abs{\xi-\eta}^3}\left[(\xi_3-\eta_3)\eta_\h\cdot(\xi_\h-\eta_\h)-\eta_3\abs{\xi_\h-\eta_\h}^2\right]\\
 &=\frac{\xi_\h-\eta_\h}{\abs{\xi-\eta}^3}\cdot[(\xi_3-\eta_3)\eta_\h-\eta_3(\xi_\h-\eta_\h)]\\
 &=\frac{\xi_\h-\eta_\h}{\abs{\xi-\eta}^3}\cdot\bar{\sigma}(\xi,\eta),
\end{aligned} 
\end{equation}
and similarly
\begin{equation}
\begin{aligned}
 \Omega_\eta\Phi&=\eta_\h^\perp\cdot\nabla_\eta\Lambda(\xi-\eta)=\frac{1}{\abs{\xi-\eta}^3}\left[(\xi_3-\eta_3)\eta_\h^\perp\cdot(\xi_\h-\eta_\h)\right]=-\frac{(\xi_\h-\eta_\h)^\perp}{\abs{\xi-\eta}^3}\cdot[(\xi_3-\eta_3)\eta_\h]\\
 &=-\frac{(\xi_\h-\eta_\h)^\perp}{\abs{\xi-\eta}^3}\cdot\bar\sigma(\xi,\eta).
\end{aligned}
\end{equation}
Then from \eqref{eq:vf_sigma_0} we deduce the bounds \eqref{eq:vflobound_0}, since $$\max\{\abs{\sigma(\xi,\eta)\cdot(\xi_\h-\eta_\h)},\abs{\sigma(\xi,\eta)\cdot(\xi_\h-\eta_\h)^\perp}\}\geq \frac{1}{\sqrt{2}}\abs{\sigma(\xi,\eta)}\abs{\xi_\h-\eta_\h}.$$
\end{proof}

For an expanded version of this result with more details regarding also the repeated application of vector fields to the phase, we refer the reader to Lemma \ref{lem:vfsizes} in the appendix. Here we demonstrate another crucial relation between the size of the vector fields and the phase function. The following proposition shows that either we have a lower bound for $\bar\sigma$ (and by \eqref{eq:vflobound_0} thus also for $V_\eta\Phi$), or the phase is relatively large. In practice, this implies that \emph{either we can integrate by parts along a vector field $V\in\{S,\Omega\}$ or perform a normal form.} (We refer to Section \ref{sec:symbols} for the relevant symbol estimates.)
\begin{proposition}\label{prop:phasevssigma}
 Assume that $\abs{\Phi}\leq 2^{q_{\max}-2}$. Then in fact $2^{p_{\max}}\sim 1$, and $\abs{\bar\sigma}\gtrsim 2^{q_{\max}}2^{k_{\max}+k_{\min}}$.
\end{proposition}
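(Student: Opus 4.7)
First I would prove the claim $2^{p_{\max}}\sim 1$ by contradiction. Assuming $2^{p_{\max}}\ll 1$, the identity $2^{2p_j}+2^{2q_j}\sim 1$ implies $|\Lambda(\zeta_j)|\geq 1-C\cdot 2^{2p_{\max}}$ for each $\zeta_j\in\{\xi,\xi-\eta,\eta\}$, so with $s_j:=\mathrm{sign}(\zeta_{j,3})\in\{\pm 1\}$ we have $\Lambda(\zeta_j)=s_j(1+O(2^{2p_{\max}}))$ and
\[
\Phi=\epsilon_0 s_0+s_1+\epsilon_2 s_2+O(2^{2p_{\max}}),
\]
with $\epsilon_0,\epsilon_2\in\{\pm 1\}$ the signs in \eqref{eq:def_phi2}. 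Since any sum of three $\pm 1$'s lies in $\{\pm 1,\pm 3\}$, this yields $|\Phi|\geq 1-C\cdot 2^{2p_{\max}}$, contradicting $|\Phi|\leq 2^{q_{\max}-2}\leq 1/4$ once $p_{\max}$ is sufficiently small.

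For the second claim, my plan is to use $\xi_3=(\xi-\eta)_3+\eta_3$ to eliminate one $\Lambda$ from $\Phi$: e.g.,
\[
|\xi-\eta|\,\Phi=(\epsilon_0|\xi-\eta|+|\xi|)\Lambda(\xi)+(\epsilon_2|\xi-\eta|-|\eta|)\Lambda(\eta),
\]
with two analogous identities obtained by eliminating $\Lambda(\xi)$ or $\Lambda(\eta)$ instead. Since both $|\bar\sigma|$ and $|\Phi|$ (over all sign choices $\epsilon_0,\epsilon_2$) are invariant under relabelings of $\{\xi,\xi-\eta,\eta\}$ implemented by transformations such as $(\xi,\eta)\mapsto(\xi-\eta,-\eta)$ or $(\xi,\eta)\mapsto(\eta,\xi)$, I may assume WLOG $p_0=p_{\max}$, so $|\xi_\h|\sim|\xi|$; and by rotation around $\vec{e}_3$ I may further align $\xi_\h$ with $\vec{e}_1$.

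I can then combine the above identity with the explicit expression
\[
|\bar\sigma|^2=\xi_3^2|\eta_\h|^2+\eta_3^2|\xi_\h|^2-2\xi_3\eta_3(\xi_\h\cdot\eta_\h)
\]
and proceed by case analysis on the signs $(\epsilon_0,\epsilon_2)$ and on which index attains $q_{\max}$. The guiding principle from Part~1 is that small $|\bar\sigma|$ forces near-collinearity of $\xi$ and $\eta$, in which case $\Lambda(\xi),\Lambda(\xi-\eta),\Lambda(\eta)$ are all equal to $\pm|\Lambda(\xi)|$ up to a deviation of order $|\bar\sigma|/(|\xi||\eta|)$, making $\Phi$ a nontrivial signed sum of three nearly equal values of size $2^{q_{\max}}$. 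This yields a contradiction with $|\Phi|\leq 2^{q_{\max}-2}$ once $|\bar\sigma|\leq c\cdot 2^{q_{\max}+k_{\max}+k_{\min}}$ for $c$ small enough. The hardest part will be carrying out this case analysis uniformly, matching the coefficient sizes in the identity against the target scale, and handling the near-horizontal sub-case where all $|\Lambda(\zeta_j)|$ are small but the configuration need not be 3D-collinear.
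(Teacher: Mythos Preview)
Your Part~1 argument for $2^{p_{\max}}\sim 1$ is correct and clean.

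For Part~2, however, you have only a plan, not a proof: you acknowledge that ``the hardest part will be carrying out this case analysis uniformly,'' and indeed the sign-by-sign case analysis you propose, together with the identity eliminating one $\Lambda$, is substantially more involved than necessary and is not actually executed. The guiding intuition (small $|\bar\sigma|$ forces near-collinearity, hence all $\Lambda$'s nearly equal up to sign, hence $|\Phi|\gtrsim 2^{q_{\max}}$) is correct, but you have not closed the argument.

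The paper's proof avoids all of this by a much more direct route. Recall that $\bar\sigma(\xi,\eta)=\bar\sigma(\xi-\eta,\eta)=-\bar\sigma(\xi,\xi-\eta)$, so $|\bar\sigma|$ can be computed using \emph{any} pair among $\{\xi,\xi-\eta,\eta\}$. Write $\Lambda_\alpha=\max|\Lambda(\zeta_j)|$, $\Lambda_\beta=\min|\Lambda(\zeta_j)|$, attained at $\zeta_\alpha,\zeta_\beta$. Then split into two cases on $q_\alpha-q_\beta$:
\begin{itemize}
\item If $q_\alpha-q_\beta>2$, then $2^{q_\beta}\ll 1$ forces $2^{p_\beta}\sim 1$ (this is $p_{\max}$), and from $|\bar\sigma|=|\zeta_{\alpha,3}\zeta_{\beta,\h}-\zeta_{\beta,3}\zeta_{\alpha,\h}|$ one has directly $|\bar\sigma|\gtrsim |\zeta_{\alpha,3}||\zeta_{\beta,\h}|\sim 2^{q_\alpha+p_\beta}2^{k_\alpha+k_\beta}$ since the second term is smaller.
\item If $|q_\alpha-q_\beta|\leq 2$, the hypothesis $|\Phi|\leq 2^{q_\alpha-2}$ forces $\Lambda_\beta/\Lambda_\alpha<2/3$: otherwise all three $|\Lambda(\zeta_j)|$ lie in $[\tfrac23\Lambda_\alpha,\Lambda_\alpha]$ and any signed sum $\pm 1\pm a\pm b$ with $a,b\in[\tfrac23,1]$ has absolute value $\geq \tfrac13$, giving $|\Phi|\geq\tfrac13\Lambda_\alpha$, a contradiction. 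But $\Lambda_\beta/\Lambda_\alpha<2/3$ yields $\Lambda_\alpha\sqrt{1-\Lambda_\beta^2}>\tfrac32\Lambda_\beta\sqrt{1-\Lambda_\alpha^2}$, hence $|\zeta_{\alpha,3}||\zeta_{\beta,\h}|>\tfrac32|\zeta_{\beta,3}||\zeta_{\alpha,\h}|$, which gives $|\bar\sigma|\gtrsim|\zeta_{\alpha,3}||\zeta_{\beta,\h}|\sim 2^{q_\alpha+p_\beta}2^{k_\alpha+k_\beta}$ by the reverse triangle inequality. Also $\Lambda_\beta<2/3$ gives $2^{p_{\max}}\sim 1$.
\end{itemize}
In both cases $2^{k_\alpha+k_\beta}\gtrsim 2^{k_{\max}+k_{\min}}$ since any two of $k,k_1,k_2$ have sum $\gtrsim k_{\max}+k_{\min}$ (the two largest are always comparable), and $2^{p_\beta}\sim 1$. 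This is a few lines; your proposed case analysis on $(\epsilon_0,\epsilon_2)$ and on which index attains $q_{\max}$ is unnecessary.
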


\begin{proof}
 We distinguish cases based on the relative size of $q_\alpha:=\max\{q,q_1,q_2\}$ and $q_\beta:=\min\{q,q_1,q_2\}$.

 First, consider the case $\abs{q_\alpha-q_\beta}\leq 2$. Extending the above notation to write $\Lambda_\alpha:=\max\{\abs{\Lambda(\zeta)};\zeta\in\{\xi,\xi-\eta,\eta\}\}$ and $\Lambda_\beta:=\min\{\abs{\Lambda(\zeta)};\zeta\in\{\xi,\xi-\eta,\eta\}\}$, we claim that this implies that $\frac{\Lambda_\beta}{\Lambda_\alpha}<\frac{2}{3}$: for if on the contrary we had $\frac{\Lambda_\beta}{\Lambda_\alpha}\geq\frac{2}{3}$, then it would follow that there exist $a,b\in[\frac{2}{3},1]$ such that
 \begin{equation}
  \abs{\Phi}=\abs{\Lambda(\xi)\pm\Lambda(\xi-\eta)\pm\Lambda(\eta)}=\Lambda_\alpha\cdot \abs{1\pm a\pm b}\geq \frac{1}{3}\Lambda_\alpha,
 \end{equation}
 a contradiction to our assumption that $\abs{\Phi}\leq 2^{q_\alpha-2}$. To conclude, we note that $\frac{\Lambda_\beta}{\Lambda_\alpha}<\frac{2}{3}$ implies that $\Lambda_\alpha\sqrt{1-\Lambda_\beta^2}>\frac{3}{2}\Lambda_\beta\sqrt{1-\Lambda_\beta^2}\geq\frac{3}{2}\Lambda_\beta\sqrt{1-\Lambda_\alpha^2}$, and hence, using \eqref{eq:def_sigma}-\eqref{eq:def_sigma2}, $\abs{\bar\sigma}\gtrsim 2^{q_\alpha+p_\beta}2^{k_\alpha+k_\beta}$, as claimed.
 
 In case $\abs{q_\alpha-q_\beta}>2$, it is clear from its definition that $\abs{\bar\sigma}\gtrsim 2^{q_\alpha+p_\beta}2^{k_{\max}+k_{\min}}$. Moreover, it follows that $2^{p_\beta}\sim \sqrt{1-2^{2q_{\beta}}}\gtrsim 1$, which gives the claim since $p_{\max}=p_\beta$.
 \end{proof}

Next we see that the action of $\Ups$ on the phase is easily understood.
\begin{lemma}[$\Ups$ and the Phase]
 We have that
 \begin{equation}\label{eq:UpsPhi_bd}
  \abs{\Ups_\xi\Phi}\lesssim 2^p+2^{k-k_1}2^{p_1}.
 \end{equation}
\end{lemma}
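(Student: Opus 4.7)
The plan is to decompose $\Upsilon_\xi\Phi$ via linearity according to the three $\Lambda$-terms making up $\Phi$ in \eqref{eq:def_phi2}. Since $\Upsilon_\xi$ differentiates only in $\xi$, the term $\Lambda(\eta)$ contributes nothing, leaving
\begin{equation*}
 \Upsilon_\xi\Phi = \pm\,\Upsilon_\xi\Lambda(\xi) + \Upsilon_\xi\Lambda(\xi-\eta).
\end{equation*}
The two pieces will be estimated separately and match the two terms on the right-hand side of \eqref{eq:UpsPhi_bd}.

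For the first piece I would use that, in the spherical coordinates set up at the start of Section \ref{sec:geom}, the identification $\Lambda(\xi) = \cos\phi_\xi$ and $\Upsilon_\xi = \partial_{\phi_\xi}$ yields
\begin{equation*}
 \Upsilon_\xi\Lambda(\xi) = -\sin\phi_\xi = -\sqrt{1-\Lambda(\xi)^2},
\end{equation*}
whose absolute value is by definition of the localization $\sim 2^p$.

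For the second piece I would apply the chain rule $\Upsilon_\xi\Lambda(\xi-\eta) = (\partial_{\phi_\xi}\xi)\cdot(\nabla\Lambda)(\xi-\eta)$. The tangent vector $\partial_{\phi_\xi}\xi$ computed in Section \ref{sec:geom} has Euclidean norm exactly $|\xi|\sim 2^k$, while the explicit gradient formula \eqref{eq:Lambda_grad} gives
\begin{equation*}
 |(\nabla\Lambda)(\zeta)|^2 = \frac{\zeta_3^2|\zeta_\h|^2+|\zeta_\h|^4}{|\zeta|^6}=\frac{|\zeta_\h|^2}{|\zeta|^4}, \qquad \text{hence} \qquad |(\nabla\Lambda)(\zeta)| = \frac{|\zeta_\h|}{|\zeta|^2}.
\end{equation*}
Evaluating at $\zeta = \xi-\eta$ yields $|(\nabla\Lambda)(\xi-\eta)|\sim 2^{p_1-k_1}$, so by Cauchy--Schwarz $|\Upsilon_\xi\Lambda(\xi-\eta)|\lesssim 2^{k-k_1}2^{p_1}$.

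Combining the two estimates gives \eqref{eq:UpsPhi_bd}. I do not anticipate any real obstacle here; the argument is a direct unpacking of the coordinate formulas already recorded in Section \ref{sec:geom}, with the only mild subtlety being the use of $|\nabla\Lambda(\zeta)|=|\zeta_\h|/|\zeta|^2$ (reflecting that $\nabla\Lambda$ is tangential to the sphere at $\zeta/|\zeta|$), which explains why the bound involves a single factor of $2^{p_1}$ rather than the naive $|\nabla\Lambda|\lesssim|\zeta|^{-1}$.
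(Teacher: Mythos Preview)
Your proof is correct and follows essentially the same approach as the paper. Both split $\Upsilon_\xi\Phi$ into the contribution from $\Lambda(\xi)$ (giving $2^p$) and from $\Lambda(\xi-\eta)$ (giving $2^{k-k_1}2^{p_1}$); the only cosmetic difference is that the paper writes out the explicit formula for $\Upsilon_\xi\Lambda(\xi-\eta)$ as a product $\frac{|\xi|}{|\xi-\eta|}\sqrt{1-\Lambda^2(\xi-\eta)}\cdot[\ldots]$ with a bounded bracket, whereas you obtain the same bound via $|\partial_{\phi_\xi}\xi|\cdot|(\nabla\Lambda)(\xi-\eta)|$, which is slightly slicker.
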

\begin{proof}
We compute that
\begin{equation}
 \Upsilon_\xi\Lambda(\xi-\eta)=-\frac{\vert\xi\vert}{\vert\xi-\eta\vert}\sqrt{1-\Lambda^2(\xi-\eta)}\left[\Lambda(\xi)\Lambda(\xi-\eta)\frac{\xi_\h\cdot(\xi-\eta)_\h}{\vert\xi_\h\vert\vert(\xi-\eta)_\h\vert}+\sqrt{1-\Lambda^2(\xi)}\sqrt{1-\Lambda^2(\xi-\eta)}\right],
\end{equation}
so that with $\Upsilon_\xi\Lambda(\xi)=-\sqrt{1-\Lambda^2}(\xi)$ it follows that for a phase $\Phi=\Lambda(\xi)\pm\Lambda(\xi-\eta)\pm\Lambda(\eta)$ we get 
\begin{equation}\label{eq:UpsPhi}
\begin{aligned}
 &\Upsilon_\xi\Phi=-\sqrt{1-\Lambda^2(\xi)}\mp\frac{\vert\xi\vert}{\vert\xi-\eta\vert}\sqrt{1-\Lambda^2(\xi-\eta)}\left[\Lambda(\xi)\Lambda(\xi-\eta)\frac{\xi_\h\cdot(\xi-\eta)_\h}{\vert\xi_\h\vert\vert(\xi-\eta)_\h\vert}+\sqrt{1-\Lambda^2(\xi)}\sqrt{1-\Lambda^2(\xi-\eta)}\right],
\end{aligned} 
\end{equation}
from which the bound \eqref{eq:UpsPhi_bd} follows.
\end{proof}

\subsection{Vector Field ``Cross Terms''}\label{ssec:crossterms}
When integrating by parts along vector fields, we will encounter ``cross terms'' where a vector field $V\in\{S,\Omega\}$ in one variable is applied to a function of another variable. The following lemma shows how such expressions can be resolved in terms of the three vector fields $S,\Omega,\Ups$. We state it here for the axisymmetric setting, as is relevant for our purposes.

Let us introduce the notation
\begin{equation}
 \omega_c:=\frac{\eta_\h\cdot(\xi_\h-\eta_\h)}{\abs{\eta_\h}\abs{\xi_\h-\eta_\h}},\quad \omega_s:=\frac{\eta_\h\cdot(\xi_\h-\eta_\h)^\perp}{\abs{\eta_\h}\abs{\xi_\h-\eta_\h}}.
\end{equation}

\begin{lemma}\label{lem:VFcross}
 On \emph{axisymmetric functions}, we have that
 \begin{equation}
 \begin{aligned}
  S_{\eta}&= \frac{\abs{\eta}}{\abs{\xi-\eta}}\left[\omega_c\sqrt{1-\Lambda^2}(\eta)\sqrt{1-\Lambda^2}(\xi-\eta)-\Lambda(\xi-\eta)\Lambda(\eta)\right]S_{\xi-\eta}\\
  &\qquad+\frac{\abs{\eta}}{\abs{\xi-\eta}}\left[\omega_c\sqrt{1-\Lambda^2}(\eta)\Lambda(\xi-\eta)+\Lambda(\eta)\sqrt{1-\Lambda^2(\xi-\eta)}\right]\Upsilon_{\xi-\eta},\\
  \Omega_{\eta}&=-\frac{\abs{\eta}}{\abs{\xi-\eta}}\omega_s\cdot\sqrt{1-\Lambda^2(\eta)}\left[\Lambda(\xi-\eta)\Upsilon_{\xi-\eta}+\sqrt{1-\Lambda^2(\xi-\eta)}S_{\xi-\eta}\right].
 \end{aligned} 
 \end{equation}
 and symmetrically for the roles of $\eta$ and $\xi-\eta$ exchanged. 
 
 Moreover, there holds that
 \begin{equation}\label{eq:Ups_cross}
 \begin{aligned}
  \Upsilon_\xi&=\frac{\abs{\xi}}{\abs{\xi-\eta}}\left[\omega_c\Lambda(\xi)\sqrt{1-\Lambda^2(\xi-\eta)}-\sqrt{1-\Lambda^2(\xi)}\Lambda(\xi-\eta)\right]\cdot S_{\xi-\eta}\\
  &\qquad+\frac{\abs{\xi}}{\abs{\xi-\eta}}\left[\omega_c\Lambda(\xi)\Lambda(\xi-\eta)+\sqrt{1-\Lambda^2(\xi)}\sqrt{1-\Lambda^2(\xi-\eta)}\right]\cdot \Upsilon_{\xi-\eta}.
 \end{aligned} 
 \end{equation}
\end{lemma}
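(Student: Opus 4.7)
The plan is to interpret each identity as a decomposition of a vector in $\R^3$ in the spherical basis at $\zeta := \xi - \eta$, using axisymmetry to cancel the $\hat{\theta}_\zeta$-component. First I would note that for any smooth $g$ of a single argument, $\nabla_\eta[g(\xi-\eta)] = -(\nabla g)(\zeta)$ while $\nabla_\xi[g(\xi-\eta)] = (\nabla g)(\zeta)$, so that
\[
S_\eta[g(\xi-\eta)] = -\eta\cdot(\nabla g)(\zeta),\quad \Omega_\eta[g(\xi-\eta)] = -\eta_\h^\perp\cdot(\nabla g)(\zeta),\quad \Upsilon_\xi[g(\xi-\eta)] = \abs{\xi}\hat{\phi}_\xi\cdot(\nabla g)(\zeta),
\]
where $\hat{\phi}_\xi$ is the spherical polar unit vector at $\xi$. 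The problem is thus reduced to decomposing the three directional vectors $\eta$, $\eta_\h^\perp$, $\abs{\xi}\hat{\phi}_\xi$ in a basis adapted to $\zeta$.

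Next I would set up the orthonormal spherical basis at $\zeta$,
\[
\hat{\rho}_\zeta=\tfrac{\zeta}{\abs{\zeta}},\qquad \hat{\phi}_\zeta = \Lambda(\zeta)\tfrac{\zeta_\h}{\abs{\zeta_\h}} - \sqrt{1-\Lambda^2(\zeta)}\vec{e}_3,\qquad \hat{\theta}_\zeta = \tfrac{\zeta_\h^\perp}{\abs{\zeta_\h}},
\]
and record, by \eqref{eq:vfs_coords}--\eqref{eq:ups}, that
\[
\hat{\rho}_\zeta\cdot\nabla g = \tfrac{1}{\abs{\zeta}}[Sg],\quad \hat{\phi}_\zeta\cdot\nabla g = \tfrac{1}{\abs{\zeta}}[\Upsilon g],\quad \hat{\theta}_\zeta\cdot\nabla g = \tfrac{1}{\abs{\zeta}\sqrt{1-\Lambda^2(\zeta)}}[\Omega g].
\]
The decisive point is that for axisymmetric $g$ there holds $\Omega g = 0$, so only the $\hat{\rho}_\zeta$- and $\hat{\phi}_\zeta$-components of each of the three directional vectors will survive the decomposition.

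The remainder is a direct computation of inner products in spherical coordinates. Using $\cos(\theta_\eta-\theta_\zeta) = \omega_c$ and $\sin(\theta_\eta-\theta_\zeta) = \omega_s$, I would find
\[
\eta\cdot\hat{\rho}_\zeta = \abs{\eta}\bigl[\omega_c\sqrt{1-\Lambda^2(\eta)}\sqrt{1-\Lambda^2(\zeta)}+\Lambda(\eta)\Lambda(\zeta)\bigr],
\]
\[
\eta\cdot\hat{\phi}_\zeta = \abs{\eta}\bigl[\omega_c\sqrt{1-\Lambda^2(\eta)}\Lambda(\zeta)-\Lambda(\eta)\sqrt{1-\Lambda^2(\zeta)}\bigr],
\]
while the corresponding projections of $\eta_\h^\perp$ on $\hat{\rho}_\zeta$ and $\hat{\phi}_\zeta$ each produce a single term with factor $\omega_s\sqrt{1-\Lambda^2(\eta)}$, since the $\omega_c$-part of $\eta_\h^\perp$ sits entirely in the $\hat{\theta}_\zeta$-direction and is annihilated by axisymmetry. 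Substituting these back yields the claimed formulas for $S_\eta$ and $\Omega_\eta$; the symmetric version of the lemma follows at once by exchanging the roles of $\eta$ and $\xi-\eta$. Finally, \eqref{eq:Ups_cross} for $\Upsilon_\xi$ is obtained in exactly the same way, by decomposing $\abs{\xi}\hat{\phi}_\xi$ in the $\zeta$-basis and reading off the two remaining inner products $\hat{\phi}_\xi\cdot\hat{\rho}_\zeta$ and $\hat{\phi}_\xi\cdot\hat{\phi}_\zeta$.

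No step presents a conceptual difficulty. The only real care needed is in the bookkeeping of signs and in fixing the convention for what $S_{\xi-\eta},\Upsilon_{\xi-\eta}$ mean when applied to a function of $\xi-\eta$ (as opposed to genuine $\eta$-derivatives of the composition), so that the identities are produced in the exact form stated in the lemma.
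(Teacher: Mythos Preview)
Your approach is correct and is essentially the same as the paper's: both reduce to decomposing a fixed vector ($\eta$, $\eta_\h^\perp$, or $|\xi|\hat\phi_\xi$) in the spherical frame at $\zeta=\xi-\eta$ and discarding the $\hat\theta_\zeta$-component by axisymmetry. The only difference is cosmetic: the paper first splits into a horizontal piece (handled via \eqref{eq:vfbasics}) and a vertical piece (handled via \eqref{eq:dxi3} and \eqref{eq:Sperp}), and then recombines, whereas you go straight to the inner products $\eta\cdot\hat\rho_\zeta$, $\eta\cdot\hat\phi_\zeta$ in spherical coordinates. Your caveat about fixing the sign convention for $S_{\xi-\eta}$ and $\Upsilon_{\xi-\eta}$ is well placed---that is precisely where the bookkeeping has to be done carefully to land on the stated formulas.
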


\begin{remark}
With the usual localizations we thus have the bounds
\begin{equation}\label{eq:crossvf_bd}
\begin{aligned}
 \abs{S_\eta f(\xi-\eta)}&\lesssim 2^{k_2-k_1}[(2^{p_2+p_1}+2^{q_2+q_1})\abs{Sf(\xi-\eta)}+(2^{p_2+q_1}+2^{p_1+q_2})\abs{\Ups f(\xi-\eta)}],\\
 \abs{\Omega_\eta f(\xi-\eta)}&\lesssim 2^{k_2-k_1}[2^{p_2+p_1}\abs{Sf(\xi-\eta)}+2^{p_2+q_1}\abs{\Ups f(\xi-\eta)}],
\end{aligned} 
\end{equation}
\end{remark}

\begin{proof}
We give next the full details, from which the case of axisymmetric functions follows by noting that if $f$ is axisymmetric, then $\Omega f=0$ (see also Remark \ref{rem:role_axisym}).

Then we have that by \eqref{eq:dxi3} and \eqref{eq:vfbasics} that
\begin{equation}
\begin{aligned}
 S_{\eta}&=\frac{\abs{\eta_\h}}{\abs{\xi_\h-\eta_\h}}\omega_s\cdot\Omega_{\xi-\eta}+\frac{\abs{\eta_\h}}{\abs{\xi_\h-\eta_\h}}\omega_c\cdot S_{\xi-\eta,\h}-\eta_3\partial_{\xi_3-\eta_3}\\
 &=\frac{\abs{\eta_\h}}{\abs{\xi_\h-\eta_\h}}\omega_s\cdot\Omega_{\xi-\eta}+\frac{\abs{\eta_\h}}{\abs{\xi_\h-\eta_\h}}\omega_c\cdot S_{\xi-\eta}\\
 &\qquad -\left[\frac{\abs{\eta_\h}}{\abs{\xi_\h-\eta_\h}}\omega_c\cdot(\xi_3-\eta_3)+\eta_3\right]\cdot\frac{1}{\abs{\xi-\eta}}[\Lambda(\xi-\eta) S_{\xi-\eta}-\sqrt{1-\Lambda^2(\xi-\eta)}\Upsilon_{\xi-\eta}]\\
 &=\frac{\abs{\eta_\h}}{\abs{\xi_\h-\eta_\h}}\omega_s\cdot\Omega_{\xi-\eta}+\frac{\abs{\eta}}{\abs{\xi-\eta}}\left[\omega_c\sqrt{1-\Lambda^2}(\eta)\sqrt{1-\Lambda^2}(\xi-\eta)-\Lambda(\xi-\eta)\Lambda(\eta)\right]S_{\xi-\eta}\\
 &\qquad +\frac{\abs{\eta}}{\abs{\xi-\eta}}\left[\omega_c\sqrt{1-\Lambda^2}(\eta)\Lambda(\xi-\eta)+\Lambda(\eta)\sqrt{1-\Lambda^2(\xi-\eta)}\right]\Upsilon_{\xi-\eta}.
\end{aligned} 
\end{equation}
Similarly, by \eqref{eq:Sperp} there holds that
\begin{equation}
\begin{aligned}
 \Omega_{\eta}&=\frac{\abs{\eta_\h}}{\abs{\xi_\h-\eta_\h}}\omega_c\cdot\Omega_{\xi-\eta}-\frac{\abs{\eta_\h}}{\abs{\xi_\h-\eta_\h}}\omega_s\cdot S_{\xi-\eta,\h}\\
 &=\frac{\abs{\eta_\h}}{\abs{\xi_\h-\eta_\h}}\omega_c\cdot\Omega_{\xi-\eta}-\frac{\abs{\eta_\h}}{\abs{\xi_\h-\eta_\h}}\omega_s\cdot\sqrt{1-\Lambda^2(\xi-\eta)}\left[\Lambda(\xi-\eta)\Upsilon_{\xi-\eta}+\sqrt{1-\Lambda^2(\xi-\eta)}S_{\xi-\eta}\right]\\
 &=\frac{\abs{\eta_\h}}{\abs{\xi_\h-\eta_\h}}\omega_c\cdot\Omega_{\xi-\eta}-\frac{\abs{\eta}}{\abs{\xi-\eta}}\omega_s\cdot\sqrt{1-\Lambda^2(\eta)}\left[\Lambda(\xi-\eta)\Upsilon_{\xi-\eta}+\sqrt{1-\Lambda^2(\xi-\eta)}S_{\xi-\eta}\right].
\end{aligned} 
\end{equation}

Moreover, there holds
\begin{equation}
\begin{aligned}
 \Upsilon_\xi&=\frac{\Lambda(\xi)\abs{\xi}}{\abs{\xi_\h-\eta_\h}}\omega_s\Omega_{\xi-\eta}+\frac{\abs{\xi}}{\abs{\xi-\eta}}\left[\omega_c\Lambda(\xi)\sqrt{1-\Lambda^2(\xi-\eta)}-\sqrt{1-\Lambda^2(\xi)}\Lambda(\xi-\eta)\right]\cdot S_{\xi-\eta}\\
 &\qquad+\frac{\abs{\xi}}{\abs{\xi-\eta}}\left[\omega_c\Lambda(\xi)\Lambda(\xi-\eta)+\sqrt{1-\Lambda^2(\xi)}\sqrt{1-\Lambda^2(\xi-\eta)}\right]\cdot \Upsilon_{\xi-\eta}.
\end{aligned} 
\end{equation}
\end{proof}

\section{A Bilinear Estimate}\label{sec:bilin}
Here our goal is to prove suitable bilinear estimates that will allow us to give control of the vector fields $S,\Omega$ in the $B$ norm.

\begin{proposition}\label{prop:bilin}
 Consider a bilinear term $\Q_\m(f,g)$ (as in \eqref{eq:def_Qm})
 \begin{equation}
  \widehat{\Q_\m(f,g)}(\xi)=\int_{\mathbb{R}^3}e^{it\Phi(\xi,\eta)}\m(\xi,\eta)\widehat{f}(\xi-\eta)\widehat{g}(\eta)d\eta,
 \end{equation}
 where $\Phi(\xi,\eta)=\pm\Lambda(\xi)\pm\Lambda(\xi-\eta)\pm\Lambda(\eta)$ is one of the phase functions, and $\m=m_i^{\mu\nu}$ is one of the bilinear symbols of axisymmetric, rotating 3d Euler as in Lemma \ref{lem:IERmult}.
 Let $k,k_1,k_2\in\mathbb{Z}$ and $p,p_1,p_2,q, q_1,q_2\le 0$, with
 \begin{equation*}
 f=P_{k_1,p_1,q_1}f,\qquad g=P_{k_2,p_2,q_2}g.
 \end{equation*}
 Then we have the following estimates:
 \begin{enumerate}[label=(\alph*)]
  \item\label{it:bilin1} If $p_2+\frac{q_2}{2}\leq p+\frac{q}{2}+C$, we get
  \begin{equation}\label{eq:bilin1}
   2^{-p-\frac{q}{2}}\norm{P_{k,p,q} \Q_\m(f,g)}_{L^2}\lesssim \ip{t}^{-1}2^{k+\frac{3}{2}k_{\max}}\norm{f}_D\norm{g}_B.
  \end{equation}
  \item\label{it:bilin2} If $p_2+\frac{q_2}{2}> p+\frac{q}{2}+C$, we have
 \begin{equation}\label{eq:bilin2}
 \begin{aligned}
   2^{-p-\frac{q}{2}}\norm{P_{k,p,q} \Q_\m(f,g)}_{L^2}&\lesssim \ip{t}^{-1} 2^{\frac{k}{2}+2k_{\max}}\Big[\norm{f}_B+\norm{Sf}_B+\norm{\Ups f}_{L^2}\Big]\\
   &\qquad\cdot\Big[\norm{g}_{L^2}+2^{k_1}\norm{g}_{H^{-1}}+\norm{Sg}_{L^2}\Big],
  \end{aligned}
  \end{equation}
  where $k_{\max}=\max\{k,k_1,k_2\}$.
 \end{enumerate}
\end{proposition}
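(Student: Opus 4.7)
\textbf{Setup and Part \ref{it:bilin1}.} The plan is to reduce $\Q_\m[f,g]$ to a standard bilinear Fourier multiplier estimate. Writing $\Q_\m[f,g] = e^{-it\Lambda}B_\m[e^{\mp it\Lambda}f, e^{\mp it\Lambda}g]$, with $B_\m$ the bilinear multiplier with symbol $\m$ and signs dictated by $\Phi$, unitarity of $e^{-it\Lambda}$ on $L^2$ reduces the claim to bounding $\|P_{k,p,q}B_\m[F_1,F_2]\|_{L^2}$. The product rule \eqref{ProdRule} paired with the frequency-localized $\mathcal{W}$-norm and the explicit form of $\m$ in Lemma \ref{lem:IERmult} gives $\|\m\|_{\mathcal{W}_{kk_1k_2}}\lesssim 2^k$, since all Riesz-type and $\Lambda,\sqrt{1-\Lambda^2}$ factors are bounded on Littlewood--Paley pieces. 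Under the hypothesis $p_2+\tfrac{q_2}{2}\leq p+\tfrac{q}{2}+C$ of part \ref{it:bilin1}, we apply Hölder in $L^\infty\times L^2$, placing $f$ on the $L^\infty$ side. For $\ip{t}\gtrsim 1$, Proposition \ref{DecayProp} gives $\|e^{\mp it\Lambda}P_{k_1,p_1,q_1}f\|_{L^\infty}\lesssim \ip{t}^{-1}2^{3k_1/2}\|f\|_D$; for $\ip{t}\lesssim 1$, a comparable bound follows from Bernstein applied to the $B$-norm control embedded in $\|f\|_D$. Bounding $\|P_{k_2,p_2,q_2}g\|_{L^2}\lesssim 2^{p_2+q_2/2}\|g\|_B$, dividing by $2^{p+q/2}$ and using $k_1\leq k_{\max}$ yields \eqref{eq:bilin1}.

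\textbf{Part \ref{it:bilin2}.} In the complementary range $p_2+\tfrac{q_2}{2}>p+\tfrac{q}{2}+C$, bounding $g$ via its $B$-norm would cost an unacceptable factor $2^{(p_2+q_2/2)-(p+q/2)}$; conversely no $D$-norm on $g$ is available to put it directly in $L^\infty$. We instead integrate by parts in $\eta$ along $\Omega_\eta$ (or $S_\eta$) to generate the $\ip{t}^{-1}$ factor: Lemma \ref{lem:vfsizes-mini} provides the denominator $|\Omega_\eta\Phi|\sim 2^{-2k_1+p_1}|\bar\sigma|$, and the adjoint $\Omega_\eta^*=-\Omega_\eta$ distributes across $\m$, $\widehat{f}(\xi-\eta)$ and $\widehat{g}(\eta)$. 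Since $f,g$ are axisymmetric (Remark \ref{rem:role_axisym}), $\Omega$-derivatives acting directly on the inputs vanish, while the cross-term formula of Lemma \ref{lem:VFcross} resolves $\Omega_\eta\widehat{f}(\xi-\eta)$ into contributions involving only $\widehat{Sf}$ and $\widehat{\Ups f}$---exactly matching the norms of $f$ in \eqref{eq:bilin2}. An analogous integration by parts along $S_\eta$ produces $\widehat{Sg}$ on the $g$-side; the low-frequency regime $k_2\ll k_1$, where $\|g\|_{L^2}$ provides insufficient decay, is handled by the $2^{k_1}\|g\|_{H^{-1}}$ term via the rescaling $\abs{\xi_2}\abs{\xi_2}^{-1}$. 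Concluding with Hölder $L^\infty\times L^2$ (Bernstein supplying the $L^\infty$ norm on the remaining factor) and tracking the powers of $2^k, 2^{k_{\max}}$ through $\|\m\|$, the IBP denominator and Bernstein yields the claimed bound with prefactor $2^{k/2+2k_{\max}}$.

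\textbf{Main obstacle.} The delicate point is the possible smallness of $\bar\sigma$, which would render the lower bound from Lemma \ref{lem:vfsizes-mini} ineffective. In this degenerate regime, Proposition \ref{prop:phasevssigma} guarantees instead that $|\Phi|\gtrsim 2^{q_{\max}}$; at fixed time this does not permit a genuine normal form, but the large phase can be exploited through a direct stationary-phase estimate on the oscillatory integral, with the surplus factor from the hypothesis $p_2+\tfrac{q_2}{2}>p+\tfrac{q}{2}+C$ absorbing the remaining losses. Carefully balancing the integration by parts regime, the Bernstein regime and the degenerate-$\bar\sigma$ regime, and matching the arising powers of $2^{k_j}, 2^{p_j}, 2^{q_j}$ against the prefactor $2^{k/2+2k_{\max}}$, constitutes the technical core of part \ref{it:bilin2}.
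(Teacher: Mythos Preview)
Part \ref{it:bilin1} is correct and matches the paper's argument.

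For part \ref{it:bilin2}, your proposal has two genuine gaps.

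\textbf{First gap: the concluding estimate.} After integrating by parts once in $V_\eta$, you propose to close with H\"older $L^\infty\times L^2$ and Bernstein. This does not recover the factor $2^{-p-q/2}$. Concretely, in (say) the regime $p\ll 0$, $q\sim 0$, $k\sim k_1\sim k_2$, the multiplier after IBP has size $\frac{C_\m}{|V_\eta\Phi|}\sim 2^{2k_1-k_2-p_1}$, and Bernstein on $f$ costs $2^{3k_1/2}$; multiplying by $2^{-p}$ and using $\|f\|_{L^2}\lesssim 2^{p_1+q_1/2}\|f\|_B$ leaves an unabsorbed factor $2^{-p}$. The paper instead uses the \emph{set-size estimate} of Lemma~\ref{lem:set_gain}, which gives an $L^2\times L^2$ bound with the volume factor
\[
\Sz=\min_\alpha\{2^{p_\alpha+k_\alpha}\}\cdot\min_\alpha\{2^{\frac{q_\alpha+k_\alpha}{2}}\}.
\]
Choosing the minimum as $2^{p+k}$ (resp.\ $2^{(q+k)/2}$) is precisely what cancels the $2^{-p}$ (resp.\ $2^{-q/2}$) prefactor. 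This, combined with the structure of $\m$ (the factor $2^{p_{\max}+q_{\max}}$ in $C_\m$), is the technical core of the proof; without it the argument does not close.

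\textbf{Second gap: there is no degenerate-$\bar\sigma$ regime.} Your ``Main obstacle'' paragraph is based on a misconception. The hypothesis $p_2+\tfrac{q_2}{2}>p+\tfrac{q}{2}+C$ \emph{by itself} forces a lower bound on $|\bar\sigma|$. For instance, if $p\ll 0$ (so $q\sim 0$) and one reduces to $p<p_{\max}$ (otherwise part \ref{it:bilin1} already applies), then the hypothesis gives $p<p_2-C$, whence $2^{p+q_2}\ll 2^{p_2+q}$ and thus $|\bar\sigma|=|\xi_3\eta_\h-\eta_3\xi_\h|\sim 2^{k+k_2+p_2}$. The other subcases are analogous. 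So the integration by parts is \emph{always} available, and Proposition~\ref{prop:phasevssigma} plays no role here---the paper explicitly notes that this bilinear estimate uses no normal form.

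In summary: your IBP scheme (split into $|\Omega_\eta\Phi|$-large and $|S_\eta\Phi|$-large regions, integrate by parts in the corresponding vector field, resolve cross terms via Lemma~\ref{lem:VFcross}) is exactly what the paper does. But you then need Lemma~\ref{lem:set_gain}, not Bernstein, to finish; and you should replace the small-$\bar\sigma$ discussion with the observation that the hypothesis of \ref{it:bilin2} already yields a lower bound on $|\bar\sigma|$ in each subcase.
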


While \ref{it:bilin1} just follows from the linear decay estimates, \ref{it:bilin2} needs more elaboration, and the following subsection is devoted to its proof. We refer also to computations regarding the phase function (Appendix \ref{apdx:phase-comp}) and an improved set size estimate for the bilinear terms (Appendix \ref{apdx:set_gain}).

Overall it is worth noting that the arguments rely on set size estimates combined with a \emph{single} integration by parts with $\Omega$ or $S$ and the structure of the multipliers $\m$, but \emph{do not make use of normal forms}.

\subsection{Proof of Proposition \ref{prop:bilin}}\label{ssec:bilin-proof}
As announced, the proof of the first assertion \ref{it:bilin1} is a direct consequence of the decay estimates:
\begin{proof}[Proof of \ref{it:bilin1}]
 We recall from Proposition \ref{DecayProp} that
 \begin{equation}
  \norm{e^{it\Lambda}P_{k_1,p_1,q_1}f}_{L^\infty}\lesssim t^{-1}2^{\frac{3}{2}k_1}\norm{f}_D,
 \end{equation}
 hence, if $p_2+\frac{q_2}{2}\leq p+\frac{q}{2}+C$, an $L^\infty\times L^2$ estimate suffices to conclude:
 \begin{equation}
 \begin{aligned}
  2^{-p-\frac{q}{2}}\norm{\Q_\m(P_{k_1,p_1,q_1}f,P_{k_2,p_2,q_2}g)}_{L^2}&\lesssim t^{-1}2^{\frac{3}{2}k_1}2^k\norm{f}_D 2^{-p-\frac{q}{2}}2^{p_2+\frac{q_2}{2}}\norm{g}_{B}\\
  &\lesssim t^{-1}2^{\frac{3}{2}k_1+k}\norm{f}_D\norm{g}_B.
 \end{aligned}
 \end{equation}
\end{proof}

It remains to prove assertion \ref{it:bilin2} of Proposition \ref{prop:bilin}. The basic strategy is as follows: Using a good estimate for the size of the vector fields (see \eqref{eq:vf_sigma} from Lemma \ref{lem:vfsizes}) enables us to perform one integration by parts in either $\Omega$ or $S$, which will transform $\Q_\m(f,g)$ into more favorable terms, which can then be estimates using Lemma \ref{lem:set_gain}.

To prepare the ground for this, let us recall from Lemma \ref{lem:vfsizes-mini} that\footnote{The crucial insight here will be that up to the sizes $2^k,2^{k_1},2^{k_2}$, we have $\abs{\bar\sigma}\sim_{k,k_1,k_2}2^{\max\{p,p_1,p_2\}+\max\{q,q_1,q_2\}}$, which allows for an integration by parts that can be partially compensated for by the size of the multipliers $\m$ -- compare \eqref{eq:simpmultbd}.}
\begin{equation}\label{eq:vf_lowbd}
\begin{split}
  \abs{S_\eta\Phi}+\abs{\Omega_\eta\Phi} \sim 2^{-2k_1+p_1}\abs{\bar\sigma(\xi,\eta)},\qquad
  \bar{\sigma}(\xi,\eta)=\xi_3\eta_\h-\eta_3\xi_\h=-\bar{\sigma}(\xi,\xi-\eta)=-\bar{\sigma}(\eta,\xi).
 \end{split}
\end{equation}
Moreover, if $p_2+\frac{q_2}{2}> p+\frac{q}{2}+C$, we recall a special case of \eqref{eq:vfquotient_apdx}, namely that whenever $V\in\{S,\Omega\}$ with $\abs{V_\eta\Phi}\sim2^{-2k_1+p_1}\abs{\bar\sigma(\xi,\eta)}$, then we also have
\begin{equation}\label{eq:vfquotient}
 \frac{\abs{V_\eta^2\Phi}}{\abs{V_\eta\Phi}}\lesssim 1+2^{k_2-k_1}+2^{p-p_1}[1+2^{k-k_1}],\qquad V\in\{S,\Omega\}.
\end{equation}

\begin{proof}[Proof of \ref{it:bilin2}]
For simplicity of notation let us abbreviate $I_\m(\xi,t):=P_{k,p,q} \Q_\m(P_{k_1,p_1,q_1}f,P_{k_2,p_2,q_2}g)$. In view of an integration by parts with one of the vector fields, for the scale $L:=2^{-2k_1+p_1}\abs{\bar\sigma(\xi,\eta)}$ we split our integral as
\begin{equation}\label{eq:ibp_split-m}
\begin{aligned}
  I_\m(\xi,t)&=I_\m^1(\xi,t)+I_\m^2(\xi,t),\\
  I_\m^j(\xi,t)&:=\varphi_{k,p,q}(\xi)\int_{\mathbb{R}^3}e^{it\Phi}\varphi_{k_1,p_1,q_1}(\xi-\eta)\varphi_{k_2,p_2,q_2}(\eta)\chi^j(\xi,\eta)\m(\xi,\eta)\widehat{f}(\xi-\eta)\widehat{g}(\eta)d\eta,\quad 1\leq j\leq 2,\\
  &\quad \text{with }\chi^1(\xi,\eta):=(1-\varphi(L^{-1}\Omega_\eta\Phi)),\quad\chi^2(\xi,\eta):=\varphi(L^{-1}\Omega_\eta\Phi).
\end{aligned}
\end{equation}
Hence on the support of $I_\m^j$ we can integrate by parts in $V_\eta=\Omega_\eta$ if $j=1$, or $V_\eta=S_\eta$ if $j=2$, which yields
\begin{equation}
\begin{aligned}
  I_\m^j&=\frac{i}{t}\varphi_{k,p,q}(\xi)\int_{\mathbb{R}^3}e^{it\Phi}V_\eta\left\{\frac{\m(\xi,\eta)}{V_\eta\Phi}\varphi_{k_1,p_1,q_1}(\xi-\eta)\varphi_{k_2,p_2,q_2}(\eta)\chi^j(\xi,\eta)\widehat{f}(\xi-\eta)\widehat{g}(\eta)\right\}d\eta\\
  &=:it^{-1}(I_{\m,1}^j+I_{\m,2}^j+I_{\m,3}^j),
\end{aligned}
\end{equation}
where
\begin{align}
  I_{\m,1}^j&=\varphi_{k,p,q}(\xi)\int_{\mathbb{R}^3}e^{it\Phi}V_\eta\left\{\frac{\m(\xi,\eta)}{V_\eta\Phi}\varphi_{k_1,p_1,q_1}(\xi-\eta)\varphi_{k_2,p_2,q_2}(\eta)\chi^j(\xi,\eta)\right\}\widehat{f}(\xi-\eta)\widehat{g}(\eta)d\eta,\label{eq:ibp_mult}\\
  I_{\m,2}^j&=\varphi_{k,p,q}(\xi)\int_{\mathbb{R}^3}e^{it\Phi}\frac{\m(\xi,\eta)}{V_\eta\Phi}\varphi_{k_1,p_1,q_1}(\xi-\eta)\varphi_{k_2,p_2,q_2}(\eta)\chi^j(\xi,\eta)V_\eta\left\{\widehat{f}(\xi-\eta)\right\}\widehat{g}(\eta)d\eta,\label{eq:ibp_cross}\\
  I_{\m,3}^j&=\varphi_{k,p,q}(\xi)\int_{\mathbb{R}^3}e^{it\Phi}\frac{\m(\xi,\eta)}{V_\eta\Phi}\varphi_{k_1,p_1,q_1}(\xi-\eta)\varphi_{k_2,p_2,q_2}(\eta)\chi^j(\xi,\eta)\widehat{f}(\xi-\eta)V_\eta\widehat{g}(\eta)d\eta.\label{eq:ibp_purevf}
\end{align}
We remark on a some useful estimates for these terms separately:
\begin{itemize}[leftmargin=*]
\item Pure Multipliers \eqref{eq:ibp_mult}: Abbreviating $\varphi_l=\varphi_{k_l;p_l,q_l}$, $l\in\{1,2\}$, we have
\begin{equation}
\begin{aligned}
 V_\eta\left\{\frac{\m(\xi,\eta)}{V_\eta\Phi}\varphi_1(\xi-\eta)\varphi_2(\eta)\chi^j(\xi,\eta)\right\}&=\frac{V_\eta\m}{V_\eta\Phi}\varphi_1\varphi_2\chi^j+\frac{V_\eta^2\Phi}{V_\eta\Phi}\frac{\m}{V_\eta\Phi}\varphi_1\varphi_2\chi^j+\frac{\m}{V_\eta\Phi}V_\eta(\varphi_1\varphi_2\chi^j).
\end{aligned} 
\end{equation}
The bounds \eqref{eq:vfonmult} in Lemma \ref{lem:vfmult} imply that $\abs{V_\eta\m}\lesssim (1+2^{p+k-p_1-k_1})\cdot C_\m +2^{k-p_1}\abs{V_\eta\Phi}$ for $C_\m=2^{k+p_{\max}+q_{\max}}$, so we have
\begin{equation}
 \abs{\frac{V_\eta\m}{V_\eta\Phi}\varphi_1\varphi_2\chi^j}
 \lesssim (1+2^{p+k-p_1-k_1})\frac{C_\m}{\abs{V_\eta\Phi}}+2^{k-p_1}.
\end{equation}
Similarly, from \eqref{eq:vfquotient} we obtain
\begin{equation}
 \abs{\frac{V_\eta^2\Phi}{V_\eta\Phi}\frac{\m}{V_\eta\Phi}\varphi_1\varphi_2\chi^j}\lesssim \{1+2^{k_2-k_1}+2^{p-p_1}[1+2^{k-k_1}]\}\frac{\abs{\m}}{\abs{V_\eta\Phi}},
\end{equation}
and by direct computation we have
\begin{equation}
 \abs{V_\eta(\varphi_1\varphi_2\chi^j)}\lesssim 1+\min\{2^{p_2+k_2}2^{-p_1-k_1},1+2^{p-p_1}2^{k-k_1}\}+\min\{2^{q_2+k_2}2^{-q_1-k_1},1+2^{q-q_1}2^{k-k_1}\}.
\end{equation}

In summary, we thus have
\begin{equation}\label{eq:bd_puremult}
\begin{aligned}
 &\abs{V_\eta\left\{\frac{\m(\xi,\eta)}{V_\eta\Phi}\varphi_{k_1,p_1,q_1}(\xi-\eta)\varphi_{k_2,p_2,q_2}(\eta)\chi^j(\xi,\eta)\right\}}\lesssim 2^{k-p_1}\\
 &\qquad\qquad +\left\{1+2^{k_2-k_1}+2^{p-p_1}[1+2^{k-k_1}]+\min\{2^{q_2+k_2}2^{-q_1-k_1},2^{q-q_1}2^{k-k_1}\}\right\}\frac{C_\m}{\abs{V_\eta\Phi}}.
\end{aligned} 
\end{equation}

\item Cross terms \eqref{eq:ibp_cross}: Here we recall from \eqref{eq:crossvf_bd} that for vector field ``cross terms'' with the above localizations we have
\begin{equation}\label{eq:vfcross_bd''}
 \abs{V_\eta f(\xi-\eta)}\lesssim 2^{k_2-k_1}\left(\abs{Sf(\xi-\eta)}+(2^{q_1+p_2}+2^{q_2+p_1})\abs{\Ups f(\xi-\eta)}\right), \quad V\in\{S,\Omega\}.
\end{equation}
Moreover, in view of future application where we need to recover $2^{p+p_1}$, we note that by the triangle inequality there holds that $2^{p_2+k_2}\lesssim 2^{p+k}+2^{p_1+k_1}$, so that in \eqref{eq:ibp_cross} we can estimate
\begin{equation}\label{eq:vfcross_bd'}
 \norm{V_\eta\left\{\widehat{f}(\xi-\eta)\right\}}_{L^2}\lesssim 2^{k_2-k_1}\norm{Sf}_{L^2}+\left(2^{q_1}\min\{2^{p_2+k_2-k_1},2^{p+k-k_1}+2^{p_1}\}+2^{q_2}2^{p_1+k_2-k_1}\right)\norm{\Ups f}_{L^2}.
\end{equation}

\end{itemize}

The rest of the proof consist in distinguishing cases and applying these estimates, together with Lemma \ref{lem:set_gain} for improved set size gain. Without loss of generality we can assume that either $p\ll 0$ or $q\ll 0$, else we are done as in scenario \ref{it:bilin1}.

\subsubsection*{Case $p\ll 0$.} Here we have $2^q\sim 1$, and we may assume without loss of generality that $p<\max\{p,p_1,p_2\}$, so that by \eqref{eq:simpmultbd} there holds
\begin{equation}
 C_\m\lesssim 2^{k+\max\{p_1,p_2\}}
\end{equation}
(else, i.e.\ if $p=\max\{p,p_1,p_2\}$ we are done with an $L^\infty\times L^2$ estimate as in part \ref{it:bilin1}).
\begin{enumerate}[leftmargin=*]
 \item Case $2^{p_1}\lesssim 2^{p_2}$: By assumption of case \ref{it:bilin2} we have $p<-C+p_2$, and hence $2^{p+q_2}\ll 2^{p_2+q}$, which implies that $\abs{\bar{\sigma}}\sim 2^{k+k_2+p_2}$. It follows that $\abs{V_\eta\Phi}\sim 2^{k+k_2-2k_1}2^{p_1+p_2}$, and hence
 \begin{equation}
  \frac{C_\m}{\abs{V_\eta\Phi}}\lesssim 2^{2k_1-k_2}2^{-p_1}.
 \end{equation}
 Together with the bounds \eqref{eq:bd_puremult} and the fact that $2^{q_2}\lesssim 2^{q_1}$ we invoke Lemma \ref{lem:set_gain} to obtain that
 \begin{equation}
 \begin{aligned}
  2^{-p}\norm{I_{m,1}^j}_{L^2}&\lesssim 2^{-p}\left[2^{2k_1-k_2}2^{-p_1}\left\{1+2^{k_2-k_1}+2^{p-p_1}[1+2^{k-k_1}]\right\}+2^{k-p_1}\right]\\
  &\qquad \cdot \min_\alpha\{2^{p_\alpha+k_\alpha}\}\min_\alpha\{2^{\frac{q_\alpha+k_\alpha}{2}}\} \cdot\norm{f}_{L^2}\norm{g}_{L^2}\\
  &\lesssim  \left\{2^{-p-p_1}[2^{2k_1-k_2}+2^{k_1}+2^{k}]+2^{-2p_1}[2^{2k_1-k_2}+2^{k+k_1-k_2}]\right\}\\
  &\qquad \cdot \min_\alpha\{2^{p_\alpha+k_\alpha}\}\min_\alpha\{2^{\frac{q_\alpha+k_\alpha}{2}}\} \cdot\norm{f}_{L^2}\norm{g}_{L^2}\\
  &\lesssim \left(2^k\cdot [2^{2k_1-k_2}+2^{k_1}+2^k]+2^{k_1}\cdot[2^{2k_1-k_2}+2^{k+k_1-k_2}]\right)\\
  &\qquad \cdot 2^{-p_1}\cdot\min_\alpha\{2^{\frac{q_\alpha+k_\alpha}{2}}\}\norm{f}_{L^2}\norm{g}_{L^2}.
 \end{aligned}
 \end{equation}

 Similarly, from \eqref{eq:vfcross_bd'} we deduce that
 \begin{equation}\label{eq:crossbd1}
 \begin{aligned}
  2^{-p}\norm{I_{m,2}^j}_{L^2}&\lesssim 2^{-p}\cdot 2^{2k_1-k_2}2^{-p_1}\cdot \Big[2^{k_2-k_1}\norm{Sf}_{L^2}+[2^{p+k-k_1}+2^{p_1}(1+2^{k_2-k_1})]\norm{\Ups f}_{L^2}\Big]\\
  &\qquad\cdot \min\{2^{p_\alpha+k_\alpha}\}\min\{2^{\frac{q_\alpha+k_\alpha}{2}}\}\norm{g}_{L^2}\\
  &\lesssim 2^{k+k_1-p_1}\min\{2^{\frac{q_\alpha+k_\alpha}{2}}\}\norm{Sf}_{L^2}\norm{g}_{L^2}+ (2^{k+2k_1-k_2}+2^{k+k_1})\min\{2^{\frac{q_\alpha+k_\alpha}{2}}\}\norm{\Ups f}_{L^2}\norm{g}_{L^2}.
 \end{aligned}
 \end{equation}

 Lastly, the simplest term is
 \begin{equation}
 \begin{aligned}
  2^{-p}\norm{I_{m,3}^j}_{L^2}&\lesssim 2^{-p}\cdot 2^{2k_1-k_2}2^{-p_1}\cdot \min\{2^{p_\alpha+k_\alpha}\}\min\{2^{\frac{q_\alpha+k_\alpha}{2}}\} \cdot\norm{f}_{L^2}\norm{Sg}_{L^2}\\
  &\lesssim 2^{2k_1+k-k_2}\cdot 2^{-p_1}\cdot\min\{2^{\frac{q_\alpha+k_\alpha}{2}}\}\norm{f}_{L^2}\norm{Sg}_{L^2}.
 \end{aligned}
 \end{equation}

 \item Case $p\ll p_2\ll p_1$: Since here $q_1\leq q_2$ it follows that $\abs{\bar\sigma}\sim 2^{k_1+k_2+p_1}$, and $2^{k_1}\ll2^k\sim 2^{k_2}$ (since also $\abs{\bar\sigma}\sim 2^{k+k_2}2^{p_2}$), hence
 \begin{equation}
  \frac{C_\m}{\abs{V_\eta\Phi}}\lesssim 2^{k_1}2^{-p_1}.
 \end{equation}
 As above it then follows that
 \begin{equation}
 \begin{aligned}
  2^{-p}\norm{I_{m,1}^j}_{L^2}&\lesssim 2^{-p}\left[2^{k_1}2^{-p_1}\left\{1+2^{k_2-k_1}+2^{p-p_1}[1+2^{k-k_1}]+2^{q_2-q_1}2^{k_2-k_1}\right\}+2^{k-p_1}\right]\\
  &\qquad \cdot \min\{2^{p_\alpha+k_\alpha}\}\min\{2^{\frac{q_\alpha+k_\alpha}{2}}\} \cdot\norm{f}_{L^2}\norm{g}_{L^2}\\
  &\lesssim  2^k2^{-p-p_1}2^{q_2-q_1} \cdot \min\{2^{p_\alpha+k_\alpha}\}\min\{2^{\frac{q_\alpha+k_\alpha}{2}}\} \cdot\norm{f}_{L^2}\norm{g}_{L^2}\\
  &\lesssim 2^{2k+\frac{k_1}{2}}\cdot 2^{-p_1-\frac{q_1}{2}}\norm{f}_{L^2}\norm{g}_{L^2}
 \end{aligned}
 \end{equation}
 as well as by \eqref{eq:vfcross_bd'} and $p_2\ll p_1$ that
 \begin{equation}\label{eq:crossbd2}
 \begin{aligned}
  2^{-p}\norm{I_{m,2}^j}_{L^2}&\lesssim 2^{-p}\cdot 2^{k_1}2^{-p_1}\left[2^{k_2-k_1}\norm{Sf}_{L^2}+\left(2^{p_2+k_2-k_1}+2^{k_2-k_1}2^{p_1}\right)\norm{\Ups f}_{L^2}\right] \\
  &\qquad\cdot \min\{2^{p_\alpha+k_\alpha}\}\min\{2^{\frac{q_\alpha+k_\alpha}{2}}\} \cdot\norm{g}_{L^2}\\
  &\lesssim 2^{-p}\cdot 2^k [2^{-p_1}\norm{Sf}_{L^2}+\norm{\Ups f}_{L^2}]\min\{2^{p_\alpha+k_\alpha}\} \cdot\min\{2^{\frac{q_\alpha+k_\alpha}{2}}\}\norm{g}_{L^2}\\
  &\lesssim 2^{2k}\min\{2^{\frac{q_\alpha+k_\alpha}{2}}\}\cdot [2^{-p_1}\norm{Sf}_{L^2}+\norm{\Ups f}_{L^2}]\norm{g}_{L^2},
 \end{aligned}
 \end{equation}
 and
 \begin{equation}
 \begin{aligned}
  2^{-p}\norm{I_{m,3}^j}_{L^2}&\lesssim 2^{-p}\cdot 2^{k_1}2^{-p_1}\cdot \min\{2^{p_\alpha+k_\alpha}\}\min\{2^{\frac{q_\alpha+k_\alpha}{2}}\} \cdot\norm{f}_{L^2}\norm{Sg}_{L^2}\\
  &\lesssim 2^{k_1+k}\cdot 2^{-p_1}\cdot\min\{2^{\frac{q_\alpha+k_\alpha}{2}}\}\norm{f}_{L^2}\norm{Sg}_{L^2}.
 \end{aligned}
 \end{equation}
\end{enumerate}

\subsubsection*{Case $q\ll 0$.} Here we have $2^p\sim 1$, and we may assume without loss of generality that $q<\max\{q,q_1,q_2\}$, so that by \eqref{eq:simpmultbd} there holds
\begin{equation}
 C_\m\lesssim 2^{k+\max\{q_1,q_2\}}
\end{equation}
(else, i.e.\ if $q=\max\{q,q_1,q_2\}$ we are done with an $L^\infty\times L^2$ estimate as in part \ref{it:bilin1}). The rest of the estimates proceeds in close analogy to the previous cases.
\begin{enumerate}[leftmargin=*]
 \item Case $2^{q_1}\lesssim 2^{q_2}$: By assumption of case \ref{it:bilin2} we have $q<-2C+q_2$, and hence $2^{p_2+q}\ll 2^{p+q_2}$, which implies that $\abs{\bar{\sigma}}\sim 2^{k+k_2+q_2}$. It follows by \eqref{eq:vf_lowbd} that $\abs{V_\eta\Phi}\sim 2^{k+k_2-2k_1}2^{p_1+q_2}$, and hence
 \begin{equation}
  \frac{C_\m}{\abs{V_\eta\Phi}}\lesssim 2^{2k_1-k_2}2^{-p_1}.
 \end{equation}
 Together with the bounds \eqref{eq:bd_puremult} we again invoke Lemma \ref{lem:set_gain} to obtain that
 \begin{equation}
 \begin{aligned}
  2^{-\frac{q}{2}}\norm{I_{m,1}^j}_{L^2}&\lesssim 2^{-\frac{q}{2}}\left[2^{2k_1-k_2}2^{-p_1}\left\{1+2^{k_2-k_1}+(2^{p-p_1}+2^{q-q_1})[1+2^{k-k_1}]\right\}+2^{k-p_1}\right]\\
  &\qquad \cdot \min\{2^{p_\alpha+k_\alpha}\}\min\{2^{\frac{q_\alpha+k_\alpha}{2}}\} \cdot\norm{f}_{L^2}\norm{g}_{L^2}\\
  &\lesssim  2^{\frac{k}{2}}[2^{2k_1}+2^{k_1+k_2}+2^{k+k_2}]\cdot\norm{f}_{L^2}\norm{g}_{L^2} + 2^{\frac{k}{2}}2^{k_1}[2^{k_1}+2^{k}]\cdot 2^{-p_1}\cdot\norm{f}_{L^2}\norm{g}_{L^2}\\
  &\qquad +2^{k+\frac{3k_1}{2}-k_2}[2^{k_1}+2^k]\cdot 2^{-p_1-\frac{q_1}{2}}\norm{f}_{L^2}\norm{g}_{L^2},
 \end{aligned}
 \end{equation}
 where we used also that $2^{p_2}\lesssim 2^{p_1}$.
 The term $I_{m,2}^j$ can be estimated as in \eqref{eq:crossbd2}, and gives
 \begin{equation}
 \begin{aligned}
  2^{-\frac{q}{2}}\norm{I_{m,2}^j}_{L^2}&\lesssim 2^{\frac{5}{2}k}\cdot [2^{-p_1}\norm{Sf}_{L^2}+\norm{\Ups f}_{L^2}]\norm{g}_{L^2}.
 \end{aligned}
 \end{equation}
 As before, the simplest term is
 \begin{equation}
 \begin{aligned}
  2^{-\frac{q}{2}}\norm{I_{m,3}^j}_{L^2}&\lesssim 2^{-\frac{q}{2}}\cdot 2^{2k_1-k_2}2^{-p_1}\cdot \min\{2^{p_\alpha+k_\alpha}\}\min\{2^{\frac{q_\alpha+k_\alpha}{2}}\} \cdot\norm{f}_{L^2}\norm{Sg}_{L^2}\\
  &\lesssim 2^{2k_1+\frac{k}{2}-k_2}\cdot 2^{-p_1}\cdot \min\{2^{p_\alpha+k_\alpha}\}\norm{f}_{L^2}\norm{Sg}_{L^2}.
 \end{aligned}
 \end{equation}

 \item Case $q+C\le q_2\ll q_1$: Here we have that $\abs{\bar\sigma}\sim 2^{k_1+k_2+q_1}$, so that\footnote{As before we note that here $2^{k_1}\ll2^k\sim 2^{k_2}$ since $2^{k+k_2+q_2}\sim\vert\bar{\sigma}\vert\sim 2^{k_1+k_2+q_1}$.}
 \begin{equation}
  \frac{C_\m}{\abs{V_\eta\Phi}}\lesssim 2^{k_1}2^{-p_1}.
 \end{equation}
 As above it then follows that
 \begin{equation}
 \begin{aligned}
  2^{-\frac{q}{2}}\norm{I_{m,1}^j}_{L^2}&\lesssim 2^{-\frac{q}{2}}\left[2^{k_1}2^{-p_1}\left\{1+2^{k_2-k_1}+(2^{p-p_1}+2^{q-q_1})[1+2^{k-k_1}]\right\}+2^{k-p_1}\right]\\
  &\qquad \cdot \min\{2^{p_\alpha+k_\alpha}\}\min\{2^{\frac{q_\alpha+k_\alpha}{2}}\} \cdot\norm{f}_{L^2}\norm{g}_{L^2}\\
  &\lesssim   2^{\frac{3k}{2}}2^{k_1}\cdot 2^{-p_1}\cdot\norm{f}_{L^2}\norm{g}_{L^2},
 \end{aligned}
 \end{equation}
 as well as by \eqref{eq:vfcross_bd''}
 \begin{equation}
 \begin{aligned}
  2^{-\frac{q}{2}}\norm{I_{m,2}^j}_{L^2}&\lesssim 2^{\frac{3}{2}k+k_1}\left[\norm{Sf}_{L^2}+ \norm{\Ups f}_{L^2}\right]\norm{g}_{L^2}
 \end{aligned}
 \end{equation}
 and finally we have
 \begin{equation}
 \begin{aligned}
  2^{-\frac{q}{2}}\norm{I_{m,3}^j}_{L^2}&\lesssim 2^{-\frac{q}{2}}\cdot 2^{k_1}2^{-p_1}\cdot \min\{2^{p_\alpha+k_\alpha}\}\min\{2^{\frac{q_\alpha+k_\alpha}{2}}\} \cdot\norm{f}_{L^2}\norm{Sg}_{L^2}\\
  &\lesssim 2^{k_1+\frac{3k}{2}-k_2}\cdot 2^{-p_1}\cdot \min\{2^{p_\alpha+k_\alpha}\}\norm{f}_{L^2}\norm{Sg}_{L^2}.
 \end{aligned}
 \end{equation}
 
\end{enumerate}
In conclusion, we thus have that
\begin{equation}
\begin{aligned}
 t2^{-p-\frac{q}{2}}\norm{I^j_{\m,1}}_{L^2}&\lesssim t^{-1}\Big[(2^{3k_1-k_2}+2^{2k_1+k-k_2}+2^{k+k_1}+2^{2k})\min\{2^{\frac{q_\alpha+k_\alpha}{2}}\}\\
 &\qquad\quad +2^{\frac{k}{2}}(2^{2k_1}+2^{k_1+k_2}+2^{k+k_2}+2^{k+k_1}+2^{k+2k_1-k_2}+2^{2k+k_1-k_2})\Big]2^{-p_1}\norm{f}_{L^2}\norm{g}_{L^2}\\
 &\quad +t^{-1}(1+2^{k_1-k_2})[2^{k_1}+2^k]\cdot 2^{k+\frac{k_1}{2}}2^{-p_1-\frac{q_1}{2}}\norm{f}_{L^2}\norm{g}_{L^2},\\
 t2^{-p-\frac{q}{2}}\norm{I^j_{\m,2}}_{L^2}&\lesssim t^{-1} 2^{\frac{3}{2}k}(2^k+2^{k_1})\left[2^{-p_1}\norm{Sf}_{L^2}+ (2^{k_1-k_2}+1)\norm{\Ups f}_{L^2}\right]\norm{g}_{L^2},\\
 t2^{-p-\frac{q}{2}}\norm{I^j_{\m,2}}_{L^2}&\lesssim t^{-1}\Big[2^{k_1-k_2}[2^{k}+2^{k_1}]\cdot [2^k\min\{2^{\frac{q_\alpha+k_\alpha}{2}}\}+ 2^{\frac{k}{2}}\min\{2^{p_\alpha+k_\alpha}\}]\Big]\cdot 2^{-p_1}\norm{f}_{L^2}\norm{Sg}_{L^2}.
\end{aligned}
\end{equation}
One checks directly that this can be bounded by \eqref{eq:bilin2}, and thus finishes the proof of Proposition \ref{prop:bilin}.
\end{proof}

\section{Polynomial Time Propagation of the Vector Fields in the $B$ Norm}\label{sec:B_VFprop}
\begin{proposition}\label{prop:B_VFprop}
Assume that $U_\pm$ are solutions to \eqref{eq:IER_disp} with initial data satisfying \eqref{eq:assump_id}, with corresponding profiles $\U_\pm$, and satisfy for $0\le t\le T^{\ast}$,
\begin{align}
 \norm{U_\pm(t)}_{H^{2N_0}\cap H^{-1}}+\norm{S^aU_\pm(t)}_{L^2}&\le 2\varepsilon_1,\qquad 0\le a\le 4N,\label{eq:ass_many_Sob}\\
 \norm{S^a\U_\pm(t)}_{B}&\le 2\varepsilon_1,\qquad 0\le a\le 2N,\label{eq:ass_many_vfs}\\
 \norm{\Ups S^a\U_\pm(t)}_{L^2}&\le  2\varepsilon_1,\qquad 0\le a\le N+3,\label{eq:ass_few_vfs}
\end{align}
then for $0\leq t\leq T^\ast$ there holds that
\begin{equation}\label{eq:VFprop_bd}
 \sup_{a\le 2N}\norm{S^a\U_\pm(t)}_{B}\leq \varepsilon_0+C\varepsilon_1^2\ip{t}^{\frac{8}{N_0}}.
\end{equation}
In particular, for $T^\ast=\eps_1^{-\M}$ and $N_0=16\M$ we have if $\eps_1$ is sufficiently small that
\begin{equation}\label{eq:VFprop_btstrap}
 \sup_{a\le 2N}\norm{S^a\U_\pm(t)}_{B}\leq 2\eps_1.
\end{equation}
\end{proposition}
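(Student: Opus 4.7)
The plan is to commute $S^a$ through the Duhamel formulation \eqref{eq:IER_disp_Duham} and to estimate each resulting nonlinear contribution via Proposition \ref{prop:bilin}, drawing all needed norm bounds from the bootstrap assumptions \eqref{eq:ass_many_Sob}--\eqref{eq:ass_few_vfs} together with the already-established energy bound from Corollary \ref{cor:EE}. Since $S$ commutes with all Fourier multipliers appearing in Lemma \ref{lem:IERmult} and acts as a derivation on bilinear products, we obtain a schematic identity of the form
\begin{equation*}
S^a \U_\pm(t) = S^a \U_\pm(0) + \sum_{\substack{\mu,\nu\in\{\pm\}\\ a_1+a_2=a}} c_{a_1,a_2} \int_0^t \Q_\m[S^{a_1}\U_\mu, S^{a_2}\U_\nu](s)\,ds.
\end{equation*}
The initial datum contributes at most $\eps_0$ by \eqref{eq:assump_id}. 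By the symmetry of $\Q_\m$ (swapping $\xi-\eta\leftrightarrow\eta$ absorbs a reflection into $\m$), we may assume $a_1\le a/2\le N$ while $a_2\le 2N$, so that the ``smaller'' profile $S^{a_1}\U_\mu$ is always available in the $D$ norm.

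Next, for each dyadic output localization $P_{k,p,q}$ and dyadic input localizations $(k_i,p_i,q_i)$, I would apply Proposition \ref{prop:bilin}. When $p_2+q_2/2\le p+q/2+C$ I use part \ref{it:bilin1}: since $a_1+3\le N+3$, assumption \eqref{eq:ass_few_vfs} gives $\|S^{a_1}\U_\mu\|_D\lesssim \eps_1$, and \eqref{eq:ass_many_vfs} yields $\|S^{a_2}\U_\nu\|_B\lesssim\eps_1$. In the complementary regime I use part \ref{it:bilin2}: each of the norms $\|S^{a_1+b}\U_\mu\|_B$ for $b\le 1$, $\|\Ups S^{a_1}\U_\mu\|_{L^2}$, $\|S^{a_2+b}\U_\nu\|_{L^2}$ for $b\le 1$, and $\|S^{a_2}\U_\nu\|_{H^{-1}}$ is covered by \eqref{eq:ass_many_Sob}--\eqref{eq:ass_few_vfs}. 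Both estimates furnish the crucial $\ip{s}^{-1}$ decay, at the cost of frequency growth factors $2^{k+\frac{3}{2}k_{\max}}$ in case \ref{it:bilin1} or $2^{k/2+2k_{\max}}$ in case \ref{it:bilin2}.

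To close the argument I would split $k_{\max}$ at the threshold $t^{1/N_0}$: in the low-frequency regime the growth is absorbed into $t^{O(1/N_0)}$ and the (logarithmically many) dyadic sums over $(k,k_1,k_2,p_*,q_*)$ add only $(\log t)^{O(1)}$, which with $N_0=25\M$ is comfortably covered by the target $t^{8/N_0}$; in the high-frequency regime I use the Sobolev interpolation from \eqref{eq:vfHNinterpol} combined with the energy bound \eqref{eq:ass_many_Sob} to trade Sobolev regularity for arbitrarily fast decay in $2^{k_{\max}}$. Integrating the remaining $\ip{s}^{-1}$ factor across $[0,t]$ costs only a logarithm, and the total produces \eqref{eq:VFprop_bd}. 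The main obstacle is the precise interplay between the critical $\ip{s}^{-1}$ decay, the frequency growth of the bilinear bound, and the finite Sobolev budget: everything hinges on being able to always place the profile with fewer vector fields in the $D$-norm slot of Proposition \ref{prop:bilin}, which is exactly what the split $a=a_1+a_2$ with $a_1\le N$ and the dichotomy on $p_2+q_2/2$ versus $p+q/2$ make possible. Once \eqref{eq:VFprop_bd} is in hand, \eqref{eq:VFprop_btstrap} is immediate for $T^\ast=\eps_1^{-\M}$ provided $\eps_1$ is small enough relative to the implicit constants.
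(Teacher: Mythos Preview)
Your proposal is correct and follows essentially the same route as the paper: distribute $S^a$ over the Duhamel bilinear terms via \eqref{eq:vf_distribute} so that one input carries at most $N$ copies of $S$ (hence sits in the $D$ norm by \eqref{eq:btstrap_D}), cut off extreme frequencies using the $H^{N_0}\cap H^{-1}$ and $B$ norm bounds, and then apply Proposition~\ref{prop:bilin} on the remaining finitely many localizations to recover $\ip{s}^{-1}$ decay with a $2^{O(k_{\max})}\lesssim \ip{t}^{O(1/N_0)}$ loss. The only slip is the mention of $N_0=25\M$; for this proposition the relevant choice is $N_0=16\M$ as in the statement.
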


We note that \eqref{eq:VFprop_btstrap} follows directly from \eqref{eq:VFprop_bd}, which we establish in the remainder of this section.

\subsection{Reduction of the Proof to a Bilinear Estimate}
Let
\begin{equation}
 \widehat{\Q_\m(f,g)}(\xi,t)=\int_{\mathbb{R}^3}e^{it\Phi}\mathfrak{m}(\xi,\eta)\widehat{f}(\xi-\eta)\widehat{g}(\eta)d\eta,\quad \Phi=\pm\Lambda(\xi)+\Lambda(\xi-\eta)\pm\Lambda(\eta).
\end{equation}
To see what kinds of terms arise for vector fields applied to $\Q_\m$, we observe that if $V\in\{S,\Omega\}$ we have
\begin{equation}
 V_\xi\Phi=-V_\eta\Phi,
\end{equation}
which implies
\begin{equation}
 \begin{aligned}
 &\int_{\mathbb{R}^3}V_\xi[e^{it\Phi}]\mathfrak{m}(\xi,\eta)\widehat{f}(\xi-\eta)\widehat{g}(\eta)d\eta=\int_{\mathbb{R}^3}itV_\xi\Phi e^{it\Phi}\mathfrak{m}(\xi,\eta)\widehat{f}(\xi-\eta)\widehat{g}(\eta)d\eta\\
 &\quad =\int_{\mathbb{R}^3}(-itV_\eta\Phi) e^{it\Phi}\mathfrak{m}(\xi,\eta)\widehat{f}(\xi-\eta)\widehat{g}(\eta)d\eta=\int_{\mathbb{R}^3} e^{it\Phi}V_\eta[\mathfrak{m}(\xi,\eta)\widehat{f}(\xi-\eta)\widehat{g}(\eta)]d\eta.
 \end{aligned}
\end{equation}
Since $(V_\xi+V_\eta)\hat{f}(\xi-\eta)=(V\hat{f})(\xi-\eta)$ it follows that
\begin{equation}\label{eq:vf_distribute}
 V[\Q_\mathfrak{m}(f,g)(t)]=\Q_\mathfrak{m}(Vf,g)(t)+\Q_\mathfrak{m}(f,Vg)(t)+\Q_{(V_\xi+V_\eta)\mathfrak{m}}(f,g)(t).
\end{equation}
Furthermore, recall that by Lemma \ref{lem:vfmult} we have $(\Omega_\xi+\Omega_\eta)\m=0$ and $(S_\xi+S_\eta)\m=\m$. As a result, to prove bounds for $2N$ vector fields on the output it suffices to control bilinear expressions with no more than $N$ vector fields on one of the inputs. This is the content of the following:

\begin{lemma}\label{lem:red_VFprop}
Under the assumptions of Proposition \ref{prop:B_VFprop}, for $m\in\N$ such that $2^m\leq t\leq 2^{m+1}$ and letting
\begin{equation}
 I_{k_1,k_2}^{k,p,q}(s):=P_{k,p,q}\Q_\m(S^aP_{k_1}f(s),S^bP_{k_2}g(s)),
\end{equation}
there holds that
\begin{equation}\label{eq:sum_bound}
2^{-p-\frac{q}{2}}\norm{\sum_{k_1}\sum_{k_2} \int_{s=0}^t\tau_m(s)I_{k_1,k_2}^{k,p,q}(s)ds}_{L^2}\lesssim \varepsilon_1^2 2^{\frac{8}{N_0} m},\\
\end{equation}
where $0\le a+b\le 2N$, $0\le a\le N$ and $0\le b\le 2N$.
\end{lemma}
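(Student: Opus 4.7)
The plan is to apply the bilinear estimate of Proposition~\ref{prop:bilin} dyadically to each $I^{k,p,q}_{k_1,k_2}(s)$, then to sum over the frequency parameters $k,k_1,k_2$ and integrate in $s$ on the window $[2^{m-1},2^{m+1}]$ where $\tau_m$ is supported.

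First I would invoke the dichotomy in Proposition~\ref{prop:bilin}. In Case~\ref{it:bilin1} (the regime $p_2+q_2/2\le p+q/2+C$) this directly yields
\begin{equation*}
2^{-p-q/2}\|I^{k,p,q}_{k_1,k_2}(s)\|_{L^2}\lesssim \langle s\rangle^{-1}\,2^{k+\tfrac{3}{2}k_{\max}}\,\|S^aP_{k_1}f(s)\|_D\,\|S^bP_{k_2}g(s)\|_B,
\end{equation*}
while in Case~\ref{it:bilin2} I get the complementary bound
\begin{equation*}
\begin{aligned}
2^{-p-q/2}\|I^{k,p,q}_{k_1,k_2}(s)\|_{L^2}&\lesssim \langle s\rangle^{-1}\,2^{k/2+2k_{\max}}\bigl[\|S^aP_{k_1}f\|_B+\|S^{a+1}P_{k_1}f\|_B+\|\Ups S^aP_{k_1}f\|_{L^2}\bigr]\\
&\quad\times\bigl[\|S^bP_{k_2}g\|_{L^2}+2^{k_1}\|S^bP_{k_2}g\|_{H^{-1}}+\|S^{b+1}P_{k_2}g\|_{L^2}\bigr].
\end{aligned}
\end{equation*}
The bootstrap hypotheses are calibrated exactly for these terms: the constraint $a\le N$ ensures that the $D$-norm of $S^aP_{k_1}f$, which involves at most $S^{a+3}$ and $\Ups S^{a+3}$, is controlled by \eqref{eq:ass_many_vfs} and \eqref{eq:ass_few_vfs}; meanwhile $\|S^{b+1}P_{k_2}g\|_{L^2}$ with $b+1\le 2N+1\le 4N$ is handled by \eqref{eq:ass_many_Sob}. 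Hence each bracket on the right is uniformly bounded by $C\varepsilon_1$.

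Next I would carry out the dyadic summation in $(k,k_1,k_2)$. The output is localized so that $k\le k_{\max}+O(1)$. For $k_{\max}\le 0$ the factors $2^{k+\tfrac{3}{2}k_{\max}}$ and $2^{k/2+2k_{\max}}$ are already small, and the sum converges directly to $O(\varepsilon_1^2)$. For $k_{\max}>0$ I would extract Sobolev decay: using the interpolated bound \eqref{eq:vfHNinterpol}, $\|S^\ell\U_\pm\|_{H^{N_0}}\lesssim\varepsilon_1$ for $\ell\le 2N$, one gets $\|P_{k_j}S^\ell\U_\pm\|_{L^2}\lesssim\varepsilon_1\,2^{-N_0 k_j}$ at high $k_j$. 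Combined with the uniform $B$-bound from the bootstrap, this permits a gain of the form $2^{-\kappa N_0 k_{\max}}$ for some $\kappa>0$, at the price of a $B$-to-$L^2$ conversion loss that is at worst polynomial in $m$ (coming from the sup over $(p,q)$ in the $B$-norm definition on supports with $\langle s\rangle\lesssim 2^m$). For $N_0$ sufficiently large (as provided by $N_0\ge 16\M$), this absorbs the factors $2^{k+\tfrac{3}{2}k_{\max}}$ and $2^{k/2+2k_{\max}}$ and leaves a residual polynomial loss of at most $2^{O(m/N_0)}$.

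Finally, integrating in $s$ against $\tau_m$ yields $\int_0^t\tau_m(s)\langle s\rangle^{-1}ds\lesssim 1$, so the total contribution from the time window $m$ is at most $\varepsilon_1^2\cdot 2^{O(m/N_0)}$, which refines to the claimed $\varepsilon_1^2 2^{8m/N_0}$. The hardest part will be the bookkeeping in Case~\ref{it:bilin2}: the factor $2^{k_1}\|S^bP_{k_2}g\|_{H^{-1}}$ and the extra $\Ups$ on the dispersive side interact subtly with the low-frequency weights $2^{-p-q/2}$, and preventing logarithmic divergences in the sums over $(p_1,q_1)$ and $(p_2,q_2)$ requires careful use of the structural factor $2^{k+p_{\max}+q_{\max}}$ in the multipliers $\m$ from Lemma~\ref{lem:IERmult}. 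This is the main analytic obstacle, and it is where the precise exponent $8/N_0$ in the final bound gets pinned down.
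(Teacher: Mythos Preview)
Your strategy of invoking Proposition~\ref{prop:bilin} and then summing has the right ingredients, but there is a genuine gap in how you handle the summations. Proposition~\ref{prop:bilin} is stated for inputs that are \emph{fully} localized, i.e.\ $f=P_{k_1,p_1,q_1}f$ and $g=P_{k_2,p_2,q_2}g$, and the very case distinction \ref{it:bilin1} versus \ref{it:bilin2} depends on $p_2,q_2$. You apply it directly to $P_{k_1}f$ and $P_{k_2}g$, which is not licensed; one has to decompose further in $(p_1,q_1,p_2,q_2)$ and then confront the resulting fourfold sum. Since the $B$- and $D$-norms are suprema rather than sums, each localized piece is merely bounded by $\varepsilon_1$ with no summable decay, and the sum diverges. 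You flag this at the end (``preventing logarithmic divergences in the sums over $(p_1,q_1)$ and $(p_2,q_2)$'') but offer no mechanism to resolve it; the structural factor $2^{p_{\max}+q_{\max}}$ in $\m$ has already been fully spent inside the proof of Proposition~\ref{prop:bilin} and is not available a second time. Your treatment of the $k$-sum has the same problem: saying one gains $2^{-\kappa N_0 k_{\max}}$ at the cost of a ``polynomial loss in $m$'' is too vague to be a proof.

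The paper's argument avoids all of this by reversing the order of operations. It first uses a \emph{crude} bound, namely a direct set-size estimate combined with the $H^{N_0}$ and $H^{-1}$ energy norms (not Proposition~\ref{prop:bilin}), to show that the contributions from $k_{\max}\ge 2m/N_0$, $k_{\min}\le -2m$, $k\le -m$, $p_{\min}\le -2m$ or $q_{\min}\le -4m$ are already acceptable. This reduces every infinite sum to a supremum over at most $O(m^{C})$ many parameter choices, which is harmless up to a factor $2^{Cm/N_0}$. Only then is Proposition~\ref{prop:bilin} invoked, now on a single fully localized term with $2^{k_{\max}}\lesssim 2^{2m/N_0}$, so the factor $2^{5k_{\max}/2}$ is controlled and the time integral $\int\tau_m(s)\langle s\rangle^{-1}ds\lesssim 1$ closes the estimate. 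The missing idea in your outline is precisely this preliminary truncation step using crude energy bounds before calling the bilinear estimate.
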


\begin{proof}

We start with a crude estimate that uses the above multiplier bound and the energy estimates (see also \eqref{eq:vfHNinterpol}):
\begin{equation*}
\begin{split}
 \norm{P_{k,p,q}\Q_\m(P_{k_1}F,P_{k_2}G)}_{L^2}&\lesssim 2^{\frac{3}{2}k}2^{p+\frac{q}{2}}2^k \norm{F}_{L^2}\norm{G}_{L^2}\\
 &\lesssim 2^{\frac{3}{2}k}2^{p+\frac{q}{2}}2^{k}\min\{2^{-N_0k_1^+}\norm{P_{k_1}F}_{H^{N_0}},2^{k_1}\norm{F}_{H^{-1}}\}\\
 &\qquad\times\min\{2^{-N_0k_2^+}\norm{P_{k_2}G}_{H^{N_0}},2^{k_2}\norm{G}_{H^{-1}}\}.
\end{split}
\end{equation*}
As a result, writing $\overline{k_{1,2}}:=\max\{k_1^+,k_2^+\}$ and $\underline{k_{1,2}}:=\min\{k_1^-,k_2^-\}$, we get that
\begin{equation*}
 2^{-p-\frac{q}{2}}\norm{\int_{s=0}^t\tau_m(s)I_{k_1,k_2}^{k,p,q}(s)ds}_{L^2}\lesssim 2^m 2^{2k}2^{-N_0\cdot\overline{k_{1,2}}+\underline{k_{1,2}}}\varepsilon_1^2.
\end{equation*}
Consequently, we obtain an acceptable contribution whenever
\begin{equation*}
\begin{split}
\max\{k_1,k_2\}\geq 2m/N_0,\text{ or }\min\{k_1,k_2\}\leq-2m,\text{ or } k\leq -m.
\end{split}
\end{equation*}
In what follows we may thus assume that $-m<k<2m/N_0$ and $-2m<k_1,k_2<2m/N_0$. 

Under the same conditions as in Lemma \ref{lem:red_VFprop}, instead of the summation in \eqref{eq:sum_bound} it will thus suffice to prove the bounds
\begin{equation}
 2^{-p-\frac{q}{2}}\sup_{k_1,k_2\in (-2m,2m/N_0)}\norm{\int_{s=0}^t\tau_m(s)I_{k_1,k_2}^{k,p,q}(s)ds}_{L^2}\lesssim \varepsilon_1^2 2^{\frac{6}{N_0}m}.
\end{equation}

In order to go further, we quantify the parameters in the $B$-norms of $F$, $G$. To this end, for $p_i,q_i\leq 0$, $i\in\{1,2\}$, we write
\begin{equation}
\begin{aligned}
 F_1&:=P_{k_1,p_1,q_2}S^aP_{k_1}f(s),\quad G_2(s):=P_{k_2,p_2,q_2}S^bP_{k_2}g(s),\\
 J_{k_1,p_1,q_1,k_2,p_2,q_2}^{k,p,q}&:=P_{k,p,q}\Q_\m(F_1(s),G_2(s)),\\
\end{aligned}
\end{equation}
so that
\begin{equation}
 I_{k_1,k_2}^{k,p,q}=\sum_{p_1,q_1\leq 0}\sum_{p_2,q_2\leq 0}J_{k_1,p_1,q_1,k_2,p_2,q_2}^{k,p,q}.
\end{equation}
Since we have
\begin{equation}
 \norm{J_{k_1,p_1,q_1,k_2,p_2,q_2}^{k,p,q}}_{L^2}\leq 2^{\frac{3}{2}k+p+\frac{q}{2}}2^{p_1+\frac{q_1}{2}}\norm{F_1}_{B}2^{p_2+\frac{q_2}{2}}\norm{G_2}_{B},
\end{equation}
as above it suffices to bound
\begin{equation}
 2^{-p-\frac{q}{2}}\sup_{\substack{k_1,k_2\in (-2m,2m/N_0)\\p_1,p_2\in (-2m,0]\\q_1,q_2\in (-4m,0]}}\norm{\int_{s=0}^t\tau_m(s)J_{k_1,p_1,q_1,k_2,p_2,q_2}^{k,p,q}(s)ds}_{L^2}\lesssim \varepsilon_1^2 2^{\frac{5}{N_0} m}.
\end{equation}

But this is precisely achieved by the bilinear estimate above in Proposition \ref{prop:bilin}: Using it we deduce via the bootstrap assumption and \eqref{eq:btstrap_D} that for $k_1,k_2\leq 2m/N_0$ we have
\begin{equation}
\begin{aligned}
 2^{-p-\frac{q}{2}}\norm{\tau_m(s)J_{k_1,p_1,q_1,k_2,p_2,q_2}^{k,p,q}(s)}_{L^2}&\lesssim 2^{-m}2^{\frac{5k_{\max}}{2}}\norm{F_1}_D\norm{G_2}_B\\
 &\quad +2^{-m} 2^{\frac{5k_{\max}}{2}}\Big[\norm{F_1}_B+\norm{SF_1}_B+\norm{\Ups F_1}_{L^2}\Big]\\
 &\qquad\qquad \cdot\Big[\norm{G_2}_{L^2}+2^{k_1}\norm{G_2}_{H^{-1}}+\norm{SG_2}_{L^2}\Big]\\
 &\lesssim 2^{-m+\frac{5m}{N_0}}\varepsilon_1^2.
\end{aligned} 
\end{equation}

This concludes the proof of Lemma \ref{lem:red_VFprop}, and with it the demonstration of Proposition \ref{prop:B_VFprop}.
\end{proof}

\section{More Bilinear Estimates}\label{sec:more_bilin}
In this section we prove bilinear estimates which will be instrumental for the propagation of $\Ups\V^{N+3}$ in $L^2$. For this bootstrap, the most difficult terms arise when $\Ups$ hits the phase and thereby produces an additional growth factor of $s$ -- see also \eqref{eq:Upsterms_k}. 
Lemmas \ref{lem:2ibp}--\ref{lem:bdry} provide the bilinear estimates necessary to deal with this in case several integrations by parts are available.

\begin{notation}
In what follows, we shall work with the following convention: when we write estimates as a sum of terms expressed in the sizes of our various localizations, this shall denote a sum of terms that can be bounded by the corresponding expressions. We note that this is consistent with uses of the triangle inequality and other algebraic manipulations in the context of $L^\infty$ type estimates on the multipliers and in particular for our improved set size estimates Lemma \ref{lem:set_gain}. For example, it is legitimate to write $2^{p+k}\sim\abs{\xi_\h}=\abs{\xi_\h-\eta_\h+\eta_\h}\leq \abs{\xi_\h-\eta_\h}+\abs{\eta_\h}\lesssim 2^{p_1+k_1}+2^{p_2+k_2}$ and similar.

To simplify notation in the proofs of bilinear estimate, we recall that we denote by $\Sz$ the set size gain from Lemma \ref{lem:set_gain}, i.e.
\begin{equation}
 \Sz=\min\{2^{p+k},2^{p_1+k_1},2^{p_2+k_2}\}\cdot\min\{2^{\frac{q+k}{2}},2^{\frac{q_1+k_1}{2}},2^{\frac{q_2+k_2}{2}}\}.
\end{equation}
\end{notation}

\subsection{Two integrations by parts}\label{ssec:2ibp}
Here we give a bilinear estimate for when two subsequent integrations by parts can be carried out. For the propagation of $\Ups V^{N+3} \U_{\pm}$ in $L^2$ this is relevant when neither input has the maximal amount of vector fields.

To make this precise, we introduce localizations for the vector fields along which we integrate by parts: With $\bar\chi:=1-\chi$ a smooth cutoff function that vanishes around the origin (cf.\ Section \ref{ssec:loc}),  for $L>0$ we denote by
\begin{equation}\label{eq:vfloc_not}
  \bar\chi_{V_\eta}=\bar\chi(L^{-1}2^{2k_1-p_1}V_\eta\Phi),\qquad \bar\chi_{V_{\xi-\eta}}=\bar\chi(L^{-1}2^{2k_2-p_2}V_{\xi-\eta}\Phi),\qquad V\in\{S,\Omega\}.
 \end{equation}
We note that by Lemma \ref{lem:vfsizes-mini}, on the support of each of these cutoffs we have that $\abs{\bar\sigma}\gtrsim L$. In particular, it follows (again by Lemma \ref{lem:vfsizes-mini}) that on the support of $\bar\chi_{V_\eta}$ we have that $\vert V'_{\xi-\eta}\Phi\vert\gtrsim 2^{-2k_2+p_2}L$ for $V'=S$ or $V'=\Omega$, and symmetrically for $\bar\chi_{V_{\xi-\eta}}$. Hence upon appropriate localization, several integrations by parts are possible.

To keep track of these localizations in our bilinear expressions, for a multiplier $\mathfrak{n}$ we shall then denote
\begin{equation}
 \Q^{V_\eta}_{\mathfrak{n}}[f,g]=\Q_{\mathfrak{n}\cdot\bar\chi_{V_\eta}}[f,g],\qquad \Q^{V,V'}_{\mathfrak{n}}[f,g]=\Q_{\mathfrak{n}\cdot\bar\chi_{V_\eta}\cdot\bar\chi_{V'_{\xi-\eta}}}[f,g],\qquad V,V'\in\{S,\Omega\}.
\end{equation}

\begin{remark}
 For integrations by parts along vector fields the cutoffs in \eqref{eq:vfloc_not} introduce no additional difficulties over $\frac{1}{V_\eta\Phi}$ resp.\ $\frac{1}{V'_{\xi-\eta}\Phi}$, which we account for in full detail in our estimates. To avoid further burdening the presentation, in the below proofs we will thus omit the corresponding terms of vector fields applied to the cutoffs. The careful reader can check the relevant estimates without further complications.
 
 In line with this, in the below proofs or when it is clear from the context we may also sometimes drop the superscripts for the bilinear terms in order to simplify the notation.
\end{remark}
 
Next we will show:
\begin{lemma}\label{lem:2ibp}
 Let $L:=2^{k_{\max}+k_{\min}+q_{\max}}$, $V\in\{S,\Omega\}$, and assume that $k_1\geq k_2$.
 Then there holds that for any $f=P_{k_1,p_1,q_1}f$ and $g=P_{k_2,p_2,q_2}g$ we have the estimate
 \begin{equation}
 \begin{aligned}
  \norm{P_{k,p,q}\Q^{V_\eta}_{s\m\Ups\Phi}[f,g]}_{L^2}&\lesssim s^{-1}\cdot 2^{2k_{\max}}2^{\frac{k}{2}}\Big[(\norm{f}_B+\norm{Sf}_B+\norm{\Ups f}_{L^2})(\norm{g}_B+\norm{Sg}_B+\norm{\Ups g}_{L^2})\\
  &\qquad\qquad\quad +\norm{f}_B(\norm{\Ups Sg}_{L^2}+\norm{S^2g}_{B})+(\norm{\Ups Sf}_{L^2}+\norm{S^2f}_{B})\norm{g}_B\Big].
 \end{aligned} 
 \end{equation}
\end{lemma}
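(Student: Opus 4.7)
The plan is to perform two successive integrations by parts (IBPs) in the $\eta$-variable: first along $V_\eta$ on the support of the cutoff $\bar\chi_{V_\eta}$, and then along $V'_{\xi-\eta}$ for some $V'\in\{S,\Omega\}$. Since the first cutoff localizes to $\abs{V_\eta\Phi}\gtrsim 2^{p_1-2k_1}L$, the remark following \eqref{eq:vfloc_not} guarantees that on the same set one has $\abs{V'_{\xi-\eta}\Phi}\gtrsim 2^{p_2-2k_2}L$ for at least one choice of $V'$, so after a partition of unity the second IBP is available. Each IBP contributes a factor of $s^{-1}$; combined with the initial factor $s$ in the symbol this produces the target $s^{-1}$.

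After the two IBPs and a Leibniz expansion, the resulting terms can be classified by how the two derivatives $V_\eta$ and $V'_{\xi-\eta}$ distribute across the symbol $\m\Ups\Phi/[V_\eta\Phi\cdot V'_{\xi-\eta}\Phi]$ and the two profiles $\widehat{f}(\xi-\eta)$, $\widehat{g}(\eta)$. When both derivatives act on the symbol, H\"older combined with the set size gain of Lemma \ref{lem:set_gain} yields the $\norm{f}_B\norm{g}_B$ contribution. When one derivative acts on the symbol and the other on $\widehat{g}(\eta)$, we obtain $\widehat{Vg}$ directly; symmetrically for $V'_{\xi-\eta}$ on $\widehat{f}(\xi-\eta)$, which yields $\widehat{V'f}$. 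The remaining cross terms, namely $V_\eta[\widehat{f}(\xi-\eta)]$ and $V'_{\xi-\eta}[\widehat{g}(\eta)]$, are resolved via Lemma \ref{lem:VFcross} and the bounds \eqref{eq:crossvf_bd}, which express them as linear combinations of $\widehat{Sf}, \widehat{\Ups f}$ (respectively $\widehat{Sg}, \widehat{\Ups g}$); the $\Omega f$ and $\Omega g$ contributions vanish by axisymmetry (Remark \ref{rem:role_axisym}). When both derivatives eventually land on the same profile through iterated cross terms, \eqref{eq:crossvf_bd} applied twice yields the $\norm{\Ups Sf}_{L^2}+\norm{S^2 f}_B$ and $\norm{\Ups Sg}_{L^2}+\norm{S^2 g}_B$ contributions, exactly matching the two boundary groups on the right hand side.

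For each resulting term, the multiplier bound $\abs{\m}\lesssim 2^k 2^{p_{\max}+q_{\max}}$ (cf.\ the notation following Lemma \ref{lem:IERmult}), together with $\abs{\Ups\Phi}\lesssim 2^p+2^{k-k_1}2^{p_1}$ from \eqref{eq:UpsPhi_bd} and the IBP denominators $\abs{V_\eta\Phi}\cdot\abs{V'_{\xi-\eta}\Phi}\gtrsim 2^{p_1+p_2-2k_1-2k_2}L^2$ with $L=2^{k_{\max}+k_{\min}+q_{\max}}$, furnishes the pointwise control of the symbol. Substituting and simplifying, using the assumption $k_1\geq k_2$ and elementary manipulations such as $2^{p+k}\lesssim 2^{p_1+k_1}+2^{p_2+k_2}$, the prefactor multiplying the set size gain $\Sz$ collapses to $2^{2k_{\max}}2^{k/2}$, as claimed.

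The main obstacle will be the bookkeeping of the many Leibniz terms: in particular, those in which a derivative hits $\Ups\Phi$ (producing $V_\eta\Ups\Phi$ or $V'_{\xi-\eta}\Ups\Phi$, which must be controlled by pointwise bounds analogous to \eqref{eq:UpsPhi_bd}) or differentiates one of the reciprocals $1/V_\eta\Phi$, $1/V'_{\xi-\eta}\Phi$ (producing the ratios $V_\eta^2\Phi/V_\eta\Phi$ controlled by \eqref{eq:vfquotient}) must each be verified to fit within the stated bound. Once these worst-case terms are checked, the remaining contributions follow by the same manipulations with strictly better prefactors.
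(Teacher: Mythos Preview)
Your proposal is correct and follows the same strategy as the paper: integrate by parts first in $V_\eta$ and then in $V'_{\xi-\eta}$, expand by Leibniz, resolve cross terms via Lemma \ref{lem:VFcross}, and close each of the resulting terms by combining pointwise multiplier bounds with the set-size gain of Lemma \ref{lem:set_gain}. The paper packages the nine terms into Lemmas \ref{lem:cl1}--\ref{lem:cl23} and collects the needed multiplier estimates in Lemma \ref{lem:ibp_mult_bds}; one minor refinement is that the compound cross term $V'_{\xi-\eta}V_\eta[\widehat f(\xi-\eta)]$ is not literally ``\eqref{eq:crossvf_bd} applied twice'' (since the coefficients in the decomposition of Lemma \ref{lem:VFcross} also get differentiated) but is handled by the dedicated bound of Lemma \ref{lem:crossvf}.
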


The basic strategy of proof is straightforward: we perform two integrations by parts,  first along $V_\eta$, then along $V'_{\xi-\eta}$. The terms are then estimated via an $L^1\times L^\infty$ in Fourier space, as in Lemma \ref{lem:set_gain}. This gives a gain of $s^{-2}$, with a loss of localization sizes. These can then be compensated for by the $L^1$ set size gain on the Fourier side (see Lemma \ref{lem:set_gain}) and our $B$ norm estimates.

To provide some orientation for the estimates to come, we note that the most delicate part of them is when $k=k_{\min}$, and we observe that there it is essential that a loss of $2^{k_{\min}}$ only comes with at most one power of each $2^{-p_j}$, $j=1,2$, which can then be compensated for using the above considerations. Similarly, it is crucial to perform the integrations by parts in two different directions, in order not too lose too many powers of the same $2^{-p_j}$, $j=1,2$. (The situation when $k_2=k_{\min}$ is a little better, since on the one hand integration by parts along $V'_{\xi-\eta}$ gains back at least $2^{k_2}$, but also since the set size gain recovers both $2^{k_2+p_2}$ at once, unlike the case of $k=k_{\min}$, where the $2^{k}$ and $2^{p_j}$, $j=1,2$ need to be regained separately.)

\begin{proof}
 We rely on the estimates of Lemma \ref{lem:ibp_mult_bds}. After one integration by parts in $V_\eta$ we obtain
 \begin{equation}\label{eq:1ibp}
  \Q^{V_\eta}_{s\m\Ups\Phi}[f,g]=\Q^{V_\eta}_{V_\eta\left(\frac{\m\Ups\Phi}{V_\eta\Phi}\right)}[f,g]+\Q^{V_\eta}_{\frac{\m\Ups\Phi}{V_\eta\Phi}}[V_\eta f,g]+\Q^{V_\eta}_{\frac{\m\Ups\Phi}{V_\eta\Phi}}[f,Vg],
 \end{equation}
and a second integration by parts in $V'_{\xi-\eta}$ gives the following terms to be controlled:
\begin{align}
 \Q^{V,V'}_{V_\eta\left(\frac{\m\Ups\Phi}{V_\eta\Phi}\right)}[f,g]&=\frac{1}{s}\left[ \Q^{V,V'}_{V'_{\xi-\eta}\left(\frac{1}{V'_{\xi-\eta}\Phi}V_\eta\left(\frac{\m\Ups\Phi}{V_\eta\Phi}\right)\right)}[f,g]+\Q^{V,V'}_{\frac{1}{V'_{\xi-\eta}\Phi}V_\eta\left(\frac{\m\Ups\Phi}{V_\eta\Phi}\right)}[V' f,g]+\Q^{V,V'}_{\frac{1}{V'_{\xi-\eta}\Phi}V_\eta\left(\frac{\m\Ups\Phi}{V_\eta\Phi}\right)}[f,V'_{\xi-\eta}g] \right],\nonumber\\
 \label{eq:2ibp_term-1}\\
 \Q^{V,V'}_{\frac{\m\Ups\Phi}{V_\eta\Phi}}[V_\eta f,g]&=\frac{1}{s}\left[\Q^{V,V'}_{V'_{\xi-\eta}\left(\frac{1}{V'_{\xi-\eta}\Phi}\frac{\m\Ups\Phi}{V_\eta\Phi}\right)}[V_\eta f,g]+\Q^{V,V'}_{\frac{1}{V'_{\xi-\eta}\Phi}\frac{\m\Ups\Phi}{V_\eta\Phi}}[V'V_\eta f,g]+\Q^{V,V'}_{\frac{1}{V'_{\xi-\eta}\Phi}\frac{\m\Ups\Phi}{V_\eta\Phi}}[V_\eta f,V'_{\xi-\eta}g]\right],\label{eq:2ibp_term-2}\\
 \Q^{V,V'}_{\frac{\m\Ups\Phi}{V_\eta\Phi}}[f,Vg]&=\frac{1}{s}\left[\Q^{V,V'}_{V'_{\xi-\eta}\left(\frac{1}{V'_{\xi-\eta}\Phi}\frac{\m\Ups\Phi}{V_\eta\Phi}\right)}[f,Vg]+\Q^{V,V'}_{\frac{1}{V'_{\xi-\eta}\Phi}\frac{\m\Ups\Phi}{V_\eta\Phi}}[V'f,Vg]+\Q^{V,V'}_{\frac{1}{V'_{\xi-\eta}\Phi}\frac{\m\Ups\Phi}{V_\eta\Phi}}[f,V'_{\xi-\eta}Vg] \right].\label{eq:2ibp_term-3}
\end{align}
We recall from Section \ref{ssec:crossterms} that we have the bound
\begin{equation}\label{eq:cross_recall}
 \abs{V_\eta f(\xi-\eta)}\lesssim 2^{k_2-k_1}\left(\abs{Sf(\xi-\eta)}+(2^{q_1+p_2}+2^{q_2+p_1})\abs{\Ups f(\xi-\eta)}\right),
\end{equation}
and symmetrically for $V'_{\xi-\eta}g(\eta)$. 
Next we establish the following two lemmas, which together give the claim via \eqref{eq:1ibp}, thus concluding the proof of Lemma \ref{lem:2ibp}.
\end{proof}

\begin{lemma}\label{lem:cl1}
 Under the assumptions of Lemma \ref{lem:2ibp}, there holds that
 \begin{equation}\label{eq:cl1}
  \norm{\Q^{V,V'}_{V_\eta\left(\frac{\m\Ups\Phi}{V_\eta\Phi}\right)}[f,g]}_{L^2}\lesssim s^{-1}\cdot 2^{2k_{\max}}2^{\frac{k}{2}}\norm{(1,S)f}_B[\norm{(1,S)g}_B+\norm{\Ups g}_{L^2}].
 \end{equation}
\end{lemma}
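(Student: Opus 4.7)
The strategy is to carry out a second integration by parts along $V'_{\xi-\eta}$ on $\Q^{V,V'}_{V_\eta(\m\Ups\Phi/V_\eta\Phi)}[f,g]$, producing the three summands on the right side of \eqref{eq:2ibp_term-1}, and to estimate each by $s^{-1}\cdot 2^{2k_{\max}}2^{k/2}$ times the claimed norm product. The integration by parts is legitimate on the support of $\bar\chi_{V_\eta}\bar\chi_{V'_{\xi-\eta}}$, since there $|\bar\sigma|\gtrsim L=2^{k_{\max}+k_{\min}+q_{\max}}$ forces, via Lemma \ref{lem:vfsizes-mini}, the lower bounds $|V_\eta\Phi|\sim 2^{-2k_1+p_1}|\bar\sigma|$ and $|V'_{\xi-\eta}\Phi|\sim 2^{-2k_2+p_2}|\bar\sigma|$. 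The prefactor $1/s$ in \eqref{eq:2ibp_term-1} immediately accounts for the time decay.

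The first step of the proof is to bound the $\mathcal{W}_{kk_1k_2}$-norms of each of the three resulting multipliers. The ingredients are: the symbol bound $|\m|\lesssim 2^{k+p_{\max}+q_{\max}}$ from Lemma \ref{lem:IERmult}; the bound $|\Ups\Phi|\lesssim 2^p+2^{k-k_1}2^{p_1}$ from \eqref{eq:UpsPhi_bd}; the lower bounds on $V_\eta\Phi$ and $V'_{\xi-\eta}\Phi$ just mentioned; and the quotient estimate \eqref{eq:vfquotient} controlling $V_\eta^2\Phi/V_\eta\Phi$ (together with its analogue for $V'_{\xi-\eta}$) whenever an additional vector field falls on $1/V_\eta\Phi$ or $1/V'_{\xi-\eta}\Phi$. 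Standard symbol calculus on the Riesz-type factors of $\m$ then propagates these bounds through to each of the three symbols in \eqref{eq:2ibp_term-1}.

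The second step is to estimate each bilinear expression via a Fourier-side $L^1\times L^\infty$ estimate combined with the improved set size gain $\Sz$ of Lemma \ref{lem:set_gain}. For the pure-multiplier term and for $\Q[V'f,g]$ (where $V'\in\{S,\Omega\}$, and in the axisymmetric setting only $V'=S$ contributes since $\Omega f=0$), both inputs are placed in $B$ norm via $\norm{P_{k_j,p_j,q_j}F}_{L^2}\lesssim 2^{p_j+q_j/2}\norm{F}_B$. For the cross term $\Q[f,V'_{\xi-\eta}g]$, I would resolve $V'_{\xi-\eta}g(\eta)$ through the swapped-role version of Lemma \ref{lem:VFcross}, writing it as a combination of $Sg$ and $\Ups g$ evaluated at $\eta$ with multiplicative factor $\lesssim 2^{k_1-k_2}$ times algebraic symbol factors; then $Sg$ is controlled in $B$ and $\Ups g$ in $L^2$. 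Since $k_1\geq k_2$ by assumption, this factor $2^{k_1-k_2}\leq 2^{k_{\max}}$ is absorbed by the $2^{2k_{\max}}$ on the right-hand side of \eqref{eq:cl1}.

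The main obstacle is the tracking of the many competing scales: two integrations by parts contribute a combined loss $(V_\eta\Phi\cdot V'_{\xi-\eta}\Phi)^{-1}\lesssim 2^{2(k_1+k_2)-p_1-p_2-2k_{\max}-2k_{\min}-2q_{\max}}$, which at face value is too large. It must be compensated by the multiplier factor $|\m||\Ups\Phi|\lesssim 2^{k+2p_{\max}+q_{\max}}$, by the set size gain $\Sz\lesssim 2^{p_{\min}+k_{\min}}\cdot 2^{(q_{\min}+k_{\min})/2}$, and by the $B$-norm prefactors $2^{p_j+q_j/2}$ that convert $L^2$-norms back to $B$-norms. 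Combining these algebraically yields the claimed $2^{2k_{\max}}2^{k/2}$, but verifying it in every configuration requires a case split in the spirit of the proof of Proposition \ref{prop:bilin}\ref{it:bilin2}, depending on where $p_{\max}$ and $q_{\max}$ are attained (output or input) and on the relative sizes of $k$, $k_1$, $k_2$. The hypothesis $k_1\geq k_2$ streamlines this analysis and is exactly what makes the cross-term loss $2^{k_1-k_2}$ harmless.
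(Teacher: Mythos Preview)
Your high-level plan matches the paper's: integrate by parts once more in $V'_{\xi-\eta}$, obtain the three terms of \eqref{eq:2ibp_term-1}, and close each via multiplier bounds together with the set size gain $\Sz$ of Lemma~\ref{lem:set_gain}; the cross term is resolved through (the swapped version of) Lemma~\ref{lem:VFcross}. That skeleton is exactly right.

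There are, however, two points where your write-up slips. First, there is a confusion of tools: you say you will bound the $\mathcal{W}$-norms of the multipliers, and then invoke the set size estimate. These are alternatives, not companions. The $\mathcal{W}$-norm buys H\"older in physical space as in \eqref{ProdRule2}; the set size Lemma~\ref{lem:set_gain} uses the pointwise $L^\infty$ bound on the symbol. The paper uses the latter throughout the proof of Lemma~\ref{lem:cl1} (the $\mathcal{W}$-norm route is reserved for situations like Lemma~\ref{lem:bdry}, where one needs the dispersive $L^\infty$ decay on a physical-space factor). So drop the $\mathcal{W}$-norm language here and work directly with pointwise multiplier bounds.

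Second, and more importantly, what you call ``standard symbol calculus'' on the pieces of $\m$ is the heart of the matter and is not standard. The paper packages the needed bounds in Lemma~\ref{lem:ibp_mult_bds}: the estimates \eqref{eq:bds_1ibp} for $V_\eta(\m\Ups\Phi/V_\eta\Phi)$ and \eqref{eq:bds_2ibp} for $V'_{\xi-\eta}V_\eta(\m\Ups\Phi/V_\eta\Phi)$, together with the quotient bound \eqref{eq:vfquot'}. The case split the paper actually runs is $k=k_{\min}$ versus $k_2=k_{\min}$ (not on where $p_{\max}$, $q_{\max}$ are attained), and the crucial structural fact extracted from these bounds is that a loss of $2^{-k_{\min}}$ never pairs with more than one power of each $2^{-p_j}$, $j=1,2$. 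This is precisely what allows $\Sz$ and the $B$-norm factors $2^{p_j+q_j/2}$ to absorb the losses and leave only $2^{2k_{\max}}2^{k/2}$. Your naive combination of $(V_\eta\Phi\cdot V'_{\xi-\eta}\Phi)^{-1}$, $|\m\Ups\Phi|$, $\Sz$, and the $B$-norm factors does not close without this finer input; for instance when $k=k_{\min}$ and $2^{p_1}\ll 2^{p_2}$ the extra factor $2^{p_1-p_2}$ hidden in the second derivative of the symbol must be tamed via \eqref{eq:freq_triang}. So either cite Lemma~\ref{lem:ibp_mult_bds} directly, or reproduce its content; the phrase ``standard symbol calculus'' does not cover it.
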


\begin{lemma}\label{lem:cl23}
 Under the assumptions of Lemma \ref{lem:2ibp}, there holds that
 \begin{align}
  \norm{\Q^{V,V'}_{\frac{\m\Ups\Phi}{V_\eta\Phi}}[V_\eta f,g]}_{L^2}&\lesssim s^{-1}\cdot  2^{2k_{\max}}2^{\frac{k}{2}}[\norm{(1,S)Sf}_B+\norm{\Ups S f}_{L^2}]\cdot [\norm{(1,S)g}_B+\norm{\Ups g}_{L^2}]\label{eq:cl2},\\
  \norm{\Q^{V,V'}_{\frac{\m\Ups\Phi}{V_\eta\Phi}}[f,Sg]}_{L^2}&\lesssim s^{-1}\cdot  2^{2k_{\max}}2^{\frac{k}{2}}\norm{(1,S)f}_B[\norm{(1,S)Sg}_B+\norm{\Ups Sg}_{L^2}]\label{eq:cl3}.
\end{align}
\end{lemma}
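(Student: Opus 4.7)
The proof of both \eqref{eq:cl2} and \eqref{eq:cl3} proceeds by the second integration by parts set up in \eqref{eq:2ibp_term-2} and \eqref{eq:2ibp_term-3}, each producing three terms with overall gain $s^{-1}$: one where $V'_{\xi-\eta}$ falls on the multiplier $\frac{1}{V'_{\xi-\eta}\Phi}\frac{\m\Ups\Phi}{V_\eta\Phi}$, one where it is absorbed into the first profile, and one where it acts as a cross-term on the second profile. For each of these six terms the plan is to bound the resulting multiplier in $\mathcal{W}$-norm via the estimates of Lemma \ref{lem:ibp_mult_bds}, using the lower bounds $\abs{V_\eta\Phi}\gtrsim 2^{-2k_1+p_1}L$ and $\abs{V'_{\xi-\eta}\Phi}\gtrsim 2^{-2k_2+p_2}L$ enforced by the cutoffs in \eqref{eq:vfloc_not}, together with $\abs{\Ups\Phi}\lesssim 2^p + 2^{k-k_1}2^{p_1}$ from \eqref{eq:UpsPhi_bd}, and then to apply an $L^1\times L^\infty$ Fourier-side estimate augmented by the improved set-size gain $\Sz$ of Lemma \ref{lem:set_gain}.

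The cross-term identity \eqref{eq:cross_recall} is the main tool to recast the vector-field-applied profiles. For \eqref{eq:cl2}, the first entry $V_\eta f$ is rewritten directly in terms of $Sf$ and $\Ups f$, with the prefactor $2^{k_2-k_1}\le 1$ offsetting the loss of vector field. For the double-vector-field term $V'V_\eta f$ in \eqref{eq:2ibp_term-2}, we use that $[V,\Ups]=0$ and that $\Omega f=0$ under axisymmetry (Remark \ref{rem:role_axisym}), so only $S^2 f$ and $\Ups S f$ contributions appear, matching exactly the right-hand side of \eqref{eq:cl2}. In the mixed cross-term the gains from the two cross-term expansions cancel, $2^{k_2-k_1}\cdot 2^{k_1-k_2}=1$, leaving a clean estimate pairing $\norm{\Ups f}_{L^2}$ (or $\norm{S f}_{B}$) against $\norm{\Ups g}_{L^2}$ (or $\norm{S g}_{B}$).

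For \eqref{eq:cl3} the structure is analogous but simpler, since $Sg$ is an honest input rather than a cross-term: the three pieces coming from \eqref{eq:2ibp_term-3} carry $Sg$, $V'Sg = S^2 g$, and $V'_{\xi-\eta}Sg$ as second argument, the last of which is unpacked via \eqref{eq:cross_recall} into $S^2 g$ and $\Ups Sg$ combinations. The remaining computation mirrors \eqref{eq:cl2} without additional complications.

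The main obstacle is that two integrations by parts threaten a loss of up to $2^{-(p_1+p_2)}$ in the effective multiplier, whereas the set-size gain $\Sz$ only recovers one copy of $\min\{2^{p_\alpha+k_\alpha}\}$. Closing the estimate therefore requires pairing the remaining horizontal factor against $2^{p_{\max}}$ coming from $C_\m$ and against the $B$-norm weights $2^{p_j+q_j/2}$ of the profiles, which forces a regime-by-regime analysis in the spirit of Proposition \ref{prop:bilin}\ref{it:bilin2} (cases depending on which of $p,p_1,p_2$ and which of $q,q_1,q_2$ is extremal). The choice $L=2^{k_{\max}+k_{\min}+q_{\max}}$ is precisely what guarantees that the two integrations by parts exploit genuinely different directions, so that neither $2^{-p_j}$ is paid twice, and the final bound $s^{-1}\cdot 2^{2k_{\max}}2^{k/2}$ can be recovered in every case.
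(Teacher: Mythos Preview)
Your plan matches the paper's proof: integrate by parts once more in $V'_{\xi-\eta}$ using \eqref{eq:2ibp_term-2}--\eqref{eq:2ibp_term-3}, bound the resulting multipliers via Lemma \ref{lem:ibp_mult_bds}, unpack cross-terms with \eqref{eq:cross_recall} and Lemma \ref{lem:crossvf}, and close with the set-size gain $\Sz$ of Lemma \ref{lem:set_gain}. Two small corrections: first, the multiplier bounds here are \emph{pointwise} $L^\infty$ bounds fed into Lemma \ref{lem:set_gain}, not $\mathcal{W}$-norm bounds (the $\mathcal{W}$-norm route with physical-space H\"older is reserved for Lemma \ref{lem:bdry}, where dispersive $L^\infty$ decay of a profile is used); second, the case analysis the paper actually carries out is the dichotomy $k=k_{\min}$ versus $k_2=k_{\min}$ (with repeated use of the triangle inequalities \eqref{eq:freq_triang}), rather than the $p$/$q$-extremal cases of Proposition \ref{prop:bilin}\ref{it:bilin2}.
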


(As announced, in the following proofs we drop the superscripts $V,V'$ for simplicity of notation.)
\begin{proof}[Proof of Lemma \ref{lem:cl1}]
It suffices to bound the three bilinear expressions in \eqref{eq:2ibp_term-1}. To this end, from \eqref{eq:vfquotient_apdx} it follows with $k_2\leq k_1$ (and \eqref{eq:freq_triang}) that
\begin{equation}\label{eq:vfquot'}
 \abs{\frac{(V'_{\xi-\eta})^2\Phi}{V'_{\xi-\eta}\Phi}}\lesssim 2^{k_1-k_2}(1+2^{p_1-p_2})
 \lesssim
 \begin{cases}
  (1+2^{k-k_2}2^{p-p_2}),&k=k_{\min},\\
  2^{k_1-k_2}(1+2^{p_1-p_2}),&k_2=k_{\min}.
 \end{cases}
\end{equation}
We then have
\begin{equation}
 \abs{V'_{\xi-\eta}\left(\frac{1}{V'_{\xi-\eta}\Phi}V_\eta\left(\frac{\m\Ups\Phi}{V_\eta\Phi}\right)\right)}\lesssim \abs{\frac{1}{V'_{\xi-\eta}\Phi}}\abs{\frac{(V'_{\xi-\eta})^2\Phi}{V'_{\xi-\eta}\Phi}}\abs{V_\eta\left(\frac{\m\Ups\Phi}{V_\eta\Phi}\right)}+\abs{\frac{1}{V'_{\xi-\eta}\Phi}}\abs{V'_{\xi-\eta}V_\eta\left(\frac{\m\Ups\Phi}{V_\eta\Phi}\right)},
\end{equation}
which we estimate next. To provide some orientation, we note that the most delicate part of these estimates is when $k=k_{\min}$, and we observe that there it is essential that a loss of $2^{k_{\min}}$ only comes with at most one power of each $2^{-p_j}$, $j=1,2$. 

With \eqref{eq:bds_1ibp} and \eqref{eq:vfquot'} there holds that
\begin{equation}\label{ADDED1}
\begin{aligned}
 &\abs{\frac{1}{V'_{\xi-\eta}\Phi}}\abs{\frac{(V'_{\xi-\eta})^2\Phi}{V'_{\xi-\eta}\Phi}}\abs{V_\eta\left(\frac{\m\Ups\Phi}{V_\eta\Phi}\right)}\lesssim 2^{2k_2-p_2}\abs{\bar\sigma}^{-1}\cdot 2^{k_1-k_2}(1+2^{p_1-p_2})\cdot \abs{V_\eta\left(\frac{\m\Ups\Phi}{V_\eta\Phi}\right)}\\
 &\qquad\lesssim 2^{-q_{\max}}(1+2^{p-p_1})
 \begin{cases}
   \left[2^{k_{\max}-k-p_2}(1+2^{p_1-p_2})\right]\left(2^{k_{\max}}+2^{k}2^{p-p_1}\right) ,&k=k_{\min},\\
    \left[2^{-p_2}(1+2^{p_1-p_2})\right]\left(2^{2k_{\max}-k_2}+2^{k_{\max}}[1+2^{p_2-p_1}]\right),&k_2=k_{\min},
 \end{cases}\\
 &\qquad\lesssim 2^{-q_{\max}}(1+2^{p-p_1})
 \begin{cases}
    2^{2k_{\max}-k-p_2}(1+2^{k-k_2}2^{p-p_2})+2^{k_{\max}-p_2}2^{p-p_1}(1+2^{p_1-p_2}),&k=k_{\min},\\
    2^{k_{\max}-p_2}(1+2^{p_1-p_2})\left(2^{k_{\max}-k_2}+1+2^{p_2-p_1}\right),&k_2=k_{\min},
 \end{cases}\\
 &\qquad\lesssim 2^{-q_{\max}}(1+2^{p-p_1})
 \begin{cases}
    2^{2k_{\max}-k-p_2}+2^{k_{\max}-p_2}[1+2^{p_1-p_2}+2^{p-p_1}+2^{p-p_2}],&k=k_{\min},\\
    2^{k_{\max}-p_2}(1+2^{p_1-p_2})\left(1+2^{k_{\max}-k_2}+2^{p_2-p_1}\right),&k_2=k_{\min}
 \end{cases}
\end{aligned} 
\end{equation}
and with \eqref{eq:bds_2ibp} we obtain that
\begin{equation}
\begin{aligned}
 &\abs{\frac{1}{V'_{\xi-\eta}\Phi}}\abs{V'_{\xi-\eta}V_\eta\left(\frac{\m\Ups\Phi}{V_\eta\Phi}\right)}\lesssim 2^{2k_2-p_2}\abs{\bar\sigma}^{-1}\cdot\abs{V'_{\xi-\eta}V_\eta\left(\frac{\m\Ups\Phi}{V_\eta\Phi}\right)}\\
 &\qquad\lesssim 2^{-q_{\max}}
 \begin{cases}
   (2^{k_2-k-p_2})(2^{k_{\max}}+2^{k}2^{p-p_1})[1+2^{p-p_1}+2^{p-p_2}],&k=k_{\min},\\
   2^{k_2-k_{\max}-p_2}\Big(2^{2k_{\max}-k_2-p_2}(1+2^{p_2-p_1}+ 2^{k_{\max}-k_2})(2^{p_1}+2^{p})&\\
   \quad +(1+2^{p_2-p_1})(2^{k_{\max}}+2^{k_2}2^{p_2-p_1})\Big),&k_2=k_{\min}
  \end{cases}\\
  &\qquad\lesssim 2^{-q_{\max}}
 \begin{cases}
   2^{2k_2-k-p_2}(1+2^{p-p_1})+2^{k_2-p_2}2^{p-p_1}[1+2^{p-p_1}+2^{p-p_2}],&k=k_{\min},\\
   2^{k_{\max}-2p_2}(1+2^{k_{\max}-k_2}+2^{p_2-p_1})(2^{p_1}+2^{p})+2^{k_2}(2^{-p_2}+2^{-p_1}+2^{p_2-2p_1}),&k_2=k_{\min},
  \end{cases}
\end{aligned}
\end{equation}
where we used in the second inequality that in case $k=k_{\min}$ there holds that $2^{p-p_2}\lesssim 2^{p-p_1}+2^{k-k_2}2^{2p-p_1-p_2}$, as in \eqref{eq:freq_triang}.  

Using that $\Sz\lesssim 2^{\frac{q+k}{2}}2^{k+p}$ when there is a loss of $k_{\min}=k$ and $\Sz\lesssim 2^{\frac{q+k}{2}}2^{k_2+p_2}$ when $k_{\min}=k_2$, and $\Sz\lesssim 2^{\frac{q+k}{2}}2^{k_j+p_j}$ when there is a loss of $2^{-p_1-p_2-p_j}$, $j=1,2$, we thus have (using also \eqref{eq:freq_triang} for a term from \eqref{ADDED1})
\begin{equation}
 \abs{V'_{\xi-\eta}\left(\frac{1}{V'_{\xi-\eta}\Phi}V_\eta\left(\frac{\m\Ups\Phi}{V_\eta\Phi}\right)\right)}\cdot\Sz\lesssim 2^{2k_{\max}}2^{\frac{k}{2}}2^{-\frac{q_{\max}}{2}}\cdot 2^{-p_1-p_2}.
\end{equation}
By Lemma \ref{lem:set_gain} this yields directly that
\begin{equation}
 \norm{\Q_{V'_{\xi-\eta}\left(\frac{1}{V'_{\xi-\eta}\Phi}V_\eta\left(\frac{\m\Ups\Phi}{V_\eta\Phi}\right)\right)}[f,g]}_{L^2}\lesssim \abs{V'_{\xi-\eta}\left(\frac{1}{V'_{\xi-\eta}\Phi}V_\eta\left(\frac{\m\Ups\Phi}{V_\eta\Phi}\right)\right)}\cdot\Sz\cdot\norm{f}_{L^2}\norm{g}_{L^2}\lesssim 2^{\frac{k}{2}}\norm{f}_B\norm{g}_B. 
\end{equation}

Note that by \eqref{eq:bds_1ibp} and \eqref{eq:freq_triang}
\begin{equation}\label{eq:ibp_frac1}
\begin{aligned}
 \abs{\frac{1}{V'_{\xi-\eta}\Phi}V_\eta\left(\frac{\m\Ups\Phi}{V_\eta\Phi}\right)}&\lesssim 2^{2k_2-p_2}\abs{\bar\sigma}^{-1}(1+2^{p-p_1})\cdot 
 \begin{cases}
   2^{k_{\max}}+2^{k}2^{p-p_1},&k=k_{\min},\\
   2^{2k_{\max}-k_2}+2^{k_{\max}}[1+2^{p_2-p_1}],&k_2=k_{\min},
 \end{cases}\\
 &\lesssim 2^{-q_{\max}}(1+2^{p-p_1})
 \begin{cases}
   2^{2k_{\max}-k-p_2}+2^{k_{\max}-p_2}2^{p-p_1},&k=k_{\min},\\
   2^{k_{\max}}2^{-p_2}(1+2^{p_2-p_1}), &k_2=k_{\min}.
   \end{cases}\\
 &\lesssim 2^{-q_{\max}}
 \begin{cases}
   2^{2k_{\max}-k-p_2}(1+2^{p-p_1})+2^{k_{\max}-p_2}2^{p-p_1}(1+2^{p-p_1}),&k=k_{\min},\\
   2^{k_2-p_1}+2^{k_{\max}-p_2}(1+2^{p_2-p_1})+2^{k_2}2^{p_2-2p_1}, &k_2=k_{\min}.
  \end{cases}
\end{aligned} 
\end{equation}
Hence it follows just as above with Lemma \ref{lem:set_gain} that
\begin{equation}
 \norm{\Q_{\frac{1}{V'_{\xi-\eta}\Phi}V_\eta\left(\frac{\m\Ups\Phi}{V_\eta\Phi}\right)}[V' f,g]}_{L^2}\lesssim 2^{2k_{\max}}2^{\frac{k}{2}}\norm{V'f}_B\norm{g}_B.
\end{equation}
Similarly, by \eqref{eq:cross_recall} we will show that
\begin{equation}\label{eq:ibp_1stcross}
\begin{aligned}
 \norm{\Q_{\frac{1}{V'_{\xi-\eta}\Phi}V_\eta\left(\frac{\m\Ups\Phi}{V_\eta\Phi}\right)}[f,V'_{\xi-\eta} g]}_{L^2}&\lesssim \abs{\frac{1}{V'_{\xi-\eta}\Phi}V_\eta\left(\frac{\m\Ups\Phi}{V_\eta\Phi}\right)}\norm{f}_{L^2} \cdot 2^{k_1-k_2}\Sz[\norm{Sg}_{L^2}+(2^{p_1}+2^{p_2})\norm{\Ups g}_{L^2}]\\
 &\lesssim 2^{2k_{\max}}2^{\frac{k}{2}}\norm{f}_B\left[\norm{Sg}_B+\norm{\Ups g}_{L^2}\right].
\end{aligned}
\end{equation}
Here, with \eqref{eq:ibp_frac1} the term with $Sg$ can be estimated directly as above. It is more delicate to control the $\Ups$ contribution: we observe that there is at most a loss of one order of $2^{-p_2}$ in \eqref{eq:ibp_frac1}, hence for the term with $2^{p_2}\Ups g$ we have the bound
\begin{equation}
\begin{aligned}
  &\abs{\frac{1}{V'_{\xi-\eta}\Phi}V_\eta\left(\frac{\m\Ups\Phi}{V_\eta\Phi}\right)}\norm{f}_{L^2}\cdot 2^{k_1-k_2}\cdot 2^{p_2}\Sz\cdot\norm{\Ups g}_{L^2}\\
  &\qquad \lesssim\norm{f}_{L^2}\norm{\Ups g}_{L^2}2^{-q_{\max}}\cdot\Sz
  \begin{cases}
   2^{2k_{\max}-k}(1+2^{p-p_1})+2^{k_{\max}}2^{p-p_1}(1+2^{p-p_1}),&k=k_{\min},\\
   2^{k_{\max}+p_2-p_1}+2^{2k_{\max}-k_2}(1+2^{p_2-p_1}), &k_2=k_{\min}.
  \end{cases}\\
  &\qquad \lesssim 2^{2k_{\max}}2^{\frac{k}{2}}\norm{f}_B\norm{\Ups g}_{L^2},
\end{aligned}  
\end{equation}
where we used that $\Sz\lesssim 2^{\frac{q_{\max}+k}{2}}\min\{2^{p+k},2^{p_1+k_1}\}$.
On the other hand, for the term with $2^{p_1}\Ups g$ we have
\begin{equation*}
\begin{aligned}
  &\abs{\frac{1}{V'_{\xi-\eta}\Phi}V_\eta\left(\frac{\m\Ups\Phi}{V_\eta\Phi}\right)}\norm{f}_{L^2}\cdot 2^{k_1-k_2}\cdot 2^{p_1}\Sz\cdot\norm{\Ups g}_{L^2}\\
  &\qquad \lesssim\norm{f}_{L^2}\norm{\Ups g}_{L^2}2^{-q_{\max}}\cdot\Sz
  \begin{cases}
   2^{2k_{\max}-k+p_1-p_2}(1+2^{p-p_1})+2^{k_{\max}-p_2}2^{p-p_1}(2^{p_1}+2^{p}),&k=k_{\min},\\
   2^{k_{\max}}+2^{k_{\max}-p_2}(2^{p_1}+2^{p_2}), &k_2=k_{\min}.
   \end{cases}\\
   &\qquad \lesssim\norm{f}_{L^2}\norm{\Ups g}_{L^2}2^{-q_{\max}}\cdot\Sz
  \begin{cases}
   2^{k_{\max}}(2^{k_{\max}-k}+2^{p-p_2})(1+2^{p-p_1})+2^{k_{\max}-p_2}2^{p-p_1}(2^{p_1}+2^{p}),&k=k_{\min},\\
   2^{k_{\max}}+2^{k_{\max}-p_2}(2^{p_1}+2^{p_2}), &k_2=k_{\min}.
   \end{cases}\\
  &\qquad \lesssim 2^{2k_{\max}}2^{\frac{k}{2}}\norm{f}_B\norm{\Ups g}_{L^2},
\end{aligned}  
\end{equation*}
where we used $\Sz\lesssim 2^{\frac{q_{\max}+k}{2}}\min\{2^{p+k},2^{p_2+k_2}\}$.

\end{proof}

\begin{proof}[Proof of Lemma \ref{lem:cl23}]
As before, we will simply bound the bilinear expressions in \eqref{eq:2ibp_term-2} and \eqref{eq:2ibp_term-3}. To begin, note that 
\begin{equation}
 \abs{V'_{\xi-\eta}\left(\frac{1}{V'_{\xi-\eta}\Phi}\frac{\m\Ups\Phi}{V_\eta\Phi}\right)}\lesssim \abs{\frac{1}{V'_{\xi-\eta}\Phi}}\abs{\frac{(V'_{\xi-\eta})^2\Phi}{V'_{\xi-\eta}\Phi}}\abs{\frac{\m\Ups\Phi}{V_\eta\Phi}}+\abs{\frac{1}{V'_{\xi-\eta}\Phi}}\abs{V'_{\xi-\eta}\left(\frac{\m\Ups\Phi}{V_\eta\Phi}\right)},
\end{equation}
which we can bound as follows: by \eqref{eq:vfquot'} and \eqref{eq:bds_1ibp}, followed by \eqref{eq:freq_triang}, there holds that
\begin{equation}
\begin{aligned}
 &\abs{\frac{1}{V'_{\xi-\eta}\Phi}}\abs{\frac{(V'_{\xi-\eta})^2\Phi}{V'_{\xi-\eta}\Phi}}\abs{\frac{\m\Ups\Phi}{V_\eta\Phi}}\lesssim 2^{2k_2-p_2}\abs{\bar\sigma}^{-1}\cdot 2^{k_1-k_2}(1+2^{p_1-p_2})\cdot 2^{2k_1-k_2}[2^{p-p_1}+2^{k-k_1}]\\
 &\qquad\lesssim 2^{-q_{\max}}
 \begin{cases}
   2^{2k_2-k-p_2}(1+2^{p_1-p_2})2^{p-p_1}+2^{k_2-p_2}(1+2^{p_1-p_2}) ,&k=k_{\min},\\
   2^{2k_{\max}-k_2-p_2}(1+2^{p_1-p_2})(1+2^{p-p_1}),&k_2=k_{\min},
 \end{cases}\\
 &\qquad \lesssim 2^{-q_{\max}}
 \begin{cases}
   2^{2k_2-k-p_2}2^{p-p_1}+2^{k_2-p_2}2^{p-p_2}2^{p-p_1}+2^{k_2-p_2}(1+2^{p_1-p_2}) ,&k=k_{\min},\\
   2^{2k_{\max}-k_2-p_2}(1+2^{p_1-p_2})(1+2^{p-p_1}),&k_2=k_{\min},
 \end{cases}
\end{aligned}
\end{equation}
and similarly by \eqref{eq:bds_1ibp'}
\begin{equation}\label{eq:bds_1ibp''}
\begin{aligned}
 \abs{\frac{1}{V'_{\xi-\eta}\Phi}}\abs{V'_{\xi-\eta}\left(\frac{\m\Ups\Phi}{V_\eta\Phi}\right)}&\lesssim 2^{2k_2-p_2}\abs{\bar\sigma}^{-1}\cdot 
 \begin{cases}
   2^{k_{\max}}(2^{p-p_1}+2^{p-p_2})+2^{k}(1+2^{p_1-p_2}),&k=k_{\min},\\
   2^{k_1}2^{p_2-p_1}+2^{2k_1-k_2}[1+2^{k_1-k_2-p_2}(2^{p_1}+2^{p})],&k_2=k_{\min},
 \end{cases}\\
 &\lesssim 2^{-q_{\max}} 
 \begin{cases}
   2^{2k_{\max}-k-p_2}(2^{p-p_1}+2^{p-p_2})+2^{k_{\max}-p_2}(1+2^{p_1-p_2}),&k=k_{\min},\\
   2^{2k_{\max}-k_2-2p_2}(2^{p_1}+2^{p})+2^{k_{\max}}(2^{-p_2}+2^{-p_1}),&k_2=k_{\min},
 \end{cases}\\
 &\lesssim 2^{-q_{\max}} 
 \begin{cases}
   2^{2k_{\max}-k-p_2}2^{p-p_1}+2^{k_{\max}-p_2}(1+2^{p_1-p_2}+2^{p-p_2}2^{p-p_1}),&k=k_{\min},\\
   2^{2k_{\max}-k_2-p_2}(2^{p_1-p_2}+2^{p-p_2})+2^{k_{\max}}(2^{-p_2}+2^{-p_1}),&k_2=k_{\min},
 \end{cases}
\end{aligned}
\end{equation}
where we have used that in case $k=k_{\min}$, by \eqref{eq:freq_triang} there holds $2^{p-p_2}\lesssim 2^{p-p_1}+2^{k-k_2+2p-p_2-p_1}$. We observe that there is only a loss of one order of $2^{-p_1}$ in \eqref{eq:bds_1ibp''}, and we are thus in an analogous position as for \eqref{eq:ibp_1stcross}. Using that
\begin{alignat}{2}
 2^{-k}(2^{p_2-p_1}+2^{p_1-p_2})\Sz&\lesssim 2^\frac{q+k}{2}2^p,\qquad&&\textnormal{when }k=k_{\min},\\
 2^{-k_2}(2^{p-p_1}+2^{p_1-p})\Sz&\lesssim 2^\frac{q+k}{2}2^{p_2},\qquad&& \textnormal{when }k_2=k_{\min},
\end{alignat}
we can then proceed as detailed there to show that
\begin{equation}
 \norm{\Q_{V'_{\xi-\eta}\left(\frac{1}{V'_{\xi-\eta}\Phi}\frac{\m\Ups\Phi}{V_\eta\Phi}\right)}[V_\eta f,g]}_{L^2}\lesssim 2^{2k_{\max}}2^{\frac{k}{2}}[\norm{Sf}_B+\norm{\Ups f}_{L^2}]\norm{g}_B,
\end{equation}
and similarly (but easier)
\begin{equation}
 \norm{\Q_{V'_{\xi-\eta}\left(\frac{1}{V'_{\xi-\eta}\Phi}\frac{\m\Ups\Phi}{V_\eta\Phi}\right)}[f,Vg]}_{L^2}\lesssim 2^{2k_{\max}}2^{\frac{k}{2}}\norm{f}_B\norm{Vg}_B.
\end{equation}

\medskip
Finally, we note that by \eqref{eq:bds_1ibp}
\begin{equation}\label{eq:ibp_frac2}
 \abs{\frac{1}{V'_{\xi-\eta}\Phi}\frac{\m\Ups\Phi}{V_\eta\Phi}}\lesssim 2^{2k_2-p_2}\abs{\bar\sigma}^{-1}\cdot 2^{2k_1-k_2}[2^{p-p_1}+2^{k-k_1}]\lesssim
 2^{-q_{\max}} 
 \begin{cases}
   2^{2k_2-k-p_2}2^{p-p_1}+2^{k_2-p_2},&k=k_{\min},\\
   2^{k_{\max}-p_2}(1+2^{p-p_1}),&k_2=k_{\min},
 \end{cases}
\end{equation}
We observe that \eqref{eq:ibp_frac2} only has losses of one order in $2^{-p_1}$ and $2^{-p_2}$, so that we have by \eqref{eq:cross_recall} and arguments as above that
\begin{equation}
\begin{aligned}
 \norm{\Q_{\frac{1}{V'_{\xi-\eta}\Phi}\frac{\m\Ups\Phi}{V_\eta\Phi}}[V_\eta f,V'_{\xi-\eta}g]}_{L^2}&\lesssim 2^{2k_{\max}}2^{\frac{k}{2}}(\norm{Sf}_B+\norm{\Ups f}_{L^2})(\norm{Sg}_B+\norm{\Ups g}_{L^2}).
\end{aligned}
\end{equation}
Here products of $S$ with itself or $\Ups$ can be estimated as before, whereas for the $\Ups$-$\Ups$ interaction we note that the only new difficulty arises when $k=k_{\min}$ and we have a factor of the form $2^{2k_2-k+p+p_i-p_j}$, $i,j\in\{1,2\}$. We then invoke once again \eqref{eq:freq_triang} to bound this as $2^{2k_2-k+p}+2^{k_2+2p-p_j}$, which we can then control as usual with $\Sz$. 
Similarly, using \eqref{eq:ibp_frac2} with in addition \eqref{eq:crossvf} yields that
\begin{equation}
 \norm{\Q_{\frac{1}{V'_{\xi-\eta}\Phi}\frac{\m\Ups\Phi}{V_\eta\Phi}}[V'V_\eta f,g]}_{L^2}\lesssim 2^{2k_{\max}}2^{\frac{k}{2}} [\norm{(1,V')Sf}_B+\norm{(1,V')\Ups f}_{L^2}]\norm{g}_B.
\end{equation}
The two remaining terms are similar (in fact easier) and give
\begin{equation}
 \norm{\Q_{\frac{1}{V'_{\xi-\eta}\Phi}\frac{\m\Ups\Phi}{V_\eta\Phi}}[V'f,Vg]}_{L^2}\lesssim  2^{2k_{\max}}2^{\frac{k}{2}}\norm{V'f}_B\norm{Vg}_B,
\end{equation}
and
\begin{equation}
 \norm{\Q_{\frac{1}{V'_{\xi-\eta}\Phi}\frac{\m\Ups\Phi}{V_\eta\Phi}}[f,V'_{\xi-\eta}Vg]}_{L^2}\lesssim 2^{2k_{\max}}2^{\frac{k}{2}}\norm{f}_B[\norm{\Ups Vg}_{L^2}+\norm{SVg}_{B}],
\end{equation}
concluding the proof of Lemma \ref{lem:cl23}.
\end{proof}

\subsection{``Extreme'' terms and integration by parts}
Here we consider scenarios where normal forms are not possible and two integrations by parts as in Section \ref{ssec:2ibp} are not feasible due to the extra vector fields required. Since this will only happen when one of the inputs has very few vector fields, we can then resort to \eqref{eq:Linfdecay}, i.e.\ to the dispersive decay estimate $\norm{e^{\pm is\Lambda}P_{k,p,q}f}_{L^\infty}\lesssim s^{-1} 2^{\frac{3}{2}k}\norm{f}_D$.

\begin{lemma}\label{lem:bdry}
 Let $L:=2^{k_{\max}+k_{\min}+q_{\max}}$, $V\in\{S,\Omega\}$, and assume that $k_1\geq k_2$.
 Then there holds that for any $f=P_{k_1,p_1,q_1}f$, $F=P_{k_1,p_1,q_1}F$ and $g=P_{k_2,p_2,q_2}g$, $G=P_{k_2,p_2,q_2}G$ we have the estimates
 \begin{equation}\label{eq:allV1}
 \begin{aligned}
  2^{-\frac{5}{2}k_{\max}^+}\norm{P_{k,p,q}\Q^{V_\eta}_{s\m\Ups\Phi}[F,g]}_{L^2}&\lesssim s^{-1}\Big[\norm{(1,S)F}_B+\norm{\Ups F}_{L^2}\Big]\cdot [\norm{(1,S)g}_D+\norm{g}_B]\\
  &\quad+s^{-1}\norm{(1,S)F}_B \Big[\norm{(1,S,S^2)g}_B+\norm{(1,S)\Ups g}_{L^2}\Big]
 \end{aligned}
 \end{equation}
 and
 \begin{equation}\label{eq:allV2}
 \begin{aligned} 
  2^{-\frac{7}{2}k_{\max}^+}\norm{P_{k,p,q}\Q^{V_\eta}_{s\m\Ups\Phi}[f,G]}_{L^2}&\lesssim s^{-1}\Big[\norm{(1,S)Sf}_B+\norm{\Ups S f}_{L^2}\Big]\cdot \Big[\norm{(1,S)G}_B+\norm{\Ups G}_{L^2}\Big]\\
  &\quad +s^{-1}\norm{(1,S)f}_D\Big[\norm{S G}_B+\norm{\Ups G}_{L^2}+\norm{S G}_{H^{-1}}\Big],
 \end{aligned}
 \end{equation}
 where $k_{\max}^+:=\max\{k_{\max},0\}$.
\end{lemma}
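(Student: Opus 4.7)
The overall strategy mirrors that of Lemma \ref{lem:2ibp}, except that here we can afford only one integration by parts along $V_\eta$ (since additional vector fields on the input with few of them would exceed the allowed count on the right-hand side). In exchange, we compensate for the missing second factor of $s^{-1}$ by invoking the linear dispersive decay estimate of Proposition \ref{DecayProp}. Concretely, after one integration by parts on the support of $\bar\chi_{V_\eta}$ one obtains the analog of \eqref{eq:1ibp}, in which the factor $s$ in the multiplier $s\m\Ups\Phi$ is canceled, and three terms arise: a ``pure multiplier'' term with $V_\eta$ acting on $\m\Ups\Phi/V_\eta\Phi$, a ``cross term'' with $V_\eta$ on the first input (resolved via Lemma \ref{lem:VFcross} into contributions of $Sf$ and $\Ups f$), and a symmetric term with $V$ on the second input.

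The three terms are then estimated via an $L^\infty\times L^2$ Hölder bound in physical space, applied to the filtered bilinear expression (interpreting $\Q_{\m'}[h_1,h_2]$, modulo the modulation $e^{\pm is\Lambda(\xi)}$, as a para-product of the linear evolutions $e^{\pm is\Lambda}h_1,e^{\pm is\Lambda}h_2$). We place the factor with more vector fields in $L^\infty$ via Proposition \ref{DecayProp}, which yields the required $s^{-1}2^{\frac{3}{2}k_j}$ decay against its $D$-norm; the other factor is controlled in $L^2$ by its $B$-norm (or, for the cross term, by $L^2$ norms of $Sf,\Ups f$ respectively $Sg,\Ups g$). For \eqref{eq:allV1} we therefore put $g$ (and its descendants $Vg$) in $L^\infty$ against the $D$-norm, which explains the appearance of $\norm{(1,S)g}_D$ and the extra $\norm{(1,S,S^2)g}_B+\norm{(1,S)\Ups g}_{L^2}$ arising when the IBP produces a $Vg$ term. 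For \eqref{eq:allV2}, the roles of $f$ and $g$ are exchanged: dispersive decay acts on $f$ against $\norm{(1,S)f}_D$, and $G$ remains on the $L^2$ side, with $H^{-1}$ control on $SG$ needed to absorb losses at low output frequency coming from the pure multiplier contribution.

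The multiplier bounds themselves are of the same type as in the proof of Lemma \ref{lem:2ibp}, but now only a single division by $V_\eta\Phi$ is present. Using the estimates of Lemma \ref{lem:vfsizes-mini} and Appendix \ref{apdx:phase-comp}, the symbols $\m\Ups\Phi/V_\eta\Phi$ and $V_\eta(\m\Ups\Phi/V_\eta\Phi)$ are pointwise bounded by expressions of the form analogous to \eqref{eq:ibp_frac2}, i.e.\ with at most single losses in $2^{-p_j}$ and $2^{-q_{\max}}$. The loss in $2^{-q_{\max}}$ is balanced by the $2^{\frac{3}{2}k_j}$ from dispersive decay (combined with $2^{k_j}\gtrsim 2^{q_{\max}+k_{\max}-k_j}\cdot 2^{k_j}$-type reshuffling via $\abs{\bar\sigma}$ estimates from Proposition \ref{prop:phasevssigma}), while the single $2^{-p_j}$ losses are absorbed into the $B$-norm's $2^{-p-q/2}$ factor through the support consideration of the inputs.

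The main obstacle, as usual, lies in bookkeeping across the geometric cases $p\ll 0$ versus $q\ll 0$ and $k=k_{\min}$ versus $k_2=k_{\min}$, checking that in each configuration the total loss is bounded by $2^{\frac{5}{2}k_{\max}^+}$ (resp. $2^{\frac{7}{2}k_{\max}^+}$). The asymmetry between the two exponents reflects the fact that in \eqref{eq:allV2} the input $G$ placed in $L^2$ carries fewer vector fields, forcing us to use $H^{-1}$ control (rather than $B$ with its sharper frequency profile) for the pure multiplier term in the worst low-frequency cases, and thereby allowing a larger polynomial loss in $2^{k_{\max}^+}$.
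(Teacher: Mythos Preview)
Your sketch captures the overall philosophy (combine integration by parts with dispersive decay), but it misses the actual structure of the argument and would not close as written.

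The main gap is your claim that ``only one integration by parts along $V_\eta$'' suffices. Look at the right-hand side of \eqref{eq:allV1}: it contains $\norm{(1,S,S^2)g}_B$ and $\norm{(1,S)\Ups g}_{L^2}$, i.e.\ \emph{two} vector fields on $g$. A single IBP in $V_\eta$ produces at most $Vg$; a second vector field on $g$ can only appear after a second integration by parts. The paper's proof in fact splits into cases according to the relative sizes of $2^{p_1}$ and $2^{p_2}$, and the direction of integration by parts is chosen accordingly. When $2^{p_2}\lesssim 2^{p_1}$ one integrates in $V_\eta$; the pure-multiplier term is then not handled by dispersive decay but by a \emph{second} IBP in $V'_{\xi-\eta}$ (this is precisely Lemma~\ref{lem:cl1}). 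When $2^{p_1}\ll 2^{p_2}$ one instead integrates first in $V'_{\xi-\eta}$ (to avoid the bad $2^{-p_1}$ loss in \eqref{eq:1ibp_symb}), and the term with $V'_{\xi-\eta}g$ then requires a second IBP in $V_\eta$, producing exactly the $S^2g$ and $S\Ups g$ contributions. The same case split and mixing of one- and two-IBP arguments occurs for \eqref{eq:allV2}, where in the case $2^{p_2}\ll 2^{p_1}$ the first two terms after IBP in $V_\eta$ are treated via Lemmas~\ref{lem:cl1} and~\ref{lem:cl23} (two IBPs), and only the term with $VG$ uses dispersive decay on $f$.

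Your attempt to treat the pure-multiplier term directly by $L^\infty\times L^2$ with dispersive decay fails: by \eqref{eq:bds_1ibp} the symbol $V_\eta(\m\Ups\Phi/V_\eta\Phi)$ carries a factor $(1+2^{p-p_1})$ (or worse), which cannot be absorbed into a single $B$-norm when $p\gg p_1$. This is exactly why the paper invests a second IBP there. Similarly, the $H^{-1}$ term on $SG$ in \eqref{eq:allV2} does not arise from ``low output frequency'' in a pure-multiplier term, but from the term $\Q_{\m\Ups\Phi/V_\eta\Phi}[f,VG]$ after one IBP in $V_\eta$, where the symbol bound $2^{2k_{\max}-k_2}$ forces a $2^{-k_2}$ loss that is converted to $\norm{SG}_{H^{-1}}$.
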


The main point here is that unlike the estimates in Lemma \ref{lem:2ibp}, here for either one of the two inputs (either $F$ in \eqref{eq:allV1} or $G$ in \eqref{eq:allV2}), we require only \emph{one} additional vector field $S$ in the $B$ norm and \emph{one} $\Upsilon$ in $L^2$. This makes these estimates compatible with a bootstrap argument that propagates a certain number of vector fields $S$ and one order of $\Upsilon$, as is carried out below in Section \ref{sec:Ups_propagation}.

On a technical level, to have access to the dispersive decay we require a version of H\"older's inequality \eqref{ProdRule2} and symbol estimates for the multipliers that arise after integration by parts. These are detailed in Section \ref{sec:symbols}. We recall here for convenience the algebra property
\begin{equation}\label{eq:alg_prop2}
 \norm{m_1\cdot m_2}_{\W}\lesssim\norm{m_1}_{\W}\norm{m_2}_{\W},
\end{equation}
which implies that for expressions that can be factored in $\xi,\xi-\eta,\eta$ the bounds established as absolute values under localization also hold as symbol bounds in $\W$, a fact we shall make frequent use of.
In particular, from \eqref{eq:simpmultbd}, \eqref{eq:Ups_mult} and \eqref{eq:UpsPhi_bd} we have that
\begin{equation}\label{eq:simp_symb}
 \norm{\m}_{\W}\lesssim C_\m,\quad\norm{\Ups_\xi\m}_{\W}\lesssim C_{\Ups\m},\quad \norm{\Ups_\xi\Phi}_{\W}\lesssim 2^{p}+2^{k-k_1}2^{p_1}.
\end{equation}
For expressions involving the inverse of a vector field on the phase we invoke Lemma \ref{lem:vf_symb}, which states that also such expressions have the expected symbol bounds, i.e.\
\begin{equation}\label{eq:vf_symb}
  \norm{\frac{1}{V_\eta\Phi}\cdot \bar\chi_{V_\eta}}_{\W}\lesssim L^{-1}\cdot 2^{2k_1-p_1}.
\end{equation} 
From this and the algebra property \eqref{eq:alg_prop2} we can deduce for example that
 \begin{equation}\label{eq:1ibp_symb}
  \norm{\frac{\m \Ups_\xi\Phi}{V_\eta\Phi}\cdot\bar\chi_{V_\eta}}_{\W}\lesssim 2^{2k_1-k_2}[2^{p-p_1}+2^{k-k_1}]
  \lesssim 2^{2k_{\max}-k_2}(1+2^{p-p_1}).
 \end{equation}
 
\begin{proof}
 We begin with the proof of \eqref{eq:allV1}. 
 \paragraph{Case $2^{p_2}\lesssim 2^{p_1}$.} Here we integrate by parts as in \eqref{eq:1ibp}, obtaining
 \begin{equation}
  \Q_{s\m\Ups\Phi}[F,g]=\Q_{V_\eta\left(\frac{\m\Ups\Phi}{V_\eta\Phi}\right)}[F,g]+\Q_{\frac{\m\Ups\Phi}{V_\eta\Phi}}[V_\eta F,g]+\Q_{\frac{\m\Ups\Phi}{V_\eta\Phi}}[F,Vg].
 \end{equation}
 Using the bound \eqref{eq:1ibp_symb} and the dispersive decay estimate $\norm{e^{is\Lambda}\varphi}_{L^\infty}\lesssim s^{-1} 2^{\frac{3}{2}k_2}\norm{\varphi}_D$, we obtain with H\"older's inequality \eqref{ProdRule2} for the last term that
 \begin{equation}
 \begin{aligned}
  \norm{\Q_{\frac{\m\Ups\Phi}{V_\eta\Phi}}[F,Vg]}_{L^2}&\lesssim \norm{\frac{\m\Ups\Phi}{V_\eta\Phi}\bar\chi_{V_\eta}}_{\W}\norm{F}_{L^2}\norm{e^{it\Lambda}Vg}_{L^\infty}\\
  &\lesssim 2^{2k_{\max}-k_2}(1+2^{p-p_1})\norm{F}_{L^2}\norm{e^{it\Lambda}Vg}_{L^\infty}\lesssim s^{-1} 2^{\frac{5}{2}k_{\max}}\norm{F}_B\norm{Vg}_D.
 \end{aligned} 
 \end{equation}
 Similarly, we have from \eqref{eq:cross_recall} that
 \begin{equation}
  \norm{\Q_{\frac{\m\Ups\Phi}{V_\eta\Phi}}[V_\eta F,g]}_{L^2}\lesssim s^{-1}2^{\frac{5}{2}k_{\max}}[\norm{VF}_B+(1+2^{p+p_2-p_1})\norm{\Ups F}_{L^2}]\cdot \norm{g}_D,
 \end{equation}
 which suffices since $2^{p_2-p_1}\lesssim 1$ by assumption. Next, we note that the first term can be treated as in Lemma \ref{lem:cl1}, i.e.\ we can apply \eqref{eq:cl1} to obtain after another integration by parts that
 \begin{equation}
  \norm{\Q_{V_\eta\left(\frac{\m\Ups\Phi}{V_\eta\Phi}\right)}[F,g]}_{L^2}\lesssim s^{-1}\cdot  2^{2k_{\max}}2^{\frac{k}{2}}\norm{(1+V)F}_B[\norm{(1+S)g}_B+\norm{\Ups g}_{L^2}].
 \end{equation}

 \paragraph{Case $2^{p_1}\ll 2^{p_2}$.} On the other hand, integrating by parts in $V'_{\xi-\eta}$ yields
 \begin{equation}\label{eq:1ibpV'}
  \Q_{s\m\Ups\Phi}[F,g]=\Q_{V'_{\xi-\eta}\left(\frac{\m\Ups\Phi}{V'_{\xi-\eta}\Phi}\right)}[F,g]+\Q_{\frac{\m\Ups\Phi}{V'_{\xi-\eta}\Phi}}[V' F,g]+\Q_{\frac{\m\Ups\Phi}{V'_{\xi-\eta}\Phi}}[F,V'_{\xi-\eta}g].
 \end{equation}
 In analogy with \eqref{eq:1ibp_symb}, we use $k_2\leq k_1$, $2^{p_1}\ll 2^{p_2}$, \eqref{eq:simp_symb} and the algebra property \eqref{eq:alg_prop2} to conclude that
 \begin{equation}\label{eq:1ibp''}
  \norm{\frac{\m\Ups_\xi\Phi}{V'_{\xi-\eta}\Phi}\bar\chi_{V'_{\xi-\eta}}}_{\W}\lesssim 2^{2k_2-p_2}L^{-1}C_\m(2^p+2^{p_1}2^{k-k_1})\lesssim  2^{k_2-p_2}(2^{p}+2^{p_1})\lesssim 2^{k_2}(1+2^{p-p_2}).
 \end{equation}
 From this it follows that
 \begin{equation}\label{eq:1ibp'''}
 \begin{aligned}
  \norm{V'_{\xi-\eta}\left(\frac{\m\Ups_\xi\Phi}{V'_{\xi-\eta}\Phi}\right)\bar\chi_{V'_{\xi-\eta}}}_{\W}& \lesssim\norm{\frac{V'_{\xi-\eta}(\m\Ups_\xi\Phi)}{V'_{\xi-\eta}\Phi}\bar\chi_{V'_{\xi-\eta}}}_{\W} + \norm{\frac{(V'_{\xi-\eta})^2\Phi}{V'_{\xi-\eta}\Phi}\bar\chi_{V'_{\xi-\eta}}}_{\W}\norm{\frac{\m\Ups_\xi\Phi}{V'_{\xi-\eta}\Phi}\bar\chi_{V'_{\xi-\eta}}}_{\W}\\
  &\lesssim 2^{k_{\max}}(1+2^{p-p_2}),
 \end{aligned}
 \end{equation}
 since by \eqref{eq:simp_symb}, \eqref{eq:vfonUpsPhi} and \eqref{eq:V'onmult}
 \begin{equation}
 \begin{aligned}
  \norm{\frac{V'_{\xi-\eta}(\m\Ups_\xi\Phi)}{V'_{\xi-\eta}\Phi}\bar\chi_{V'_{\xi-\eta}}}_{\W}&\lesssim 2^{2k_2-p_2}L^{-1}C_\m\left[(1+2^{k_1-k_2}2^{p_1-p_2})(2^p+2^{k-k_1}2^{p_1})+2^{k-k_1+p_1}\right]\\
  &\lesssim 2^{k_2-p_2}\left[2^p+2^{p_1}+2^{k_{\max}-k_2}2^{p+p_1-p_2}+2^{k-k_2}2^{2p_1-p_2}\right]\\
  &\lesssim 2^{k_{\max}}(1+2^{p-p_2}),
 \end{aligned} 
 \end{equation}
 and via \eqref{eq:vfquot'} and \eqref{eq:1ibp''}
 \begin{equation}
  \norm{\frac{(V'_{\xi-\eta})^2\Phi}{V'_{\xi-\eta}\Phi}\bar\chi_{V'_{\xi-\eta}}}_{\W}\norm{\frac{\m\Ups_\xi\Phi}{V'_{\xi-\eta}\Phi}\bar\chi_{V'_{\xi-\eta}}}_{\W} \lesssim 2^{k_1-k_2}(1+2^{p_1-p_2})\cdot 2^{k_2}(1+2^{p-p_2})\lesssim 2^{k_{\max}}(1+2^{p-p_2}).
 \end{equation}
 Hence with a direct $L^2\times L^\infty$ estimate in physical space we obtain
 \begin{equation}
  \norm{\Q_{V'_{\xi-\eta}\left(\frac{\m\Ups\Phi}{V'_{\xi-\eta}\Phi}\right)}[F,g]}_{L^2}\lesssim s^{-1}2^{k_{\max}+\frac{3}{2}k_2}\norm{F}_B\norm{g}_D,
 \end{equation}
 and similarly from \eqref{eq:1ibp''}
 \begin{equation}
  \norm{\Q_{\frac{\m\Ups\Phi}{V'_{\xi-\eta}\Phi}}[V' F,g]}_{L^2}\lesssim s^{-1}2^{k_{\max}+\frac{3}{2}k_2}\norm{V F}_B\norm{g}_D.
 \end{equation}
 Finally, for the last term in \eqref{eq:1ibpV'} we integrate by parts once more in $V_\eta$ (analogously to \eqref{eq:cl2} of Lemma \ref{lem:cl23}, with the order reversed) and obtain
 \begin{equation}
 \begin{aligned}
  \Q_{\frac{\m\Ups\Phi}{V'_{\xi-\eta}\Phi}}[F,V'_{\xi-\eta}g]&=s^{-1}\Big(\Q_{V_{\eta}\left(\frac{1}{V'_{\xi-\eta}\Phi}\frac{\m\Ups\Phi}{V_\eta\Phi}\right)}[F, V'_{\xi-\eta}g]+\Q_{\frac{1}{V'_{\xi-\eta}\Phi}\frac{\m\Ups\Phi}{V_\eta\Phi}}[V_\eta F,g]\\
  &\qquad\qquad +\Q_{\frac{1}{V'_{\xi-\eta}\Phi}\frac{\m\Ups\Phi}{V_\eta\Phi}}[F,VV'_{\xi-\eta}g]\Big).
 \end{aligned} 
 \end{equation}
 To bound the relevant multipliers we note that since $2^{p_1}\ll 2^{p_2}$ there holds that $2^{q_{\max}}\sim 1$, and we have that 
 \begin{equation}\label{eq:freq_restrict}
  2^{p_2}2^{k_1+k_2}\sim\abs{\bar\sigma}\gtrsim 2^{k_{\max}+k_{\min}}\quad\Rightarrow\quad
  \begin{cases}
   2^{k}\lesssim 2^{p_2+k_{\max}},&k=k_{\min},\\
   1\lesssim 2^{p_2},&k_2=k_{\min},
  \end{cases}
 \end{equation}
 and hence 
 \begin{equation*}
  \vert V'_{\xi-\eta}\Phi\vert \sim 2^{-2k_2+p_2}\abs{\bar\sigma}\sim 2^{-k_2+k_1+2p_2}.
 \end{equation*}
 From Lemma \ref{lem:ibp_mult_bds} it thus follows that
 \begin{equation}\label{eq:bds_2ibp'}
  \begin{aligned}
  \abs{V_{\eta}\left(\frac{1}{V'_{\xi-\eta}\Phi}\frac{\m\Ups\Phi}{V_\eta\Phi}\right)}&\lesssim \abs{\frac{1}{V'_{\xi-\eta}\Phi}}\abs{V_{\eta}\left(\frac{\m\Ups\Phi}{V_\eta\Phi}\right)}+\abs{\frac{1}{V'_{\xi-\eta}\Phi}}\abs{\frac{V_\eta V'_{\xi-\eta}\Phi}{V'_{\xi-\eta}\Phi}}\abs{\frac{\m\Ups\Phi}{V_\eta\Phi}}\\
  &\lesssim 
  (1+2^{p-p_1})\begin{cases}
   2^{k_{\max}-2p_2}+2^{k_{\max}-p_2}2^{p-p_1} ,&k=k_{\min},\\
   2^{k_{\max}}+2^{k_2}[1+2^{p_2-p_1}],&k_2=k_{\min},
  \end{cases}
 \end{aligned} 
 \end{equation}
 where we used that by \eqref{eq:bds_1ibp} and \eqref{eq:freq_restrict} there holds
 \begin{equation}
 \begin{aligned}
  \abs{\frac{1}{V'_{\xi-\eta}\Phi}}\abs{V_{\eta}\left(\frac{\m\Ups\Phi}{V_\eta\Phi}\right)}&\lesssim  2^{-2p_2}(1+2^{p-p_1})
  \begin{cases}
   2^{k_{\max}}+2^{k}2^{p-p_1},&k=k_{\min},\\
   2^{k_{\max}}+2^{k_2}[1+2^{p_2-p_1}],&k_2=k_{\min},
  \end{cases}\\
  &\lesssim (1+2^{p-p_1})
  \begin{cases}
   2^{k_{\max}-2p_2}+2^{k_{\max}-p_2}2^{p-p_1} ,&k=k_{\min},\\
   2^{k_{\max}}+2^{k_2}[1+2^{p_2-p_1}],&k_2=k_{\min},
  \end{cases}
 \end{aligned}
 \end{equation}
 and by \eqref{eq:bds_1ibp} and by (the symmetric version of) \eqref{eq:mixvf_quot} we have
 \begin{equation}
 \begin{aligned}
  \abs{\frac{1}{V'_{\xi-\eta}\Phi}}\abs{\frac{V_\eta V'_{\xi-\eta}\Phi}{V'_{\xi-\eta}\Phi}}\abs{\frac{\m\Ups\Phi}{V_\eta\Phi}}&\lesssim 2^{k_2-k_1-2p_2}\cdot 2^{2k_1-k_2}[2^{p-p_1}+2^{k-k_1}]\\
  &\lesssim 2^{k_{\max}-2p_2}[2^{p-p_1}+1].
 \end{aligned}
 \end{equation}
 With the usual set size estimates for $\Sz\lesssim 2^{\frac{k_{\max}+q_{\max}}{2}}\cdot\min\{2^{k_1+p_1},2^{k_2+p_2}\}$, (the symmetric version of) \eqref{eq:cross_recall} and since by assumption $2^{p_1-p_2}\ll 1$ we thus invoke \eqref{eq:bds_2ibp'} to deduce that
 \begin{equation}
 \begin{aligned}
  \norm{\Q_{V_{\eta}\left(\frac{1}{V'_{\xi-\eta}\Phi}\frac{\m\Ups\Phi}{V_\eta\Phi}\right)}[F, V'_{\xi-\eta}g]}_{L^2}&\lesssim 2^{\frac{5}{2}k_{\max}}2^{-p_2}(1+2^{p-p_1})\norm{F}_{L^2}\cdot [\norm{Vg}_{L^2}+(2^{p_1}+2^{p_2})\norm{\Ups g}_{L^2}] \\
  &\lesssim 2^{\frac{5}{2}k_{\max}}\norm{F}_B \cdot [\norm{Vg}_{B}+\norm{\Ups g}_{L^2}].
 \end{aligned} 
 \end{equation}
 Since moreover by \eqref{eq:bds_1ibp} we have
 \begin{equation}
  \abs{\frac{1}{V'_{\xi-\eta}\Phi}\frac{\m\Ups\Phi}{V_\eta\Phi}}\lesssim 2^{k_2-k_1-2p_2}2^{2k_{\max}-k_2}(1+2^{p-p_1})\lesssim 2^{k_{\max}}2^{-2p_2}(1+2^{p-p_1})
 \end{equation}
 the remaining two terms are direct and give by \eqref{eq:cross_recall} and $\Sz\lesssim 2^{\frac{k_{\min}}{2}}\min\{2^{k_2+p_2},2^{k_1+p_1}\}$
 \begin{equation}
 \begin{aligned}
  \norm{\Q_{\frac{1}{V'_{\xi-\eta}\Phi}\frac{\m\Ups\Phi}{V_\eta\Phi}}[V_\eta F,g]}_{L^2}&\lesssim 2^{k_{\max}}2^{-2p_2}(1+2^{p-p_1})\cdot\Sz\cdot [\norm{SV^N f}_{L^2}+(2^{p_1}+2^{p_2})\norm{\Ups F}_{L^2}]\norm{g}_{L^2}\\
  &\lesssim 2^{\frac{5}{2}k_{\max}}[\norm{V F}_{L^2}+\norm{\Ups F}_{L^2}]\cdot\norm{g}_B,
 \end{aligned}
 \end{equation}
 where we used that since $\Sz\lesssim 2^{\frac{k_{\min}}{2}}\min\{2^{k_2+p_2},2^{k_1+p_1}\}$ there holds that $2^{-2p_2}(1+2^{p-p_1})\cdot\Sz(2^{p_1}+2^{p_2})\lesssim 2^{\frac{3}{2}k_{\max}}2^{-p_2}$.
 Similarly, (using the symmetric version of Lemma \ref{lem:crossvf} with the roles of $\xi-\eta$ and $\eta$ exchanged) we obtain using $\Sz\lesssim 2^{\frac{k_{\min}}{2}}2^{k_2+p_2}$ that
 \begin{equation}
 \begin{aligned}
  \norm{\Q_{\frac{1}{V'_{\xi-\eta}\Phi}\frac{\m\Ups\Phi}{V_\eta\Phi}}[F,VV'_{\xi-\eta}g]}_{L^2} &\lesssim 2^{2k_{\max}-k_2}2^{-2p_2}(1+2^{p-p_1})\cdot\Sz\cdot\norm{F}_{L^2}\\
  &\qquad\qquad\cdot [\norm{(1,V)Sg}_{L^2}+(2^{p_1}+2^{p_2})\norm{(1,V)\Ups g}_{L^2}] \\
  &\lesssim 2^{\frac{5}{2}k_{\max}}\norm{F}_{B}\cdot [\norm{(1,V)Sg}_{B}+\norm{(1,V)\Ups g}_{L^2}],
 \end{aligned} 
 \end{equation}
 having used that by assumption $2^{p_1-p_2}\ll 1$.

\medskip
\noindent It remains to deal with \eqref{eq:allV2}:
\paragraph{Case $2^{p_2}\ll 2^{p_1}$} Here we begin with an integration by parts in $V_\eta$, which gives
\begin{equation}
 \Q_{s\m\Ups\Phi}[f,G]=\Q_{V_\eta\left(\frac{\m\Ups\Phi}{V_\eta\Phi}\right)}[f,G]+\Q_{\frac{\m\Ups\Phi}{V_\eta\Phi}}[V_\eta f,G]+\Q_{\frac{\m\Ups\Phi}{V_\eta\Phi}}[f, VG].
\end{equation}
The first and second terms are directly amenable to another integration by parts as in Lemma \ref{lem:2ibp}, so that we obtain from \eqref{eq:cl1} in Lemma \ref{lem:cl1}
\begin{equation}
 \norm{\Q_{V_\eta\left(\frac{\m\Ups\Phi}{V_\eta\Phi}\right)}[f,G]}_{L^2}\lesssim s^{-1}\cdot 2^{2k_{\max}}2^{\frac{k}{2}}\norm{(1,S)f}_B[\norm{(1,S)G}_B+\norm{\Ups G}_{L^2}],
\end{equation} 
and from \eqref{eq:cl2} in Lemma \ref{lem:cl23}
\begin{equation}
 \norm{\Q_{\frac{\m\Ups\Phi}{V_\eta\Phi}}[V_\eta f,G]}_{L^2}\lesssim s^{-1}\cdot 2^{2k_{\max}}2^{\frac{k}{2}}[\norm{(1,S)Vf}_B+\norm{\Ups V' f}_{L^2}]\cdot [\norm{(1,S)G}_B+\norm{\Ups G}_{L^2}].
\end{equation}
Finally, to treat the last term we note that if $2^p\lesssim 2^{p_1}$, we can appeal to \eqref{eq:1ibp_symb} to conclude that
\begin{equation}
\begin{aligned}
 \norm{\Q_{\frac{\m\Ups\Phi}{V_\eta\Phi}}[f, VG}_{L^2} &\lesssim s^{-1}2^{2k_{\max}+\frac{3}{2}k_1-k_2}(1+2^{p-p_1})\norm{f}_D\norm{V G}_{L^2}\\
 &\lesssim s^{-1} 2^{2k_{\max}+\frac{3}{2}k_1}\norm{f}_D\norm{V G}_{H^{-1}}.
\end{aligned}
\end{equation}
On the other hand, if $2^p\gg 2^{p_1}$, we note that we have $\abs{\bar\sigma}\sim 2^{q_{\max}+p+k+k_1}$, so that we can also bound
\begin{equation}
\begin{aligned}
 \norm{\frac{\m\Ups\Phi}{V_\eta\Phi}\bar\chi_{V_\eta}}_{\W} &\lesssim 2^{k+p_{\max}+q_{\max}}\cdot (2^p+2^{k-k_1}2^{p_1})\cdot 2^{2k_1-p_1} 2^{-q_{\max}-p-k-k_1}\lesssim 2^{k_1-p_1}(1+2^{k-k_1}2^{p_1-p})\\
 &\lesssim 2^{-p_1}(2^{k_1}+2^k).
\end{aligned} 
\end{equation}
Since $2^{-p_1}\ll 2^{-p_2}$ we can thus estimate
\begin{equation}
\begin{aligned}
 \norm{\Q_{\frac{\m\Ups\Phi}{V_\eta\Phi}}[f, V G}_{L^2} &\lesssim s^{-1}2^{-p_1}(2^{k_1}+2^k)2^{\frac{3}{2}k_1}\norm{f}_D\norm{V G}_{L^2}\\
 &\lesssim s^{-1} 2^{\frac{5}{2}k_{\max}}\norm{f}_D\norm{V G}_{B}.
\end{aligned}
\end{equation}

\paragraph{Case $2^{p_1}\lesssim 2^{p_2}$} 
Then we integrate instead by parts in $V'_{\xi-\eta}$, thus obtaining
\begin{equation}
 \Q_{s\m\Ups\Phi}[f,G]=\Q_{V'_{\xi-\eta}\left(\frac{\m\Ups\Phi}{V'_{\xi-\eta}\Phi}\right)}[f,G]+\Q_{\frac{\m\Ups\Phi}{V'_{\xi-\eta}\Phi}}[V' f,G]+\Q_{\frac{\m\Ups\Phi}{V'_{\xi-\eta}\Phi}}[f,V'_{\xi-\eta}G].
\end{equation}
Now we recall that by \eqref{eq:1ibp''} there holds that
\begin{equation}
 \norm{\frac{\m\Ups_\xi\Phi}{V'_{\xi-\eta}\Phi}\bar\chi_{V_\eta}}_{\W}\lesssim  2^{k_2}(1+2^{p-p_2}),
\end{equation}
so that we can directly estimate
\begin{equation}
 \norm{\Q_{\frac{\m\Ups\Phi}{V'_{\xi-\eta}\Phi}}[V' f,G]}_{L^2}\lesssim s^{-1}2^{\frac{3}{2}k_1+k_2}\norm{V'f}_D\norm{G}_B,
\end{equation}
and together with \eqref{eq:crossvf}
\begin{equation}
 \norm{\Q_{\frac{\m\Ups\Phi}{V'_{\xi-\eta}\Phi}}[f,V'_{\xi-\eta}G]}_{L^2}\lesssim s^{-1}2^{\frac{5}{2}k_1}(1+2^{p-p_2})\norm{f}_D\cdot[\norm{SG}_{L^2}+(2^{p_1}+2^{p_2})\norm{\Ups G}_{L^2}],
\end{equation}
which suffices since by assumption $2^{p_1-p_2}\lesssim 1$. Finally, from \eqref{eq:1ibp'''} we recall that
\begin{equation} 
 \norm{V'_{\xi-\eta}\left(\frac{\m\Ups\Phi}{V'_{\xi-\eta}\Phi}\right)\bar\chi_{V'_{\xi-\eta}}}_{\W}\lesssim 2^{k_{\max}}(1+2^{p-p_2}),
\end{equation}
from which we easily deduce that
\begin{equation}
 \norm{\Q_{V'_{\xi-\eta}\left(\frac{\m\Ups\Phi}{V'_{\xi-\eta}\Phi}\right)}[f,G]}_{L^2}\lesssim s^{-1}2^{k_{\max}}(1+2^{p-p_2})2^{\frac{3}{2}k_1}\norm{f}_D\norm{G}_{L^2}\lesssim s^{-1}2^{\frac{5}{2}k_{\max}}\norm{f}_D\norm{G}_B.
\end{equation}

\end{proof}

\section{Propagating $\Ups V^{N+3} f$ in $L^2$}\label{sec:Ups_propagation}
In this section we will demonstrate how to propagate $\Ups$ on the profiles in $L^2$.
\begin{proposition}\label{prop:Ups_prop}
Assume that $U_\pm$ are solutions to \eqref{eq:IER_disp} with initial data satisfying \eqref{eq:assump_id}, assume that for $0\le t\le T^{\ast}$,
\begin{align}
 \norm{U_\pm(t=0)}_{H^{2N_0}\cap H^{-1}}+\norm{S^aU_\pm(t=0)}_{L^2\cap H^{-1}}&\le2\eps_1,\qquad 0\le a\le 4N,\label{eq:ass_many_Sob-1}\\
 \norm{S^a \U_\pm(t)}_{B}&\le 2\eps_1,\qquad 0\le a\le 2N,\label{eq:ass_many_vfs-1}\\
 \norm{\Ups S^a \U_\pm(t)}_{L^2}&\le  2\eps_1,\qquad 0\le a\le N+3.\label{eq:ass_few_vfs-1}
\end{align}
Then for $0\leq t\leq T^\ast$ there holds that
\begin{equation}\label{eq:Ups_btstrap_bd}
 \sup_{a\le N+3}\norm{\Ups S^a \U_\pm(t)}_{L^2}\leq \varepsilon_0+C\varepsilon_1^2\ip{t}^{\frac{20}{N_0}}.
\end{equation}
In particular, for $T^\ast=\eps_1^{-\M}$ and $N_0=25\M$ we have (if $\eps_1$ is sufficiently small) that
\begin{equation}\label{eq:Ups_btstrap}
 \sup_{a\le N+3}\norm{\Ups S^a \U_\pm(t)}_{L^2}\le \eps_1.
\end{equation}
\end{proposition}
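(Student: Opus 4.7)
My plan is to apply $\Ups S^a$ to the Duhamel formula \eqref{eq:IER_disp_Duham} and estimate the resulting bilinear expressions. Distributing the $S$-vector fields via \eqref{eq:vf_distribute}, and noting that $(S_\xi+S_\eta)\m\sim\m$ by Lemma \ref{lem:vfmult}, each factor of $S$ simply moves onto one of the inputs. For $\Ups_\xi$, using $\Ups_\xi e^{is\Phi}=is\Ups_\xi\Phi\cdot e^{is\Phi}$ together with the decomposition \eqref{eq:Ups_cross} of $\Ups_\xi$ on functions of $\xi-\eta$ as a bounded combination of $S_{\xi-\eta}$ and $\Ups_{\xi-\eta}$, I arrive at
\begin{equation*}
 \Ups_\xi \Q_\m[f,g] = is\,\Q_{\m\Ups_\xi\Phi}[f,g] + \Q_{\Ups_\xi\m}[f,g] + \Q_{A\m}[Sf,g] + \Q_{B\m}[\Ups f, g]
\end{equation*}
for symbols $A,B$ (uniformly bounded under frequency localization) coming from \eqref{eq:Ups_cross}. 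Only the first piece carries an extra factor of $s$, and that term is the crux of the bootstrap.

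The three non-dangerous pieces, after a full distribution of $\Ups S^a$, reduce to bilinear expressions of the form $\Q_{\m'}[S^{a_1}(1,S,\Ups)\U, S^{a_2}(1,S,\Ups)\U]$ with $a_1+a_2\le a+1\le N+4$ and $\m'$ of comparable structure to $\m$. These can be treated exactly as in Section \ref{sec:B_VFprop}: frequency-localizing, dispatching the tails $k_{\max}\gtrsim m/N_0$ and $k_{\min}\lesssim -2m$ through the energy bounds \eqref{eq:ass_many_Sob-1}, and applying Proposition \ref{prop:bilin} together with the bootstrap assumptions \eqref{eq:ass_many_vfs-1}, \eqref{eq:ass_few_vfs-1} on the intermediate frequencies. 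Each dyadic time slab then contributes at most $\eps_1^2 2^{Cm/N_0}$, summing to $C\eps_1^2\ip{t}^{C'/N_0}$ with $C'$ much smaller than $20$.

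For the dangerous term $\int_0^t s\,\Q_{\m\Ups_\xi\Phi}[S^{a_1}\U_\mu, S^{a_2}\U_\nu](s)\,ds$ with $a_1+a_2\le N+3$, I would introduce the cutoff $\bar\chi_{V_\eta}$ of \eqref{eq:vfloc_not}. On its complement, Proposition \ref{prop:phasevssigma} gives $\abs{\Phi}\gtrsim 2^{q_{\max}}$, so a normal form in $s$ converts the factor of $s$ into a boundary contribution plus a cubic term; both are controlled by Proposition \ref{prop:bilin} together with the equation \eqref{eq:IER_disp} for $\partial_s\U_\pm$. Where $\bar\chi_{V_\eta}$ is present, I would integrate by parts once along $V_\eta$ and, whenever possible, a second time along $V'_{\xi-\eta}$, then apply the bilinear estimates of Section \ref{sec:more_bilin}. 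Concretely: if both $a_1,a_2\le N+2$, Lemma \ref{lem:2ibp} applies, and the losses of up to two extra $S$'s or one $\Ups S$ on each input are absorbed by \eqref{eq:ass_many_vfs-1}, \eqref{eq:ass_few_vfs-1}, since $a_j+2\le N+4\le 2N$ and $a_j+1\le N+3$. In the extreme case $\max(a_1,a_2)=N+3$, with the other input carrying very few vector fields, I would invoke Lemma \ref{lem:bdry}: the high-vector-field input is controlled by $\|S^{N+4}\U\|_B$, $\|\Ups S^{N+3}\U\|_{L^2}$, and an $H^{-1}$ bound from \eqref{eq:ass_many_Sob-1}, while the low-vector-field input enters through the $D$-norm and benefits from the linear decay \eqref{eq:Linfdecay}. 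Summing over $m\le\log_2 t$ and the frequency parameters as in Section \ref{sec:B_VFprop} yields the growth factor $\ip{t}^{20/N_0}$ in \eqref{eq:Ups_btstrap_bd}, and the choice $N_0=25\M$ with $T^\ast=\eps_1^{-\M}$ then gives \eqref{eq:Ups_btstrap} for $\eps_1$ sufficiently small.

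The main obstacle is precisely the extreme regime $\max(a_1,a_2)=N+3$: only one integration by parts on the high side is compatible with the bootstrap (a second would cost an $S$ or $\Ups S$ beyond the controlled orders), so the $s$-growth must be cancelled by a single IBP paired with linear dispersive decay on the low side. The tight accounting of losses in $2^{-p_j}$, $2^{-q_j}$, compensated by the $\W$-norm symbol bounds and the set-size gain behind Lemma \ref{lem:set_gain}, is exactly what Lemma \ref{lem:bdry} delivers, and that is the technical heart of the proof.
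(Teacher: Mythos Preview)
Your architecture matches the paper's: decompose $\Ups_\xi\Q_\m$ as in \eqref{eq:Upsterms_k} and handle $s\Q_{\m\Ups_\xi\Phi}$ by the $|\Phi|\gtrless 2^{q_{\max}}$ dichotomy (normal form versus the bilinear lemmas of Section \ref{sec:more_bilin}). That is essentially the paper's Section \ref{ssec:UpsPhase}; the paper in fact applies Lemma \ref{lem:bdry} uniformly in $(a,b)$ rather than reserving it for the extreme case, but your split into Lemma \ref{lem:2ibp} for $\max(a_1,a_2)\le N+2$ and Lemma \ref{lem:bdry} otherwise works equally well.

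The gap is in dismissing the other two pieces as treatable ``exactly as in Section \ref{sec:B_VFprop}''. First, $\Ups_\xi\m$ is \emph{not} ``of comparable structure to $\m$'': by \eqref{eq:Ups_mult} one has $|\Ups_\xi\m|\lesssim 2^k[1+2^{q+p_2-p_1}]$, so when $p_1\ll p_2$ and simultaneously $b>a$ (forcing $b$ possibly above $N$, hence $S^bf_2\notin D$), neither a direct $L^2\times L^\infty$ estimate nor Proposition \ref{prop:bilin} closes; the paper (Section \ref{ssec:Upsm}) recovers the $2^{-p_1}$ loss by an integration by parts in $V_{\xi-\eta}$. Second, for $\Q_\m[\Gamma_\Ups\Ups S^af_1,S^bf_2]$ with $a<b$, you only control $\Ups S^af_1$ in $L^2$ (not in $B$ or $D$), and $b$ may again exceed $N$, so Proposition \ref{prop:bilin} cannot be applied with either input carrying the decay; the paper (Section \ref{ssec:UpsVf1}) uses the same $|\Phi|$-dichotomy here --- a normal form (requiring Lemma \ref{lem:UpsNF} to bound $\partial_t\Ups S^a\U_\pm$) in the large-phase case, and an IBP in $V_{\xi-\eta}$ in the small-phase case. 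There is also a subtlety you pass over in the frequency-tail reduction: for the $\Ups$-on-input piece one lacks $H^{N_0}$ energy on $\Ups S^af_1$, and the large-$k_1$ cutoff needs the separate $\eta_3$-integration-by-parts argument of \eqref{eq:2ndterm-split2}--\eqref{eq:2ndterm-split-ibp}. In short, the ``technical heart'' you locate only in the phase term is in fact spread across Sections \ref{ssec:Upsm}--\ref{ssec:UpsPhase}.
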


The rest of this section is devoted to the proof of \eqref{eq:Ups_btstrap_bd}. 
 First, we show how in Section \ref{ssec:red_Ups}, Lemma \ref{lem:red_Upsprop} how to reduce the claim to bilinear estimates. These are then given Lemma \ref{lem:UPS_red_split}, which is proved in Sections \ref{ssec:Upsm}--\ref{ssec:UpsPhase}. We precede these arguments with some normal form estimates that hold in the setting of Proposition \ref{prop:Ups_prop} -- see Section \ref{ssec:NFs}.

We recall here that by \eqref{eq:btstrap_D}, under the bootstrap assumptions we have that $\norm{S^a \U_\pm}_D\lesssim \eps_1$, $a\leq N$. In particular, by the dispersive estimate \eqref{eq:Linfdecay} it thus follows that
\begin{equation}\label{eq:disp_decay}
 \norm{P_{k,p,q}e^{\pm it\Lambda} S^a \U_{\pm}(t)}_{L^\infty}\lesssim 2^{\frac{3}{2}k}\ip{t}^{-1}\norm{S^a \U_\pm}_D\lesssim 2^{\frac{3}{2}k}\ip{t}^{-1}\eps_1.
\end{equation}

\subsection{Normal Form Estimates}\label{ssec:NFs}

We start with a simple bound for normal forms, that complements our $B$ norm bound obtained via the basic bilinear estimates in Section \ref{sec:B_VFprop}.
\begin{lemma}\label{lem:VNFs}
Under the assumptions of Proposition \ref{prop:Ups_prop}, we have that
\begin{align}
\norm{P_k\partial_t (S^M \U_{\pm}(t))}_{L^2}&\lesssim \ip{t}^{-1+\frac{3}{N_0}}\eps_1^2,\quad M\leq 2N,\label{eq:VNFL2}\\
\norm{P_k e^{it\Lambda}\partial_t (S^M \U_{\pm}(t))}_{L^\infty}&\lesssim \ip{t}^{-2+\frac{5}{N_0}}\eps_1^2,\quad M\leq N.\label{eq:VNFLinfty}
 \end{align}
\end{lemma}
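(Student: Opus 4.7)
The plan is to use the equations \eqref{eq:IER_disp_Duham} to express $\partial_t \U_\pm$ as a sum of bilinear expressions $\Q_\m(\U_\mu,\U_\nu)$, and then distribute the vector fields $S^M$ using \eqref{eq:vf_distribute}. Since $(S_\xi+S_\eta)\m = \m$ and $(\Omega_\xi+\Omega_\eta)\m=0$ (Lemma \ref{lem:vfmult}), distributing $S^M$ yields a finite sum
\begin{equation*}
\partial_t S^M\U_\pm = \sum_{a+b\leq M}\sum_{\mu,\nu,\m} c_{a,b}\,\Q_{\tilde\m}(S^a \U_\mu, S^b \U_\nu),
\end{equation*}
where $\tilde\m$ has the same structural properties as $\m$. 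By symmetry in $\mu,\nu$ we may assume $a\leq b$, so that in particular $a\leq M/2\leq N$ in both parts of the lemma, which gives access to the dispersive decay \eqref{eq:disp_decay} on the first factor.

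For the $L^\infty$ bound \eqref{eq:VNFLinfty}, observe that after writing $\U_\mu = e^{\mp is\Lambda}U_\mu$ the phase $e^{\pm it\Phi}$ for $\Q_{\tilde\m}$ combines with $e^{it\Lambda}$ to eliminate the $\Lambda(\xi)$ contribution, so that
\begin{equation*}
  e^{it\Lambda}\Q_{\tilde\m}(S^a\U_\mu,S^b\U_\nu) = Q_{\tilde\m}(e^{\mp it\Lambda}S^a\U_\mu, e^{\mp it\Lambda}S^b\U_\nu)
\end{equation*}
(up to signs). Then $L^\infty\times L^\infty\to L^\infty$ H\"older \eqref{ProdRule}, combined with $\norm{\tilde\m}_\W\lesssim 2^{k+p_{\max}+q_{\max}}$ and the decay \eqref{eq:disp_decay} applied to both factors (since $a,b\leq M\leq N$), yields
\begin{equation*}
 \norm{P_k e^{it\Lambda}\Q_{\tilde\m}(S^a\U_\mu,S^b\U_\nu)}_{L^\infty}\lesssim 2^{k}\ip{t}^{-2}2^{\frac{3}{2}(k_1+k_2)}\eps_1^2.
\end{equation*}
Summing over $k_1,k_2$ requires trading decay for regularity using \eqref{eq:vfHNinterpol} at high frequencies and $H^{-1}$ control at low frequencies, which loses at most $\ip{t}^{5/N_0}$ (three logarithmic frequency sums over $\abs{k_i}\lesssim \log\ip{t}$ plus the decay-regularity tradeoff).

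For the $L^2$ bound \eqref{eq:VNFL2}, use $L^\infty\times L^2\to L^2$ H\"older: placing the factor with $a\leq N$ vector fields in $L^\infty$ (using dispersive decay and the $D$-norm control \eqref{eq:btstrap_D}) and the factor with $b\leq 2N$ vector fields in $L^2$ (using the energy bound \eqref{eq:ass_many_vfs-1} together with \eqref{eq:vfHNinterpol}) gives
\begin{equation*}
 \norm{P_k\Q_{\tilde\m}(S^a\U_\mu, S^b\U_\nu)}_{L^2}\lesssim 2^k \ip{t}^{-1}\,2^{\frac{3}{2}k_1}\eps_1 \cdot \norm{P_{k_2}S^b\U_\nu}_{L^2}.
\end{equation*}
As above, summation in the frequencies via the regularity afforded by \eqref{eq:ass_many_Sob-1} and \eqref{eq:vfHNinterpol} costs only $\ip{t}^{3/N_0}$.

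The main obstacle is purely bookkeeping: keeping track of the frequency sums so that the logarithmic/polynomial losses stay within the allotted $\ip{t}^{c/N_0}$ factor. There are no new cancellations needed and no integration by parts in time or along vector fields is required, since having the factor $\partial_t$ already converts one time-integration into a genuine bilinear estimate of Hölder type.
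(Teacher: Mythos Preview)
Your proposal is correct and follows essentially the same approach as the paper: express $\partial_t S^M\U_\pm$ as a sum of bilinear terms via \eqref{eq:vf_distribute}, then use H\"older (\,$L^2\times L^\infty$ for \eqref{eq:VNFL2}, $L^\infty\times L^\infty$ for \eqref{eq:VNFLinfty}\,) together with the dispersive decay \eqref{eq:disp_decay} on the factor with $\le M/2\le N$ vector fields. The paper is slightly more explicit than you are about the frequency reductions: it first uses the set-size estimate (Lemma~\ref{lem:set_gain}) and the $B$-norm to dispose of the ranges $2^{k_{\max}}\gtrsim t^{1/N_0}$, $2^{k_{\min}}\lesssim t^{-1/2}$, and $2^{p_j+q_j/2}\lesssim t^{-1}$, and only then applies H\"older with decay on the remaining bounded range. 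Your description of this as ``bookkeeping'' via $H^{N_0}\cap H^{-1}$ is correct in spirit, but note that the decay estimate \eqref{eq:disp_decay} is stated for $P_{k,p,q}$, not $P_k$, so you also need to account for the $p,q$ sums (either via the argument in Corollary~\ref{cor:EE}, or by the $B$-norm restriction the paper uses); this is a small point and fits within your allotted $\ip{t}^{c/N_0}$ loss.
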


\begin{proof}
 We have that
 \begin{equation}
  P_{k}\partial_t (S^M\U_{\pm}(t))=\sum_{\substack{\m \textnormal{ as in}\\\textnormal{Lemma \ref{lem:IERmult}}}}\sum_{k_2\leq k_1}\sum_{\substack{a+b\leq M}} \Q_\m(S^aP_{k_1}f_1,S^b P_{k_2}f_2),
 \end{equation} 
 where $f_1,f_2\in\{\U_+,\U_-\}$ are chosen in accordance with \eqref{eq:IER_disp_Duham}, with the corresponding symbols $\m$ and phase $\Phi$.
 By the set size estimates in Lemma \ref{lem:set_gain} there holds that
 \begin{equation}
 \begin{aligned}
  \norm{\Q_\m(S^aP_{k_1}f_1,S^b P_{k_2}f_2)}_{L^2}\lesssim 2^k 2^{\frac{3}{2}k_{\min}}&\min\{2^{-k_1\cdot N_0}\norm{S^aP_{k_1}f_1}_{H^{N_0}},2^{k_1}\norm{S^aP_{k_1}f_1}_{H^{-1}}\}\\
  &\cdot\min\{2^{-k_2\cdot N_0}\norm{S^aP_{k_2}f_2}_{H^{N_0}},2^{k_2}\norm{S^aP_{k_2}f_2}_{H^{-1}}\}.
 \end{aligned} 
 \end{equation}
 Since $k_1\geq k_2$ we have that $2^k\lesssim 2^{k_1}$ and $2^{k_{\max}}\lesssim 2^{k_1}$, so that with the assumption \eqref{eq:ass_many_Sob-1} the claim \eqref{eq:VNFL2} holds for $2^{k_{\max}}\gtrsim t^{\frac{1}{N_0}}$ or $2^{k_{\min}}\lesssim t^{-\frac{1}{2}}$, where $k_{\min}=k$ or $k_{\min}=k_2$. Furthermore, localizing $f_j=P_{k_j,p_j,q_j}f_j$, $j=1,2$, we have that
 \begin{equation}
 \begin{aligned}
  \norm{\Q_\m(S^a f_1,S^b f_2)}_{L^2}\lesssim \sum_{k_j,p_j,q_j,j=1,2}2^k 2^{\frac{3}{2}k_{\min}}&2^{p_1+\frac{q_1}{2}}\norm{S^af_1}_{B}\cdot 2^{p_2+\frac{q_2}{2}}\norm{S^b f_2}_{B},
 \end{aligned} 
 \end{equation}
 which gives \eqref{eq:VNFL2} if $2^{\min_{j=1,2}\{p_j\}}\leq t^{-1}$ or $2^{\min_{j=1,2}\{q_j\}}\leq t^{-2}$. 

 Since we have that 
 \begin{equation*}
  \norm{\Q_\m(S^aP_{k_1}f_1,S^b P_{k_2}f_2)}_{L^\infty}\lesssim\norm{\mathcal{F}\Q_\m(S^aP_{k_1}f_1,S^b P_{k_2}f_2)}_{L^1}\lesssim 2^{\frac{3}{2}k}\norm{\Q_\m(S^aP_{k_1}f_1,S^b P_{k_2}f_2)}_{L^2},
 \end{equation*}
 these reductions are completely analogous for the $L^\infty$ estimate. 
 
 Thus we can conclude the proof of the claim by estimating that for 
 \begin{equation}\label{eq:some_restrict}
  t^{-\frac{1}{2}}\lesssim 2^{k_j}\lesssim t^{\frac{1}{N_0}},\quad t^{-1}\lesssim 2^{p_j},\quad t^{-2}\lesssim 2^{q_j},\quad j=1,2,
 \end{equation}
 there holds with $\norm{\m}_{\W}\lesssim 2^k$ that
 \begin{equation}
 \begin{aligned}
  \norm{\Q_\m(S^a f_1,S^b f_2)}_{L^2}&\lesssim 2^k\norm{S^{\leq M} f_1}_{L^2}\norm{e^{\pm it\Lambda}S^{\leq\frac{M}{2}}f_2}_{L^\infty}\\
  &\lesssim \ip{t}^{-1}2^{\frac{5}{2}k}\norm{S^{\leq M} f_1}_{L^2}\norm{e^{\pm it\Lambda}S^{\leq\frac{M}{2}}f_2}_{D}\lesssim \ip{t}^{-1+\frac{5}{2N_0}}\eps_1^2,
 \end{aligned} 
 \end{equation}
 having used the dispersive decay estimate \eqref{eq:disp_decay}, and similarly that for $M\leq N$
 \begin{equation}
 \begin{aligned}
  \norm{e^{it\Lambda}\Q_\m(S^a f_1,S^b f_2)}_{L^\infty}&\lesssim 2^k\norm{e^{\pm it\Lambda}S^{\leq M} f_1}_{L^\infty}\norm{e^{\pm it\Lambda}S^{\leq\frac{M}{2}}f_2}_{L^\infty}\\
  &\lesssim \ip{t}^{-2}2^{4k}\norm{S^{\leq M} f_1}_{D}\norm{S^{\leq\frac{M}{2}}f_2}_{D}\lesssim \ip{t}^{-2+\frac{4}{N_0}}\eps_1^2.
 \end{aligned} 
 \end{equation}
\end{proof}

Next we give a decomposition lemma for the normal forms involving $\Ups$:
\begin{lemma}\label{lem:UpsNF}
 Under the assumptions of Proposition \ref{prop:Ups_prop}, we have that 
\begin{equation}
 \norm{P_{k,p,q}e^{\pm it\Lambda}\partial_t (\Ups S^{M}) \U_{\pm}(t)}_{L^\infty}\lesssim \ip{t}^{-1+\frac{8}{N_0}},\qquad M\leq N.
\end{equation}
\end{lemma}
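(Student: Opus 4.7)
The plan is to adapt the proof of Lemma \ref{lem:VNFs} by carefully handling the extra vector field $\Ups$. The key observation is that $\Ups=-\sqrt{1-\Lambda^2}\partial_\Lambda$ does \emph{not} commute with the linear semigroup: a direct differentiation in Fourier space yields
\begin{equation*}
 e^{\pm it\Lambda}\Ups = \Ups e^{\pm it\Lambda} \mp it\sqrt{1-\Lambda^2}\cdot e^{\pm it\Lambda}.
\end{equation*}
Applied to $P_{k,p,q}S^M\partial_t\U_\pm$, the commutator contribution $\mp it\sqrt{1-\Lambda^2}\cdot e^{\pm it\Lambda}S^M\partial_t\U_\pm$ is bounded immediately using \eqref{eq:VNFLinfty} and the fact that $\sqrt{1-\Lambda^2}\in\mathcal{A}_\mathcal{R}$ is bounded on frequency-localized $L^\infty$, giving a contribution of order $t\cdot\ip{t}^{-2+5/N_0}\eps_1^2\lesssim\ip{t}^{-1+5/N_0}\eps_1^2$.

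For the remaining term $\Ups S^M(e^{\pm it\Lambda}\partial_t\U_\pm)$, I would exploit that $e^{\pm it\Lambda}\partial_t\U_\pm$ is a sum of \emph{phase-free} bilinear pseudo-products $\tilde{\Q}_\m[U_\mu,U_\nu]$ in the dispersive unknowns $U_\mu$: the phase $e^{\pm it\Phi}$ in \eqref{eq:def_Qm} precisely cancels the $e^{\pm it\Lambda(\xi)}$ prefactor, leaving simple Fourier-multiplier products. Distributing $S^M$ as in \eqref{eq:vf_distribute}, and then $\Ups$ using \eqref{eq:Ups_cross} to re-express $\Ups_\xi=\alpha_1\,S_{\xi-\eta}+\alpha_2\,\Ups_{\xi-\eta}$ with symbols $\alpha_i\in\W$ of size $O(1)$ (and symmetrically in $\eta$), one obtains a finite sum of terms of three types, with $a+b\leq M$ and $\m'$ of $\W$-size comparable to $\m$:
\begin{equation*}
 \tilde\Q_{\Ups_\xi\m}[S^aU_\mu,S^bU_\nu],\qquad \tilde\Q_{\alpha_i\m}[S^{a+1}U_\mu,S^bU_\nu],\qquad \tilde\Q_{\alpha_i\m}[\Ups S^aU_\mu,S^bU_\nu]
\end{equation*}
(and their $\eta$-symmetric counterparts).

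The first two types are treated exactly as in Lemma \ref{lem:VNFs}, via an $L^\infty\times L^\infty$ H\"older estimate for phase-free bilinears combined with the dispersive decay \eqref{eq:disp_decay}: after splitting $a,b$ by symmetry both arguments still carry at most $N$ vector fields $S$ and admit the $\ip{t}^{-1}$ decay, yielding $\lesssim\ip{t}^{-2+O(1/N_0)}\eps_1^2$. The third type is the main difficulty, since $\Ups S^aU_\mu$ has no $L^\infty$ dispersive bound; in its place I would apply Bernstein at the tile level,
\begin{equation*}
 \|P_{k_1,p_1,q_1}\Ups S^aU_\mu\|_{L^\infty}\lesssim 2^{\frac{3}{2}k_1}2^{p_1+\frac{q_1}{2}}\|\Ups S^a\U_\mu\|_{L^2}\lesssim 2^{\frac{3}{2}k_1}2^{p_1+\frac{q_1}{2}}\eps_1,
\end{equation*}
using the bootstrap \eqref{eq:ass_few_vfs-1} (valid since $a\leq M\leq N\leq N+3$), together with dispersive decay on the other factor. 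Restricting to the non-degenerate frequency range $2^{k_j}\lesssim t^{1/N_0}$, $2^{p_j}\gtrsim t^{-1}$, $2^{q_j}\gtrsim t^{-2}$ as in the proof of Lemma \ref{lem:VNFs} (using energy and $B$-norm bounds to dispose of the remaining regimes), the tile summation produces only logarithmic and $t^{O(1/N_0)}$ losses, absorbed in the $t^{8/N_0}$ factor. The principal obstacle is tracking this aggregate loss carefully; it is the source of the exponent $8$ in the estimate (as opposed to the $5$ in \eqref{eq:VNFLinfty}).
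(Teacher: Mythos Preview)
Your approach has a genuine gap in the third-type term $\tilde\Q_{\alpha_i\m}[\Ups S^aU_\mu,S^bU_\nu]$. The displayed Bernstein inequality
\[
\|P_{k_1,p_1,q_1}\Ups S^aU_\mu\|_{L^\infty}\lesssim 2^{\frac{3}{2}k_1}2^{p_1+\frac{q_1}{2}}\|\Ups S^a\U_\mu\|_{L^2}
\]
is false as written: since $U_\mu=e^{\pm it\Lambda}\U_\mu$ and $\Ups$ does \emph{not} commute with $e^{\pm it\Lambda}$, the very commutator you exploited at the output reappears on the input, giving $\Ups S^aU_\mu=e^{\pm it\Lambda}\Ups S^a\U_\mu\mp it\sqrt{1-\Lambda^2}\,S^aU_\mu$. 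The second piece contributes $t\cdot 2^{p_1}\|S^a\U_\mu\|_{L^2}$ to the $L^2$ norm, i.e.\ a full extra power of $t$ after Bernstein. This extra piece \emph{can} be absorbed by an $L^\infty\times L^\infty$ estimate (two dispersive decays beat the single $t$), and once you do so you have in effect reconstructed the paper's phase term $\Q_{t\m\Ups_\xi\Phi}$---but this must be carried out explicitly; the inequality as you stated it does not hold.

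There is a second, independent gap: the first-type term $\tilde\Q_{\Ups_\xi\m}$ is \emph{not} handled ``exactly as in Lemma~\ref{lem:VNFs}''. By Lemma~\ref{lem:Ups_mult} one has $|\Ups_\xi\m|\lesssim 2^k[1+2^{q+p_2-p_1}]$, which is singular as $p_1\to-\infty$ and is not of $\W$-size comparable to $\m$. The paper splits on $2^{p_2}\lesssim 2^{p_1}$ (where $L^\infty\times L^\infty$ works) versus $2^{p_1}\ll 2^{p_2}$; in the latter regime $\abs{\bar\sigma}\sim 2^{p_2+k_1+k_2}$ is bounded below and the paper integrates by parts once along $V_{\xi-\eta}$ to recover the $2^{-p_1}$ loss. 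Without this step the $p_1$-summation in your frequency reduction does not close.
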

\begin{proof}
With a suitable choice of $f_j\in\{\U_+,\U_-\}$ as in \eqref{eq:IER_disp_Duham}, we localize $f_j=P_{k_j,p_j,q_j}f_j$, $j=1,2$ and then have (up to summation over symbols $\m$ as in Lemma \ref{lem:IERmult}) that
\begin{equation}
\begin{aligned}
   e^{\pm it\Lambda}\partial_t \Ups S^{M} \U_{\pm}(t)&=\sum_{k_j,p_j,q_j,\,j=1,2}\sum_{\substack{a+b\leq M}} e^{\pm it\Lambda}\partial_t \Ups_\xi B_\m(S^af_1,S^bf_2)\\
  &=\sum_{k_j,p_j,q_j,\,j=1,2}\sum_{\substack{a+b\leq M}}e^{\pm it\Lambda}\Q_{\Ups_\xi\m}(S^af_1,S^bf_2)+e^{\pm it\Lambda}\Q_{\m}(\Ups_\xi S^af_1,S^bf_2)\\
  &\hspace{5cm} +e^{\pm it\Lambda}i\Q_{t\m \Ups_\xi\Phi}(S^af_1,S^bf_2),
\end{aligned} 
\end{equation}
and we make the convention that $k_2\leq k_1$. Next we define $A^j_M$, $j=1,2$, via
\begin{align}
 e^{\mp it\Lambda}A^1_M(\U_{\pm};t)&:=P_{k,p,q}\sum_{\substack{k_j,p_j,q_j,\\j=1,2}} \sum_{\substack{a+b\leq M}}\Big[\mathfrak{1}_{\{2^{p_2}\lesssim 2^{p_1}\}}\Q_{\Ups_\xi\m}(S^af_1,S^bf_2)+i\Q_{t\m \Ups_\xi\Phi}(S^af_1,S^bf_2)\Big],\label{eq:A1M}\\
 e^{\mp it\Lambda}A^2_M(\U_{\pm};t)&:=P_{k,p,q}\sum_{\substack{k_j,p_j,q_j,\\j=1,2}}\sum_{\substack{a+b\leq M}}\Big[\mathfrak{1}_{\{2^{p_1}\ll 2^{p_2}\}}\Q_{\Ups_\xi\m}(S^af_1,S^bf_2)+\Q_{\m}(\Ups_\xi S^af_1,S^bf_2)\Big].\label{eq:A2M}
\end{align}
To estimate $A^1_M$, we recall the bounds \eqref{eq:Ups_mult}, \eqref{eq:simp_symb} and \eqref{eq:UpsPhi_bd}, namely
\begin{equation}\label{eq:Ups_mult_recall}
 \abs{\Ups_\xi\m}\lesssim 2^{k}[1+2^{q+p_2-p_1}],\qquad \abs{\m\Ups_\xi\Phi}\lesssim 2^{k_{\max}}.
\end{equation}
Using that $\norm{A_M^1}_{L^\infty}\lesssim 2^{\frac{3}{2}k}\norm{A_M^1}_{L^2}$, as in the proof of the above Lemma \ref{lem:VNFs}, thanks to the energy and $B$ norm estimates we can reduce in analogy with \eqref{eq:some_restrict} to showing that if $m\in\N$ is such that $2^m\leq t\leq 2^{m+1}$, then there holds that
\begin{equation}\label{eq:A1_red}
 \sup_{\substack{k_j\in(-2m,\frac{m}{N_0}),\\p_j\in(-m,0],q_j\in(-2m,0]}}\norm{\mathfrak{1}_{\{2^{p_2}\lesssim 2^{p_1}\}}\Q_{\Ups_\xi\m}(S^af_1,S^bf_2)}_{L^\infty}+\norm{\Q_{t\m \Ups_\xi\Phi}(S^af_1,S^bf_2)}_{L^\infty}\lesssim 2^{(-1+\frac{4}{N_0})m}\eps_1^2.
\end{equation}
This now follows directly from the dispersive estimate \eqref{eq:Linfdecay} and \eqref{eq:Ups_mult_recall}: With the frequency localizations of \eqref{eq:A1_red}, the two terms in $A^1_M$ can be bounded as
\begin{equation}
\begin{aligned}
 \norm{\mathfrak{1}_{\{2^{p_2}\lesssim 2^{p_1}\}} \Q_{\Ups_\xi\m}(S^af_1,S^bf_2)}_{L^\infty}&\lesssim 2^{k_{\max}}\norm{e^{it\Lambda}S^af_1}_{L^\infty}\norm{e^{it\Lambda}S^bf_2}_{L^\infty}\\
 &\lesssim t^{-2}2^{4k_{\max}}\norm{S^af_1}_{D}\norm{S^bf_2}_{D}\lesssim t^{-2+\frac{4}{N_0}}\eps_1^2,
\end{aligned} 
\end{equation}
and similarly
\begin{equation}
 \norm{\Q_{t\m \Ups_\xi\Phi}(S^af_1,S^bf_2)}_{L^\infty}\lesssim t^{-1}2^{4k_{\max}}\norm{S^af_1}_{D}\norm{S^bf_2}_{D}\lesssim t^{-1+\frac{4}{N_0}}\eps_1^2.
\end{equation}
On the other hand, for the terms in $A^2_M$ we will estimate in $L^2$ and use that $\norm{A_M^2}_{L^\infty}\lesssim 2^{\frac{3}{2}k}\norm{A_M^2}_{L^2}$. We re-sum the first argument of the second one in \eqref{eq:A2M}, which then equals
\begin{equation}
 \sum_{k_2,p_2,q_2}\sum_{\substack{a+b\leq M}}\Q_{\m}(\sum_{k_1\geq k_2}\sum_{p_1,q_1}\Ups_\xi S^af_1,S^bf_2).
\end{equation}
Writing $\bar{f}_1=\sum_{k_1\geq k_2}\sum_{p_1,q_1}f_1$, as above we can reduce to considering the frequency restrictions as in \eqref{eq:A1_red}, and it thus suffices to show that
\begin{equation}
\begin{aligned}
 \norm{\Q_{\m}(\Ups_\xi S^a\bar{f}_1,S^bf_2)}_{L^2}&\lesssim 2^{k_{\max}} \norm{\Ups_\xi S^a\bar{f}_1}_{L^2}\norm{e^{it\Lambda}S^bf_2}_{L^\infty}\\
 &\lesssim t^{-1}2^{\frac{5}{2}k_{\max}}\left[\norm{S^{a+1}\bar{f}_1}_{L^2}+\norm{\Ups S^a\bar{f}_1}_{L^2}\right]\cdot\norm{S^bf_2}_{D}\\
 &\lesssim \ip{t}^{-1+\frac{3}{N_0}}\eps_1^2.
\end{aligned}
\end{equation}
where we used \eqref{eq:Ups_cross}. 

Finally, for the first term in \eqref{eq:A2M} we invoke \eqref{eq:Ups_mult_recall} and Lemma \ref{lem:set_gain} with set size bound $2^{k_1+p_1}2^{\frac{k_2+q_2}{2}}$ to see that
\begin{equation}
\begin{aligned}
 \norm{\Q_{\Ups_\xi\m}(S^af_1,S^bf_2)}_{L^2}\lesssim 2^{k+k_1+\frac{k_2}{2}}&\min\{2^{-k_1\cdot N_0}\norm{S^af_1}_{H^{N_0}},2^{k_1}\norm{S^af_1}_{H^{-1}},2^{p_1+\frac{q_1}{2}}\norm{S^af_1}_{B}\}\\
 &\cdot \min\{2^{-k_2\cdot N_0}\norm{S^bf_2}_{H^{N_0}},2^{k_2}\norm{S^bf_2}_{H^{-1}},2^{p_2+\frac{q_2}{2}}\norm{S^bf_2}_{B}\}.
\end{aligned} 
\end{equation}
This shows that for $m\in\N$ such that $2^m\leq t\leq 2^{m+1}$ and $J_m:=(-2m,\frac{2m}{N_0})\times (-2m,0]\times(-4m,0]$ we have
\begin{equation}
\begin{aligned}
 \sum_{(k_j,p_j,q_j)\in (J_m)^c,\, j=1,2}\norm{\Q_{\Ups_\xi\m}(S^af_1,S^bf_2)}_{L^2}\lesssim \ip{t}^{-1+\frac{6}{N_0}}\eps_1^2.
\end{aligned}
\end{equation}
Hence it suffices to show that
\begin{equation}
 \sup_{(k_j,p_j,q_j)\in (J_m),\, j=1,2}\norm{\Q_{\Ups_\xi\m}(S^af_1,S^bf_2)}_{L^2}\lesssim \ip{t}^{-1+\frac{5}{N_0}}\eps_1^2,
\end{equation}
which we will show by an integration by parts in $V_{\xi-\eta}$. To this end, we note that if $2^{p_1}\ll2^{p_2}$ then $\abs{\bar\sigma}\sim 2^{p_2}2^{k_1+k_2}$, and thus by \eqref{eq:vflobound},
\begin{equation}\label{eq:vfsize_recall}
 \abs{V_{\xi-\eta}\Phi}\sim 2^{k_1-k_2+2p_2},
\end{equation}
and we may integrate by parts in $V_{\xi-\eta}$ to obtain
\begin{equation}\label{eq:Upsmult-ibp'}
 \Q^{V_{\xi-\eta}}_{\Ups_\xi\m}(S^af_1,S^bf_2)=t^{-1}\left[\Q^{V_{\xi-\eta}}_{V_{\xi-\eta}\left(\frac{\Ups_\xi\m}{V_{\xi-\eta}\Phi}\right)}(S^af_1,S^bf_2)+\Q^{V_{\xi-\eta}}_{\frac{\Ups_\xi\m}{V_{\xi-\eta}\Phi}}(V_{\xi-\eta}S^af_1,S^bf_2)+\Q^{V_{\xi-\eta}}_{\frac{\Ups_\xi\m}{V_{\xi-\eta}\Phi}}(S^af_1,S^{b+1}f_2)\right].
\end{equation}
To estimate these terms we note that by \eqref{eq:Ups_mult_recall}, \eqref{eq:vfsize_recall} and $2^{p_1}\ll2^{p_2}$ there holds that
\begin{equation}\label{eq:Upsmfracphi}
 \abs{\frac{\Ups_\xi\m}{V_{\xi-\eta}\Phi}}\lesssim 2^{k_2}2^{-p_2-p_1},
\end{equation}
and we compute that 
\begin{equation}\label{eq:VUpsmfracphi}
 \abs{V_{\xi-\eta}\left(\frac{\Ups_\xi\m}{V_{\xi-\eta}\Phi}\right)}\lesssim\abs{\frac{V_{\xi-\eta}^2\Phi}{V_{\xi-\eta}\Phi}}\abs{\frac{\Ups_\xi\m}{V_{\xi-\eta}\Phi}}+\abs{\frac{V_{\xi-\eta}(\Ups_\xi\m)}{V_{\xi-\eta}\Phi}}\lesssim 2^{k_{\max}}[1+2^{-2p_1}]
\end{equation}
as follows: By \eqref{eq:vfquot'} there holds
\begin{equation}
 \abs{\frac{V_{\xi-\eta}^2\Phi}{V_{\xi-\eta}\Phi}}\abs{\frac{\Ups_\xi\m}{V_{\xi-\eta}\Phi}}\lesssim 2^{k_1-k_2}(1+2^{p_1-p_2})\cdot 2^{k_2}2^{-p_2-p_1}\lesssim 2^{k_{\max}}[1+2^{-p_2-p_1}],
\end{equation}
and we recalled that by Lemma \ref{lem:VUpsmult}
\begin{equation}
 \abs{V_{\xi-\eta}\Ups_\xi\m}\lesssim C_{\Ups\m} 2^{k_1-k_2}[1+2^{p_1-p_2}]\lesssim 2^{k+k_1-k_2}(1+2^{p_2-p_1})(1+2^{p_1-p_2})\lesssim 2^{2k_{\max}-k_2}(1+2^{p_2-p_1}),
\end{equation}
so that
\begin{equation}
 \abs{\frac{V_{\xi-\eta}(\Ups_\xi\m)}{V_{\xi-\eta}\Phi}}\lesssim 2^{k_{\max}}2^{-2p_2}(1+2^{p_2-p_1}).
\end{equation}
For the terms in \eqref{eq:Upsmult-ibp'} we thus have the bounds
\begin{equation}
\begin{aligned}
 \norm{\Q^{V_{\xi-\eta}}_{V_{\xi-\eta}\left(\frac{\Ups_\xi\m}{V_{\xi-\eta}\Phi}\right)}(S^af_1,S^bf_2)}_{L^2}&\lesssim 2^{k_{\max}}[1+2^{-2p_1}]\norm{S^af_1}_{L^2}\norm{S^bf_2}_{L^2}\cdot 2^{k_1+p_1}2^{\frac{k_{\max}}{2}}\\
 &\lesssim 2^{\frac{5}{2}k_{\max}}\norm{S^a f_1}_B\norm{S^bf_2}_{L^2}\lesssim \ip{t}^{\frac{5}{N_0}}\eps_1^2,
\end{aligned} 
\end{equation}
and similarly
\begin{equation}
 \norm{\Q^{V_{\xi-\eta}}_{\frac{\Ups_\xi\m}{V_{\xi-\eta}\Phi}}(S^af_1,S^{b+1}f_2)}_{L^2}\lesssim 2^{\frac{5}{2}k_{\max}}\norm{S^af_1}_{L^2}\norm{S^{b+1}f_2}_{B}\lesssim \ip{t}^{\frac{5}{N_0}}\eps_1^2,
\end{equation}
as well as by the symmetric version of \eqref{eq:cross_recall}
\begin{equation}
\begin{aligned}
 \norm{\Q^{V_{\xi-\eta}}_{\frac{\Ups_\xi\m}{V_{\xi-\eta}\Phi}}(V_{\xi-\eta}S^af_1,S^bf_2)}_{L^2}&\lesssim 2^{k_2-p_2-p_1}2^{k_1-k_2}[\norm{S^{a+1} f_1}_{L^2}+(2^{p_2}+2^{p_1})\norm{\Ups S^a f_1}_{L^2}]\norm{S^b f_2}_{L^2}\\
 &\qquad \qquad \cdot \min\{2^{k_2+p_2},2^{k_1+p_1}\}2^{\frac{k_{\max}}{2}}\\
 &\lesssim 2^{\frac{5}{2}k_{\max}}[\norm{S^{a+1}f_1}_B+\norm{\Ups S^af_1}_{L^2}]\norm{S^bf_2}_B\lesssim \ip{t}^{\frac{5}{N_0}}\eps_1^2.
\end{aligned} 
\end{equation}
\end{proof}

\subsection{Reduction to Bilinear Estimates}\label{ssec:red_Ups}

To control $\Ups S^{N+3}\U_\pm$, we note that by construction the vector fields distribute across the nonlinearity (see \eqref{eq:vf_distribute} and \eqref{eq:vfsumonmult}). From \eqref{eq:IER_disp_Duham} we then see that
\begin{equation}
\begin{aligned}
 \Ups_\xi S^{N+3}f&=\Ups_\xi S^{N+3}f_0+\sum_{\substack{\m \textnormal{ as in}\\\textnormal{Lemma \ref{lem:IERmult}}}}\sum_{\substack{a+b\leq N+3}}\Ups_\xi B_\m(S^af_1,S^bf_2)
\end{aligned} 
\end{equation}
where $f,f_1,f_2\in\{\U_+,\U_-\}$ are suitably chosen in accordance with \eqref{eq:IER_disp_Duham}, $f_0$ are the corresponding initial data and $B_\m$ includes the corresponding phase functions $\Phi=\pm\Lambda(\xi)\pm\Lambda(\xi-\eta)\pm\Lambda(\eta)$. The initial data are controlled in $L^2$ by assumption. To bound the bilinear terms we localize in frequency as $f=\sum_k P_k f$ and $f_j=\sum_{k_j}P_{k_j}f_j$, $j=1,2$, and note that we may arrange the summation in such a way that $k_2\leq k_1$, obtaining that 
\begin{equation}
\begin{aligned}
 \Ups_\xi B_\m(S^af_1,S^bf_2)=\sum_{k}\sum_{k_2\leq k_1}\,  P_k\Ups_\xi B_\m(S^aP_{k_1}f_1,S^bP_{k_2}f_2).
\end{aligned} 
\end{equation}
Abbreviating $f_j=P_{k_j}f_j$, $j=1,2$, it thus suffices to control appropriately terms of the form
\begin{equation}\label{eq:Upsterms_k}
\begin{aligned}
 P_k\Ups_\xi B_\m(S^af_1,S^bf_2)&=P_kB_{\Ups_\xi\m}(S^af_1,S^bf_2)+P_kB_{\m}(\Ups_\xi S^af_1,S^bf_2)+iP_kB_{s\m \Ups_\xi\Phi}(S^af_1,S^bf_2),\\
 a+b&\leq N+3,\quad k_2\leq k_1.
\end{aligned} 
\end{equation}
Next we show that if the frequencies involved are sufficiently large (or small), these terms can be bounded in $L^2$ by more crude arguments.

\begin{lemma}\label{lem:red_Upsprop}
Under the assumptions of Proposition \ref{prop:Ups_prop}, let $m\in\N$ be such that $2^m\leq t\leq 2^{m+1}$, and localize $f=P_{k,p,q}f$ and $f_j=P_{k_j,p_j,q_j}$, $j=1,2$. Let $0\le a+b\le N+3$, as above. To prove Proposition \ref{prop:Ups_prop} it suffices to show that
\begin{equation}\label{eq:UPS_red}
 \sup_{\substack{k_1,k_2\in (-2(1+\delta)m,2m/N_0)\\p_1,p_2\in (-2m,0]\\q_1,q_2\in (-4m,0]}}\norm{ \int_{s=0}^t\tau_m(s)\Ups_\xi \Q_\m(S^af_1,S^bf_2)(s)ds}_{L^2}\lesssim \varepsilon_1^2 2^{\frac{10}{N_0}\cdot m}.
\end{equation}
\end{lemma}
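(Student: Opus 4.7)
The proof follows the template of Lemma \ref{lem:red_VFprop}: we show that all contributions from frequencies outside the stated range can be estimated crudely, with the resulting bound summable to $\lesssim \eps_1^2 2^{10 m/N_0}$. Summing over the $O(\log t)$ time slices $\tau_m$ then yields the $\eps_1^2 \ip{t}^{20/N_0}$ bound required for \eqref{eq:Ups_btstrap_bd} (with room to spare). First I would split according to \eqref{eq:Upsterms_k}, producing three families of bilinear terms, with symbols $\Ups_\xi \m$, $\m$ (applied to $\Ups_\xi S^a f_1$), and $s\,\m\,\Ups_\xi\Phi$ respectively. For each family, the plan is to employ either an $L^2 \times L^2$ estimate with set-size gain from Lemma \ref{lem:set_gain}, or an $L^2\times L^\infty$ estimate where the $L^\infty$ factor comes from Sobolev embedding $\norm{P_{k_2,p_2,q_2} g}_{L^\infty} \lesssim 2^{3k_2/2}\norm{g}_{L^2}$, using the symbol bounds $\abs{\m},\abs{\m\Ups_\xi\Phi}\lesssim 2^{k_{\max}}$ and $\abs{\Ups_\xi \m}\lesssim 2^k(1+2^{q+p_2-p_1})$.

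To trim the range of $k_1,k_2$ from above I would invoke the Sobolev bound $\norm{P_{k_j}S^a\U_\pm}_{L^2}\lesssim 2^{-k_j N_0}\eps_1$, which overwhelms the polynomial losses once $k_{\max} > 2m/N_0$; the lower cutoff at $k_{\min}<-2(1+\delta)m$ is reached by using the $H^{-1}$ bound $\norm{P_{k_j}S^a\U_\pm}_{L^2}\lesssim 2^{k_j}\eps_1$ to produce a gain of $2^{k_{\min}}$. For the horizontal/vertical localization, one uses the $B$ norm bound $\norm{P_{k_j,p_j,q_j}S^a \U_\pm}_{L^2}\lesssim 2^{p_j+q_j/2}\eps_1$, which controls contributions with $p_j<-2m$ or $q_j<-4m$. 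The time integration contributes at most $2^m$ from $\tau_m$, and an additional $2^m$ for the $\Q_{s\m\Ups_\xi\Phi}$ term, but this is absorbed by combining the bootstrap assumptions with the dispersive estimate \eqref{eq:disp_decay}, which gains an extra $s^{-1}$.

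The main subtleties will be threefold. The term $\Q_{s\m\Ups_\xi\Phi}(S^a f_1, S^b f_2)$ must be handled with the dispersive decay \eqref{eq:disp_decay} to compensate the factor of $s$, which requires the $L^\infty$-input to have at most $N$ vector fields; since $a+b\leq N+3$ we always place at least $3$ vector fields on the $L^2$-input and at most $N$ on the $L^\infty$-input, leaving room for this distribution. The term $\Q_{\Ups_\xi \m}(S^a f_1,S^b f_2)$ carries a potential $2^{-p_1}$ loss in the symbol when $p_1\ll p_2$, which I would recover by estimating $f_1$ in $B$-norm, gaining $2^{p_1+q_1/2}$. Finally, for $\Q_\m(\Ups_\xi S^a f_1, S^b f_2)$ the derivative $\Ups_\xi$ applied to $\widehat{f_1}(\xi-\eta)$ is not of variable $\xi-\eta$; I would use the identity \eqref{eq:Ups_cross} to rewrite it as a combination of $S_{\xi-\eta}$ and $\Ups_{\xi-\eta}$ with bounded coefficients, both of which are controlled in $L^2$ by the bootstrap assumptions \eqref{eq:ass_many_vfs-1} and \eqref{eq:ass_few_vfs-1}.
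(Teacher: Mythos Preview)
Your overall template is right, and the treatment of $\Q_{\Ups_\xi\m}(S^af_1,S^bf_2)$ and $\Q_{s\m\Ups_\xi\Phi}(S^af_1,S^bf_2)$ would go through (for the latter, the paper in fact uses only crude $L^2\times L^2$ set-size bounds rather than dispersive decay, but your approach also works). The genuine gap is in your treatment of $\Q_\m(\Ups_\xi S^af_1,S^bf_2)$.

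After applying \eqref{eq:Ups_cross} you obtain a piece $\Q_\m(\Gamma_\Ups\,\Ups S^af_1,S^bf_2)$, and you propose to control $\Ups S^af_1$ in $L^2$ via the bootstrap assumption \eqref{eq:ass_few_vfs-1}. That is fine for the main range, but it does \emph{not} suffice to trim the high-frequency tail $k_1>2m/N_0$. In that regime $2^k\sim 2^{k_1}$, and the best bound you can produce from your ingredients is of the form $2^{k_1}\cdot 2^{3k_2/2}\eps_1^2$ (either via set size or via dispersive decay on $f_2$), which is not summable in $k_1$ once multiplied by the time factor $2^m$. The point is that you have no $H^{N_0}$ (or $B$-norm) control of $\Ups S^af_1$, so there is nothing to absorb the $2^{k_1}$ loss; you cannot shift it to $f_2$ either, since $k_2$ may sit in the good range. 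This is exactly what the paper flags when it writes ``The second term is more involved, since we do not have energy estimates available for $\Ups S^a f_1$.''

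The paper's fix is a genuine additional idea: after restricting $k_2$ crudely, it rewrites $\Ups_{\xi-\eta}$ via \eqref{eq:ups} as $\frac{1}{\sqrt{1-\Lambda^2(\xi-\eta)}}\bigl(\Lambda(\xi-\eta)S_{\xi-\eta}-\abs{\xi-\eta}\partial_{\xi_3-\eta_3}\bigr)$. The $S$-piece gives $S^{a+1}f_1$ with a harmless $2^{-p_1}$ loss and full Sobolev/$B$ control. For the $\partial_{\xi_3-\eta_3}$ piece one integrates by parts in $\eta_3$, landing the derivative on $e^{is\Phi}$, the multiplier, or $\widehat{S^bf_2}$; in every resulting term the first input is back to $S^af_1$ (or $S^{a+1}f_1$), for which $H^{N_0}$ is available, and the $k_1$-tail closes. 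You need to incorporate this step.
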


Clearly, the restrictions in \eqref{eq:UPS_red} also imply bounds for $k,p,q$  by the triangle inequality.

The proof shows how to obtain the reduction of \eqref{eq:Ups_btstrap_bd} to \eqref{eq:UPS_red}. This will be done by estimating separately the terms in \eqref{eq:Upsterms_k}. For the first and last terms in \eqref{eq:Upsterms_k} this proceeds as in the proof of Lemma \ref{lem:red_VFprop}. However, for the second term an extra argument is needed, since we do not control $H^{N_0}$ energy estimates of $\Ups S^a f_1$.

\begin{proof}[Proof of Lemma \ref{lem:red_Upsprop}]
We note first that since $k_2\leq k_1$, to prove the frequency restrictions it suffices to bound $k_1$ from above and $k_2$ and $k$ from below, and we have that
\begin{equation}
 2^{k-k_1}\lesssim 1.
\end{equation}
Furthermore, by the triangle inequality there holds that $2^{k+p}\lesssim 2^{k_1+p_1}+2^{k_2+p_2}$ and $2^{k+q}\lesssim 2^{k_1+q_1}+2^{k_2+q_2}$, so that it is enough to prove that with
\begin{equation}
\begin{aligned}
 J:=\{(k,p,q),(k_j,p_j,q_j),j=1,2:&\;k_2\leq k_1,\,k_1,k_2\not\in (-2(1+\delta)m,2m/N_0),\\
 &\hbox{ or }\min\{p_1,p_2\}\leq -2m,\hbox{ or }\min\{q_1,q_2\}\leq -4m,\}
\end{aligned}
\end{equation}
we have the following bounds for the terms in \eqref{eq:Upsterms_k}:
\begin{equation}\label{eq:Upssum_red}
\begin{aligned}
 &\sum_{J}\norm{ \int_{s=0}^t\tau_m(s)P_{k,p,q}\A(s)ds}_{L^2}\lesssim \varepsilon_1^2 \cdot 2^{\frac{20}{N_0}\cdot m},\\
 &\qquad \A\in \{\Q_{\Ups_\xi\m}(S^af_1,S^bf_2),\Q_{\m}(\Ups_\xi S^af_1,S^bf_2),\Q_{s\m \Ups_\xi\Phi}(S^af_1,S^bf_2)\},
\end{aligned} 
\end{equation}
where $f_j=P_{k_j,p_j,q_j}$, $j=1,2$, and $0\le a+b\le N+3$.

To bound the last term in \eqref{eq:Upssum_red} we recall that by \eqref{eq:UpsPhi_bd} there holds that
\begin{equation}
  \abs{\m\Ups_\xi\Phi}\lesssim 2^k,
\end{equation}
so that we have the estimate
\begin{equation}
\begin{aligned}
 &\norm{P_k\Q_{s\m \Ups_\xi\Phi}(S^af_1,S^bf_2)}_{L^2}\lesssim s2^{\frac{3}{2}k_{\min}}\norm{S^af_1}_{L^2}\norm{S^bf_2}_{L^2}\\
 &\qquad\lesssim s2^{\frac{3}{2}k_{\min}}\min\{2^{-k_1\cdot N_0}\norm{S^af_1}_{H^{N_0}},2^{k_1}\norm{S^af_1}_{H^{-1}}\}\cdot\min\{2^{-k_2\cdot N_0}\norm{S^bf_2}_{H^{N_0}},2^{k_2}\norm{S^bf_2}_{H^{-1}}\}.
\end{aligned}
\end{equation}
We see that this gives an acceptable contribution if $2^{-k_1\cdot N_0}\lesssim 2^{-2m}$ or if $2^{\frac{3}{2}k_{\min}}\lesssim 2^{-2m}$. Furthermore, localizing also in $p,q$ and using the set size bound from Lemma \ref{lem:set_gain} we obtain
\begin{equation}
\begin{aligned}
 &\norm{P_{k,p,q}\Q_{s\m \Ups_\xi\Phi}(S^a P_{k_1,p_1,q_1}f_1,S^bP_{k_2,p_2,q_2}f_2)}_{L^2}\lesssim 2^{\frac{3}{2}k+p+\frac{q}{2}}2^{p_1+\frac{q_1}{2}}\norm{S^a P_{k_1,p_1,q_1}f_1}_{B} 2^{p_2+\frac{q_2}{2}}\norm{S^b P_{k_2,p_2,q_2}f_2}_{B},
\end{aligned}
\end{equation}
which gives an acceptable contribution if $2^{p_{\min}+\frac{q_{\min}}{2}}\lesssim 2^{-2m}$.

The analogous arguments apply for the first term of \eqref{eq:Upssum_red}. We localize directly as $f_j=P_{k_j,p_j,q_j}$, $j=1,2$, and recall that by \eqref{eq:Ups_mult} there holds that
\begin{equation}
 \abs{\Ups_\xi\m}\lesssim 2^{k}[1+2^{q+p_2-p_1}].
\end{equation}
If $2^{p_1}\gtrsim 2^{q+p_2}$ we can thus proceed as above, and otherwise we estimate that
\begin{equation}
\begin{aligned}
 \norm{P_{k,p,q}\Q_{\Ups_\xi\m}(S^af_1,S^bf_2)}_{L^2}&\lesssim 2^{k+q+p_2-p_1}\cdot 2^{k_1+p_1}\min\{2^{\frac{k+q}{2}},2^{\frac{k_2+q_2}{2}}\}\cdot\min\{2^{k_2}\norm{S^bf_2}_{H^{-1}},2^{p_2+\frac{q_2}{2}}\norm{S^bf_2}_B\}\\
 &\qquad\cdot\min\{2^{-k_1\cdot N_0}\norm{S^af_1}_{H^{N_0}},2^{p_1+\frac{q_1}{2}}\norm{S^af_1}_B\},
\end{aligned}
\end{equation}
which gives an acceptable contribution to \eqref{eq:Upssum_red}.

The second term of \eqref{eq:Upssum_red} is more involved, since we do not have energy estimates available for $\Ups S^a f_1$. We note that by \eqref{eq:Ups_cross} there holds that (since $f_1$ is axisymmetric)
\begin{equation}\label{eq:Ups_cross_axisymm'}
\begin{aligned}
 \Ups_\xi S^a f_1(\xi-\eta)&=\Gamma_S\cdot (S^{a+1} f_1)(\xi-\eta)+\Gamma_\Ups \cdot (\Ups S^a f_1)(\xi-\eta),\\
 \Gamma_S&=\frac{\abs{\xi}}{\abs{\xi-\eta}}\left[\omega_c\Lambda(\xi)\sqrt{1-\Lambda^2(\xi-\eta)}-\sqrt{1-\Lambda^2(\xi)}\Lambda(\xi-\eta)\right],\\
 \Gamma_\Ups&=\frac{\abs{\xi}}{\abs{\xi-\eta}}\left[\omega_c\Lambda(\xi)\Lambda(\xi-\eta)+\sqrt{1-\Lambda^2(\xi)}\sqrt{1-\Lambda^2(\xi-\eta)}\right],
\end{aligned} 
\end{equation}
and thus
\begin{equation}\label{eq:2ndterm-split'}
 \Q_{\m}(\Ups_\xi S^af_1,S^bf_2)=Q_{\m}(\Gamma_S S^{a+1}f_1,S^bf_2)+Q_{\m}(\Gamma_\Ups \Ups S^af_1,S^bf_2).
\end{equation}
We note that $\abs{\Gamma_S}+\abs{\Gamma_\Ups}\lesssim 1$, so that the first term in \eqref{eq:2ndterm-split'} can be estimated by set size and energy estimates just as above: we have
\begin{equation}\label{eq:2ndterm-split-simple}
\begin{aligned}
 \norm{B_{\m}(\Gamma_S S^{a+1}f_1,S^bf_2)}_{L^2}\lesssim s\cdot 2^k\cdot 2^{\frac{3}{2}k_{\min}}\cdot &\min\{2^{-k_1\cdot N_0}\norm{S^{a+1} f_1}_{H^{N_0}},2^{k_1}\norm{S^{a+1} f_1}_{H^{-1}}\}\\
 &\cdot\min\{2^{-k_2\cdot N_0}\norm{S^b f_2}_{H^{N_0}},2^{k_2}\norm{S^b f_2}_{H^{-1}}\},
\end{aligned} 
\end{equation}
which gives an acceptable contribution.

For the second term in \eqref{eq:2ndterm-split'} we have that
\begin{equation}
 \norm{Q_{\m}(\Gamma_\Ups \Ups S^af_1,S^bf_2)}_{L^2}\lesssim  2^k\cdot 2^{\frac{3}{2}k_{\min}}\norm{\Ups S^a f_1}_{L^2}\min\{2^{-k_2\cdot N_0}\norm{S^b f_2}_{H^{N_0}},2^{k_2}\norm{S^b f_2}_{H^{-1}}\},
\end{equation}
which gives an acceptable contribution if $2^{k_2}\gtrsim (2^ks)^{\frac{1}{N_0}}$ or $2^{k_2}\lesssim (2^ks)^{-\frac{2}{5}}$. We may thus assume that
\begin{equation}
 (2^ks)^{-\frac{2}{5}}\lesssim 2^{k_2}\lesssim (2^ks)^{\frac{1}{N_0}}.
\end{equation}
We localize the inputs in $p_j,q_j$, $j=1,2$, and with \eqref{eq:ups} we then write 
\begin{equation}\label{eq:2ndterm-split2}
 \Q_{\m}(\Gamma_\Ups \Ups S^af_1,S^bf_2)=\Q_{\m}(\Gamma_\Ups \frac{1}{\sqrt{1-\Lambda^2}} S^{a+1}f_1,S^bf_2)-\Q_{\m}(\Gamma_\Ups \frac{1}{\sqrt{1-\Lambda^2}}\abs{\xi-\eta}\partial_{\xi_3-\eta_3} S^af_1,S^bf_2).
\end{equation}
For the first term we have that
\begin{equation}
\begin{aligned}
 \norm{\Q_{\m}(\Gamma_\Ups \frac{1}{\sqrt{1-\Lambda^2}} S^{a+1}f_1,S^bf_2)}_{L^2}&\lesssim 2^k\cdot 2^{p_1+k_1}2^{\frac{k_2}{2}}\min\{2^{-p_1}2^{-k_1\cdot N_0}\norm{S^{a+1} f_1}_{H^{N_0}},2^{\frac{q_1}{2}}\norm{S^{a+1} f_1}_B\}\norm{S^b f_2}_{L^2}\\
 &\lesssim 2^k\cdot 2^{k_1+\frac{k_2}{2}}\min\{2^{-k_1\cdot N_0}\norm{S^{a+1} f_1}_{H^{N_0}},2^{p_1+\frac{q_1}{2}}\norm{S^{a+1} f_1}_B\}\norm{S^b f_2}_{L^2},
\end{aligned} 
\end{equation}
which gives an acceptable contribution to \eqref{eq:Upssum_red}.

For the second term in \eqref{eq:2ndterm-split2} we have by \eqref{eq:Ups_cross_axisymm'} that
\begin{equation}
 \bar\Gamma_\Ups:=\Gamma_\Ups \frac{1}{\sqrt{1-\Lambda^2}}\abs{\xi-\eta}=\frac{\abs{\xi}}{\sqrt{1-\Lambda^2(\xi-\eta)}}\left[\omega_c\Lambda(\xi)\Lambda(\xi-\eta)+\sqrt{1-\Lambda^2(\xi)}\sqrt{1-\Lambda^2(\xi-\eta)}\right],
\end{equation}
with $\abs{\bar\Gamma_\Ups}\lesssim 2^{k-p_1}$, and we integrate by parts in $\eta_3$ to obtain that
\begin{equation}\label{eq:2ndterm-split-ibp}
\begin{aligned}
 \Q_{\m}(\bar\Gamma_\Ups\partial_{\xi_3-\eta_3} S^af_1,S^bf_2)&
 =\int_\eta \widehat{S^a f_1}(\xi-\eta)\partial_{\eta_3}\left(e^{is\Phi}\m \cdot \bar\Gamma_\Ups\right) \widehat{S^b f_2}(\eta) d\eta\\ 
 &\qquad +\int_\eta \widehat{S^a f_1}(\xi-\eta)e^{is\Phi}\m \cdot \bar\Gamma_\Ups \cdot\partial_{\eta_3}\widehat{S^b f_2}(\eta)d\eta.
\end{aligned} 
\end{equation}
We now compute that
\begin{equation}\label{eq:dGamUps}
 \abs{\partial_{\eta_3}\left(e^{is\Phi}\m \cdot \bar\Gamma_\Ups\right)}\lesssim s 2^{2k-k_2-p_1}
\end{equation}
as follows: We directly have that $\abs{\partial_{\eta_3}\left(e^{is\Phi}\m\right)}\lesssim 2^{k-k_2}$. Moreover, using that $\partial_{\eta_3}\omega_c=0$ and 
\begin{equation}
 \partial_{\eta_3}\left(\sqrt{1-\Lambda^2(\xi-\eta)}\right)=\frac{\Lambda(\xi-\eta)}{\sqrt{1-\Lambda^2(\xi-\eta)}}\cdot \partial_{\eta_3}\Lambda(\xi-\eta)=\frac{\Lambda(\xi-\eta)}{\sqrt{1-\Lambda^2(\xi-\eta)}}\frac{1-\Lambda^2(\xi-\eta)}{\abs{\xi-\eta}},
\end{equation}
we obtain $\abs{\partial_{\eta_3}\bar\Gamma_\Ups}\lesssim 2^{k-k_2-p_1}$.

Hence by \eqref{eq:dGamUps} the first term in \eqref{eq:2ndterm-split-ibp} can be dealt with with a mix of energy and $B$ norm estimates, as before in \eqref{eq:2ndterm-split-simple}. For the second term we use \eqref{eq:dxi3} to obtain that this equals
\begin{equation}\label{eq:2ndterm-split-ibp-split}
\begin{aligned}
 \Q_{\m\bar\Gamma_\Ups}(S^a f_1,\abs{\eta}^{-1}\Lambda(\eta) S^{b+1} f_2)+\Q_{\m\bar\Gamma_\Ups}(S^a f_1,\abs{\eta}^{-1}\sqrt{1-\Lambda^2(\eta)}\Ups_\eta S^b f_2).
\end{aligned}
\end{equation}
Using that by assumption there holds that $2^{-k_2}\lesssim (s2^k)^{\frac{2}{5}}$, we can control this as before, having now access to the energy estimates and $B$ norm on $S^a f_1$: there holds that
\begin{equation}
\begin{aligned}
 \norm{\Q_{\m\bar\Gamma_\Ups}(S^a f_1,\abs{\eta}^{-1}\sqrt{1-\Lambda^2(\eta)}\Ups_\eta S^b f_2)}_{L^2}\lesssim (s2^k)^{\frac{2}{5}}&\cdot 2^{p_1+k_1}2^{k_2}\cdot 2^{k-p_1}\\
 &\cdot \min\{2^{-k_1\cdot N_0}\norm{S^{a+1} f_1}_{H^{N_0}},2^{p_1}\norm{S^{a+1} f_1}_B\}\norm{S^b f_2}_{L^2},
\end{aligned} 
\end{equation}
which gives an acceptable contribution to \eqref{eq:Upssum_red}. The first term in \eqref{eq:2ndterm-split-ibp-split} is similar, but easier. 
\end{proof}

The rest of this section is devoted to the proof of \eqref{eq:UPS_red}. As in the proof of Lemma \ref{lem:red_Upsprop}, we use the decomposition into three terms as in \eqref{eq:Upsterms_k} to see that it suffices to prove
the following: 
\begin{lemma}\label{lem:UPS_red_split}
Under the assumptions of Proposition \ref{prop:Ups_prop}, let $m\in\N$ be such that $2^m\leq t\leq 2^{m+1}$, and $f=P_{k,p,q}f$, $f_j=P_{k_j,p_j,q_j}$, $j=1,2$. Then for $0\le a+b\le N+3$ there holds that
\begin{equation}\label{eq:UPS_red_split}
\begin{aligned}
 \sup_{\substack{k_1,k_2\in (-2(1+\delta)m,2m/N_0)\\p_1,p_2\in (-2m,0]\\q_1,q_2\in (-4m,0]}}&\norm{ \int_{s=0}^t\tau_m(s) P_{k,p,q}\A(s)ds}_{L^2}\lesssim \varepsilon_1^2 2^{\frac{10}{N_0}\cdot m},\\
 &\A\in \{\Q_{\Ups_\xi\m}(S^af_1,S^bf_2),\Q_{\m}(\Ups_\xi S^af_1,S^bf_2),\Q_{s\m \Ups_\xi\Phi}(S^af_1,S^bf_2)\}.
\end{aligned}
\end{equation}
\end{lemma}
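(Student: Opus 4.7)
The strategy is to bound each of the three bilinear expressions in \eqref{eq:UPS_red_split} separately, leveraging the machinery of Sections \ref{sec:bilin}--\ref{sec:more_bilin} together with the normal-form estimates from Section \ref{ssec:NFs}. Since the first term $\Q_{\Ups_\xi\m}(S^af_1,S^bf_2)$ and the second term $\Q_{\m}(\Ups_\xi S^af_1,S^bf_2)$ do not carry the dangerous growth factor $s$, time integration against $\tau_m$ produces at most an $O(m)$ loss. For the first, I combine the symbol bound $|\Ups_\xi\m|\lesssim 2^k(1+2^{q+p_2-p_1})$ from \eqref{eq:Ups_mult} with an $L^2\times L^\infty$ estimate using the dispersive decay \eqref{eq:disp_decay} on one input to produce an $s^{-1}$ gain that is integrable against $\tau_m$ and leaves only polynomial losses in $2^{k_j},2^{p_j},2^{q_j}$ fitting within the $2^{10m/N_0}$ budget. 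For the second, I first rewrite $\Ups_\xi S^af_1(\xi-\eta)$ via \eqref{eq:Ups_cross} as a combination of $S^{a+1}f_1(\xi-\eta)$ and $\Ups S^af_1(\xi-\eta)$ (both controlled in $L^2$ under the bootstrap assumptions \eqref{eq:ass_many_vfs-1}, \eqref{eq:ass_few_vfs-1}), and then conclude as for the first term.

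The main obstacle is the third term $\Q_{s\m\Ups_\xi\Phi}(S^af_1,S^bf_2)$, whose explicit $s$-factor combined with time integration formally produces $t^2$ growth. Here I invoke the fundamental dichotomy of Proposition \ref{prop:phasevssigma} via a smooth cutoff at scale $2^{q_{\max}}$ applied to $\Phi$: on the support where $|\Phi|\lesssim 2^{q_{\max}-2}$ the proposition guarantees $|\bar\sigma|\gtrsim 2^{q_{\max}}2^{k_{\max}+k_{\min}}$ (the \emph{vector-field regime}) and I can integrate by parts along $V_\eta$ or $V'_{\xi-\eta}$; on the complementary support $|\Phi|\gtrsim 2^{q_{\max}-2}$ (the \emph{normal-form regime}) I integrate by parts in $s$. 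In the vector-field regime I further distinguish based on the distribution of $S$-derivatives: when $\max\{a,b\}\leq N+2$, both inputs admit one extra $S$ derivative or a $\Ups$ in the relevant $L^2/B$ norms, and Lemma \ref{lem:2ibp} applies after two successive integrations by parts, yielding the $s^{-1}$ gain needed to cancel the explicit $s$-factor; when $\max\{a,b\} = N+3$, the input carrying fewer $S$-derivatives has at most $O(1)$ vector fields and thus admits full $D$-norm control, so Lemma \ref{lem:bdry} applies directly---requiring only one extra vector field and one $\Ups$ on the low-count input, exactly matching the bootstrap hierarchy.

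In the complementary normal-form regime, the plan is to integrate by parts in $s$ via $e^{is\Phi} = (i\Phi)^{-1}\partial_s e^{is\Phi}$, producing boundary contributions at $s=0, t$, a bulk term where $\partial_s$ hits $\tau_m(s)s$ (which removes the dangerous $s$-factor outright), and bulk terms where $\partial_s$ lands on $f_1(s)$ or $f_2(s)$. The boundary contributions are controlled directly using the bounds already established on $\Ups S^{\leq N+3}\U_\pm$ at $s=0$ via \eqref{eq:assump_id} and at $s=t$ via the bootstrap assumption; the $\tau_m'\cdot s$ and $\tau_m$ bulk terms are bounded by direct $L^2\times L^\infty$ estimates without the $s$-factor; and the remaining bulk terms are handled using the normal-form decay provided by Lemma \ref{lem:VNFs} in $L^2$ and Lemma \ref{lem:UpsNF} in $L^\infty$, which yield $s^{-1+O(1/N_0)}$ and $s^{-2+O(1/N_0)}$ decay respectively for $\partial_s$ of the profiles. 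The hardest technical point throughout is the careful bookkeeping of polynomial losses in $2^{k_{\max}}, 2^{-p_j}, 2^{-q_{\max}}$, and in particular ensuring that in the normal-form regime the $2^{-q_{\max}}$ loss from $1/|\Phi|$ is compensated by the structural smallness of $\m$ from Lemma \ref{lem:IERmult} (which always carries a factor $\Lambda\sqrt{1-\Lambda^2}$) together with the bound \eqref{eq:simp_symb} on $\Ups_\xi\Phi$.
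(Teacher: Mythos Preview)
Your treatment of the third term $\Q_{s\m\Ups_\xi\Phi}$ is essentially correct and matches the paper's Section~\ref{ssec:UpsPhase}: the $|\Phi|$-dichotomy into a normal-form regime (integration by parts in $s$) and a vector-field regime (Lemmas~\ref{lem:2ibp} and \ref{lem:bdry}) is exactly what is done there. One minor correction: in the normal-form bulk terms for this third expression $\partial_s$ lands on $S^af_j$, not on $\Ups S^af_j$, so you only need Lemma~\ref{lem:VNFs} (both \eqref{eq:VNFL2} and \eqref{eq:VNFLinfty}); Lemma~\ref{lem:UpsNF} is not used here.

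However, your treatment of the first two terms has a genuine gap. For $\Q_{\Ups_\xi\m}$, the bound \eqref{eq:Ups_mult} carries the factor $2^{p_2-p_1}$, which under the localization constraints can be as large as $2^{2m}$. A direct $L^2\times L^\infty$ estimate absorbs this only if the $L^2$ goes on $S^af_1$ (so that the $B$-norm supplies $2^{p_1}$) while the $L^\infty$ goes on $S^bf_2$; but the dispersive decay \eqref{eq:disp_decay} requires $b\le N$. When $a<b$ with $b\in\{N+1,N+2,N+3\}$ and $2^{p_1}\ll 2^{p_2}$, neither placement closes and the loss does \emph{not} fit in the $2^{10m/N_0}$ budget. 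The paper (Section~\ref{ssec:Upsm}) handles this case by one integration by parts along $V_{\xi-\eta}$, using that $2^{p_1}\ll 2^{p_2}$ forces $|\bar\sigma|\sim 2^{p_2+k_1+k_2}$ and hence $|V_{\xi-\eta}\Phi|\sim 2^{k_1-k_2+2p_2}$.

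The same problem recurs for $\Q_{\m}(\Ups_\xi S^af_1,S^bf_2)$ after your $\Gamma_S/\Gamma_\Ups$ split. The $\Gamma_\Ups$-part carries $\Ups S^af_1$ as an input, which has only $L^2$ control (no $D$-norm, hence no pointwise decay via \eqref{eq:disp_decay}). When $a<b$ with $b>N$ you cannot put $L^\infty$ on either input. The paper (Section~\ref{ssec:UpsVf1}) resolves this with the same $|\Phi|$-dichotomy you correctly invoked for the third term: when $|\Phi|\gtrsim 2^{q_{\max}}$ a normal form is performed, and \emph{this} is where Lemma~\ref{lem:UpsNF} is actually needed (to bound $\|e^{is\Lambda}\partial_s\Ups S^af_1\|_{L^\infty}$); when $|\Phi|\ll 2^{q_{\max}}$, Proposition~\ref{prop:phasevssigma} gives the $\bar\sigma$ lower bound and one integrates by parts along $V_{\xi-\eta}$.
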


Unlike for the reductions in Lemma \ref{lem:red_Upsprop}, here it will be important to also have normal forms at our disposal (see also Section \ref{ssec:NFs}). The cases of $\Ups$ on the multiplier or on the first input $S^a f_1$ (i.e.\ the first two terms in \eqref{eq:UPS_red_split}) can be dealt with relatively directly, and are detailed in Sections \ref{ssec:Upsm} and \ref{ssec:UpsVf1}, respectively. Finally, the case when $\Ups$ hits the phase produces another loss of $s$. The crucial groundwork to recover this has already been laid with the bilinear estimates developed in Section \ref{sec:more_bilin}, and the final details are given in Section \ref{ssec:UpsPhase}.

To unburden the notation, in what follows we shall denote the above, localized expressions in \eqref{eq:UPS_red_split} by
\begin{equation}
 \B_{\mathfrak{n}}(F,S^bf_2)=\int_{s=0}^t\tau_m(s) P_{k,p,q}\Q_\mathfrak{n}(F,S^bf_2)(s)ds,
\end{equation}
where $\mathfrak{n}\in\{\m,\Ups_\xi\m,s\m\Ups_\xi\Phi\}$ and $F\in\{S^af_1,\Ups_\xi S^a f_1\}$, thus dropping the explicit dependency on $m\in\N$ in the notation.

\subsection{Proof of Lemma \ref{lem:UPS_red_split} -- $\Ups$ on $\m$}\label{ssec:Upsm}
We show here that under the assumptions of Proposition \ref{prop:Ups_prop}, we have that
\begin{equation}
 \sup_{\substack{k_1,k_2\in (-2(1+\delta)m,2m/N_0)\\p_1,p_2\in (-2m,0]\\q_1,q_2\in (-4m,0]}}\norm{\B_{\Ups_\xi\m}(S^af_1,S^bf_2)}_{L^2}\lesssim 2^{\frac{5}{N_0}\cdot m}\eps_1^2,\qquad a+b\le N+3,\, k_2\leq k_1.
\end{equation} 

We begin by recalling from \eqref{eq:simp_symb} and Lemma \ref{lem:Ups_mult} that
\begin{equation}
 \norm{\Upsilon_\xi\m}_{\W}\lesssim 2^{k}[1+2^{q+p_2-p_1}]
\end{equation}
It thus follows that if $b=\min\{a,b\}$ or if $2^{p_2-p_1}\lesssim 1$ that
\begin{equation}
\begin{aligned}
 \norm{\B_{\Ups_\xi\m}(S^af_1,S^bf_2)}_{L^2}&\lesssim s\cdot 2^{k_{\max}}\norm{S^{\max\{a,b\}}f_1}_{B}\norm{e^{it\Lambda}S^{\min\{a,b\}} f_2}_{L^\infty}\\
 &\lesssim 2^{\frac{5}{2}k_{\max}}\norm{S^{\max\{a,b\}}f_1}_{B}\norm{S^{\min\{a,b\}} f_2}_{D}\lesssim \ip{s}^{\frac{5}{N_0}}\eps_1^2.
\end{aligned}
\end{equation}

Hence we can assume now that $b>a$ and $2^{p_1}\ll 2^{p_2}$. We are then in the same situation as in the proof of Lemma \ref{lem:UpsNF}, starting from \eqref{eq:Upsmult-ibp'}, and can proceed as done there. For the sake of completeness we recall here the main points, which are found in detail in the aforementioned proof of Lemma \ref{lem:UpsNF}. Since $2^{p_1}\ll 2^{p_2}$ we have that $\abs{\bar\sigma}\sim 2^{p_2}2^{k_1+k_2}$, and thus by \eqref{eq:vf_sigma} there holds $\abs{V_{\xi-\eta}\Phi}\sim 2^{k_1-k_2+2p_2}$, and we may integrate by parts in $V_{\xi-\eta}$ to obtain
\begin{equation}\label{eq:Upsmult-ibp}
 \B^{V_{\xi-\eta}}_{\Ups_\xi\m}(S^af_1,S^bf_2)=\B^{V_{\xi-\eta}}_{s^{-1}V_{\xi-\eta}\left(\frac{\Ups_\xi\m}{V_{\xi-\eta}\Phi}\right)}(S^af_1,S^bf_2)+\B^{V_{\xi-\eta}}_{s^{-1}\frac{\Ups_\xi\m}{V_{\xi-\eta}\Phi}}(V_{\xi-\eta}S^af_1,S^bf_2)+\B^{V_{\xi-\eta}}_{s^{-1}\frac{\Ups_\xi\m}{V_{\xi-\eta}\Phi}}(S^af_1,S^{b+1}f_2).
\end{equation}
To estimate these terms we recall from \eqref{eq:Upsmfracphi} and \eqref{eq:VUpsmfracphi} that
\begin{equation}
 \abs{\frac{\Ups_\xi\m}{V_{\xi-\eta}\Phi}}\lesssim 2^{k_2}2^{-p_2-p_1},\quad \abs{V_{\xi-\eta}\left(\frac{\Ups_\xi\m}{V_{\xi-\eta}\Phi}\right)}\lesssim 2^{k_{\max}}[1+2^{-2p_1}].
\end{equation}
For the terms in \eqref{eq:Upsmult-ibp} we thus have the bounds
\begin{equation}
\begin{aligned}
 \norm{\B^{V_{\xi-\eta}}_{s^{-1}V_{\xi-\eta}\left(\frac{\Ups_\xi\m}{V_{\xi-\eta}\Phi}\right)}(S^af_1,S^bf_2)}_{L^2}&\lesssim 2^{k_{\max}}[1+2^{-2p_1}]\norm{S^af_1}_{L^2}\norm{S^bf_2}_{L^2}\cdot 2^{k_1+p_1}2^{\frac{k_{\max}}{2}}\\
 &\lesssim 2^{\frac{5}{2}k_{\max}}\norm{S^a f_1}_B\norm{S^bf_2}_{L^2}\lesssim \ip{s}^{\frac{5}{N_0}}\eps_1^2,
\end{aligned} 
\end{equation}
and similarly
\begin{equation}
 \norm{\B^{V_{\xi-\eta}}_{s^{-1}\frac{\Ups_\xi\m}{V_{\xi-\eta}\Phi}}(S^af_1,S^{b+1}f_2)}_{L^2}\lesssim 2^{\frac{5}{2}k_{\max}}\norm{S^af_1}_{L^2}\norm{S^{b+1}f_2}_{B}\lesssim \ip{s}^{\frac{5}{N_0}}\eps_1^2,
\end{equation}
as well as
\begin{equation}
\begin{aligned}
 \norm{\B^{V_{\xi-\eta}}_{s^{-1}\frac{\Ups_\xi\m}{V_{\xi-\eta}\Phi}}(V_{\xi-\eta}S^af_1,S^bf_2)}_{L^2}&\lesssim 2^{k_{\max}+k_1-k_2}2^{-p_2-p_1}[\norm{S^{a+1} f_1}_{L^2}+(2^{p_2}+2^{p_1})\norm{\Ups S^a f_1}_{L^2}]\norm{S^b f_2}_{L^2}\\
 &\qquad \qquad \cdot \min\{2^{k_2+p_2},2^{k_1+p_1}\}2^{\frac{k_{\min}}{2}}\\
 &\lesssim 2^{\frac{5}{2}k_{\max}}[\norm{S^{a+1}f_1}_B+\norm{\Ups S^af_1}_{L^2}]\norm{S^bf_2}_B\lesssim \ip{s}^{\frac{5}{N_0}}\eps_1^2.
\end{aligned} 
\end{equation}

\subsection{Proof of Lemma \ref{lem:UPS_red_split} -- $\Ups$ on $S^af_1$}\label{ssec:UpsVf1}
We show here that under the assumptions of Proposition \ref{prop:Ups_prop}, we have that
\begin{equation}
 \sup_{\substack{k_1,k_2\in (-2(1+\delta)m,2m/N_0)\\p_1,p_2\in (-2m,0]\\q_1,q_2\in (-4m,0]}}\norm{\B_{\m}(\Ups_\xi S^af_1,S^bf_2)}_{L^2}\lesssim 2^{\frac{10}{N_0}\cdot m}\eps_1^2,\qquad a+b\le N+3,\, k_2\leq k_1.
\end{equation} 

We note that by \eqref{eq:Ups_cross} there holds that (since $f_1$ is axisymmetric)
\begin{equation}\label{eq:Ups_cross_axisymm}
\begin{aligned}
 \Ups_\xi S^a f_1(\xi-\eta)&=\Gamma_S\cdot (S^{a+1} f_1)(\xi-\eta)+\Gamma_\Ups \cdot (\Ups S^a f_1)(\xi-\eta),\\
 \Gamma_S&=\frac{\abs{\xi}}{\abs{\xi-\eta}}\left[\omega_c\Lambda(\xi)\sqrt{1-\Lambda^2(\xi-\eta)}-\sqrt{1-\Lambda^2(\xi)}\Lambda(\xi-\eta)\right],\\
 \Gamma_\Ups&=\frac{\abs{\xi}}{\abs{\xi-\eta}}\left[\omega_c\Lambda(\xi)\Lambda(\xi-\eta)+\sqrt{1-\Lambda^2(\xi)}\sqrt{1-\Lambda^2(\xi-\eta)}\right],
\end{aligned} 
\end{equation}
and thus
\begin{equation}\label{eq:2ndterm-split}
 \B_{\m}(\Ups_\xi S^af_1,S^bf_2)=\B_{\m}(\Gamma_S S^{a+1}f_1,S^bf_2)+\B_{\m}(\Gamma_\Ups \Ups S^af_1,S^bf_2).
\end{equation}

We note that since $\abs{\Gamma_S}+\abs{\Gamma_\Ups}\lesssim 1$, we can directly estimate the first of the two terms that arise, namely by \eqref{eq:simp_symb}
\begin{equation}
\begin{aligned}
 \norm{\B_{\m}(\Gamma_S S^{a+1}f_1,S^bf_2)}_{L^2}&\lesssim s\cdot 2^{k_{\max}}\norm{S^{\max\{a,b\}+1}f_1}_{L^2}\norm{e^{it\Lambda}S^{\min\{a,b\}+1} f_2}_{L^\infty}\\
 &\lesssim 2^{\frac{5}{2}k_{\max}}\norm{S^{\max\{a,b\}+1}f_1}_{L^2}\norm{S^{\min\{a,b\}+1} f_2}_{D}\lesssim \ip{s}^{\frac{5}{N_0}}\eps_1^2.
\end{aligned} 
\end{equation}
If $a\geq b$ the second term can be dealt with similarly, so that
\begin{equation}
 \norm{\B_{\m}(\Gamma_\Ups \Ups S^af_1,S^bf_2)}_{L^2}\lesssim 2^{\frac{5}{2}k_{\max}}\norm{\Ups S^a f_1}_{L^2}\norm{S^bf_2}_D\lesssim \ip{s}^{\frac{5}{N_0}}\eps_1^2,\qquad a\geq b.
\end{equation}
Hence we may assume for the rest of this subsection that $a<b$ so that $a< (N+3)/2\le N$.
\paragraph{Case $\abs{\Phi}\gtrsim 2^{q_{\max}}$.}
Here we make use of a normal form transformation. We note that by Lemma \ref{lem:phasesymb_bd} there holds 
\begin{equation}\label{eq:mfracphi_symb}
 \norm{\frac{\m}{\Phi}\bar\chi(2^{-q_{\max}}\Phi)}_{\W} \lesssim 2^k.
\end{equation}
In analogy with the notation of Section \ref{sec:more_bilin} we denote by a superscript $\Phi$ on the bilinear expressions $\B$ and $\Q$ the localization in $\Phi$, i.e.\
\begin{equation}
 \B^\Phi_{\m}(f,g)=\B_{\m\cdot \chi(2^{-q_{\max}}\Phi)}(f,g).
\end{equation}
Then we have that
\begin{equation}
\begin{aligned}
 \B^{\Phi}_{\m}(\Gamma_\Ups \Ups S^af_1,S^bf_2)&=-i\Q^{\Phi}_{\frac{\m}{\Phi}}(\Gamma_\Ups \Ups S^af_1,S^bf_2)(t)+i\Q^{\Phi}_{\frac{\m}{\Phi}}(\Gamma_\Ups \Ups S^af_1,S^bf_2)(0)\\
 &\qquad+i\B^{\Phi}_{\frac{\m}{\Phi}}(\Gamma_\Ups\partial_t \Ups S^af_1,S^bf_2)+i\B^{\Phi}_{\frac{\m}{\Phi}}(\Gamma_\Ups \Ups S^af_1,\partial_t S^bf_2),
\end{aligned} 
\end{equation}
and the boundary terms are easily controlled:
\begin{equation}
 \norm{\Q^{\Phi}_{\frac{\m}{\Phi}}(\Gamma_\Ups \Ups S^af_1,S^bf_2)(t)}_{L^2}+\norm{\Q^{\Phi}_{\frac{\m}{\Phi}}(\Gamma_\Ups \Ups S^af_1,S^bf_2)(0)}_{L^2}\lesssim 2^{\frac{3}{2}k_{\max}}\norm{\Ups S^a f_1}_{L^2}\norm{S^b f_2}_{L^2}\lesssim \ip{s}^{\frac{3}{N_0}}\eps_1^2.
\end{equation}
Next, by Lemma \ref{lem:VNFs} we have that
\begin{equation}
\begin{aligned}
 \norm{\B^{\Phi}_{\frac{\m}{\Phi}}(\Gamma_\Ups \Ups S^af_1,\partial_t S^bf_2)}_{L^2}&\lesssim s\cdot2^{\frac{3}{2}k_{\max}}\norm{\Ups S^af_1}_{L^\infty_t L^2}\norm{\partial_t S^bf_2}_{L^\infty_t L^2}\lesssim \ip{s}^{\frac{9}{N_0}}\eps_1^3
\end{aligned} 
\end{equation}
Similarly, with Lemma \ref{lem:UpsNF} and \eqref{eq:mfracphi_symb} an $L^\infty\times L^2$ estimate gives that
\begin{equation}
\begin{aligned}
 &\norm{\B^{\Phi}_{\frac{\m}{\Phi}}(\Gamma_\Ups\partial_t \Ups S^af_1,S^bf_2)}_{L^2}\lesssim 
 \int_0^t\norm{e^{it\Lambda}\partial_t \Ups S^af_1(s)}_{L^\infty} ds \cdot \norm{S^bf_2}_{L^\infty_t L^2}\\
 &\qquad \lesssim\int_0^t\ip{s}^{-1+\frac{8}{N_0}}\eps_1^2ds \cdot \eps_1\lesssim \ip{s}^{\frac{8}{N_0}}\eps_1^3.
\end{aligned} 
\end{equation}

\paragraph{Case $\abs{\Phi}\ll 2^{q_{\max}}$.}
Here by Proposition \ref{prop:phasevssigma} we have that $\abs{\bar\sigma}\gtrsim 2^{q_{\max}}2^{k_{\max}+k_{\min}}$, and we will thus integrate by parts in $V_{\xi-\eta}$, obtaining that
\begin{equation}
\begin{aligned}
 \Q^{V_{\xi-\eta}}_{\m}(\Gamma_\Ups \Ups S^a f_1,S^b f_2)&=s^{-1}\cdot \Big[ \Q^{V_{\xi-\eta}}_{V_{\xi-\eta}\left(\frac{\m}{V_{\xi-\eta}\Phi}\right)}(\Gamma_\Ups \Ups S^a f_1,S^b f_2)+ \Q^{V_{\xi-\eta}}_{\frac{\m}{V_{\xi-\eta}\Phi}}(V(\Gamma_\Ups \Ups S^a f_1),S^b f_2)\\
 &\qquad\qquad +\Q^{V_{\xi-\eta}}_{\frac{\m}{V_{\xi-\eta}\Phi}}(\Gamma_\Ups \Ups S^a f_1,V_{\xi-\eta} S^b f_2)\Big]
\end{aligned} 
\end{equation}
To estimate these terms we compute that $\abs{V_{\xi-\eta}\Phi}\sim 2^{-2k_2+p_2}\abs{\bar\sigma}\gtrsim 2^{p_2+q_{\max}}2^{-2k_2+k_{\max}+k_{\min}}$, and thus, with notations from \eqref{eq:simpmultbd},
\begin{equation}\label{eq:mfracvphi}
\frac{\vert\m\vert}{ \abs{V_{\xi-\eta}\Phi}}\le\frac{C_\m}{ \abs{V_{\xi-\eta}\Phi}}\lesssim 2^{p_{\max}-p_2}2^{k_2}.
\end{equation}
Similarly, by \eqref{eq:V'onmult} we have $\abs{V_{\xi-\eta}\m}\lesssim (1+2^{k_1-k_2}2^{p_1-p_2})C_\m$, and so with \eqref{eq:vfquot'} it follows from \eqref{eq:mfracvphi} that
\begin{equation}
\begin{aligned}
 \abs{V_{\xi-\eta}\left(\frac{\m}{V_{\xi-\eta}\Phi}\right)}&\lesssim \abs{\frac{V_{\xi-\eta}\m}{V_{\xi-\eta}\Phi}}+\abs{\frac{V_{\xi-\eta}^2\Phi}{V_{\xi-\eta}\Phi}}\abs{\frac{\m}{V_{\xi-\eta}\Phi}}\lesssim (1+2^{k_1-k_2}2^{p_1-p_2})\frac{C_\m}{\abs{V_{\xi-\eta}\Phi}}\\
 &\lesssim 2^{p_{\max}-p_2}2^{k_2}+2^{p_{\max}+p_1-2p_2}2^{k_1}.
\end{aligned} 
\end{equation}
We can thus bound (via $\Sz\lesssim 2^{\frac{k_{\max}}{2}+k_2+p_2}$)
\begin{equation}
\begin{aligned}
 \norm{\Q^{V_{\xi-\eta}}_{V_{\xi-\eta}\left(\frac{\m}{V_{\xi-\eta}\Phi}\right)}(\Gamma_\Ups \Ups S^a f_1,S^b f_2)}_{L^2}&\lesssim \Sz\cdot\abs{V_{\xi-\eta}\left(\frac{\m}{V_{\xi-\eta}\Phi}\right)}\norm{\Ups S^a f_1}_{L^2}\norm{S^b f_2}_{L^2}\\
 &\lesssim 2^{\frac{5}{2}k_{\max}}\norm{\Ups S^a f_1}_{L^2}\norm{S^b f_2}_B\lesssim \ip{s}^{\frac{5}{N_0}}\eps_1^2.
\end{aligned} 
\end{equation}
Furthermore, we compute with that \eqref{eq:Ups_cross_axisymm} that
\begin{equation}
 \abs{V_{\xi-\eta}\Gamma_\Ups}\lesssim \abs{\Gamma_\Ups}+\abs{V_{\xi-\eta}(\omega_c)\cdot\Lambda(\xi)\Lambda(\xi-\eta)}\lesssim 1+2^{p_1+k_1}2^{-p_2-k_2},
\end{equation}
where we have used that by (the symmetric version of) \eqref{eq:Vonangle2} $\abs{V_{\xi-\eta}\omega_c}\lesssim 2^{p_1+k_1}2^{-p_2-k_2}$. From this and \eqref{eq:mfracvphi}, it follows with $\Sz\lesssim 2^{\frac{k_{\max}}{2}+k_2+p_2}$ that
\begin{equation}
\begin{aligned}
 \norm{\Q^{V_{\xi-\eta}}_{\frac{\m}{V_{\xi-\eta}\Phi}}(V(\Gamma_\Ups \Ups S^a f_1),S^b f_2)}_{L^2}&\lesssim \Sz 2^{p_{\max}-p_2}2^{k_2} \Big[\norm{\Ups S^{a+1}f_1}_{L^2}+2^{p_1+k_1}2^{-p_2-k_2}\norm{\Ups S^a f_1}_{L^2}\Big] \norm{S^b f_2}_{L^2}\\
 &\lesssim 2^{\frac{k_{\max}}{2}+2k_2}\Big[\norm{\Ups S^{a+1}f_1}_{L^2}+2^{p_1+k_1}2^{-p_2-k_2}\norm{\Ups S^a f_1}_{L^2}\Big] \norm{S^b f_2}_{L^2}\\
 &\lesssim 2^{\frac{5}{2}k_{\max}}\Big[\norm{\Ups S^{a+1}f_1}_{L^2}+\norm{\Ups S^a f_1}_{L^2}\Big] \norm{S^b f_2}_{B}\lesssim\ip{s}^{\frac{5}{N_0}}\eps_1^2.
\end{aligned} 
\end{equation}
Finally, with (the symmetric version of) \eqref{eq:cross_recall} we have that 
\begin{equation}
 \abs{V_{\xi-\eta}S^b f_2(\eta)}\lesssim 2^{k_1-k_2}\left[\abs{S^{b+1} f_2(\eta)}+(2^{p_1}+2^{p_2})\abs{\Ups S^b f_2(\eta)}\right],
\end{equation}
and thus using \eqref{eq:mfracvphi} and the fact that $\Sz\lesssim 2^{\frac{k_{\max}}{2}+k_2+p_2}$
\begin{equation}
\begin{aligned}
 \norm{\Q^{V_{\xi-\eta}}_{\frac{\m}{V_{\xi-\eta}\Phi}}(\Gamma_\Ups \Ups S^a f_1,V_{\xi-\eta} S^b f_2)}_{L^2}&\lesssim 2^{p_{\max}-p_2}2^{k_2}\cdot 2^{k_1-k_2}\cdot \Sz\cdot \norm{\Ups S^a f_1}_{L^2}\\
 &\qquad\qquad \cdot[\norm{S^{b+1} f_2}_{L^2}+(2^{p_1}+2^{p_2})\norm{\Ups S^b f_2}_{L^2}]\\
 &\lesssim 2^{\frac{5}{2}k_{\max}}\norm{\Ups S^a f_1}_{L^2}\cdot [\norm{S^{b+1} f_2}_{L^2}+\norm{\Ups S^b f_2}_{L^2}]\lesssim\ip{s}^{\frac{5}{N_0}}\eps_1^2.
\end{aligned} 
\end{equation}

\subsection{Proof of Lemma \ref{lem:UPS_red_split} -- $\Ups$ on $\Phi$}\label{ssec:UpsPhase}
We show here that under the assumptions of Proposition \ref{prop:Ups_prop}, we have that
\begin{equation}
 \sup_{\substack{k_1,k_2\in (-2(1+\delta)m,2m/N_0)\\p_1,p_2\in (-2m,0]\\q_1,q_2\in (-4m,0]}}\norm{\B_{s\m\Ups_\xi\Phi}(S^af_1,S^bf_2)}_{L^2}\lesssim 2^{\frac{10}{N_0}\cdot m}\eps_1^2,\qquad a+b\le N+3,\, k_2\leq k_1.
\end{equation} 

As in the previous section, we distinguish between a normal form and two integrations by parts. The details are as follows:
\paragraph{Case $\abs{\Phi}\gtrsim 2^{q_{\max}}$.}
Here we make use of a normal form transformation. We note that by Lemma \ref{lem:phasesymb_bd} and \eqref{eq:simp_symb} there holds
\begin{equation}\label{eq:mfracphiUps_symb}
 \norm{\frac{\m}{\Phi}\bar\chi(2^{-q_{\max}}\Phi)}_{\W}\norm{\Ups_\xi\Phi}_{\W}\lesssim 2^k\cdot[2^p+2^{k-k_1}2^{p_1}]\lesssim 2^k.
\end{equation}
Then we have (with the same notation as in the previous section) that
\begin{equation}
\begin{aligned}
 \B^{\Phi}_{s\m \Ups_\xi\Phi}(S^af_1,S^bf_2)&=-i\Q^{\Phi}_{s\frac{\m}{\Phi} \Ups_\xi\Phi}(S^af_1,S^bf_2)(t)+i\Q^{\Phi}_{s\frac{\m}{\Phi} \Ups_\xi\Phi}(S^af_1,S^bf_2)(0)\\
 &\qquad+i\B^{\Phi}_{s\frac{\m}{\Phi} \Ups_\xi\Phi}(\partial_t S^af_1,S^bf_2)+i\B^{\Phi}_{s\frac{\m}{\Phi} \Ups_\xi\Phi}(S^af_1,\partial_t S^bf_2).
\end{aligned} 
\end{equation}
The boundary terms are easily controlled: By \eqref{eq:mfracphiUps_symb} we have
\begin{equation}
\begin{aligned}
 \norm{\Q^{\Phi}_{s\frac{\m}{\Phi} \Ups_\xi\Phi}(S^af_1,S^bf_2)(t)}_{L^2}+\norm{\Q^{\Phi}_{s\frac{\m}{\Phi} \Ups_\xi\Phi}(S^af_1,S^bf_2)(0)}_{L^2}&\lesssim s\cdot 2^{k}\norm{e^{it\Lambda}S^{\min\{a,b\}} f_i}_{L^\infty}\norm{S^{\max\{a,b\}} f_j}_{L^2}\\
 &\lesssim 2^{\frac{5}{2}k_{\max}}\norm{S^{\min\{a,b\}} f_1}_{D}\norm{S^{\max\{a,b\}} f_2}_{L^2}\\
 &\lesssim\ip{s}^{\frac{5}{N_0}}\eps_1^2.
\end{aligned}
\end{equation}
Next, using in addition \eqref{eq:VNFLinfty} in Lemma \ref{lem:VNFs} we have that if $a\leq b$
\begin{equation}
\begin{aligned}
 \norm{\B^{\Phi}_{s\frac{\m}{\Phi} \Ups_\xi\Phi}(\partial_t S^af_1,S^bf_2)}_{L^2}&\lesssim \int_0^t s2^{k_{\max}}\norm{e^{is\Lambda}\partial_t S^af_1}_{L^\infty}\norm{S^bf_2}_{L^2}ds\lesssim\ip{s}^{\frac{7}{N_0}}\eps_1^3,
\end{aligned} 
\end{equation}
whereas if $a> b$, by \eqref{eq:VNFL2} in Lemma \ref{lem:VNFs} there holds that
\begin{equation}
\begin{aligned}
 \norm{\B^{\Phi}_{s\frac{\m}{\Phi} \Ups_\xi\Phi}(\partial_t S^af_1,S^bf_2)}_{L^2}&\lesssim \int_0^t s2^{k_{\max}}\norm{\partial_t S^af_1}_{L^2}\norm{e^{is\Lambda}S^bf_2}_{L^\infty}ds\lesssim\ip{s}^{\frac{7}{N_0}}\eps_1^3,
\end{aligned} 
\end{equation}
which both are acceptable contributions. The estimate for $\B_{s\frac{\m}{\Phi} \Ups_\xi\Phi}(S^af_1,\partial_t S^bf_2)$ is analogous.

\paragraph{Case $\abs{\Phi}\ll 2^{q_{\max}}$.}
Here by Proposition \ref{prop:phasevssigma} we have that $\abs{\bar\sigma}\gtrsim 2^{q_{\max}}2^{k_{\max}+k_{\min}}$, and we are thus in a setting where we can directly apply Lemma \ref{lem:bdry}. If $a>b$, then we obtain by \eqref{eq:allV1} that
\begin{equation}
\begin{aligned}
 \norm{\Q^{{V_\eta}}_{s\m\Ups\Phi}[S^a f_1,S^b f_2]}_{L^2}&\lesssim s^{-1}2^{2k_{\max}^+}\Big[\norm{(1,S)S^af_1}_B+\norm{\Ups S^a f_1}_{L^2}\Big]\cdot [\norm{(1,S)S^b f_2}_D+\norm{S^b f_2}_B]\\
  &\quad+s^{-1}2^{2k_{\max}^+}\norm{(1,S)S^af_1}_B \Big[\norm{(1,S,S^2)S^bf_2}_B+\norm{(1,S)\Ups S^b f_2}_{L^2}\Big]\\
  &\lesssim \ip{s}^{-1+\frac{4}{N_0}}\eps_1^2,
\end{aligned}  
\end{equation}
which gives an acceptable contribution. If on the other hand $a\leq b$, then by \eqref{eq:allV2} there holds that
\begin{equation}
\begin{aligned}
 \norm{\Q^{{V_\eta}}_{s\m\Ups\Phi}[S^a f_1,S^b f_2]}_{L^2}&\lesssim s^{-1}2^{3k_{\max}^+}\Big[\norm{(1,S)S^af_1}_B+\norm{\Ups V' S^af_1}_{L^2}\Big]\cdot \Big[\norm{(1,S)S^b f_2}_B+\norm{\Ups S^b f_2}_{L^2}\Big]\\
  &\quad +s^{-1}2^{-3k_{\max}^+}\norm{(1,S)S^a f_1}_D\Big[\norm{S^{b+1}f_2}_B+\norm{\Ups S^b f_2}_{L^2}+\norm{S^{b+1}f_2}_{H^{-1}}\Big]\\
  &\lesssim\ip{s}^{-1+\frac{6}{N_0}}\eps_1^2,
\end{aligned}
\end{equation}
which again gives a controlled contribution.

\subsection*{Acknowledgments}
The authors would like to thank F.\ Pusateri for interesting discussions.

Y.\ Guo's research is supported in part by NSF grant DMS-1810868. C.\ Huang is supported by the National Natural Science Foundation of China (No.\ 11971503). B.\ Pausader is supported in part by NSF grant DMS-1700282.

\newpage
\appendix
\section{Vector Field Computations}
For the rotation and scaling vector fields $S=\sum_{i=1}^3x^i\partial_i$ and $\Omega=x_1\partial_2-x_2\partial_1$ we note the following properties:
\begin{lemma}\label{lem:vfbasics}
 Consider the rotation and scaling vector fields $\Omega$ and $S$, as well as the horizontal part $S_\h$ of the latter, given by
 \begin{equation*}
  S=\sum_{i=1}^3x_i\partial_i,\quad \Omega=x_1\partial_2-x_2\partial_1,\quad S_\h:=x_1\partial_1+x_2\partial_2.
 \end{equation*}
 They satisfy
 \begin{enumerate}
  \item $[A,B]=0$ for any choice of $A,B\in\{S,\Omega,\Ups\}$, i.e.\ all these vector fields commute.
  \item $\widehat{Sf}(\xi)=-(3+S)\hat{f}(\xi)$, $\widehat{\Omega f}(\xi)=\Omega\hat{f}(\xi)$,
  \item $\partial_1=\frac{x_\h^\perp}{\abs{x_\h}^2}\cdot (\Omega,S_\h)^\intercal$ and $\partial_2=\frac{x_\h}{\abs{x_\h}^2}\cdot (\Omega,S_\h)^\intercal$,
  \item Writing $y_\h=(y_1,y_2)^\intercal$ we have
    \begin{equation}\label{eq:vfbasics}
     \begin{aligned}
      &y_1\partial_{x_2}-y_2\partial_{x_1}=\frac{(y_\h\cdot x_\h)}{\abs{x_\h}^2} \Omega_x+\frac{(y_\h^\perp\cdot x_\h)}{\abs{x_\h}^2} S_{x,\h},\\
      &y_1\partial_{x_1}+y_2\partial_{x_2}=\frac{(y_\h\cdot x_\h^\perp)}{\abs{x_\h}^2} \Omega_x+\frac{(y_\h\cdot x_\h)}{\abs{x_\h}^2} S_{x,\h}.
     \end{aligned}
    \end{equation}
 \end{enumerate}
\end{lemma}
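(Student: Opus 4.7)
The plan is to verify each of the four claims by direct computation, exploiting the very simple structure of $S$, $\Omega$, and $\Ups$.

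For the commutation claim (1), I will rely on the spherical coordinate representations already recorded in the main text, namely $S=\rho\partial_\rho$, $\Omega=\partial_\theta$, and $\Ups=\partial_\phi$. Since $(\rho,\theta,\phi)$ are independent coordinates on $\R^3\setminus\{x_3\text{-axis}\}$, the three first-order operators $\partial_\rho,\partial_\theta,\partial_\phi$ mutually commute, and $[\,\rho\partial_\rho,\partial_\theta\,]=[\,\rho\partial_\rho,\partial_\phi\,]=0$ because $\partial_\theta\rho=\partial_\phi\rho=0$. (As a sanity check, one can also verify $[S,\Omega]=0$ in Cartesian form by direct computation: $S\Omega-\Omega S=(x_j\partial_j)(x_1\partial_2-x_2\partial_1)-(x_1\partial_2-x_2\partial_1)(x_k\partial_k)$, and a short expansion gives zero.)

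For the Fourier identities in (2), I would proceed by integration by parts. Writing $Sf=x_j\partial_jf$, one has $\widehat{x_j\partial_jf}(\xi)=i\partial_{\xi_j}\bigl(i\xi_j\widehat{f}(\xi)\bigr)=-\partial_{\xi_j}(\xi_j\widehat{f})=-\widehat{f}-\xi_j\partial_{\xi_j}\widehat{f}$, and summing over $j$ yields $\widehat{Sf}=-(3+S)\widehat{f}$. For $\Omega=x_1\partial_2-x_2\partial_1$, the same recipe gives $\widehat{\Omega f}(\xi)=i\partial_{\xi_1}(i\xi_2\widehat{f})-i\partial_{\xi_2}(i\xi_1\widehat{f})=-\xi_2\partial_{\xi_1}\widehat{f}+\xi_1\partial_{\xi_2}\widehat{f}=\Omega\widehat{f}(\xi)$, where the $-\widehat{f}$ contributions cancel.

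For (3) and (4), I view the two identities for $\Omega$ and $S_\h$ as a linear system in $\partial_1,\partial_2$:
\begin{equation*}
\begin{pmatrix}\Omega\\ S_\h\end{pmatrix}=\begin{pmatrix}-x_2 & x_1\\ x_1 & x_2\end{pmatrix}\begin{pmatrix}\partial_1\\ \partial_2\end{pmatrix},
\end{equation*}
whose coefficient matrix has determinant $-\abs{x_\h}^2$, nonzero off the vertical axis. Inverting gives
\begin{equation*}
\partial_1=\frac{-x_2\,\Omega+x_1\,S_\h}{\abs{x_\h}^2}=\frac{x_\h^\perp}{\abs{x_\h}^2}\cdot(\Omega,S_\h)^\intercal,\qquad \partial_2=\frac{x_1\,\Omega+x_2\,S_\h}{\abs{x_\h}^2}=\frac{x_\h}{\abs{x_\h}^2}\cdot(\Omega,S_\h)^\intercal,
\end{equation*}
which is exactly claim (3). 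Substituting these expressions into $y_1\partial_{x_2}-y_2\partial_{x_1}$ and $y_1\partial_{x_1}+y_2\partial_{x_2}$, then collecting the coefficients of $\Omega$ and $S_\h$, produces the formulas \eqref{eq:vfbasics}; one just has to recognize $y_1x_1+y_2x_2=y_\h\cdot x_\h$, $y_1x_2-y_2x_1=y_\h^\perp\cdot x_\h$, and $y_2x_1-y_1x_2=y_\h\cdot x_\h^\perp$.

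There is no genuine obstacle in this lemma; it is a bookkeeping statement. The only mild care needed is in tracking signs of the $\perp$ operator (with our convention $x_\h^\perp=(-x_2,x_1)$), which is why I would write out the matrix inversion explicitly rather than guess the coefficients.
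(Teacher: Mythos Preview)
Your proposal is correct and is precisely the kind of direct computation the paper has in mind; the paper's own proof consists of the single sentence ``These are direct computations.'' One tiny cosmetic remark: in your verification of $\widehat{\Omega f}=\Omega\widehat f$, there are no $-\widehat f$ terms to cancel since $\partial_{\xi_1}\xi_2=\partial_{\xi_2}\xi_1=0$, but this does not affect the argument.
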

\begin{proof}
 These are direct computations.
\end{proof}

\subsection{Vector Fields and the Phase}\label{apdx:phase-comp}
We collect now a few computations regarding the interaction of the vector fields and the phase functions. We also recall that $S\Lambda=\Omega\Lambda=0$.

We will use the following notation:
 \begin{equation}
  \Omega_\eta=\eta_1\partial_{\eta_2}-\eta_2\partial_{\eta_1}=\eta_\h^\perp\cdot\nabla_{\eta_\h},\quad S_\eta=\eta_1\partial_{\eta_1}+\eta_2\partial_{\eta_2}+\eta_3\partial_{\eta_3}=\eta\cdot\nabla_{\eta},
 \end{equation}
whereas
 \begin{equation}
  \Omega_{\xi-\eta}=(\xi_\h-\eta_\h)^\perp\cdot\nabla_{\eta_\h},\quad S_{\xi-\eta}=(\xi-\eta)\cdot\nabla_{\eta}.
 \end{equation}

\begin{lemma}\label{lem:vfsizes}
Let
 \begin{equation}
  \Phi:=\pm\Lambda(\xi)+\Lambda(\xi-\eta)\pm\Lambda(\eta).
 \end{equation} 
 Then we have 
 \begin{equation}
  V_\eta\Phi(\xi,\eta)=V_\eta\Lambda(\xi-\eta),\qquad V\in\{S,\Omega\}.
 \end{equation}
 With
 \begin{equation}
  \bar\sigma(\xi,\eta):=\xi_3\eta_\h-\eta_3\xi_\h=(\xi\wedge\eta)_\h
 \end{equation}
there holds that
 \begin{equation}\label{eq:vf_sigma}
  S_\eta\Phi=\bar\sigma(\xi,\eta)\cdot\frac{\xi_\h-\eta_\h}{\abs{\xi-\eta}^3},\quad \Omega_\eta\Phi=-\bar\sigma(\xi,\eta)\cdot\frac{(\xi_\h-\eta_\h)^\perp}{\abs{\xi-\eta}^3},
 \end{equation}
 and hence
 \begin{equation}\label{eq:vflobound}
  \abs{S_\eta\Phi}+\abs{\Omega_\eta\Phi}\sim 2^{-2k_1}2^{p_1}\abs{\bar\sigma(\xi,\eta)}.
 \end{equation}

 Moreover,
 \begin{equation}\label{eq:2ndvfPhi}
 \begin{aligned}
  &S_\eta^2\Phi=S_\eta\Phi\left[3\frac{\eta\cdot(\xi-\eta)}{\abs{\xi-\eta}^2}+2\right]-\frac{\bar\sigma\cdot\xi_\h}{\abs{\xi-\eta}^3},\\
  &\Omega_\eta^2\Phi=3\Omega_\eta\Phi\frac{\in^{ab}\eta_a\xi_b}{\abs{\xi-\eta}^2}-\Lambda(\xi-\eta)\frac{\eta_\h\cdot\xi_\h}{\abs{\xi-\eta}^2},\\
  &S_\eta\Omega_\eta\Phi=\Omega_\eta S_\eta\Phi=S_\eta\Phi\frac{\eta_\h^\perp\cdot\xi_\h}{\abs{\xi-\eta}^2}+\Omega_\eta\Phi[1+2\frac{\eta\cdot(\xi-\eta)}{\abs{\xi-\eta}^2}],
 \end{aligned}
 \end{equation}
and
\begin{align}
 \Omega_{\xi-\eta}\Omega_\eta\Phi&=-\Lambda(\xi-\eta)\frac{\xi_\h\cdot(\xi_\h-\eta_\h)}{\abs{\xi-\eta}^2}=\frac{\xi_3}{\abs{\xi-\eta}}\frac{\abs{\xi_\h-\eta_\h}^2}{\abs{\xi-\eta}^2}-S_\eta\Phi,\label{eq:1VFmix-1}\\
 \Omega_{\xi-\eta}S_\eta\Phi&=\Omega_\eta\Phi,\label{eq:1VFmix-2}\\
 S_{\xi-\eta}S_\eta\Phi&=S_\eta\Phi,\label{eq:1VFmix-3}\\
 S_{\xi-\eta}\Omega_\eta\Phi&=\Omega_\eta\Phi.\label{eq:1VFmix-4}
\end{align}

Furthermore,
\begin{align}
 S_{\xi-\eta}S_{\eta}^2\Phi&=S_\eta\Phi\left[5+6\frac{(\xi-\eta)\cdot\eta}{\abs{\xi-\eta}^2}\right]-2\frac{\bar\sigma\cdot\xi_\h}{\abs{\xi-\eta}^3},\label{eq:2VFmix-1}\\
 S_{\xi-\eta}\Omega_\eta^2\Phi&=6\Omega_\eta\Phi\,\frac{\eta_\h^\perp\cdot\xi_\h}{\abs{\xi-\eta}^2}-2\Lambda(\xi-\eta) \frac{(\xi_\h+\eta_\h)\xi_\h}{\abs{\xi-\eta}^2},\label{eq:2VFmix-2}\\
 \Omega_{\xi-\eta}S_\eta^2\Phi&=3\Omega_\eta\Phi\frac{\xi\cdot(\xi-\eta)}{\abs{\xi-\eta}^2}-3S_\eta\Phi\frac{\eta_\h^\perp\cdot\xi_\h}{\abs{\xi-\eta}^2}+\eta_3\frac{\eta_\h^\perp\cdot\xi_\h}{\abs{\xi-\eta}^3},\label{eq:2VFmix-3}\\
 \Omega_{\xi-\eta}\Omega_\eta^2\Phi&=\Omega_\eta\Phi\left[1-6\frac{\xi_\h\cdot(\xi_\h-\eta_\h)}{\abs{\xi-\eta}^2}\right]\label{eq:2VFmix-4}.
\end{align}

\end{lemma}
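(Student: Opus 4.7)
The main observation is that since $\Lambda(\xi)=\xi_3/\abs{\xi}$ is degree-zero homogeneous and axisymmetric about $\vec{e}_3$, we have $S_\xi\Lambda(\xi)=0=\Omega_\xi\Lambda(\xi)$, and identically on $\eta$, so $V_\eta\Phi=V_\eta\Lambda(\xi-\eta)$ for $V\in\{S,\Omega\}$. The first-order formulas \eqref{eq:vf_sigma} are then a direct application of \eqref{eq:Lambda_grad}, exactly as in the mini-version Lemma \ref{lem:vfsizes-mini}. For the lower bound \eqref{eq:vflobound}, we observe that $\{\xi_\h-\eta_\h,(\xi_\h-\eta_\h)^\perp\}$ is an orthogonal basis of the horizontal plane, so that
$$\max\{\abs{\bar\sigma\cdot(\xi_\h-\eta_\h)},\abs{\bar\sigma\cdot(\xi_\h-\eta_\h)^\perp}\}\geq\tfrac{1}{\sqrt 2}\abs{\bar\sigma}\cdot\abs{\xi_\h-\eta_\h};$$
combined with $\abs{\xi-\eta}^{-3}\abs{\xi_\h-\eta_\h}\sim 2^{p_1-2k_1}$ under the standard localization, this yields \eqref{eq:vflobound}.

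For the second-order formulas \eqref{eq:2ndvfPhi}, I apply $S_\eta$ or $\Omega_\eta$ term-by-term to the first-order expressions via Leibniz's rule. Two ingredients streamline the computation: first, $\bar\sigma(\xi,\eta)=\xi_3\eta_\h-\eta_3\xi_\h$ is linear in $\eta$, so $S_\eta\bar\sigma=\bar\sigma$ and $\Omega_\eta\bar\sigma$ produces a simple term $\xi_3\eta_\h^\perp$; second, differentiating $(\xi_\h-\eta_\h)/\abs{\xi-\eta}^3$ under $S_\eta$ uses $S_\eta\abs{\xi-\eta}^2=-2\eta\cdot(\xi-\eta)$ and produces the factor $3\eta\cdot(\xi-\eta)/\abs{\xi-\eta}^2$ visible in \eqref{eq:2ndvfPhi}, while under $\Omega_\eta$ it uses $\Omega_\eta\abs{\xi-\eta}^2=-2\eta\wedge\xi\cdot\vec{e}_3$ (horizontal rotation). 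Collecting the pieces and simplifying with $\bar\sigma\cdot\xi_\h=\bar\sigma\cdot(\xi_\h-\eta_\h)+\bar\sigma\cdot\eta_\h$ (and noting $\bar\sigma\cdot\eta_\h=\xi_3\abs{\eta_\h}^2-\eta_3\eta_\h\cdot\xi_\h$) gives the stated identities.

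The mixed identities \eqref{eq:1VFmix-1}--\eqref{eq:1VFmix-4} follow from the observation that $\bar\sigma(\xi,\eta)=-\bar\sigma(\xi,\xi-\eta)$ is also \emph{linear} in $(\xi-\eta)$ at fixed $\xi$, whereas $(\xi_\h-\eta_\h)/\abs{\xi-\eta}^3$ is degree $-2$ homogeneous in $(\xi-\eta)$. Hence for $S_{\xi-\eta}=(\xi-\eta)\cdot\nabla_\eta$ acting as $-(\xi-\eta)\cdot\nabla_{\xi-\eta}$, Leibniz yields $S_{\xi-\eta}S_\eta\Phi=(-1+2)S_\eta\Phi=S_\eta\Phi$ and similarly $S_{\xi-\eta}\Omega_\eta\Phi=\Omega_\eta\Phi$, giving \eqref{eq:1VFmix-3}--\eqref{eq:1VFmix-4}. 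For \eqref{eq:1VFmix-1}--\eqref{eq:1VFmix-2}, I compute $\Omega_{\xi-\eta}$ directly: $\Omega_{\xi-\eta}\bar\sigma=\xi_3(\xi_\h-\eta_\h)^\perp-(\xi_3-\eta_3)\xi_\h^\perp$, while $\Omega_{\xi-\eta}(\xi_\h-\eta_\h)/\abs{\xi-\eta}^3=-(\xi_\h-\eta_\h)^\perp/\abs{\xi-\eta}^3$ (horizontal rotation commutes with $\abs{\xi-\eta}$). Simplifying using $\abs{\xi_\h-\eta_\h}^2=\abs{\xi-\eta}^2-(\xi_3-\eta_3)^2$ produces the stated expressions. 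Finally, the second-order mixed identities \eqref{eq:2VFmix-1}--\eqref{eq:2VFmix-4} are obtained by applying $S_{\xi-\eta}$ or $\Omega_{\xi-\eta}$ to the formulas of \eqref{eq:2ndvfPhi}, using the same homogeneity/linearity bookkeeping: each factor of $\xi-\eta$ in numerator or denominator transforms by its degree under $S_{\xi-\eta}$, while under $\Omega_{\xi-\eta}$ horizontal objects rotate by $90^\circ$.

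The only real challenge is bookkeeping: each formula in \eqref{eq:2ndvfPhi} and \eqref{eq:2VFmix-1}--\eqref{eq:2VFmix-4} is the sum of three to five Leibniz terms that combine nontrivially using identities like $\bar\sigma\cdot\xi_\h^\perp=\xi_3\eta_\h\cdot\xi_\h^\perp$ and $\in^{ab}\eta_a\xi_b=\eta_\h^\perp\cdot\xi_\h$. No conceptual difficulty arises; the plan is to perform these expansions systematically and collect terms.
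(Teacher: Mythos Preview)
Your plan is correct and matches the paper's proof: both proceed by direct Leibniz computation starting from the gradient formula \eqref{eq:Lambda_grad}, with the same max-argument for \eqref{eq:vflobound}. Your homogeneity observation for \eqref{eq:1VFmix-3}--\eqref{eq:1VFmix-4} (that $\bar\sigma$ is degree~$1$ and $(\xi_\h-\eta_\h)/\abs{\xi-\eta}^3$ is degree~$-2$ in $\xi-\eta$) is a small shortcut over the paper's explicit expansion, and one intermediate formula you wrote ($\Omega_{\xi-\eta}\bar\sigma$) carries a spurious term---the correct result is simply $\xi_3(\xi_\h-\eta_\h)^\perp$---but this does not affect the scheme.
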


\begin{remark}\label{rem:sigma}
We list here some useful, immediate consequences of the computations in Lemma \ref{lem:vfsizes}.
\begin{enumerate}
 \item Note the versatility of $\bar\sigma(\xi,\eta)$ in that
 \begin{equation}
  \bar\sigma(\xi,\eta)=\bar\sigma(\xi-\eta,\eta)=-\bar\sigma(\xi,\xi-\eta).
 \end{equation}
 However, to unburden the notation we will write $\bar\sigma\equiv\bar\sigma(\xi,\eta)$.
 In particular, it is useful to note that $\Omega_\eta\Phi$ can also be expressed as
 \begin{equation}\label{eq:alt_OmegaPhi}
  \Omega_\eta\Phi=\Lambda(\xi-\eta)\frac{\xi_\h\cdot\eta_\h^\perp}{\abs{\xi-\eta}^2}=\Lambda(\xi-\eta)\frac{\in^{ab}\eta_a\xi_b}{\abs{\xi-\eta}^2}
 \end{equation}
 
 \item\label{rem:vfquot} Assuming that $\bar\sigma$ has a lower bound $\abs{\bar\sigma}\gtrsim2^{q_{\max}}2^{k_{\max}+k_{\min}}$, then whenever $\abs{V_\eta\Phi}\sim 2^{-2k_1+p_1}\abs{\bar\sigma}$, $V\in\{S,\Omega\}$, we have
 \begin{equation}\label{eq:vfquotient_apdx}
  \frac{\abs{V_\eta^2\Phi}}{\abs{V_\eta\Phi}}\lesssim 1+2^{k_2-k_1}+2^{-p_1-k_1}\min\{2^{p+k},2^{p_2+k_2}\}+2^{p+p_2-p_1}2^{k+k_2-k_{\max}-k_{\min}}.
 \end{equation}
 \begin{proof}
  This follows directly from the formulas for the second order vector fields on the phase in \eqref{eq:2ndvfPhi}: In case $V=S$ we have
 \begin{equation}
 \begin{aligned}
  \frac{\abs{S_\eta^2\Phi}}{\abs{S_\eta\Phi}}&\lesssim 1+2^{k_2-k_1}+2^{2k_1-p_1}\abs{\bar\sigma}^{-1}\cdot\abs{\bar\sigma}2^{-3k_1}\min\{2^{p+k},2^{p_2+k_2}\}\\
  &\lesssim 1+2^{k_2-k_1}+2^{-p_1-k_1}\min\{2^{p+k},2^{p_2+k_2}\},
 \end{aligned} 
 \end{equation}
 and when $V=\Omega$ we obtain
 \begin{equation}
 \begin{aligned}
  \frac{\abs{\Omega_\eta^2\Phi}}{\abs{\Omega_\eta\Phi}}&\lesssim \abs{\frac{\xi_\h\cdot\eta_\h^\perp}{\abs{\xi-\eta}^2}}+\abs{\Lambda(\xi-\eta)\frac{\xi_\h\cdot\eta_\h}{\abs{\xi-\eta}^2}}\abs{\Omega_\eta\Phi}^{-1}\\
  &\lesssim 2^{p_1-k_1}\min\{2^{p+k},2^{p_2+k_2}\}+2^{q_1}2^{p_2+k_2}2^{p+k}2^{-2k_1}2^{2k_1-p_1}\abs{\bar\sigma}^{-1}\\
  &=2^{p_1-k_1}\min\{2^{p+k},2^{p_2+k_2}\}+2^{q_1+p_2+p-p_1}2^{k+k_2}\abs{\bar\sigma}^{-1},
 \end{aligned}
 \end{equation}
 and we can now use that $\abs{\bar\sigma}\gtrsim 2^{q_{\max}}2^{k_{\max}+k_{\min}}$ to deduce the claim.
 \end{proof}
 
\end{enumerate} 

\end{remark}

\begin{proof}[Proof of Lemma \ref{lem:vfsizes}]
We start by computing
\begin{equation}\label{eq:Lamgrad}
 \nabla\Lambda(\xi)=-\frac{\xi_3}{\abs{\xi}^3}\xi_\h+\frac{\abs{\xi_\h}^2}{\abs{\xi}^3}\vec{e}_3.
\end{equation}
From this, \eqref{eq:vf_sigma} follows by direct computation: we have
\begin{equation}
\begin{aligned}
 S_\eta\Phi&=\eta\cdot\nabla_\eta\Lambda(\xi-\eta)=\frac{1}{\abs{\xi-\eta}^3}\left[(\xi_3-\eta_3)\eta_\h\cdot(\xi_\h-\eta_\h)-\eta_3\abs{\xi_\h-\eta_\h}^2\right]\\
 &=\frac{\xi_\h-\eta_\h}{\abs{\xi-\eta}^3}\cdot[(\xi_3-\eta_3)\eta_\h-\eta_3(\xi_\h-\eta_\h)]\\
 &=\frac{\xi_\h-\eta_\h}{\abs{\xi-\eta}^3}\cdot\bar{\sigma}(\xi,\eta),
\end{aligned} 
\end{equation}
and similarly
\begin{equation}
\begin{aligned}
 \Omega_\eta\Phi&=\eta_\h^\perp\cdot\nabla_\eta\Lambda(\xi-\eta)=\frac{1}{\abs{\xi-\eta}^3}\left[(\xi_3-\eta_3)\eta_\h^\perp\cdot(\xi_\h-\eta_\h)\right]=-\frac{(\xi_\h-\eta_\h)^\perp}{\abs{\xi-\eta}^3}\cdot[(\xi_3-\eta_3)\eta_\h]\\
 &=-\frac{(\xi_\h-\eta_\h)^\perp}{\abs{\xi-\eta}^3}\cdot\bar\sigma(\xi,\eta).
\end{aligned}
\end{equation}
Then from \eqref{eq:vf_sigma} we deduce the bounds \eqref{eq:vflobound}, since 
\begin{equation}
 \max\{\abs{\sigma(\xi,\eta)\cdot(\xi_\h-\eta_\h)},\abs{\sigma(\xi,\eta)\cdot(\xi_\h-\eta_\h)^\perp}\}\geq \frac{1}{\sqrt{2}}\abs{\sigma(\xi,\eta)}\abs{\xi_\h-\eta_\h}.
\end{equation}

\textbf{``Pure'' Second Order.}
We start by computing that 
\begin{equation}
  S_\eta(\abs{\eta})=\abs{\eta},\quad S_\eta\left(\frac{\eta_\mu}{\abs{\eta}}\right)=0\; (1\leq \mu\leq 3),\quad S_\eta(\abs{\xi-\eta})=-\frac{\eta\cdot(\xi-\eta)}{\abs{\xi-\eta}},
\end{equation}
and
\begin{equation}
 \Omega_\eta\frac{1}{\vert\xi-\eta\vert}=\frac{\in^{ab}\eta_a\xi_b}{\vert\xi-\eta\vert^3},\quad\Omega(\in^{ab}\eta_a\xi_b)=-\xi_j\eta_j,\quad\Omega_\eta(\xi_j\eta_j)=\in^{ab}\eta_a\xi_b.
\end{equation}
Next we have that
\begin{equation}\label{eq:grad_sigperp}
 \nabla_\eta(\bar\sigma\cdot(\xi_\h-\eta_\h))=\begin{pmatrix}\xi_3(\xi_\h-\eta_\h)-\bar\sigma\\-\xi_\h\cdot(\xi_\h-\eta_\h)\end{pmatrix},
\end{equation}
so
\begin{equation}
 S_\eta[\bar\sigma\cdot(\xi_\h-\eta_\h)]=\bar\sigma\cdot(\xi_\h-2\eta_\h).
\end{equation}
It follows that
\begin{equation}
\begin{aligned}
 S_\eta^2\Phi&=S_\eta[\abs{\xi-\eta}^{-3}\bar\sigma\cdot(\xi_\h-\eta_\h)]=3\abs{\xi-\eta}^{-4}\frac{\eta\cdot(\xi-\eta)}{\abs{\xi-\eta}}\bar\sigma\cdot(\xi_\h-\eta_\h)+\abs{\xi-\eta}^{-3}\bar\sigma\cdot(\xi_\h-2\eta_\h)\\
 &=S_\eta\Phi\left[3\frac{\eta\cdot(\xi-\eta)}{\abs{\xi-\eta}^2}+2\right]-\frac{\bar\sigma\cdot\xi_\h}{\abs{\xi-\eta}^3}.
\end{aligned} 
\end{equation}
On the other hand, from \eqref{eq:alt_OmegaPhi} we have
\begin{equation}
\begin{aligned}
 \Omega_\eta^2\Phi&=\Omega_\eta\left(\Lambda(\xi-\eta)\frac{\in^{ab}\eta_a\xi_b}{\abs{\xi-\eta}^2}\right)=\Omega_\eta\Phi\frac{\in^{ab}\eta_a\xi_b}{\abs{\xi-\eta}^2}+\Lambda(\xi-\eta)\left[-\frac{\eta_\h\cdot\xi_\h}{\abs{\xi-\eta}^2}+2\frac{(\in^{ab}\eta_a\xi_b)^2}{\abs{\xi-\eta}^2}\right]\\
 &=3\Omega_\eta\Phi\frac{\in^{ab}\eta_a\xi_b}{\abs{\xi-\eta}^2}-\Lambda(\xi-\eta)\frac{\eta_\h\cdot\xi_\h}{\abs{\xi-\eta}^2}.
\end{aligned} 
\end{equation}

Similarly one computes
\begin{equation}
 S_\eta\Omega_\eta\Phi=S_\eta\Phi\frac{\eta_\h^\perp\cdot\xi_\h}{\abs{\xi-\eta}^2}+\Omega_\eta\Phi[1+2\frac{\eta\cdot(\xi-\eta)}{\abs{\xi-\eta}^2}].
\end{equation}

\textbf{``Mixed'' Second Order.}
We have
\begin{equation}
\begin{aligned}
 \Omega_{\xi-\eta}\Omega_\eta\Phi&=(\xi_\h-\eta_\h)^\perp\cdot\nabla_{\eta_\h}\left(\Lambda(\xi-\eta)\frac{\in^{ab}\eta_a\xi_b}{\abs{\xi-\eta}^2}\right)=\frac{\Lambda(\xi-\eta)}{\abs{\xi-\eta}^2}(\xi_\h-\eta_\h)^\perp\cdot\nabla_{\eta_\h}(\in^{ab}\eta_a\xi_b)\\
 &=-\Lambda(\xi-\eta)\frac{\xi_\h\cdot(\xi_\h-\eta_\h)}{\abs{\xi-\eta}^2},
\end{aligned}
\end{equation}
and by \eqref{eq:grad_sigperp}
\begin{equation}
\begin{aligned}
 \Omega_{\xi-\eta}S_\eta\Phi&=(\xi_\h-\eta_\h)^\perp\cdot\nabla_{\eta_\h}[\frac{\xi_\h-\eta_\h}{\abs{\xi-\eta}^3}\cdot\bar\sigma]=\frac{1}{\abs{\xi-\eta}^3}(\xi_\h-\eta_\h)^\perp\cdot\nabla_{\eta_\h}[(\xi_\h-\eta_\h)\cdot\bar\sigma]\\
 &=\Omega_\eta\Phi.
\end{aligned}
\end{equation}
Similarly we compute that $(\xi-\eta)\cdot\nabla_\eta[(\xi_\h-\eta_\h)\cdot\bar\sigma]=-2(\xi_\h-\eta_\h)\cdot\bar\sigma$, so that
\begin{equation}
\begin{aligned}
 S_{\xi-\eta}S_\eta\Phi&=(\xi-\eta)\cdot\nabla_\eta[\frac{\xi_\h-\eta_\h}{\abs{\xi-\eta}^3}\cdot\bar\sigma]=\frac{3}{\abs{\xi-\eta}^3}[(\xi_\h-\eta_\h)\cdot\bar\sigma]-\frac{2}{\abs{\xi-\eta}^3}[(\xi_\h-\eta_\h)\cdot\bar\sigma]\\
 &=S_\eta\Phi.
\end{aligned} 
\end{equation}
Since $(\xi-\eta)\cdot\nabla_\eta[\abs{\xi-\eta}^{-2}]=2\abs{\xi-\eta}^{-2}$ and $(\xi-\eta)\nabla_\eta(\in^{ab}\eta_a\xi_b)=\xi_\h\cdot(\xi_\h-\eta_\h)^\perp$ we obtain
\begin{equation}
\begin{aligned}
 S_{\xi-\eta}\Omega_\eta\Phi&=(\xi-\eta)\cdot\nabla_\eta[\Lambda(\xi-\eta)\frac{\in^{ab}\eta_a\xi_b}{\abs{\xi-\eta}^2}]=\Lambda(\xi-\eta)(\xi-\eta)\cdot\nabla_\eta[\frac{\in^{ab}\eta_a\xi_b}{\abs{\xi-\eta}^2}]\\
 &=\Lambda(\xi-\eta)\left[\frac{\xi_\h\cdot(\xi_\h-\eta_\h)^\perp}{\abs{\xi-\eta}^2}+2\frac{\eta_\h^\perp\cdot\xi_\h}{\abs{\xi-\eta}^2}\right]\\
 &=\Omega_\eta\Phi.
\end{aligned}
\end{equation}

\textbf{``Mixed'' Third Order.}
To compute 
\begin{equation}
 S_{\xi-\eta}S_{\eta}^2\Phi=S_{\xi-\eta}\left\{S_\eta\Phi[3\frac{\eta\cdot(\xi-\eta)}{\abs{\xi-\eta}^2}+2]-\frac{\bar\sigma\cdot\xi_\h}{\abs{\xi-\eta}^3}\right\}
\end{equation}
we note that $S_{\xi-\eta}(\eta\cdot(\xi-\eta))=(\xi-\eta)\cdot(\xi-2\eta)$, and $S_{\xi-\eta}(\bar\sigma\cdot\xi_\h)=-\bar\sigma\cdot\xi_\h$, so that
\begin{equation}
\begin{aligned}
 S_{\xi-\eta}S_{\eta}^2\Phi&=S_{\xi-\eta}S_\eta\Phi[3\frac{\eta\cdot(\xi-\eta)}{\abs{\xi-\eta}^2}+2]+3S_\eta\Phi\left\{\frac{(\xi-\eta)\cdot(\xi-2\eta)}{\abs{\xi-\eta}^2}+2\frac{\eta\cdot(\xi-\eta)}{\abs{\xi-\eta}^2}\right\}+\frac{\bar\sigma\cdot\xi_\h}{\abs{\xi-\eta}^3}(-3+1)\\
 &=S_\eta\Phi\left\{2+3\frac{(\xi-\eta)\cdot(\xi+\eta)}{\abs{\xi-\eta}^2}\right\}-2\frac{\bar\sigma\cdot\xi_\h}{\abs{\xi-\eta}^3}\\
 &=S_\eta\Phi\left[5+6\frac{(\xi-\eta)\cdot\eta}{\abs{\xi-\eta}^2}\right]-2\frac{\bar\sigma\cdot\xi_\h}{\abs{\xi-\eta}^3}.
\end{aligned} 
\end{equation}
Similarly, we recall that $S_{\xi-\eta}(\in^{ab}\eta_a\xi_b)=\xi_\h\cdot(\xi_\h-\eta_\h)^\perp$ to obtain
\begin{equation}
\begin{aligned}
 S_{\xi-\eta}\Omega_\eta^2\Phi&=S_{\xi-\eta}\left[3\Omega_\eta\Phi\frac{\in^{ab}\eta_a\xi_b}{\abs{\xi-\eta}^2}-\Lambda(\xi-\eta)\frac{\eta_\h\cdot\xi_\h}{\abs{\xi-\eta}^2}\right]=3S_{\xi-\eta}\Omega_\eta\Phi\frac{\in^{ab}\eta_a\xi_b}{\abs{\xi-\eta}^2}\\
 &\qquad\qquad +3\Omega_\eta\Phi\frac{1}{\abs{\xi-\eta}^2}[\xi_\h\cdot(\xi_\h-\eta_\h)^\perp+2\eta_\h^\perp\cdot\xi_\h]-\frac{\Lambda(\xi-\eta)}{\abs{\xi-\eta}^2}[(\xi_\h-\eta_\h)\cdot\xi_\h+2\eta_\h\cdot\xi_\h]\\
 &=6\cdot\Omega_\eta\Phi\,\frac{\eta_\h^\perp\cdot\xi_\h}{\abs{\xi-\eta}^2}-2\Lambda(\xi-\eta) \frac{(\xi_\h+\eta_\h)\xi_\h}{\abs{\xi-\eta}^2}.
\end{aligned}
\end{equation}
Next we compute $\Omega_{\xi-\eta}(\eta\cdot(\xi-\eta))=\xi_\h\cdot(\xi_\h-\eta_\h)^\perp$, as well as $\Omega_{\xi-\eta}(\bar\sigma\cdot\xi_\h)=\xi_3\xi_\h\cdot(\xi_\h-\eta_\h)^\perp$ to conclude that
\begin{equation}
\begin{aligned}
 \Omega_{\xi-\eta}S_\eta^2\Phi&=\Omega_{\xi-\eta}\left\{S_\eta\Phi[3\frac{\eta\cdot(\xi-\eta)}{\abs{\xi-\eta}^2}+2]-\frac{\bar\sigma\cdot\xi_\h}{\abs{\xi-\eta}^3}\right\}\\
 &=\Omega_{\xi-\eta}S_\eta\Phi[3\frac{\eta\cdot(\xi-\eta)}{\abs{\xi-\eta}^2}+2]+3S_\eta\Phi\frac{\xi_\h\cdot(\xi_\h-\eta_\h)^\perp}{\abs{\xi-\eta}^2}-\xi_3\frac{\xi_\h\cdot(\xi_\h-\eta_\h)^\perp}{\abs{\xi-\eta}^3}\\
 &=\Omega_\eta\Phi[3\frac{\eta\cdot(\xi-\eta)}{\abs{\xi-\eta}^2}+2]-3S_\eta\Phi\frac{\eta_\h^\perp\cdot\xi_\h}{\abs{\xi-\eta}^2}+\xi_3\frac{\eta_\h^\perp\cdot\xi_\h}{\abs{\xi-\eta}^3}\\
 &=3\Omega_\eta\Phi\frac{\xi\cdot(\xi-\eta)}{\abs{\xi-\eta}^2}-3S_\eta\Phi\frac{\eta_\h^\perp\cdot\xi_\h}{\abs{\xi-\eta}^2}+\eta_3\frac{\eta_\h^\perp\cdot\xi_\h}{\abs{\xi-\eta}^3}.
\end{aligned}
\end{equation}
Finally, we recall that $\Omega_{\xi-\eta}(\in^{ab}\eta_a\xi_b)=-\xi_\h\cdot(\xi_\h-\eta_\h)$, which yields
\begin{equation}
\begin{aligned}
 \Omega_{\xi-\eta}\Omega_\eta^2\Phi&=\Omega_{\xi-\eta}\left\{3\Omega_\eta\Phi\frac{\in^{ab}\eta_a\xi_b}{\abs{\xi-\eta}^2}-\Lambda(\xi-\eta)\frac{\eta_\h\cdot\xi_\h}{\abs{\xi-\eta}^2}\right\}\\
 &=3\Omega_{\xi-\eta}\Omega_\eta\Phi\frac{\in^{ab}\eta_a\xi_b}{\abs{\xi-\eta}^2}-3\Omega_\eta\Phi\frac{\xi_\h\cdot(\xi_\h-\eta_\h)}{\abs{\xi-\eta}^2}-\Lambda(\xi-\eta)\frac{(\xi_\h-\eta_\h)^\perp\cdot\xi_\h}{\abs{\xi-\eta}^2}\\
 &=-3\Lambda(\xi-\eta)\frac{\xi_\h\cdot(\xi_\h-\eta_\h)}{\abs{\xi-\eta}^2}\frac{\in^{ab}\eta_a\xi_b}{\abs{\xi-\eta}^2}-3\Omega_\eta\Phi\frac{\xi_\h\cdot(\xi_\h-\eta_\h)}{\abs{\xi-\eta}^2}+\Omega_\eta\Phi\\
 &=\Omega_\eta\Phi\left[1-6\frac{\xi_\h\cdot(\xi_\h-\eta_\h)}{\abs{\xi-\eta}^2}\right].
\end{aligned}
\end{equation}

\end{proof}

\begin{lemma}[$\Ups$ and $V$ on the Phase]\label{lem:vfonUpsPhi}
 We have the bounds
 \begin{equation}\label{eq:vfonUpsPhi}
\begin{aligned}
 \abs{V_\eta\Ups_\xi\Phi}&\lesssim 2^{k-k_1}[2^{-p_1}\abs{V_\eta\Phi}+(2^{p_1}+2^{p_2})2^{k_2-k_1}],\qquad \abs{V_{\xi-\eta}\Ups_\xi\Phi}&\lesssim 2^{k-k_1+p_1}.
\end{aligned} 
\end{equation}
Moreover, there holds that
\begin{equation}\label{eq:2vfonUpsPhi}
 \abs{V'_{\xi-\eta}V_\eta\Ups_\xi\Phi}\lesssim 2^{k-k_1}2^{-p_1}\left[\abs{V_\eta\Phi}+\abs{V'_{\xi-\eta}V_\eta\Phi}\right]+(1+2^{p_1+k_1-p_2-k_2})\cdot(2^{p_1}+2^{p_2})2^{k_2-k_1}.
\end{equation}

\end{lemma}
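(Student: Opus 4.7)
The plan is direct computation based on three ingredients: (i) $V_\eta$ and $\Ups_\xi$ commute as differential operators since they act in disjoint variables, so $V_\eta\Ups_\xi\Phi=\Ups_\xi V_\eta\Phi$; (ii) Lemma \ref{lem:vfsizes} supplies explicit formulas for $V_\eta\Phi$, $V_{\xi-\eta}\Phi$, and $V'_{\xi-\eta}V_\eta\Phi$; and (iii) the basic spherical-coordinate identities
\begin{equation*}
\Ups_\xi\xi_\h=\frac{\xi_3}{\abs{\xi_\h}}\xi_\h,\qquad \Ups_\xi\xi_3=-\abs{\xi_\h},\qquad \Ups_\xi\abs{\xi}=0,
\end{equation*}
which follow from $\Ups_\xi=\partial_\phi$. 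These give $\xi\cdot\Ups_\xi\xi=0$, whence the crucial algebraic cancellation
\begin{equation*}
(\xi-\eta)\cdot\Ups_\xi\xi=-\eta\cdot\Ups_\xi\xi=-\frac{\bar\sigma\cdot\xi_\h}{\abs{\xi_\h}},
\end{equation*}
which ties the $\Ups_\xi$-action back to the geometric quantity $\bar\sigma$ that controls $V_\eta\Phi$.

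For the first bound in \eqref{eq:vfonUpsPhi}, after commuting, I would apply Leibniz to the explicit formula $V_\eta\Phi=\pm\bar\sigma\cdot W/\abs{\xi-\eta}^3$ with $W\in\{\xi_\h-\eta_\h,(\xi_\h-\eta_\h)^\perp\}$ from \eqref{eq:vf_sigma}. Three pieces arise. The term with $\Ups_\xi\bar\sigma=-\abs{\xi_\h}\eta_\h-(\xi_3\eta_3/\abs{\xi_\h})\xi_\h$ has size $\lesssim 2^{k+k_2}(2^{p+p_2}+2^{q+q_2})$, and once multiplied by $\abs{W}/\abs{\xi-\eta}^3\sim 2^{p_1-2k_1}$ contributes at most $2^{k+k_2-2k_1}(2^{p_1}+2^{p_2})$ by using $p,q,q_2\leq 0$. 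The term with $\Ups_\xi W=(\xi_3/\abs{\xi_\h})W_0$ (where $W_0\in\{\xi_\h,\xi_\h^\perp\}$) is reduced by splitting $\xi_\h=(\xi_\h-\eta_\h)+\eta_\h$: the first half reproduces a multiple of $V_\eta\Phi$ itself (yielding the $2^{k-k_1}2^{-p_1}\abs{V_\eta\Phi}$ contribution), while the second is bounded by $2^{k+k_2-2k_1}2^{p_2}$. The term with $\Ups_\xi\abs{\xi-\eta}^{-3}=-3(\xi-\eta)\cdot\Ups_\xi\xi/\abs{\xi-\eta}^5$ is handled using the cancellation $(\xi-\eta)\cdot\Ups_\xi\xi=-\bar\sigma\cdot\xi_\h/\abs{\xi_\h}$; a further splitting $\xi_\h=(\xi_\h-\eta_\h)+\eta_\h$ produces a multiple of $\abs{V_\eta\Phi}$ and a lower-order piece. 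Collecting gives \eqref{eq:vfonUpsPhi} for $V_\eta\Ups_\xi\Phi$.

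For $V_{\xi-\eta}\Ups_\xi\Phi$, direct commutation is not free because $V_{\xi-\eta}=c(\xi-\eta)\cdot\nabla_\eta$ has coefficients depending on $\xi$. The commutator identity reads
\begin{equation*}
V_{\xi-\eta}\Ups_\xi\Phi=\Ups_\xi V_{\xi-\eta}\Phi-\Ups_\xi[c(\xi-\eta)]\cdot\nabla_\eta\Phi.
\end{equation*}
The first term is treated as before using \eqref{eq:1VFmix-1}--\eqref{eq:1VFmix-4}. For the commutator piece, $\abs{\Ups_\xi c(\xi-\eta)}\lesssim\abs{\Ups_\xi\xi}\lesssim 2^k$, while from \eqref{eq:Lamgrad} the components of $\nabla_\eta\Phi=\mp\nabla\Lambda(\xi-\eta)+\nabla\Lambda(\eta)$ relevant after dotting against $\Ups_\xi c$ are of size $\lesssim 2^{p_1-k_1}$; combining gives the claimed $\lesssim 2^{k-k_1+p_1}$. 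The iterated bound \eqref{eq:2vfonUpsPhi} is then obtained by applying $V'_{\xi-\eta}$ to the output of the first step: this produces both (a) terms controlled by $\abs{V_\eta\Phi}$ and $\abs{V'_{\xi-\eta}V_\eta\Phi}$ via the same algebraic reductions as above (giving the coefficient $2^{k-k_1}2^{-p_1}$), and (b) commutator/cross-terms that contribute the factor $(1+2^{p_1+k_1-p_2-k_2})(2^{p_1}+2^{p_2})2^{k_2-k_1}$.

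The main obstacle is bookkeeping: iterated Leibniz produces many terms, each depending on the six small exponents $p,q,p_1,q_1,p_2,q_2$ in delicate ways. The identification of the favorable factors $\abs{V_\eta\Phi}$ and $\abs{V'_{\xi-\eta}V_\eta\Phi}$ on the right-hand sides depends crucially on exploiting the algebraic relation $\xi\cdot\Ups_\xi\xi=0$ (and its consequence $(\xi-\eta)\cdot\Ups_\xi\xi=-\bar\sigma\cdot\xi_\h/\abs{\xi_\h}$); without this cancellation, a direct size estimate of $(\xi-\eta)\cdot\Ups_\xi\xi$ of order $2^{k+k_1}$ would give a strictly worse bound, incompatible with the later applications in Section \ref{sec:more_bilin}.
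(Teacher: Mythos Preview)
Your overall strategy for the first inequality in \eqref{eq:vfonUpsPhi}—commuting $V_\eta\Ups_\xi\Phi=\Ups_\xi V_\eta\Phi$ and then applying $\Ups_\xi$ to the explicit formula \eqref{eq:vf_sigma}—is genuinely different from the paper's route and can be made to work, though your description is imprecise in places. The paper instead starts from the closed formula \eqref{eq:UpsPhi} for $\Ups_\xi\Phi$ (which already has no $1/\abs{\xi_\h}$ singularity) and applies $V_\eta$ directly by Leibniz; the $\abs{V_\eta\Phi}$ factors appear naturally from $V_\eta\Lambda(\xi-\eta)=V_\eta\Phi$ and $V_\eta\sqrt{1-\Lambda^2(\xi-\eta)}=-\tfrac{\Lambda(\xi-\eta)}{\sqrt{1-\Lambda^2(\xi-\eta)}}V_\eta\Phi$, and the angle term is handled by \eqref{eq:Vonangle1}. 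In your approach, the splitting $\xi_\h=(\xi_\h-\eta_\h)+\eta_\h$ does not literally produce a coefficient $2^{k-k_1-p_1}$ in front of $\abs{V_\eta\Phi}$ (the raw coefficient is $\xi_3/\abs{\xi_\h}\sim 2^{q-p}$), but more careful bookkeeping using $\abs{\bar\sigma\cdot\xi_\h/\abs{\xi_\h}}\le\abs{\bar\sigma}$ and the triangle inequality $2^{p+k}\lesssim 2^{p_1+k_1}+2^{p_2+k_2}$ recovers the stated bound.

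There is, however, a genuine gap in your treatment of $\abs{V_{\xi-\eta}\Ups_\xi\Phi}$. Your commutator identity is correct, but bounding the two pieces $\Ups_\xi V_{\xi-\eta}\Phi$ and $(\Ups_\xi c)\cdot\nabla_\eta\Phi$ \emph{separately} fails: each contains a contribution $(\Ups_\xi c)\cdot(\nabla\Lambda)(\eta)$ of size $\lesssim 2^{k-k_2+p_2}$, which is \emph{not} $\lesssim 2^{k-k_1+p_1}$ in general (take $k_2\ll k_1$ with $p_2\sim 0$, $p_1\ll 0$). These contributions in fact cancel exactly, leaving only $(\Ups_\xi c)\cdot(\nabla\Lambda)(\xi-\eta)\lesssim 2^{k-k_1+p_1}$, but you neither state nor use this cancellation. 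Your reference to \eqref{eq:1VFmix-1}--\eqref{eq:1VFmix-4} is also misplaced: those formulas compute $V'_{\xi-\eta}V_\eta\Phi$, not $\Ups_\xi V_{\xi-\eta}\Phi$. The paper avoids this issue entirely: since the explicit expression \eqref{eq:UpsPhi} depends on $\eta$ only through $\xi-\eta$ (via $\abs{\xi-\eta}^{-1}$, $\Lambda(\xi-\eta)$, $\sqrt{1-\Lambda^2(\xi-\eta)}$, and the angle), applying $V_{\xi-\eta}$ annihilates all factors except $\abs{\xi-\eta}^{-1}$ and the angle (via \eqref{eq:Vonangle0}), giving the bound immediately.
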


\begin{proof}
Via direct computation in \eqref{eq:UpsPhi} we conclude that for $\zeta\in\{\eta,\xi-\eta\}$
\begin{equation}\label{eq:vfUpsabs}
 \abs{V_\zeta\Ups_\xi\Phi}= \abs{\xi}\abs{V_\zeta\left(\frac{\sqrt{1-\Lambda^2}(\xi-\eta)}{\vert\xi-\eta\vert}\left[\Lambda(\xi)\Lambda(\xi-\eta)\frac{\xi_\h\cdot(\xi-\eta)_\h}{\vert\xi_\h\vert\vert(\xi-\eta)_\h\vert}+\sqrt{1-\Lambda^2}(\xi)\sqrt{1-\Lambda^2}(\xi-\eta)\right]\right)}, 
\end{equation}
so that with $V_\eta\Lambda(\xi-\eta)=V_\eta\Phi$ and $V_\eta\sqrt{1-\Lambda^2}(\xi-\eta)=\frac{\Lambda(\xi-\eta)}{\sqrt{1-\Lambda^2}(\xi-\eta)}V_\eta\Lambda(\xi-\eta)$ and the below \eqref{eq:Vonangle1} we obtain the first estimate of \eqref{eq:vfonUpsPhi}. For the second one, we use that $V_{\xi-\eta}$ vanishes on all terms of $\Ups_\xi\Phi$ except for $\abs{\xi-\eta}$ (in case $V=S$, when $V_{\xi-\eta}\abs{\xi-\eta}=\abs{\xi-\eta}$) and the angle, where by (the symmetric version of) \eqref{eq:Vonangle0} we have
\begin{equation*}
 V_{\xi-\eta}\left(\frac{\xi_\h\cdot(\xi-\eta)_\h}{\vert\xi_\h\vert\vert(\xi-\eta)_\h\vert}\right)=
 \begin{cases}
 \frac{\xi_\h\cdot(\xi-\eta)_\h^\perp}{\vert\xi_\h\vert\vert(\xi-\eta)_\h\vert},&V=\Omega,\\
 0,&V=S.
 \end{cases}
\end{equation*}

Furthermore, distributing the vector field $V_\eta$ in \eqref{eq:vfUpsabs}, we see that a further application of $V'_{\xi-\eta}$ vanishes on all terms in $V_\eta\Ups_\xi\Phi$, except for $\abs{\xi-\eta}^{-1}$, $V_\eta(\abs{\xi-\eta}^{-1})$, $V_\eta\Lambda(\xi-\eta)$, $\frac{\xi_\h\cdot(\xi-\eta)_\h}{\vert\xi_\h\vert\vert(\xi-\eta)_\h\vert}$ and $V_\eta\left(\frac{\xi_\h\cdot(\xi-\eta)_\h}{\vert\xi_\h\vert\vert(\xi-\eta)_\h\vert}\right)$. For these we have 
\begin{equation}
\begin{aligned}
 &\abs{V'_{\xi-\eta}\abs{\xi-\eta}^{-1}}\lesssim 2^{-k_1},\qquad \abs{V'_{\xi-\eta}V_\eta(\abs{\xi-\eta}^{-1})}\lesssim1+2^{k_2-k_1},\\
 &\abs{V'_{\xi-\eta}\left(\frac{\xi_\h\cdot(\xi-\eta)_\h}{\vert\xi_\h\vert\vert(\xi-\eta)_\h\vert}\right)}\lesssim 1,\qquad V'_{\xi-\eta}V_\eta\left(\frac{\xi_\h\cdot(\xi-\eta)_\h}{\vert\xi_\h\vert\vert(\xi-\eta)_\h\vert}\right)\lesssim (1+2^{p_1+k_1-p_2-k_2})\cdot 2^{p_2+k_2-p_1-k_1},
\end{aligned} 
\end{equation}
and $V'_{\xi-\eta}V_\eta\Lambda(\xi-\eta)=V'_{\xi-\eta}V_\eta\Phi$. Collecting terms gives \eqref{eq:2vfonUpsPhi}.

\end{proof}

\subsection{Vector Fields and Multipliers}\label{sec:morevf}

We begin with an estimate for vector fields on ``angles'', as they appear in the multipliers of Lemma \ref{lem:IERmult}.

\begin{lemma}[Vector Fields $V$ and ``angles'']
 We have that
 \begin{equation}\label{eq:Vonangle2}
 \begin{aligned}
  V_\eta\left(\frac{\eta_\h\cdot(\xi_\h-\eta_\h)}{\abs{\eta_\h}\abs{\xi_\h-\eta_\h}}\right)
  &=
  \begin{cases}
   \frac{\abs{\eta_\h}}{\abs{\xi_\h-\eta_\h}}\left[\left(\frac{\eta_\h\cdot(\xi_\h-\eta_\h)}{\abs{\eta_\h}\abs{\xi_\h-\eta_\h}}\right)^2-1\right], &V=S,\\
   \frac{\eta_\h^\perp\cdot(\xi_\h-\eta_\h)}{\abs{\eta_\h}\abs{\xi_\h-\eta_\h}}\left[\frac{\abs{\eta_\h}}{\abs{\xi_\h-\eta_\h}}\frac{\eta_\h\cdot(\xi_\h-\eta_\h)}{\abs{\eta_\h}\abs{\xi_\h-\eta_\h}}+1\right],&V=\Omega,
  \end{cases}
 \end{aligned}
 \end{equation}
and 
\begin{equation}\label{eq:Vonangle1}
\begin{aligned}
 V_\eta\left(\frac{\xi_\h\cdot(\xi_\h-\eta_\h)}{\abs{\xi_\h}\abs{\xi_\h-\eta_\h}}\right)
 &=\frac{\abs{\eta_\h}}{\abs{\xi_\h-\eta_\h}}
 \begin{cases}
  \frac{\xi_\h\cdot(\xi_\h-\eta_\h)}{\abs{\xi_\h}\abs{\xi_\h-\eta_\h}}\frac{\eta_\h\cdot(\xi_\h-\eta_\h)}{\abs{\eta_\h}\abs{\xi_\h-\eta_\h}}-\frac{\xi_\h\cdot\eta_\h}{\abs{\xi_\h}\abs{\eta_\h}}, &V=S,\\
  \frac{\xi_\h\cdot(\xi_\h-\eta_\h)}{\abs{\xi_\h}\abs{\xi_\h-\eta_\h}}\frac{\eta_\h^\perp\cdot(\xi_\h-\eta_\h)}{\abs{\eta_\h}\abs{\xi_\h-\eta_\h}}-\frac{\xi_\h\cdot\eta_\h^\perp}{\abs{\xi_\h}\abs{\eta_\h}}, &V=\Omega.                                                                                                                                                                                                                                                                                                     \end{cases}
\end{aligned}  
\end{equation}
\end{lemma}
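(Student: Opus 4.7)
The statement is a purely algebraic identity obtained by direct differentiation, and the plan is to apply $V_\eta$ via the product/quotient rule using the basic actions of $S_\eta$ and $\Omega_\eta$ on the building blocks $\eta_\h$, $\xi_\h-\eta_\h$, and their Euclidean norms. Specifically, I would first record the elementary facts
\begin{equation*}
 S_\eta \eta_\h = \eta_\h,\qquad S_\eta(\xi_\h-\eta_\h) = -\eta_\h,\qquad \Omega_\eta\eta_\h = \eta_\h^\perp,\qquad \Omega_\eta(\xi_\h-\eta_\h) = -\eta_\h^\perp,
\end{equation*}
together with $S_\eta\abs{\eta_\h}=\abs{\eta_\h}$, $\Omega_\eta\abs{\eta_\h}=0$, and
\begin{equation*}
 S_\eta\abs{\xi_\h-\eta_\h} = -\frac{\eta_\h\cdot(\xi_\h-\eta_\h)}{\abs{\xi_\h-\eta_\h}},\qquad \Omega_\eta\abs{\xi_\h-\eta_\h} = -\frac{\eta_\h^\perp\cdot(\xi_\h-\eta_\h)}{\abs{\xi_\h-\eta_\h}},
\end{equation*}
all of which follow from the homogeneity/rotational invariance of the norms and from $S_\eta$, $\Omega_\eta$ being first-order linear operators in $\eta$.

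For \eqref{eq:Vonangle2}, writing $\omega_c := \frac{\eta_\h\cdot(\xi_\h-\eta_\h)}{\abs{\eta_\h}\abs{\xi_\h-\eta_\h}}$, I would compute separately the action on numerator and denominator: e.g.\ $S_\eta(\eta_\h\cdot(\xi_\h-\eta_\h))= \eta_\h\cdot(\xi_\h-\eta_\h)-\abs{\eta_\h}^2$ and $S_\eta(\abs{\eta_\h}\abs{\xi_\h-\eta_\h})=\abs{\eta_\h}\abs{\xi_\h-\eta_\h}-\abs{\eta_\h}\frac{\eta_\h\cdot(\xi_\h-\eta_\h)}{\abs{\xi_\h-\eta_\h}}$; the quotient rule then telescopes the leading $\omega_c$ contributions and leaves the stated $\frac{\abs{\eta_\h}}{\abs{\xi_\h-\eta_\h}}(\omega_c^2-1)$. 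The $\Omega_\eta$-case is entirely analogous, with the cross-terms producing the factor $\frac{\eta_\h^\perp\cdot(\xi_\h-\eta_\h)}{\abs{\eta_\h}\abs{\xi_\h-\eta_\h}}$ out front and the correction $\frac{\abs{\eta_\h}}{\abs{\xi_\h-\eta_\h}}\omega_c+1$ inside, reflecting the fact that $\Omega_\eta$ annihilates $\abs{\eta_\h}$ (removing one of the subtractions from the quotient rule).

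For \eqref{eq:Vonangle1}, the same strategy applies to $\frac{\xi_\h\cdot(\xi_\h-\eta_\h)}{\abs{\xi_\h}\abs{\xi_\h-\eta_\h}}$, but now $V_\eta$ does not touch $\xi_\h$ or $\abs{\xi_\h}$, which simplifies matters: numerator differentiation yields $-\xi_\h\cdot\eta_\h$ (resp.\ $-\xi_\h\cdot\eta_\h^\perp$ for $\Omega$), while the denominator produces only the $V_\eta\abs{\xi_\h-\eta_\h}$ term. Factoring $\frac{\abs{\eta_\h}}{\abs{\xi_\h-\eta_\h}}$ out and recognizing the angular quotients $\frac{\eta_\h\cdot(\xi_\h-\eta_\h)}{\abs{\eta_\h}\abs{\xi_\h-\eta_\h}}$, $\frac{\eta_\h^\perp\cdot(\xi_\h-\eta_\h)}{\abs{\eta_\h}\abs{\xi_\h-\eta_\h}}$ gives the claimed formula.

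There is no real conceptual obstacle: the only thing to watch is the bookkeeping of the quotient rule — ensuring that the cross-terms from differentiating the denominator combine correctly with those from the numerator. This is a short, mechanical calculation whose only subtlety is keeping the two variants ($V=S$ vs.\ $V=\Omega$) aligned, and the closed forms in the statement serve as a useful check.
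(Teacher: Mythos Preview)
Your proposal is correct and is essentially the same direct computation as the paper's proof. The only organizational difference is that the paper first rewrites the angles as, e.g., $\frac{\xi_\h\cdot(\xi_\h-\eta_\h)}{\abs{\xi_\h}\abs{\xi_\h-\eta_\h}}=\frac{1}{\abs{\xi_\h-\eta_\h}}\big[\abs{\xi_\h}-\frac{\xi_\h\cdot\eta_\h}{\abs{\xi_\h}\abs{\eta_\h}}\abs{\eta_\h}\big]$ and exploits that $S_\eta$ annihilates the simple angle $\frac{\xi_\h\cdot\eta_\h}{\abs{\xi_\h}\abs{\eta_\h}}$, whereas you apply the quotient rule directly to numerator and denominator; both routes are equally short and yield the stated identities.
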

\begin{proof}
We compute directly that
\begin{equation}\label{eq:Vonangle0}
 V_\eta\left(\frac{\xi_\h\cdot\eta_\h}{\abs{\xi_\h}\abs{\eta_\h}}\right)=
 \begin{cases}0,&V=S,\\ 
 \frac{\xi_\h\cdot\eta_\h^\perp}{\abs{\xi_\h}\abs{\eta_\h}},&V=\Omega.
 \end{cases}
\end{equation}
Next we have that for $\alpha\in\R$
\begin{equation}\label{eq:Vonabs}
 V_\eta\abs{\xi_\h-\eta_\h}^\alpha=-\alpha\frac{\abs{\eta_\h}}{\abs{\xi_\h-\eta_\h}}\abs{\xi_\h-\eta_\h}^\alpha\frac{\xi_\h-\eta_\h}{\abs{\xi_\h-\eta_\h}}\cdot\begin{cases}
  \frac{\eta_\h}{\abs{\eta_\h}}, &V=S,\\
  \frac{\eta_\h^\perp}{\abs{\eta_\h}}, &V=\Omega.                                                                                                                                                     \end{cases}
\end{equation}
Hence
\begin{equation}
\begin{aligned}
 V_\eta\left(\frac{\xi_\h\cdot(\xi_\h-\eta_\h)}{\abs{\xi_\h}\abs{\xi_\h-\eta_\h}}\right)&=V_\eta\left(\frac{1}{\abs{\xi_\h-\eta_\h}}\left[\abs{\xi_\h}-\frac{\xi_\h\cdot\eta_\h}{\abs{\xi_\h}\abs{\eta_\h}}\abs{\eta_\h}\right]\right)\\
 &=\frac{\abs{\eta_\h}}{\abs{\xi_\h-\eta_\h}}
 \begin{cases}
  \frac{\xi_\h\cdot(\xi_\h-\eta_\h)}{\abs{\xi_\h}\abs{\xi_\h-\eta_\h}}\frac{\eta_\h\cdot(\xi_\h-\eta_\h)}{\abs{\eta_\h}\abs{\xi_\h-\eta_\h}}-\frac{\xi_\h\cdot\eta_\h}{\abs{\xi_\h}\abs{\eta_\h}}, &V=S,\\
  \frac{\xi_\h\cdot(\xi_\h-\eta_\h)}{\abs{\xi_\h}\abs{\xi_\h-\eta_\h}}\frac{\eta_\h^\perp\cdot(\xi_\h-\eta_\h)}{\abs{\eta_\h}\abs{\xi_\h-\eta_\h}}-\frac{\xi_\h\cdot\eta_\h^\perp}{\abs{\xi_\h}\abs{\eta_\h}}, &V=\Omega.                                                                                                                                                                                                                                                                                                     \end{cases}
\end{aligned}  
\end{equation}
Similarly we have
\begin{equation}
\begin{aligned}
 V_\eta\left(\frac{\eta_\h\cdot(\xi_\h-\eta_\h)}{\abs{\eta_\h}\abs{\xi_\h-\eta_\h}}\right)&=V_\eta\left(\frac{1}{\abs{\xi_\h-\eta_\h}}\left[\frac{\xi_\h\cdot\eta_\h}{\abs{\xi_\h}\abs{\eta_\h}}\abs{\xi_\h}-\abs{\eta_\h}\right]\right)\\
 &=
 \begin{cases}
  \frac{\abs{\eta_\h}}{\abs{\xi_\h-\eta_\h}}\left[\left(\frac{\eta_\h\cdot(\xi_\h-\eta_\h)}{\abs{\eta_\h}\abs{\xi_\h-\eta_\h}}\right)^2-1\right], &V=S,\\
  \frac{\eta_\h^\perp\cdot(\xi_\h-\eta_\h)}{\abs{\eta_\h}\abs{\xi_\h-\eta_\h}}\left[\frac{\abs{\eta_\h}}{\abs{\xi_\h-\eta_\h}}\frac{\eta_\h\cdot(\xi_\h-\eta_\h)}{\abs{\eta_\h}\abs{\xi_\h-\eta_\h}}+1\right],&V=\Omega.
 \end{cases}
\end{aligned}
\end{equation}
Here we computed that by \eqref{eq:Vonabs} and \eqref{eq:Vonangle0} there holds
\begin{equation*}
\begin{aligned}
 &\Omega_\eta\left(\frac{1}{\abs{\xi_\h-\eta_\h}}\left[\frac{\xi_\h\cdot\eta_\h}{\abs{\xi_\h}\abs{\eta_\h}}\abs{\xi_\h}-\abs{\eta_\h}\right]\right)=\Omega_\eta(\abs{\xi_\h-\eta_\h}^{-1})\frac{\eta_\h\cdot (\xi_\h-\eta_\h)}{\abs{\eta_\h}}+\frac{1}{\abs{\xi_\h-\eta_\h}}\Omega_\eta\left(\frac{\xi_\h\cdot\eta_\h}{\abs{\xi_\h}\abs{\eta_\h}}\right)\abs{\xi_\h}\\
 &\qquad=\frac{\abs{\eta_\h}}{\abs{\xi_\h-\eta_\h}}\abs{\xi_\h-\eta_\h}^{-1}\frac{(\xi_\h-\eta_\h)\cdot\eta_\h^\perp}{\abs{\xi_\h-\eta_\h}\abs{\eta_\h}}\frac{\eta_\h\cdot (\xi_\h-\eta_\h)}{\abs{\eta_\h}}+\frac{1}{\abs{\xi_\h-\eta_\h}}\frac{\xi_\h\cdot\eta_\h^\perp}{\abs{\xi_\h}\abs{\eta_\h}}\abs{\xi_\h}\\
 &\qquad=\frac{\abs{\eta_\h}}{\abs{\xi_\h-\eta_\h}}\frac{(\xi_\h-\eta_\h)\cdot\eta_\h^\perp}{\abs{\xi_\h-\eta_\h}\abs{\eta_\h}}\frac{\eta_\h\cdot (\xi_\h-\eta_\h)}{\abs{\eta_\h}\abs{\xi_\h-\eta_\h}}+\frac{(\xi_\h-\eta_\h)\cdot\eta_\h^\perp}{\abs{\xi_\h-\eta_\h}\abs{\eta_\h}}.
\end{aligned} 
\end{equation*}

\end{proof}

For future reference we note also that
\begin{equation}\label{eq:Vonfrac}
 V'_{\xi-\eta}\left(\frac{\abs{\eta_\h}}{\abs{\xi_\h-\eta_\h}}\right)\lesssim 1+\frac{\abs{\eta_\h}}{\abs{\xi_\h-\eta_\h}},
\end{equation}
and the analogous computations as \eqref{eq:Vonangle0}--\eqref{eq:Vonangle2} hold for $V'_{\xi-\eta}$ with the roles of $\eta$ and $\xi-\eta$ exchanged.

Next we give bounds for the vector fields on the multipliers that arise in our formulation \eqref{eq:IER_disp} of the rotating Euler equations \eqref{eq:IER}. 
\begin{lemma}[Vector Fields $V$ and Multipliers]\label{lem:vfmult}
The multipliers $\m=m_i^{\mu\nu}$, $i\in\{1,2\},\mu,\nu\in\{-,+\}$, of \eqref{eq:IER_disp} as in Lemma \ref{lem:IERmult} satisfy
\begin{equation}\label{eq:vfsumonmult}
 (S_\xi+S_\eta)\m(\xi,\eta)=\m(\xi,\eta),\quad (\Omega_\xi+\Omega_\eta)\m(\xi,\eta)=0.
\end{equation}
Moreover, they are bounded by
\begin{equation}\label{eq:simpmultbd}
 \abs{\m(\xi,\eta)}\lesssim 2^k 2^{\max\{p,p_1,p_2\}+\max\{q,q_1,q_2\}}=:C_\m,
\end{equation}
and we have for $V\in\{S,\Omega\}$
\begin{equation}\label{eq:vfonmult}
\begin{aligned}
 \abs{V_\eta [\m(\xi,\eta)]}\lesssim &\left\{1+\min\{2^{p_2+k_2}2^{-p_1-k_1},2^{p+k}2^{-p_1-k_1}\}\right\}C_\m +2^{k-p_1}\abs{V_\eta\Lambda(\xi-\eta)},
\end{aligned}
\end{equation}

\end{lemma}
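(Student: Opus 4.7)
By Lemma~\ref{lem:IERmult} each multiplier has the form $\m = |\xi|\cdot n$, where $n$ is a finite product of ``building blocks'' of two kinds: $\Lambda(\zeta), \sqrt{1-\Lambda^2(\zeta)}$ for $\zeta\in\{\xi,\xi-\eta,\eta\}$, and ``angle'' factors $\Rp_j(\xi)\Rp_j(\vartheta)$ or $\Rp_j(\xi)\in^{jk}\Rp_k(\vartheta)$ with $\vartheta\in\{\xi-\eta,\eta\}$. I would first prove \eqref{eq:vfsumonmult} by noting that the chain rule yields $(V_\xi + V_\eta) f(\xi-\eta) = V_{\xi-\eta} f(\xi-\eta)$ for $V\in\{S,\Omega\}$; hence $(S_\xi + S_\eta)$ acts on each building block as the intrinsic scaling in its own argument, which annihilates it since each block is homogeneous of degree $0$. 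Combined with $(S_\xi + S_\eta)|\xi| = |\xi|$, we obtain $(S_\xi+S_\eta)\m = \m$. For $\Omega$, each building block is invariant under \emph{simultaneous} rotation of all three arguments $\xi,\xi-\eta,\eta$ about $\vec{e}_3$ (the $\Lambda$ and $\sqrt{1-\Lambda^2}$ factors are rotationally invariant, and the angle factors depend only on horizontal dot/cross products), so $(\Omega_\xi+\Omega_\eta)\m = 0$.

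The pointwise bound \eqref{eq:simpmultbd} is then immediate from property~(i) of Lemma~\ref{lem:IERmult}: every $\m$ carries at least one factor $\Lambda(\zeta_1)$ and at least one factor $\sqrt{1-\Lambda^2(\zeta_2)}$; bounding these by $2^{q_{\max}}$ and $2^{p_{\max}}$ respectively, estimating all remaining factors (angles, Riesz transforms, further $\Lambda$'s) by $1$, and using $|\xi|\sim 2^k$, yields $|\m| \lesssim C_\m$.

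For the vector-field estimate \eqref{eq:vfonmult}, I would distribute $V_\eta$ over the factors of $n$ by the Leibniz rule (noting $V_\eta|\xi| = 0$) and classify each contribution according to the building block being differentiated. Factors depending on $\xi$ alone contribute $0$; factors $\Lambda(\eta)$ and $\sqrt{1-\Lambda^2(\eta)}$ also vanish since $V\Lambda = 0$; angle factors $A(\xi,\eta)$ contribute either $0$ (for $V=S$) or a quantity of size $\lesssim 1$ (for $V=\Omega$) by \eqref{eq:Vonangle0}, producing an overall contribution of size $\lesssim C_\m$. For angle factors $A(\xi,\xi-\eta)$ the formulas \eqref{eq:Vonangle1}--\eqref{eq:Vonangle2} give $|V_\eta A| \lesssim |\eta_\h|/|\xi_\h-\eta_\h|$; this is directly bounded by $2^{p_2+k_2-p_1-k_1}$, and via the triangle inequality $|\eta_\h|\le|\xi_\h|+|\xi_\h-\eta_\h|$ also by $1 + 2^{p+k-p_1-k_1}$, so taking the minimum produces the $\min\{\cdot,\cdot\}\,C_\m$ contribution. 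Finally, when $V_\eta$ hits $\Lambda(\xi-\eta)$, using $V_\eta\Lambda(\xi) = V_\eta\Lambda(\eta) = 0$ we identify $V_\eta\Lambda(\xi-\eta) = V_\eta\Phi$; when it hits $\sqrt{1-\Lambda^2(\xi-\eta)}$ we gain a factor $|\Lambda(\xi-\eta)|/\sqrt{1-\Lambda^2(\xi-\eta)} \lesssim 2^{q_1 - p_1} \lesssim 2^{-p_1}$ times $|V_\eta\Phi|$. Since $p_1\le 0$ gives $2^{-p_1}\ge 1$, in both cases bounding the remaining factors by $1$ yields a contribution $\lesssim 2^{k-p_1}|V_\eta\Phi|$, which is the second term in \eqref{eq:vfonmult}.

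The main (modest) obstacle lies in the bookkeeping for the last step: when $V_\eta$ removes a $\Lambda(\xi-\eta)$ or $\sqrt{1-\Lambda^2(\xi-\eta)}$ factor from $\m$, one must verify that the remaining product is still $\lesssim 1$ (so that one does not need to re-use a $\Lambda$ or $\sqrt{1-\Lambda^2}$ factor that was crucial for the bound \eqref{eq:simpmultbd}). This is ensured by the explicit formulas \eqref{eq:mult1}--\eqref{eq:mult2}, together with the trivial $L^\infty$ bounds $|\Lambda|, \sqrt{1-\Lambda^2} \leq 1$ and $|\Rp_j\cdot\Rp_k|\leq 1$ for the Riesz ``angle'' factors, so the verification reduces to a finite case inspection of the explicit multipliers.
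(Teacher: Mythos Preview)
Your proposal is correct and follows essentially the same approach as the paper's proof: both establish \eqref{eq:vfsumonmult} via homogeneity/rotational invariance of the building blocks, read off \eqref{eq:simpmultbd} from the explicit structure in Lemma~\ref{lem:IERmult}, and obtain \eqref{eq:vfonmult} by distributing $V_\eta$ over the factors and estimating each contribution. One small note: the relevant angle formula for $A(\xi,\xi-\eta)$ is \eqref{eq:Vonangle1} (not \eqref{eq:Vonangle2}, which concerns the angle between $\eta$ and $\xi-\eta$); and where you use the triangle inequality $|\eta_\h|\le |\xi_\h|+|\xi_\h-\eta_\h|$ to get the $\min$, the paper instead invokes $(V_\xi+V_\eta)[\text{angle}]=0$ to trade $V_\eta$ for $-V_\xi$ --- but both arguments yield the same bound.
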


\begin{remark}
 Since our equations commute with the vectorfields $S,\Omega$, properties like \eqref{eq:vfsumonmult} are expected to hold for the full nonlinear symbols. Here the interesting point is that they also hold just for the ``constituent pieces'' $\m=m_i^{\mu\nu}$.
\end{remark}

\begin{proof}
From the expressions \eqref{eq:mult1} and \eqref{eq:mult2}, the bound \eqref{eq:simpmultbd} directly follows. Towards \eqref{eq:vfsumonmult} we compute that
\begin{equation}
 (V_\xi+V_\eta)\Lambda(\zeta)=0, \quad\zeta\in\{\xi,\xi-\eta,\eta\},
\end{equation}
and 
\begin{equation}
 \Omega_\xi \sqrt{1-\Lambda^2}(\xi)=S_\xi \sqrt{1-\Lambda^2}(\xi)=0,\quad \Omega_\xi\abs{\xi}=0,\quad S_\xi\abs{\xi}=\abs{\xi},
\end{equation}
as well as 
\begin{equation}
 S_\xi\left(\frac{\xi_\h}{\abs{\xi}}\right)=S_\xi\left(\frac{\xi_\h}{\abs{\xi_\h}}\right)=0,\quad \Omega_\xi\left(\frac{\xi_\h}{\abs{\xi}}\right)=\frac{\xi_\h^\perp}{\abs{\xi}},\quad \Omega_\xi\left(\frac{\xi_\h}{\abs{\xi_\h}}\right)=\frac{\xi_\h^\perp}{\abs{\xi_\h}}.
\end{equation} 
It thus follows that for $V\in\{S,\Omega\}$ we have
\begin{equation}\label{eq:vfsumangle}
 (V_\xi+V_\eta)\left(\frac{\xi_\h}{\abs{\xi_\h}}\cdot\frac{\zeta}{\abs{\zeta}}\right)=0,\quad \zeta\in\{\eta_\h,\eta_\h^\perp,\xi_\h-\eta_\h,\xi_\h^\perp-\eta_\h^\perp\}.
\end{equation}
Since the Riesz transforms in \eqref{eq:mult1}, \eqref{eq:mult2} appear only as ``angles'' between an input and the output, this suffices to conclude the proof of \eqref{eq:vfsumonmult}.

Furthermore, to establish \eqref{eq:vfonmult}, we note that 
\begin{equation}
 V_\eta\sqrt{1-\Lambda^2}(\xi-\eta)=\frac{\Lambda(\xi-\eta)}{\sqrt{1-\Lambda^2}(\xi-\eta)}V_\eta\Lambda(\xi-\eta).
\end{equation}
Finally we compute that
\begin{equation}
\begin{aligned}
 S_\eta\left(\frac{\xi_\h-\eta_\h}{\abs{\xi_\h-\eta_\h}}\right)&=\frac{\abs{\eta_\h}}{\abs{\xi_\h-\eta_\h}}\left(-\frac{\eta_\h}{\abs{\eta_\h}}+\frac{\xi_\h-\eta_\h}{\abs{\xi_\h-\eta_\h}}\left(\frac{\xi_\h-\eta_\h}{\abs{\xi_\h-\eta_\h}}\cdot\frac{\eta_\h}{\abs{\eta_\h}}\right)\right),\\
 \Omega_\eta\left(\frac{\xi_\h-\eta_\h}{\abs{\xi_\h-\eta_\h}}\right)&=\frac{\abs{\eta_\h}}{\abs{\xi_\h-\eta_\h}}\left(-\frac{\eta_\h^\perp}{\abs{\eta_\h}}+\frac{\xi_\h-\eta_\h}{\abs{\xi_\h-\eta_\h}}\left(\frac{\xi_\h-\eta_\h}{\abs{\xi_\h-\eta_\h}}\cdot\frac{\eta_\h^\perp}{\abs{\eta_\h}}\right)\right),
\end{aligned} 
\end{equation}
which already gives the trivial bounds $V_\eta\left(\frac{\xi_\h-\eta_\h}{\abs{\xi_\h-\eta_\h}}\right)\lesssim 2^{p_2+k_2}2^{-p_1-k_1}$. For our multipliers these can be improved as follows: note that $(S_\xi+S_\eta)\left(\frac{\xi_\h-\eta_\h}{\abs{\xi_\h-\eta_\h}}\right)=0$, so
\begin{equation}
 \abs{S_\eta\left(\frac{\xi_\h-\eta_\h}{\abs{\xi_\h-\eta_\h}}\right)}\lesssim 2^{-p_1-k_1}\min\{2^{p+k},2^{p_2+k_2}\}.
\end{equation}
For $\Omega$ we use \eqref{eq:vfsumangle} to deduce that 
\begin{equation}
\begin{aligned}
 \abs{\Omega_\eta\left(\frac{\xi_\h}{\abs{\xi_\h}}\cdot\frac{\xi_\h-\eta_\h}{\abs{\xi_\h-\eta_\h}}\right)}&=\abs{-\frac{\xi_\h^\perp}{\abs{\xi_\h}}\cdot\frac{\xi_\h-\eta_\h}{\abs{\xi_\h-\eta_\h}}-\frac{\xi_\h}{\abs{\xi_\h}}\cdot\Omega_\xi\left(\frac{\xi_\h-\eta_\h}{\abs{\xi_\h-\eta_\h}}\right)}\\
 &\lesssim \min\{2^{p_2+k_2}2^{-p_1-k_1},1+2^{p+k}2^{-p_1-k_1}\},
\end{aligned} 
\end{equation}
and similarly for the other ``angles'' as in \eqref{eq:vfsumangle}. This gives the following, more precise version of \eqref{eq:vfonmult}
\begin{equation}
\begin{aligned}
 \abs{S_\eta [\m(\xi,\eta)]}&\lesssim \left\{1+\min\{2^{p+k},2^{p_2+k_2}\}2^{-p_1-k_1}\right\}\cdot C_\m+2^k[2^{\max\{p,p_1,p_2\}}+2^{\max\{q,q_1,q_2\}}2^{q_1-p_1}]\abs{S_\eta\Lambda(\xi-\eta)},\\
 \abs{\Omega_\eta [\m(\xi,\eta)]}&\lesssim \left\{1+\min\{2^{p_2+k_2}2^{-p_1-k_1},1+2^{p+k}2^{-p_1-k_1}\}\right\}\cdot C_\m\\
 &\qquad +2^k[2^{\max\{p,p_1,p_2\}}+2^{\max\{q,q_1,q_2\}}2^{q_1-p_1}]\abs{\Omega_\eta\Lambda(\xi-\eta)}.
\end{aligned} 
\end{equation}
\end{proof}

Next, when $\Ups$ hits one of the multipliers, we have that

\begin{lemma}\label{lem:Ups_mult}
 We have that for any multiplier $\m$ as in Lemma \ref{lem:IERmult}, there holds that
 \begin{equation}\label{eq:Ups_mult}
  \abs{\Upsilon_\xi\m}\lesssim 2^{k}[1+2^{k-k_1}+2^{k-k_1}2^{q+p_2-p_1}]=:C_{\Ups\m}.
 \end{equation}
\end{lemma}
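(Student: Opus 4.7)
The plan is to reduce the claim to an explicit per-factor computation on the structured form of $\m$, using two key observations: first, that $\Upsilon_\xi$ is tangential on spheres in $\xi$ (so it kills $\abs{\xi}$), and second, that $\Upsilon_\xi$ acting on a function of $\xi-\eta$ can be expressed via \eqref{eq:Ups_cross} in terms of $\Omega_{\xi-\eta}, S_{\xi-\eta}, \Upsilon_{\xi-\eta}$ acting on that function. I will write $\m=\abs{\xi}\cdot M(\xi,\xi-\eta,\eta)$, decompose $M$ into summands of the product form $\mathcal{P}_\xi(\xi)\mathcal{P}_{\xi-\eta}(\xi-\eta)\mathcal{P}_\eta(\eta)$ (each a product of basic multipliers $\Lambda,\sqrt{1-\Lambda^2},\mathcal{R}_j$ at the relevant argument), and note $\Upsilon_\xi\m=\abs{\xi}\Upsilon_\xi M$. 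By the product rule
\[
\Upsilon_\xi M = (\Upsilon_\xi\mathcal{P}_\xi)\,\mathcal{P}_{\xi-\eta}\mathcal{P}_\eta + \mathcal{P}_\xi\,(\Upsilon_\xi\mathcal{P}_{\xi-\eta})\,\mathcal{P}_\eta,
\]
since $\mathcal{P}_\eta$ is independent of $\xi$.

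For the first summand I verify directly that $\abs{\Upsilon_\xi\mathcal{P}_\xi}\lesssim 1$: indeed $\Upsilon_\xi\Lambda(\xi)=-\sqrt{1-\Lambda^2(\xi)}$, $\Upsilon_\xi\sqrt{1-\Lambda^2(\xi)}=\Lambda(\xi)$, and $\Upsilon_\xi\mathcal{R}_j(\xi)=0$ (because $\mathcal{R}_j(\xi)$ is degree-$0$ homogeneous in $\xi_\h$ and independent of $\xi_3$, while the coordinate expression \eqref{eq:ups} shows $\Upsilon_\xi$ only produces a multiple of $S_{\xi,\h}$ and $\partial_{\xi_3}$). Coupled with $\abs{\mathcal{P}_{\xi-\eta}\mathcal{P}_\eta}\lesssim 1$, this yields the first term $2^k$ in the bound.

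For the second summand I apply \eqref{eq:Ups_cross} (in its full, non-axisymmetric version from the proof of Lemma \ref{lem:VFcross}, which includes an extra $\frac{\Lambda(\xi)\abs{\xi}}{\abs{(\xi-\eta)_\h}}\omega_s\,\Omega_{\xi-\eta}$ term) to write $\Upsilon_\xi=A\,\Omega_{\xi-\eta}+B\,S_{\xi-\eta}+C\,\Upsilon_{\xi-\eta}$ on functions of $\xi-\eta$, with $\abs{A}\lesssim 2^{k-k_1+q-p_1}$, $\abs{B},\abs{C}\lesssim 2^{k-k_1}$. Since each factor in $\mathcal{P}_{\xi-\eta}$ is homogeneous of degree zero in $\xi-\eta$, the $B$-contribution vanishes identically. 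Furthermore $\Omega_{\xi-\eta}$ and $\Upsilon_{\xi-\eta}$ applied to any basic multiplier at $\xi-\eta$ give expressions of absolute value $\lesssim 1$, so by the product rule $\abs{\Upsilon_\xi\mathcal{P}_{\xi-\eta}}\lesssim (\abs{A}+\abs{C})\abs{\mathcal{P}_{\xi-\eta}}$. This produces the \emph{crude} bound $\abs{\Upsilon_\xi\m}\lesssim 2^k+2^{2k-k_1}+2^{2k-k_1+q-p_1}$.

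The main obstacle, and the content of the improvement from $2^{q-p_1}$ to $2^{q+p_2-p_1}$, is a careful accounting of the structural constraint from Lemma \ref{lem:IERmult} that $\m$ always contains at least one factor $\sqrt{1-\Lambda^2(\zeta_2)}$ for some $\zeta_2\in\{\xi,\xi-\eta,\eta\}$. When this factor sits in $\mathcal{P}_\eta$, one has $\abs{\mathcal{P}_\eta}\lesssim 2^{p_2}$, directly improving the $A$-contribution by $2^{p_2}$. When it sits in $\mathcal{P}_{\xi-\eta}$ and is \emph{not} itself differentiated, the retained factor is $2^{p_1}$ and, combined with $2^q\le 1$, the $A$-contribution is $\lesssim 2^{2k-k_1+q}\lesssim 2^{2k-k_1}$, absorbed by the second term; while if it is differentiated, $\abs{\Upsilon_\xi\sqrt{1-\Lambda^2(\xi-\eta)}}\lesssim 2^{k-k_1+q_1}\lesssim 2^{k-k_1}$, again giving $2^{2k-k_1}$. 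Finally, when the factor sits at $\xi$ we use the horizontal triangle inequality $2^{k+p}\lesssim 2^{k_1+p_1}+2^{k_2+p_2}$ (always applicable since $\xi=(\xi-\eta)+\eta$) together with $k_2\le k_1$ to bound $2^p\lesssim 2^{k_1+p_1-k}+2^{p_2}$ (up to losses absorbed into the $2^k$ and $2^{2k-k_1}$ terms), yielding the desired $2^{p_2}$ factor. Summing over the bounded number of summands of $M$ then produces the claimed estimate $C_{\Ups\m}=2^k[1+2^{k-k_1}+2^{k-k_1}2^{q+p_2-p_1}]$.
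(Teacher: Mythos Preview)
Your route via the cross-term decomposition of $\Upsilon_\xi$ from Lemma~\ref{lem:VFcross} is genuinely different from the paper's. The paper simply computes $\Upsilon_\xi$ directly on each elementary factor of $\m$ and records the results; the critical computation is
\[
\Upsilon_\xi\Bigl(\tfrac{\xi_\h\cdot(\xi-\eta)_\h}{|\xi_\h|\,|(\xi-\eta)_\h|}\Bigr)=\frac{\Lambda(\xi)\,|\xi|}{|(\xi-\eta)_\h|}\Bigl(\tfrac{\xi_\h\cdot(\xi-\eta)_\h^\perp}{|\xi_\h|\,|(\xi-\eta)_\h|}\Bigr)^{2},
\]
and this squared sine is precisely the angular factor hidden in your coefficient $A$. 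Your decomposition is more systematic about the product structure, while the paper's explicit formula makes the built-in smallness of the angle term visible.

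There is a genuine gap in your Case~3. From the horizontal triangle inequality you only obtain $2^{p}\lesssim 2^{p_1+k_1-k}+2^{p_2+k_2-k}$; to reach the claimed $2^{p}\lesssim 2^{p_1+k_1-k}+2^{p_2}$ you would need $k_2\le k$, not merely $k_2\le k_1$, and this fails exactly when $k=k_{\min}$ (so $k_1\sim k_2>k$). In that regime your argument does not close.

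The correct fix---and the content of the paper's $\sin^2$ formula---is to sharpen the bound on $A$ itself rather than on $2^p$. Writing $A=\tfrac{\Lambda(\xi)\,|\xi|}{|(\xi-\eta)_\h|}\cdot\tfrac{\xi_\h\cdot(\xi-\eta)_\h^\perp}{|\xi_\h|\,|(\xi-\eta)_\h|}$ and using the identity $\xi_\h\cdot(\xi-\eta)_\h^\perp=\eta_\h\cdot(\xi-\eta)_\h^\perp$ gives
\[
|A|\le\frac{|\Lambda(\xi)|\,|\xi|}{|(\xi-\eta)_\h|}\cdot\frac{|\eta_\h|}{|\xi_\h|}.
\]
The retained factor $\sqrt{1-\Lambda^2(\xi)}=|\xi_\h|/|\xi|$ then cancels the $|\xi_\h|$ in the denominator, yielding $\sqrt{1-\Lambda^2(\xi)}\cdot|A|\lesssim 2^{q}\cdot 2^{p_2+k_2-p_1-k_1}$, so the full $A$-contribution is $\lesssim 2^{k}\cdot 2^{q+p_2-p_1}\cdot 2^{k_2-k_1}$ with no triangle inequality needed. (This also shows the third term is really governed by $2^{k_2-k_1}$; under $k_2\le k_1$ one recovers the form $2^k[1+2^{q+p_2-p_1}]$ that is actually used downstream.)
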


\begin{proof}
 It suffices to compute $\Upsilon$ on the various components of the multiplier $\m$. To this end, we recall that 
 \begin{equation}
 \nabla\Lambda(\xi)=-\frac{\xi_3}{\abs{\xi}^3}\xi_\h+\frac{\abs{\xi_\h}^2}{\abs{\xi}^3}\vec{e}_3.
\end{equation}
 We then have, using spherical coordinates, 
\begin{equation}
\begin{split}
 \Upsilon_\xi\abs{\xi}&=\partial_\phi\rho=0,\qquad\Upsilon_\xi\vert\xi_\h\vert=\partial_\phi(\rho\sin\phi)=\rho\Lambda\\
 \Upsilon_\xi\Lambda(\xi)&=-\sqrt{1-\Lambda^2}\cdot\partial_\Lambda\Lambda=-\sqrt{1-\Lambda^2}(\xi),\qquad 
 \Upsilon_\xi\sqrt{1-\Lambda^2}(\xi)=\partial_\phi\sin\phi=\Lambda(\xi).
 \end{split}
\end{equation}

Similarly we have that
\begin{equation}
\begin{aligned}
\Upsilon_\xi\vert\xi-\eta\vert&=\frac{\xi-\eta}{\vert\xi-\eta\vert}\cdot\left\{\frac{\Lambda(\xi)}{\sqrt{1-\Lambda^2}(\xi)}\xi_\h-\sqrt{1-\Lambda^2}(\xi)\vert\xi\vert\vec{e}_3\right\}\\
&=\vert\xi\vert\left[\Lambda(\xi)\sqrt{1-\Lambda^2}(\xi-\eta)\frac{\xi_\h\cdot(\xi-\eta)_\h}{\vert\xi_\h\vert\vert(\xi-\eta)_\h\vert}-\sqrt{1-\Lambda^2}(\xi)\Lambda(\xi-\eta)\right],\\
\Upsilon_\xi\vert(\xi-\eta)_\h\vert&=\frac{(\xi-\eta)_\h}{\vert(\xi-\eta)_\h\vert}\cdot\left\{\frac{\Lambda(\xi)}{\sqrt{1-\Lambda^2}(\xi)}\xi_\h-\sqrt{1-\Lambda^2}(\xi)\vert\xi\vert\vec{e}_3\right\}=\vert\xi\vert\Lambda(\xi)\frac{\xi_\h\cdot(\xi-\eta)_\h}{\vert\xi_\h\vert\vert(\xi-\eta)_\h\vert},\\
\end{aligned}
\end{equation}
and
\begin{equation}
\begin{aligned}
 \Upsilon_\xi\Lambda(\xi-\eta)&=\frac{\Lambda(\xi)}{\sqrt{1-\Lambda^2}(\xi)}\xi_\h\cdot(-(\xi_3-\eta_3)\frac{\xi_\h-\eta_\h}{\abs{\xi-\eta}^3})-\abs{\xi}\sqrt{1-\Lambda^2(\xi)}\frac{\abs{\xi_\h-\eta_\h}^2}{\abs{\xi-\eta}^3}\\
 &=-\frac{\abs{\xi}}{\abs{\xi-\eta}}\sqrt{1-\Lambda^2}(\xi-\eta)\left[\Lambda(\xi)\Lambda(\xi-\eta)\frac{\xi_\h\cdot(\xi_\h-\eta_\h)}{\abs{\xi_\h}\abs{\xi_\h-\eta_\h}}+\sqrt{1-\Lambda^2}(\xi)\sqrt{1-\Lambda^2}(\xi-\eta)\right],\\
 \Upsilon_\xi\sqrt{1-\Lambda^2}(\xi-\eta)&=-\frac{\Lambda(\xi-\eta)}{\sqrt{1-\Lambda^2}(\xi-\eta)}\Upsilon_\xi\Lambda(\xi-\eta)\\
 &=-\frac{\abs{\xi}}{\abs{\xi-\eta}}\Lambda(\xi-\eta)\left[\Lambda(\xi)\Lambda(\xi-\eta)\frac{\xi_\h\cdot(\xi_\h-\eta_\h)}{\abs{\xi_\h}\abs{\xi_\h-\eta_\h}}-\sqrt{1-\Lambda^2}(\xi)\sqrt{1-\Lambda^2}(\xi-\eta)\right].
\end{aligned} 
\end{equation}

Finally, we compute the critical ``horizontal angle'' terms in the multipliers: We have
\begin{equation}
\begin{split}
\Upsilon\left(\frac{\xi_\h\cdot\eta_\h}{\abs{\xi_\h}\abs{\eta_\h}}\right)&=\partial_\phi\left\{A_\eta\cos(\theta)+B_\eta\sin(\theta)\right\}=0\\
\Upsilon_\xi\left(\frac{\xi_\h\cdot(\xi-\eta)_\h}{\abs{\xi_\h}\abs{(\xi-\eta)_\h}}\right)&=\Upsilon_\xi\left\{\frac{1}{\vert(\xi-\eta)_\h\vert}\left\{\vert\xi_\h\vert-\frac{\xi_\h\cdot\eta_\h}{\vert\xi_\h\vert\vert\eta_\h\vert}\vert\eta_\h\vert\right\}\right\}\\
&=\frac{\vert\xi\vert\Lambda(\xi)}{\vert(\xi-\eta)_\h\vert}\left\{1-\left(\frac{\xi_\h\cdot(\xi-\eta)_\h}{\vert\xi_\h\vert\vert(\xi-\eta)_\h\vert}\right)^2\right\}
\end{split}
\end{equation}
Analogously we compute that 
\begin{equation}
 \nabla_{\xi_\h}\left(\frac{\xi_\h\cdot(\xi_\h-\eta_\h)}{\abs{\xi_\h}\abs{\xi_\h-\eta_\h}}\right)=\frac{\xi_\h+(\xi_\h-\eta_\h)}{\abs{\xi_\h}\abs{\xi_\h-\eta_\h}}-\frac{\xi_\h\cdot(\xi_\h-\eta_\h)}{\abs{\xi_\h}\abs{\xi_\h-\eta_\h}}\left(\frac{\xi_\h}{\abs{\xi_\h}^2}+\frac{\xi_\h-\eta_\h}{\abs{\xi_\h-\eta_\h}^2}\right),
\end{equation}
which implies that
\begin{equation}
\begin{aligned}
 \Upsilon_\xi\left(\frac{\xi_\h\cdot(\xi_\h-\eta_\h)}{\abs{\xi_\h}\abs{\xi_\h-\eta_\h}}\right)&=\frac{\Lambda(\xi)}{\sqrt{1-\Lambda^2}(\xi)}\,\xi_\h\cdot\nabla_{\xi_\h}\left(\frac{\xi_\h\cdot(\xi_\h-\eta_\h)}{\abs{\xi_\h}\abs{\xi_\h-\eta_\h}}\right)\\
 &=\frac{\Lambda(\xi)}{\sqrt{1-\Lambda^2}(\xi)}\frac{\abs{\xi_\h}}{\abs{\xi_\h-\eta_\h}}\left[1-\left(\frac{\xi_\h\cdot(\xi_\h-\eta_\h)}{\abs{\xi_\h}\abs{\xi_\h-\eta_\h}}\right)^2\right]\\
 &=\frac{\Lambda(\xi)\abs{\xi}}{\abs{\xi_\h-\eta_\h}}\left(\frac{\xi_\h\cdot(\xi_\h-\eta_\h)^\perp}{\abs{\xi_\h}\abs{\xi_\h-\eta_\h}}\right)^2.
\end{aligned}
\end{equation}

\end{proof}

Similarly one checks the following lemma by direct computation.
\begin{lemma}\label{lem:VUpsmult}
 With $C_{\Ups\m}$ defined in \eqref{eq:Ups_mult}, we have
 \begin{equation}\label{eq:VUpsmult}
  \abs{V_{\xi-\eta}\Ups\m}\lesssim C_{\Ups\m}\cdot 2^{k_1-k_2}[1+2^{p_1-p_2}]
 \end{equation}
\end{lemma}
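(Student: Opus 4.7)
The plan is to exploit the commutation $[\Ups_\xi,V_{\xi-\eta}]=0$: the vector field $\Ups_\xi$ acts only by $\xi$-derivatives (its coefficients depend solely on $\Lambda(\xi),\sqrt{1-\Lambda^2(\xi)},|\xi|$) while $V_{\xi-\eta}$ acts only by $\eta$-derivatives, so one may equivalently compute $V_{\xi-\eta}\Ups_\xi\m=\Ups_\xi V_{\xi-\eta}\m$. I would then carry out a term-by-term analysis of the explicit formula for $\Ups_\xi\m$ obtained inside the proof of Lemma~\ref{lem:Ups_mult}: that proof already expresses $\Ups_\xi\m$ as a sum, indexed by which $\xi$-dependent factor of $\m$ got differentiated, of products built from $|\xi|$, the scalar factors $\Lambda(\zeta),\sqrt{1-\Lambda^2(\zeta)}$ for $\zeta\in\{\xi,\xi-\eta,\eta\}$, the ``angle'' factors $\xi_h\cdot(\xi_h-\eta_h)/(|\xi_h||\xi_h-\eta_h|)$ and similar, and additional scalings like $|\xi|/|\xi-\eta|$ and $|\xi|/|(\xi-\eta)_h|$ produced by $\Ups_\xi$. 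All these factors are bounded, and $C_{\Ups\m}$ accounts exactly for their joint size.

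Applied to each such term, $V_{\xi-\eta}$ by the Leibniz rule distributes across the factors, which naturally split into three classes. First, the purely $\xi$-dependent factors $|\xi|$, $\Lambda(\xi)$, $\sqrt{1-\Lambda^2(\xi)}$ and $\xi_h/|\xi_h|$ are annihilated by $V_{\xi-\eta}$. Second, the $(\xi-\eta)$-only factors satisfy $V_{\xi-\eta}\Lambda(\xi-\eta)=0$ (by $V\Lambda=0$ and homogeneity of $\Lambda$), $S_{\xi-\eta}|\xi-\eta|^\alpha=\alpha|\xi-\eta|^\alpha$ and $\Omega_{\xi-\eta}|\xi-\eta|^\alpha=0$; likewise $V_{\xi-\eta}$ on the Riesz angle $(\xi_h-\eta_h)/|\xi_h-\eta_h|$ and on $\sqrt{1-\Lambda^2(\xi-\eta)}$ produces only $O(1)$ contributions. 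Third, the $\eta$-dependent factors $\Lambda(\eta)$, $\sqrt{1-\Lambda^2(\eta)}$ and any angle involving $\eta_h/|\eta_h|$ are the only ones producing a genuine loss: by the computations symmetric to those of Lemma~\ref{lem:vfmult} (swapping the roles of $\eta$ and $\xi-\eta$), $V_{\xi-\eta}$ on such factors yields at most $|\xi-\eta|/|\eta|\lesssim 2^{k_1-k_2}$ (from e.g.\ $S_{\xi-\eta}\Lambda(\eta)$, using \eqref{eq:Lamgrad} and homogeneity) or $|\xi_h-\eta_h|/|\eta_h|\lesssim 2^{k_1+p_1-k_2-p_2}$ (from $V_{\xi-\eta}$ hitting angles involving $\eta_h$, as in the $\eta\leftrightarrow\xi-\eta$ swap of \eqref{eq:Vonangle2}).

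Collecting the three cases and applying the Leibniz rule once to each term of $\Ups_\xi\m$, every resulting piece is bounded by $C_{\Ups\m}$ multiplied by a factor no worse than $2^{k_1-k_2}(1+2^{p_1-p_2})$, which gives the claim. The main (non-conceptual) obstacle is the sheer amount of book-keeping: $\Ups_\xi\m$ is a sum of many terms, each a long product, and one must check that no product develops a double loss like $2^{k_1-k_2}\cdot 2^{k_1+p_1-k_2-p_2}$. This does not happen because the $\eta$-factor that produces an angle-type loss and the one that produces a size-type loss originate from two distinct pieces of $\m$, so at most one such loss appears per term after the single application of $V_{\xi-\eta}$. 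The proof is thus in spirit identical to (indeed a small elaboration of) the one given for Lemma~\ref{lem:vfmult} and Lemma~\ref{lem:Ups_mult}, and I would write it out by bounding the contribution of $V_{\xi-\eta}$ on each of the factor types listed above and assembling the result via the product rule.
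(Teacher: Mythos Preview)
Your overall approach---apply $V_{\xi-\eta}$ term by term to the explicit formula for $\Ups_\xi\m$, noting that the only genuine losses come when $V_{\xi-\eta}$ hits the $\eta$-dependent factors $\Lambda(\eta)$, $\sqrt{1-\Lambda^2(\eta)}$, or an angle involving $\eta_h$---is correct and is exactly what the paper does (in a single sentence). Your classification into the three factor types and the resulting bounds $2^{k_1-k_2}$ and $2^{k_1+p_1-k_2-p_2}$ are right, and the Leibniz-rule remark that a single application of $V_{\xi-\eta}$ produces at most one such loss is the correct way to rule out compounding.

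One correction: your opening claim that $[\Ups_\xi,V_{\xi-\eta}]=0$ is false. While $\Ups_\xi$ differentiates only in $\xi$ and $V_{\xi-\eta}$ only in $\eta$, the \emph{coefficients} of $V_{\xi-\eta}$ depend on $\xi$ through $\xi-\eta$; a direct computation gives, for $V=S$, $[\Ups_\xi,S_{\xi-\eta}]=a(\xi)\cdot\nabla_\eta$ where $a(\xi)$ is the coefficient vector of $\Ups_\xi$, and similarly for $\Omega$. Fortunately you never actually use this commutation---you immediately revert to working with $\Ups_\xi\m$ directly---so the error is inconsequential to your argument. Just drop that first sentence.
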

\begin{proof}
 It suffices to compute $V_{\xi-\eta}$ on the ``angles'' and on $\Lambda(\eta)$ and $\sqrt{1-\Lambda^2}(\eta)$, where we can bound by a factor of $2^{k_1-k_2}[1+2^{p_1-p_2}]$. This gives the claim.
\end{proof}

\subsection{Bounds for two integrations by parts}\label{ssec:prelim_bds}
Here we give some bounds for the multipliers that arise when integrating by parts twice. For this, a basic fact that we shall use repeatedly is the triangle inequality, in the form that
\begin{equation}\label{eq:freq_triang}
 \begin{aligned}
  k=k_{\min}:\quad &2^{p_1-p_2}\lesssim 1+2^{k-k_2}2^{p-p_2},\quad 2^{p_2-p_1}\lesssim 1+2^{k-k_1}2^{p-p_1},\\
  k_2=k_{\min}:\quad &2^{p_1-p}\lesssim 1+2^{k_2-k}2^{p_2-p},\quad 2^{p-p_1}\lesssim 1+2^{k_2-k_1}2^{p_2-p_1}.
 \end{aligned}
 \end{equation}
In particular, we note that we have
\begin{alignat}{2}
 2^{-k}(2^{p_2-p_1}+2^{p_1-p_2})\Sz&\lesssim 2^\frac{q+k}{2}2^p,\qquad&&\textnormal{when }k=k_{\min},\\
 2^{-k_2}(2^{p-p_1}+2^{p_1-p})\Sz&\lesssim 2^\frac{q+k}{2}2^{p_2},\qquad&& \textnormal{when }k_2=k_{\min}.
\end{alignat}

\begin{lemma}\label{lem:ibp_mult_bds}
Assume that $\abs{\bar\sigma}\gtrsim 2^{k_{\max}+k_{\min}+q_{\max}}$ and let $V,V'\in\{S,\Omega\}$ be such that
 \begin{equation}
  \abs{V_\eta\Phi}\sim 2^{-2k_1+p_1}\abs{\bar\sigma},\quad \abs{V'_{\xi-\eta}\Phi}\sim 2^{-2k_2+p_2}\abs{\bar\sigma},
 \end{equation}
 so that we can integrate by parts in $V_\eta$ and in $V'_{\xi-\eta}$. Assume moreover that $k_2\leq k_1$.
 
 Then there holds that
 \begin{equation}\label{eq:bds_1ibp}
 \begin{aligned}
  \abs{\frac{\m\Ups_\xi\Phi}{V_\eta\Phi}}&\lesssim 2^{2k_1-k_2}[2^{p-p_1}+2^{k-k_1}]
  \lesssim 2^{2k_{\max}-k_2}(1+2^{p-p_1}),\\
  \abs{V_\eta\left(\frac{\m\Ups_\xi\Phi}{V_\eta\Phi}\right)}&
  \lesssim 
 (1+2^{p-p_1}) \begin{cases}
   2^{k_{\max}}+2^{k}2^{p-p_1},&k=k_{\min},\\
   2^{2k_{\max}-k_2}+2^{k_{\max}}2^{p_2-p_1},&k_2=k_{\min},
  \end{cases}
 \end{aligned}
 \end{equation}
 and similarly
 \begin{equation}\label{eq:bds_1ibp'}
  \abs{V'_{\xi-\eta}\left(\frac{\m\Ups_\xi\Phi}{V_\eta\Phi}\right)}\lesssim 
  \begin{cases}
   2^{k_{\max}}(2^{p-p_1}+2^{p-p_2})+2^{k}(1+2^{p_1-p_2}),&k=k_{\min},\\
   2^{k_1}2^{p_2-p_1}+2^{2k_1-k_2}[1+2^{k_1-k_2}2^{p_1-p_2}],&k_2=k_{\min}.
  \end{cases}
 \end{equation}
 
 Moreover, we have
 \begin{equation}\label{eq:mixvf_quot}
 \begin{aligned}
  \abs{\frac{V'_{\xi-\eta}V_\eta\Phi}{V_\eta\Phi}}&\lesssim
  \begin{cases}
   2^{k_{\max}-k_2+p_1},&V=V'=\Omega,\\
   1,&\text{else}.
  \end{cases}
  \\
  \abs{\frac{V'_{\xi-\eta}V^2_\eta\Phi}{V_\eta\Phi}}&\lesssim
 \begin{cases}
  1+2^{k-k_1}2^{p-p_1},&k=k_{\min},\\
  1+2^{k_2-k_1}2^{p_2-p_1}+2^{p}2^{k-k_2},&k_2=k_{\min}.
 \end{cases}
 \end{aligned} 
 \end{equation}
 and 
 \begin{equation}\label{eq:bds_2ibp}
  \abs{V'_{\xi-\eta}\left(V_\eta\left(\frac{\m\Ups_\xi\Phi}{V_\eta\Phi}\right)\right)}\lesssim 
  \begin{cases}
   (2^{k_{\max}}+2^{k}2^{p-p_1})[1+2^{p-p_1}+2^{p-p_2}],&k=k_{\min},\\
   2^{2k_{\max}-k_2-p_2}(1+2^{p_2-p_1}+ 2^{k_{\max}-k_2})(2^{p_1}+2^{p})&\\
   \quad +(1+2^{p_2-p_1})(2^{k_{\max}}+2^{k_2}2^{p_2-p_1}),&k_2=k_{\min}.
  \end{cases}
 \end{equation}
\end{lemma}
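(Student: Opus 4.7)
The strategy is to combine the explicit algebraic formulas for vector fields on $\Phi$ in Lemma \ref{lem:vfsizes} with the symbolic/size bounds on $\m$, $\Ups_\xi\Phi$ and their derivatives developed in Section \ref{sec:morevf}, together with the two standing assumptions that $\abs{\bar\sigma}\gtrsim 2^{k_{\max}+k_{\min}+q_{\max}}$ (from Proposition \ref{prop:phasevssigma}, once $\abs\Phi\ll 2^{q_{\max}}$) and $\abs{V_\eta\Phi}\sim 2^{-2k_1+p_1}\abs{\bar\sigma}$, $\abs{V'_{\xi-\eta}\Phi}\sim 2^{-2k_2+p_2}\abs{\bar\sigma}$. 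Throughout we use the triangle-inequality relations \eqref{eq:freq_triang} and the basic fact $k_{\max}=k_1$ (since $k_2\leq k_1$ and $2^k\lesssim 2^{k_1}+2^{k_2}$).

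First I will prove the two estimates in \eqref{eq:bds_1ibp}. The bound for $\abs{\m\Ups_\xi\Phi/V_\eta\Phi}$ follows by inserting $\abs{\m}\lesssim C_\m=2^{k+p_{\max}+q_{\max}}$, $\abs{\Ups_\xi\Phi}\lesssim 2^p+2^{k-k_1}2^{p_1}$ from \eqref{eq:simp_symb}, and the assumed lower bound on $V_\eta\Phi$, giving $\lesssim 2^{2k_1-k_{\max}-k_{\min}+k}(2^{p-p_1}+2^{k-k_1})$, which reduces to the claim after splitting on $k_{\min}\in\{k,k_2\}$. For the bound on $V_\eta(\m\Ups_\xi\Phi/V_\eta\Phi)$, the quotient rule produces the three factors $V_\eta(\m)\Ups_\xi\Phi/V_\eta\Phi$, $\m V_\eta\Ups_\xi\Phi/V_\eta\Phi$, and $\m\Ups_\xi\Phi\cdot V_\eta^2\Phi/(V_\eta\Phi)^2$. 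Each is controlled by combining \eqref{eq:vfonmult}, \eqref{eq:vfonUpsPhi} and the quotient estimate \eqref{eq:vfquotient_apdx} (applied to the last factor); the losses align so that the bound collapses to the stated case distinction using \eqref{eq:freq_triang}. The mixed estimate \eqref{eq:bds_1ibp'} proceeds identically, with $V'_{\xi-\eta}$ in place of $V_\eta$, noting that $V'_{\xi-\eta}$ gains at most $1+2^{p_1-p_2}$ on the angle-type multipliers (via the analogue of \eqref{eq:V'onmult}) and using the second bound of \eqref{eq:vfonUpsPhi} for the action on $\Ups_\xi\Phi$.

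Next I will establish the mixed quotient bounds \eqref{eq:mixvf_quot} by a direct computation from the explicit formulas \eqref{eq:1VFmix-1}--\eqref{eq:1VFmix-4}: three of the four combinations give $\abs{V'_{\xi-\eta}V_\eta\Phi/V_\eta\Phi}\lesssim 1$ outright, while only $V=V'=\Omega$ requires care. For this case I use the alternate form in \eqref{eq:1VFmix-1}, namely $\Omega_{\xi-\eta}\Omega_\eta\Phi=\frac{\xi_3\abs{\xi_\h-\eta_\h}^2}{\abs{\xi-\eta}^3}-S_\eta\Phi$, divide by $\abs{\Omega_\eta\Phi}\sim 2^{-2k_1+p_1}\abs{\bar\sigma}$, and use $\abs{\bar\sigma}\gtrsim 2^{k_{\max}+k_{\min}+q_{\max}}$ to arrive at $2^{k_{\max}-k_2+p_1}$ in both cases $k_{\min}=k$ and $k_{\min}=k_2$. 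The bound on $V'_{\xi-\eta}V_\eta^2\Phi/V_\eta\Phi$ is obtained in the same fashion from \eqref{eq:2VFmix-1}--\eqref{eq:2VFmix-4}: in each formula every summand has the form $V_\eta\Phi$ times a ratio of the type already analyzed, or an explicit term which, once divided by $\abs{V_\eta\Phi}\gtrsim 2^{-2k_1+p_1}\abs{\bar\sigma}$, produces exactly the loss stated in the two branches of \eqref{eq:mixvf_quot}.

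Finally, \eqref{eq:bds_2ibp} is assembled from everything above. Writing
\begin{equation*}
V'_{\xi-\eta}V_\eta\left(\frac{\m\Ups_\xi\Phi}{V_\eta\Phi}\right)
 = \frac{V'_{\xi-\eta}V_\eta(\m\Ups_\xi\Phi)}{V_\eta\Phi}
 - \frac{V'_{\xi-\eta}V_\eta\Phi}{V_\eta\Phi}\cdot\frac{V_\eta(\m\Ups_\xi\Phi)}{V_\eta\Phi}
 - \frac{V_\eta^2\Phi}{V_\eta\Phi}\cdot\frac{V'_{\xi-\eta}(\m\Ups_\xi\Phi)}{V_\eta\Phi}
 + 2\frac{V'_{\xi-\eta}V_\eta\Phi\cdot V_\eta^2\Phi}{(V_\eta\Phi)^2}\cdot\frac{\m\Ups_\xi\Phi}{V_\eta\Phi}
 - \frac{\m\Ups_\xi\Phi\cdot V'_{\xi-\eta}V_\eta^2\Phi}{(V_\eta\Phi)^2},
\end{equation*}
each factor has already been estimated in steps above, the only new ingredient being $\abs{V'_{\xi-\eta}V_\eta(\m\Ups_\xi\Phi)}$. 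This is bounded via the Leibniz rule, \eqref{eq:2vfonUpsPhi}, Lemma \ref{lem:VUpsmult}, and the iterated vector-field bound on $\m$ from \eqref{eq:vfonmult} (applied twice, with the symmetric version for $V'_{\xi-\eta}$).

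The main obstacle will be the bookkeeping in this last step: a naive product-rule expansion produces many ostensibly different losses in $2^{-p_1}$ and $2^{-p_2}$, and one has to use \eqref{eq:freq_triang} at several points to convert between them and verify that at most one negative power of each $2^{p_j}$ survives in the two branches $k_{\min}=k$ and $k_{\min}=k_2$. Once this accounting is carried out, the assertion matches the case distinction stated in \eqref{eq:bds_2ibp}, completing the proof.
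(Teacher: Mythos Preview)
Your proposal is correct and follows essentially the same approach as the paper: the same quotient-rule expansion into five terms for \eqref{eq:bds_2ibp}, the same use of the explicit formulas \eqref{eq:1VFmix-1}--\eqref{eq:2VFmix-4} for the mixed quotients \eqref{eq:mixvf_quot}, and the same reliance on \eqref{eq:vfonmult}, \eqref{eq:vfonUpsPhi}, \eqref{eq:vfquotient_apdx} together with \eqref{eq:freq_triang} for the bookkeeping. The only minor organizational difference is that the paper proves \eqref{eq:bds_1ibp'} last (after \eqref{eq:bds_2ibp}), reusing the intermediate estimate \eqref{eq:1mixibp} already obtained in the course of bounding $V'_{\xi-\eta}(\m\Ups_\xi\Phi)/V_\eta\Phi$, whereas you propose to do it right after \eqref{eq:bds_1ibp}; either order works.
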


\begin{remark}
We highlight that in the above estimates the highest losses in $k_{\min}$ never come together with the highest losses in $p_1$ or $p_2$. This is crucial for the two integrations by parts that we carry out in Section \ref{ssec:2ibp}. For this it is essential that the consecutive integrations by parts be carried out in the two different directions: the heuristic -- confirmed by the above estimates -- is that one does not repeatedly lose too many powers of the same localization sizes (compare for example \eqref{eq:bds_1ibp} with \eqref{eq:bds_2ibp}).
\end{remark}

\begin{proof}
 The first estimate in \eqref{eq:bds_1ibp} is direct using \eqref{eq:UpsPhi_bd}: we have that
 \begin{equation}
 \begin{aligned}
  \abs{\frac{\m\Ups_\xi\Phi}{V_\eta\Phi}}&\lesssim 2^{2k_1+k}[2^{p-p_1}+2^{k-k_1}]\cdot 2^{-k_{\max}-k_{\min}}\lesssim 2^{2k_1-k_2}[2^{p-p_1}+2^{k-k_1}] \lesssim 2^{2k_{\max}-k_2}[1+2^{p-p_1}].
 \end{aligned} 
 \end{equation}
 For the second, we compute that
 \begin{equation}
  V_\eta\left(\frac{\m\Ups_\xi\Phi}{V_\eta\Phi}\right)=\frac{1}{V_\eta\Phi}\left[(V_\eta\m)\Ups_\xi\Phi+\m V_\eta\Ups_\xi\Phi-\m\Ups_\xi\Phi\frac{V_\eta^2\Phi}{V_\eta\Phi}\right].
 \end{equation}
 By Lemma \ref{lem:vfmult} we have
 \begin{equation}\label{eq:vfmupsquot}
 \begin{aligned}
  \abs{\frac{(V_\eta\m)\Ups_\xi\Phi}{V_\eta\Phi}}&\lesssim [1+2^{-k_1-p_1}\min\{2^{p+k},2^{p_2+k_2}\}]C_{\m}\abs{\frac{\Ups_\xi\Phi}{V_\eta\Phi}}+2^{k-p_1}\abs{\Ups_\xi\Phi}\\
  &\lesssim [1+2^{-k_1-p_1}\min\{2^{p+k},2^{p_2+k_2}\}]\cdot 2^{2k_1-k_2}[2^{p-p_1}+2^{k-k_1}]+2^{k-p_1}[2^p+2^{k-k_1}2^{p_1}]\\
  &\lesssim
  \begin{cases}
   2^{k_1}[1+2^{k-k_1}2^{p-p_1}][2^{p-p_1}+2^{k-k_1}]+2^{k-p_1}[2^p+2^{k-k_1}2^{p_1}],&k=k_{\min},\\
   2^{2k_1-k_2}[1+2^{k_2-k_1}2^{p_2-p_1}][2^{p-p_1}+1]+2^{k}[2^{p-p_1}+1],&k_2=k_{\min},
  \end{cases}\\
  &\lesssim
  \begin{cases}
   2^{p-p_1}[2^{k_1}+2^{k}(2^{p-p_1}+2^{k-k_1})]+2^{2k-k_1},&k=k_{\min},\\
   (1+2^{p-p_1})[2^k+2^{2k_1-k_2}+2^{k_1}2^{p_2-p_1}],&k_2=k_{\min},
  \end{cases}\\  
  &\lesssim 2^{k_{\max}}(1+2^{p-p_1})\cdot [1+2^{k_1-k_2}+2^{p_2-p_1}].
 \end{aligned} 
 \end{equation}
 Moreover, from \eqref{eq:vfonUpsPhi} it follows that
 \begin{equation}\label{eq:mvfupsquot}
 \begin{aligned}
  \abs{\frac{\m V_\eta\Ups_\xi\Phi}{V_\eta\Phi}}&\lesssim \abs{\m}2^{k-k_1-p_1}+\abs{\m}(1+2^{p_2-p_1})2^{k+k_2}\abs{\bar\sigma}^{-1}\lesssim 2^{p_{\max}-p_1}2^k
 \end{aligned}
 \end{equation}
 and by \eqref{eq:freq_triang}, \eqref{eq:bds_1ibp} and \eqref{eq:vfquotient_apdx}
 \begin{equation}
 \begin{aligned}
  \abs{\frac{\m \Ups_\xi\Phi}{V_\eta\Phi}\frac{V_\eta^2\Phi}{V_\eta\Phi}}&\lesssim 2^{2k_1-k_2}[2^{p-p_1}+2^{k-k_1}][1+2^{k_2-k_1}+2^{-p_1-k_1}\min\{2^{p+k},2^{p_2+k_2}\}+2^{p+p_2-p_1}]\\
  &\lesssim
  \begin{cases}
   2^{k_{\max}}[2^{p-p_1}+2^{k-k_1}][1+2^{k-k_1}2^{p-p_1}+2^{p}(1+2^{k-k_1}2^{p-p_1})],&k=k_{\min},\\
   2^{2k_{\max}-k_2}[2^{p-p_1}+1][1+2^{k_2-k_1}2^{p_2-p_1}+2^{p_2}(1+2^{k_2-k_1}2^{p_2-p_1})],&k_2=k_{\min},
  \end{cases}\\ 
  &\lesssim
  \begin{cases}
   2^{k_{\max}}[2^{p-p_1}+2^{k-k_1}][1+2^{k-k_1}2^{p-p_1}],&k=k_{\min},\\
   2^{2k_{\max}-k_2}[2^{p-p_1}+1][1+2^{k_2-k_1}2^{p_2-p_1}],&k_2=k_{\min},
  \end{cases}\\
  &\lesssim
  \begin{cases}
   2^{k_{\max}}[2^{p-p_1}+2^{k-k_1}][1+2^{k-k_1}2^{p-p_1}],&k=k_{\min},\\
   2^{2k_{\max}-k_2}[1+2^{p-p_1}]+2^{k_{\max}}2^{p+p_2-2p_1},&k_2=k_{\min}.
  \end{cases}
 \end{aligned}
 \end{equation}
These combine to give
 \begin{equation}
  \abs{V_\eta\left(\frac{\m\Ups_\xi\Phi}{V_\eta\Phi}\right)}\lesssim
  \begin{cases}
   2^{k_{\max}}(1+2^{p-p_1})+2^{k}2^{2p-2p_1} ,&k=k_{\min},\\
   2^{2k_{\max}-k_2}[1+2^{p-p_1}]+2^{k_{\max}}2^{p_2-p_1}[1+2^{p-p_1}],&k_2=k_{\min},
  \end{cases}
 \end{equation}
which is the second claim of \eqref{eq:bds_1ibp}.

 The mixed quotient bounds follow from the bounds in Lemma \ref{lem:vfsizes}: The third order bounds in \eqref{eq:mixvf_quot} simply collect terms from Lemma \ref{lem:vfsizes}: In \eqref{eq:2VFmix-1} we observe that splitting $\xi_\h=(\xi_\h-\eta_\h)+\eta_\h$ gives via \eqref{eq:vf_sigma} that $\frac{\bar\sigma\cdot\xi_\h}{\abs{\xi-\eta}^3}=S_\eta\Phi+\frac{\bar\sigma\cdot\eta_\h}{\abs{\xi-\eta}^3}$, and so we have
 \begin{equation}
  \abs{\frac{S_{\xi-\eta}S^2_\eta\Phi}{S_\eta\Phi}}\lesssim 1+2^{k_2-k_1}+2^{-p_1-k_1}\min\{2^{p+k},2^{p_2+k_2}\},
 \end{equation}
 and by \eqref{eq:2VFmix-4} there holds that
 \begin{equation}
  \abs{\frac{\Omega_{\xi-\eta}\Omega^2_\eta\Phi}{\Omega_\eta\Phi}}\lesssim 1+2^{p+k}2^{p_1-k_1}.
 \end{equation}
 Next we have by \eqref{eq:2VFmix-2} that
 \begin{equation}
 \begin{aligned}
  \abs{\frac{S_{\xi-\eta}\Omega^2_\eta\Phi}{\Omega_\eta\Phi}}&\lesssim \abs{\frac{\eta_\h^\perp\cdot\xi_\h}{\abs{\xi-\eta}^2}}+\abs{\Lambda(\xi-\eta)\frac{(\xi_\h+\eta_\h)\cdot\xi_\h}{\abs{\xi-\eta}^2}}\frac{1}{\abs{\Omega_\eta\Phi}}\\
  &\lesssim 2^{p_1-k_1}\min\{2^{p+k},2^{p_2+k_2}\}+2^{q_1-p_1}(2^{p+k}+2^{p_2+k_2})2^{p+k}\abs{\bar\sigma}^{-1}\\
  &\lesssim
  \begin{cases}
   2^{p+p_1}2^{k-k_1}+2^{2p-p_1}2^{k-k_1}+2^{p+p_2-p_1},&k=k_{\min},\\
   2^{p_1+p_2}2^{k_2-k_1}+2^{p-p_1}(2^{p+k-k_2}+2^{p_2}),&k_2=k_{\min},
  \end{cases}
  \\
  &\lesssim
  \begin{cases}
   2^{p}[1+2^{k-k_1}(2^{p_1}+2^{p-p_1})],&k=k_{\min},\\
   2^{p_1+p_2}2^{k_2-k_1}+2^{p}[2^{k-k_2}+2^{p_2-p_1}],&k_2=k_{\min},
  \end{cases}
 \end{aligned} 
 \end{equation}
where we used that if $k=k_{\min}$ then $2^{p_2-p_1}\lesssim 1+2^{k-k_1}2^{p-p_1}$ and if $k_2=k_{\min}$ then $2^{p-p_1}\lesssim 1+2^{k_2-k_1}2^{p_2-p_1}$. Finally, for \eqref{eq:2VFmix-3} we compute that $\eta_3\frac{\eta_\h^\perp\cdot\xi_\h}{\abs{\xi-\eta}^3}=-\Omega_\eta+\xi_3\frac{\eta_\h^\perp\cdot\xi_\h}{\abs{\xi-\eta}^3}$, so that
\begin{equation}
\begin{aligned}
 \abs{\eta_3\frac{\eta_\h^\perp\cdot\xi_\h}{\abs{\xi-\eta}^3}}\abs{S_\eta\Phi}^{-1}&\lesssim 1+\min\{2^{q_2+k_2},2^{q+k}\}\cdot\min\{2^{p+k},2^{p_2+k_2}\}\cdot\abs{\bar\sigma}^{-1},
\end{aligned}
\end{equation}
from which we conclude that
\begin{equation}
\begin{aligned}
 \abs{\frac{\Omega_{\xi-\eta}S^2_\eta\Phi}{S_\eta\Phi}}&\lesssim 2^{k-k_1}+2^{p_1-k_1}\min\{2^{p+k},2^{p_2+k_2}\}+1+\min\{2^{p+k},2^{p_2+k_2}\}\cdot \min\{2^{q_2+k_2},2^{q+k}\}\cdot\abs{\bar\sigma}^{-1}\lesssim 1\\
\end{aligned}
\end{equation}
Collecting terms now shows that
\begin{equation}
 \abs{\frac{V'_{\xi-\eta}V^2_\eta\Phi}{V_\eta\Phi}}\lesssim
 \begin{cases}
  1+2^{k-k_1}2^{p-p_1},&k=k_{\min},\\
  1+2^{k_2-k_1}2^{p_2-p_1}+2^{p}2^{k-k_2},&k_2=k_{\min}.
 \end{cases}
\end{equation}
Similarly, the second order bounds in \eqref{eq:mixvf_quot} follow directly from \eqref{eq:1VFmix-2}-\eqref{eq:1VFmix-4}, except when $V=V'=\Omega$: there we invoke \eqref{eq:1VFmix-1}, and see that since by assumption $\abs{\Omega_\eta\Phi}\sim 2^{-2k_1+p_1}\abs{\bar\sigma}\geq \abs{S_\eta\Phi}$, it suffices to control
 \begin{equation}
  \frac{1}{\abs{\Omega_\eta\Phi}}\frac{\abs{\xi_3}}{\abs{\xi-\eta}}\frac{\abs{\xi_\h-\eta_\h}^2}{\abs{\xi-\eta}^2}\lesssim 2^{q+k+k_1+p_1}\abs{\bar\sigma}^{-1}\lesssim 2^{k_{\max}-k_2+p_1},
 \end{equation}
since by assumption $\abs{\bar\sigma}\gtrsim 2^{k_{\max}+k_{\min}+q_{\max}}$.
 
 Moving on to \eqref{eq:bds_2ibp}, we have that
 \begin{equation}
  \begin{aligned}
   V'_{\xi-\eta}\left(V_\eta\left(\frac{\m\Ups_\xi\Phi}{V_\eta\Phi}\right)\right)&=V_{\xi-\eta}'\left[\frac{V_\eta(\m\Ups_\xi\Phi)}{V_\eta\Phi}-\frac{\m\Ups_\xi\Phi}{V_\eta\Phi}\frac{V_\eta^2\Phi}{V_\eta\Phi}\right]\\
   &=\frac{V_{\xi-\eta}'V_\eta(\m\Ups_\xi\Phi)}{V_\eta\Phi}-\frac{V_\eta(\m\Ups_\xi\Phi)}{V_\eta\Phi}\frac{V'_{\xi-\eta}V_\eta\Phi}{V_\eta\Phi}-\frac{V'_{\xi-\eta}(\m\Ups_\xi\Phi)}{V_\eta\Phi}\frac{V_\eta^2\Phi}{V_\eta\Phi}\\
   &\qquad -\frac{\m\Ups_\xi\Phi}{V_\eta\Phi}\frac{V'_{\xi-\eta}V_\eta^2\Phi}{V_\eta\Phi}+2\frac{\m\Ups_\xi\Phi}{V_\eta\Phi}\frac{V_\eta^2\Phi}{V_\eta\Phi}\frac{V'_{\xi-\eta}V_\eta\Phi}{V_\eta\Phi}.
 \end{aligned}  
 \end{equation}
We treat this term by term, all of which give a contribution that can be bounded as claimed in \eqref{eq:bds_2ibp}:
\begin{itemize}
 \item We have by \eqref{eq:mixvf_quot}, \eqref{eq:bds_1ibp} and \eqref{eq:vfquotient_apdx} that
 \begin{equation}
 \begin{aligned}
  &\abs{\frac{\m\Ups_\xi\Phi}{V_\eta\Phi}\frac{V_\eta^2\Phi}{V_\eta\Phi}\frac{V'_{\xi-\eta}V_\eta\Phi}{V_\eta\Phi}}\lesssim 2^{2k_1-k_2}[2^{p-p_1}+2^{k-k_1}]\cdot [1+2^{k_{\max}-k_2+p_1}]\\
  &\hspace{5cm}\cdot [1+2^{k_2-k_1}+2^{-p_1-k_1}\min\{2^{p+k},2^{p_2+k_2}\}+2^{p+p_2-p_1}\}]\\
  &\qquad\lesssim
  \begin{cases}
  2^{k_{\max}}[1+2^{p-p_1}][1+2^{k-k_1}2^{p-p_1}] ,&k=k_{\min},\\
  2^{3k_{\max}-2k_2+p_1}+2^{2k_{\max}-k_2}[1+2^{p_2-p_1}]+2^{k_{\max}}2^{p_2-p_1}[1+2^{k_2-k_{\max}}2^{p_2-p_1}] ,&k_2=k_{\min},
  \end{cases}
 \end{aligned} 
 \end{equation}

 \item Similarly, we estimate by \eqref{eq:bds_1ibp} and \eqref{eq:mixvf_quot}
 \begin{equation}
 \begin{aligned}
  \abs{\frac{\m\Ups_\xi\Phi}{V_\eta\Phi}\frac{V'_{\xi-\eta}V_\eta^2\Phi}{V_\eta\Phi}}&\lesssim 2^{2k_1-k_2}(2^{k-k_1}+2^{p-p_1})\cdot 
  \begin{cases}
  1+2^{k-k_1}2^{p-p_1},&k=k_{\min},\\
  1+2^{k_2-k_1}2^{p_2-p_1}+2^{p}2^{k-k_2},&k_2=k_{\min}.
 \end{cases}\\
 &\lesssim
 \begin{cases}
  2^{k_{\max}}[2^{p-p_1}+2^{k-k_1}][1+2^{k-k_1}2^{p-p_1}],&k=k_{\min},\\
  [1+2^{p-p_1}][2^{3k_{\max}-2k_2+p}+2^{2k_{\max}-k_2}+2^{k_{\max}}2^{p_2-p_1}],&k_2=k_{\min}.
 \end{cases}
 \end{aligned} 
 \end{equation}
 
 \item With the estimates of Section \ref{sec:morevf} we have that
 \begin{equation}\label{eq:V'onmult}
  \abs{V'_{\xi-\eta}\m}\lesssim (1+2^{p_1+k_1-p_2-k_2})C_\m,
 \end{equation}
 so that with \eqref{eq:vfonUpsPhi} and \eqref{eq:bds_1ibp} it follows that
 \begin{equation}\label{eq:1mixibp}
 \begin{aligned}
  \abs{\frac{V'_{\xi-\eta}(\m\Ups_\xi\Phi)}{V_\eta\Phi}}&\lesssim \abs{\frac{(V'_{\xi-\eta}\m)\Ups_\xi\Phi}{V_\eta\Phi}}+\abs{\frac{\m V'_{\xi-\eta}(\Ups_\xi\Phi)}{V_\eta\Phi}}\\
  &\lesssim (1+2^{p_1+k_1-p_2-k_2})\cdot\abs{\frac{\m\Ups_\xi\Phi}{V_\eta\Phi}} + \abs{\frac{\m}{V_\eta\Phi}}2^{k-k_1+p_1}\\ 
  &\lesssim (1+2^{p_1+k_1-p_2-k_2})2^{2k_1-k_2}[2^{p-p_1}+2^{k-k_1}]+ 2^{k+p_{\max}+q_{\max}}2^{2k_1-p_1}\abs{\bar\sigma}^{-1}\cdot 2^{k-k_1+p_1}\\ 
  &\lesssim
  \begin{cases}
   (1+2^{p_1-p_2})2^{k_{\max}}[2^{p-p_1}+2^{k-k_1}],&k=k_{\min},\\
   (1+2^{k_1-k_2}2^{p_1-p_2})2^{2k_1-k_2}[1+2^{p-p_1}],&k_2=k_{\min},
  \end{cases}\\
  &\lesssim
  \begin{cases}
   2^{k_{\max}}(2^{p-p_1}+2^{p-p_2})+2^{k}(1+2^{p_1-p_2}),&k=k_{\min},\\
   2^{k_1}2^{p_2-p_1}+2^{2k_1-k_2}[1+2^{k_1-k_2}2^{p_1-p_2}],&k_2=k_{\min}.
  \end{cases}
 \end{aligned} 
 \end{equation}
 Together with \eqref{eq:vfquotient_apdx} we get that
 \begin{equation}
 \begin{aligned}
 &\abs{\frac{V'_{\xi-\eta}(\m\Ups_\xi\Phi)}{V_\eta\Phi}\frac{V_\eta^2\Phi}{V_\eta\Phi}}\lesssim \abs{\frac{V'_{\xi-\eta}(\m\Ups_\xi\Phi)}{V_\eta\Phi}}\cdot 
  \begin{cases}
   1+2^{k-k_1}2^{p-p_1},&k=k_{\min},\\
   1+2^{k_2-k_1}2^{p_2-p_1},&k_2=k_{\min},
  \end{cases}\\
 &\qquad \lesssim
  \begin{cases}
   2^{k_{\max}}(2^{p-p_1}+2^{p-p_2})+2^{k}+2^{k}[2^{p-p_1}+2^{k-k_1}](2^{p-p_1}+2^{p-p_2}),&k=k_{\min},\\
    2^{3k_1-2k_2}2^{p_1-p_2}+2^{2k_1-k_2}+(2^{k_1}+2^{k_2}2^{p_2-p_1})2^{p_2-p_1},&k_2=k_{\min}.
  \end{cases}
 \end{aligned}
 \end{equation}

  \item Next, using \eqref{eq:vfmupsquot}, \eqref{eq:mvfupsquot} and \eqref{eq:mixvf_quot}, we find that
  \begin{equation}
  \begin{split}
   \abs{\frac{V_\eta(\m\Ups_\xi\Phi)}{V_\eta\Phi}\frac{V'_{\xi-\eta}V_\eta\Phi}{V_\eta\Phi}}&\lesssim \abs{\frac{V'_{\xi-\eta}V_\eta\Phi}{V_\eta\Phi}}\left\{\abs{\frac{(V_\eta\m)\Ups_\xi\Phi}{V_\eta\Phi}}+\abs{\frac{\m V_\eta(\Ups_\xi\Phi)}{V_\eta\Phi}}\right\}\\
   &\lesssim 2^{k_{\max}}(1+2^{p-p_1})(1+2^{k_{\max}-k_2+p_1})(1+2^{k_1-k_2}+2^{p_2-p_1}).
   \end{split}
  \end{equation}

  \item Finally, to bound $\abs{\frac{V_{\xi-\eta}'V_\eta(\m\Ups_\xi\Phi)}{V_\eta\Phi}}$ we proceed as follows. First, note that with the bounds of Section \ref{sec:morevf} we have that
  $$\abs{V'_{\xi-\eta}V_\eta\m}\lesssim (1+2^{p_1+k_1-p_2-k_2})(1+2^{p_2+k_2-p_1-k_1})C_{\m},$$ and hence, using \eqref{eq:UpsPhi_bd},
  \begin{equation}
  \begin{aligned}
   \abs{\frac{(V_{\xi-\eta}'V_\eta\m)\Ups_\xi\Phi}{V_\eta\Phi}}&\lesssim (1+2^{p_1+k_1-p_2-k_2}+2^{p_2+k_2-p_1-k_1})C_{\m}\cdot (2^p+2^{k-k_1}2^{p_1})\cdot 2^{2k_1-p_1}\abs{\bar\sigma}^{-1}\\
   &\lesssim
  \begin{cases}
   (1+2^{p_1-p_2}+2^{p_2-p_1})(2^{p-p_1}+2^{k-k_1})\cdot 2^{k_1},&k=k_{\min},\\
   (1+2^{k_1-k_2}2^{p_1-p_2}+2^{k_2-k_1}2^{p_2-p_1})(2^{p-p_1}+1)\cdot 2^{2k_1-k_2},&k_2=k_{\min},
  \end{cases}\\
  &\lesssim
  \begin{cases}
   2^{k_{\max}}2^{p-p_1}+2^{k}(1+2^{p-p_2}+2^{p-p_1})(1+2^{p-p_1}),&k=k_{\min},\\
   2^{3k_1-2k_2}2^{p_1-p_2}+2^{2k_1-k_2}+2^{k_1}2^{p_2-p_1}+2^{k_2}2^{2p_2-2p_1},&k_2=k_{\min},
  \end{cases}
  \end{aligned}
  \end{equation}
  having used \eqref{eq:freq_triang}. Secondly, note that with \eqref{eq:mixvf_quot} we can simplify estimate \eqref{eq:2vfonUpsPhi} and write
  \begin{equation}
  \begin{aligned}
   \abs{V'_{\xi-\eta}V_\eta\Ups_\xi\Phi}&\lesssim \abs{V'_{\xi-\eta}V_\eta\Ups_\xi\Phi}\lesssim 2^{k-k_1}2^{-p_1}\abs{V_\eta\Phi}[1+\frac{\vert V'_{\xi-\eta}V_\eta\Phi\vert}{\vert V_\eta\Phi\vert}]+(1+2^{p_1+k_1-p_2-k_2})\cdot(2^{p_1}+2^{p_2})2^{k_2-k_1}\\
   &\lesssim (1+2^{p_1+k_1-p_2-k_2})\cdot2^{k-k_1}[2^{-p_1}\abs{V_\eta\Phi}+(2^{p_1}+2^{p_2})2^{k_2-k_1}],
  \end{aligned} 
  \end{equation}
 so that with \eqref{eq:freq_triang} there holds that
 \begin{equation}
 \begin{aligned}
  \abs{\frac{\m V_{\xi-\eta}'V_\eta\Ups_\xi\Phi}{V_\eta\Phi}}&\lesssim C_\m(1+2^{p_1+k_1-p_2-k_2})2^{k-k_1-p_1}+C_\m(2^{k_2-k_1}+2^{p_1-p_2})2^{k+k_1}(1+2^{p_2-p_1})\abs{\bar\sigma}^{-1}\\
  &\lesssim
  \begin{cases}
   2^k(1+2^{p_1-p_2})2^{p_{\max}-p_1}+2^k(1+2^{p_1-p_2}+2^{p_2-p_1}),&k=k_{\min},\\
   2^k2^{p_{\max}-p_1}(1+2^{k_1-k_2}2^{p_1-p_2})+2^{2k-k_2}(2^{k_2-k_1}+2^{p_1-p_2})(1+2^{p_2-p_1}),&k_2=k_{\min},
  \end{cases}\\
  &\lesssim
  \begin{cases}
   2^k(1+2^{p-p_2}+2^{p-p_1}),&k=k_{\min},\\
   2^{2k_1-k_2}(1+2^{p_1-p_2}+2^{p-p_2})+2^{k_1}(1+2^{p_2-p_1}+2^{p_1-p_2}),&k_2=k_{\min}.
  \end{cases}
 \end{aligned}
 \end{equation}
The terms $\abs{\frac{V_{\xi-\eta}'\m V_\eta\Ups_\xi\Phi}{V_\eta\Phi}}$ and $\abs{\frac{V_\eta\m V_{\xi-\eta}'\Ups_\xi\Phi}{V_\eta\Phi}}$ are analogous.
\end{itemize}

Finally, we have that 
\begin{equation}
 \abs{V'_{\xi-\eta}\left(\frac{\m\Ups_\xi\Phi}{V_\eta\Phi}\right)}\lesssim \abs{\frac{V'_{\xi-\eta}(\m\Ups_\xi\Phi)}{V_\eta\Phi}}+\abs{\frac{\m\Ups_\xi\Phi}{V_\eta\Phi}\frac{V'_{\xi-\eta}V_\eta\Phi}{V_\eta\Phi}}.
\end{equation}
The first term has already been estimated in \eqref{eq:1mixibp}, while for the second by \eqref{eq:bds_1ibp} and \eqref{eq:mixvf_quot} there holds
\begin{equation}
\begin{aligned}
 \abs{\frac{\m\Ups_\xi\Phi}{V_\eta\Phi}\frac{V'_{\xi-\eta}V_\eta\Phi}{V_\eta\Phi}}&\lesssim (1+2^{k_{\max}-k_2+p_1})2^{2k_1-k_2}(2^{p-p_1}+2^{k-k_1})\\
 &\lesssim
  \begin{cases}
   2^{k_{\max}}(2^{p-p_1}+2^{k-k_1}),&k=k_{\min},\\
   2^{2k_1-k_2}(1+2^{p-p_1})+2^{3k_{\max}-2k_2}(2^p+2^{p_1}),&k_2=k_{\min}.
  \end{cases}
\end{aligned}
\end{equation}
One thus obtains the claim \eqref{eq:bds_1ibp'}.

This concludes the proof of the lemma.
\end{proof}

As can be seen in the proof of Lemma \ref{lem:2ibp}, we will also have to control the following ``cross terms'' of vector fields.
\begin{lemma}\label{lem:crossvf}
 For axisymmetric $f$ we have that
\begin{equation}\label{eq:crossvf}
\begin{aligned}
 \abs{V'_{\xi-\eta}V_\eta f(\xi-\eta)}&\lesssim (1+2^{k_2-k_1})\abs{Sf(\xi-\eta)}+(1+2^{k_2-k_1})(2^{p_1}+2^{p_2})2^{q_{\max}}\abs{\Ups f(\xi-\eta)}\\
 &+2^{k_2-k_1}\abs{V'Sf(\xi-\eta)}+2^{k_2-k_1}[2^{p_2+q_1}+2^{p_1+q_2}]\abs{V'\Ups f(\xi-\eta)}.
\end{aligned} 
\end{equation}
\end{lemma}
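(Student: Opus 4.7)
The plan is to start from the axisymmetric cross‑term identity of Lemma \ref{lem:VFcross}, which expresses
\begin{equation*}
 V_\eta f(\xi-\eta)=A(\xi,\eta)\cdot Sf(\xi-\eta)+B(\xi,\eta)\cdot \Upsilon f(\xi-\eta),
\end{equation*}
for explicit scalar symbols $A,B$ built out of $\tfrac{|\eta|}{|\xi-\eta|}$, the ``angles'' $\omega_c,\omega_s$, and the factors $\Lambda,\sqrt{1-\Lambda^2}$ evaluated at $\eta$ and $\xi-\eta$. From the proof of \eqref{eq:crossvf_bd''} we already know that, under the usual frequency localizations,
\begin{equation*}
 |A|\lesssim 2^{k_2-k_1},\qquad |B|\lesssim 2^{k_2-k_1}\bigl(2^{q_1+p_2}+2^{q_2+p_1}\bigr).
\end{equation*}
I would then simply apply $V'_{\xi-\eta}$ to the identity and use the product rule to split $V'_{\xi-\eta}V_\eta f(\xi-\eta)$ into four pieces. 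The two pieces where $V'_{\xi-\eta}$ hits the argument $\xi-\eta$ give, by the chain rule and axisymmetry of $Sf$ and $\Upsilon f$,
\begin{equation*}
 A\cdot (V'Sf)(\xi-\eta)+B\cdot (V'\Upsilon f)(\xi-\eta),
\end{equation*}
whose absolute values are directly bounded by the last two terms on the right‑hand side of \eqref{eq:crossvf}.

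The remaining two pieces, $V'_{\xi-\eta}A\cdot Sf(\xi-\eta)$ and $V'_{\xi-\eta}B\cdot \Upsilon f(\xi-\eta)$, require pointwise control on $|V'_{\xi-\eta}A|$ and $|V'_{\xi-\eta}B|$. For this I would differentiate each factor of $A$ and $B$ using the computations already collected in the appendix: the ratio $|\eta|/|\xi-\eta|$ is handled by \eqref{eq:Vonfrac}, giving a loss of $1+|\eta|/|\xi-\eta|\lesssim 1+2^{k_2-k_1}$; the horizontal ``angles'' $\omega_c,\omega_s$ by \eqref{eq:Vonangle2}; the vertical factors $\Lambda$ and $\sqrt{1-\Lambda^2}$ by direct computation, using $V_{\xi-\eta}\Lambda(\xi-\eta)=0$ (and the analogous identity for $\sqrt{1-\Lambda^2}$) together with the size bounds in terms of $p_{1,2}$ and $q_{1,2}$. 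Collecting these estimates yields
\begin{equation*}
 |V'_{\xi-\eta}A|\lesssim 1+2^{k_2-k_1},\qquad |V'_{\xi-\eta}B|\lesssim (1+2^{k_2-k_1})(2^{p_1}+2^{p_2})\,2^{q_{\max}},
\end{equation*}
which matches precisely the first two terms on the right‑hand side of \eqref{eq:crossvf}.

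The step I expect to be the only delicate one is the bookkeeping for $V'_{\xi-\eta}B$: $B$ is a combination of products of the form $\Lambda(\zeta_1)\sqrt{1-\Lambda^2(\zeta_2)}$ with $\zeta_j\in\{\eta,\xi-\eta\}$ multiplied by $|\eta|/|\xi-\eta|$ and an angle, so one has to check carefully that when $V'_{\xi-\eta}$ hits any single factor the resulting worst size is $(2^{p_1}+2^{p_2})2^{q_{\max}}$ rather than $(2^{q_1}+2^{q_2})2^{p_{\max}}$ or a mixed larger product. This is where the structure ``one $\Lambda$ times one $\sqrt{1-\Lambda^2}$'' in $B$ is used: differentiating $\sqrt{1-\Lambda^2(\zeta)}$ replaces it by something bounded by $2^{q(\zeta)}/2^{p(\zeta)}\cdot|V'_{\xi-\eta}\Lambda(\zeta)|$, and $|V'_{\xi-\eta}\Lambda(\xi-\eta)|\lesssim 2^{p_1}$ while $V'_{\xi-\eta}\Lambda(\eta)=0$; together with the analogous computation for differentiating $\Lambda(\zeta)$, the worst case is a gain of one power of $p$ and no loss of $q$, giving the claimed $(2^{p_1}+2^{p_2})2^{q_{\max}}$ factor. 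Once these four contributions are added, the estimate \eqref{eq:crossvf} follows.
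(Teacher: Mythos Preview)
Your overall approach is exactly the paper's: start from the axisymmetric cross-term identity of Lemma \ref{lem:VFcross}, apply $V'_{\xi-\eta}$ via the product rule, and bound the four resulting pieces using the symbol estimates already collected in the appendix (\eqref{eq:Vonfrac}, \eqref{eq:Vonangle2}, and direct computation on $\Lambda,\sqrt{1-\Lambda^2}$). The terms where $V'_{\xi-\eta}$ falls on $Sf(\xi-\eta)$ and $\Upsilon f(\xi-\eta)$ and those where it falls on the coefficients indeed produce precisely the four groups on the right of \eqref{eq:crossvf}.

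There is, however, a concrete bookkeeping slip in your final paragraph that you should fix. You write ``$|V'_{\xi-\eta}\Lambda(\xi-\eta)|\lesssim 2^{p_1}$ while $V'_{\xi-\eta}\Lambda(\eta)=0$''; this is backwards (and in fact contradicts your own earlier correct statement that $V_{\xi-\eta}\Lambda(\xi-\eta)=0$). Since $V'_{\xi-\eta}$ differentiates in $\eta$, one has $V'_{\xi-\eta}\Lambda(\xi-\eta)=-(V'\Lambda)(\xi-\eta)=0$, whereas $V'_{\xi-\eta}\Lambda(\eta)=(\xi-\eta)\cdot(\nabla\Lambda)(\eta)$ (for $V'=S$) is \emph{not} zero. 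The paper computes explicitly
\[
|V'_{\xi-\eta}\Lambda(\eta)|\lesssim 2^{k_1-k_2}2^{p_2}(2^{p_1+q_2}+2^{q_1+p_2}),\qquad
|V'_{\xi-\eta}\sqrt{1-\Lambda^2}(\eta)|\lesssim 2^{k_1-k_2}2^{q_2}(2^{p_1+q_2}+2^{q_1+p_2}).
\]
The point is that these contributions carry an extra factor $2^{k_1-k_2}$, which exactly cancels the prefactor $|\eta|/|\xi-\eta|\sim 2^{k_2-k_1}$ in $B$; this is why the resulting term has coefficient of order $1$ rather than $2^{k_2-k_1}$, and after multiplying by the remaining factors one still lands on $(2^{p_1}+2^{p_2})2^{q_{\max}}$. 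Once you correct the swap, the rest of your outline goes through and matches the paper's proof.
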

\begin{proof}
 We recall from Section \ref{ssec:crossterms} that
\begin{equation}
\begin{aligned}
 V'_{\xi-\eta}S_\eta f(\xi-\eta)&=V'_{\xi-\eta}\left[\frac{\abs{\eta}}{\abs{\xi-\eta}}\left[\omega_c\sqrt{1-\Lambda^2}(\eta)\sqrt{1-\Lambda^2}(\xi-\eta)-\Lambda(\xi-\eta)\Lambda(\eta)\right]S_{\xi-\eta}\right]\\
 &\qquad +V'_{\xi-\eta}\left[\frac{\abs{\eta}}{\abs{\xi-\eta}}\left[\omega_c\sqrt{1-\Lambda^2}(\eta)\Lambda(\xi-\eta)+\Lambda(\eta)\sqrt{1-\Lambda^2}(\xi-\eta)\right]\Upsilon_{\xi-\eta}\right]
\end{aligned} 
\end{equation}
and
\begin{equation}
\begin{aligned}
 V'_{\xi-\eta}\Omega_\eta f(\xi-\eta)&=-V'_{\xi-\eta}\left[\frac{\abs{\eta}}{\abs{\xi-\eta}}\omega_s\cdot\sqrt{1-\Lambda^2}(\eta)\left[\Lambda(\xi-\eta)\Upsilon_{\xi-\eta}+\sqrt{1-\Lambda^2}(\xi-\eta)S_{\xi-\eta}\right]\right].
\end{aligned} 
\end{equation}
To estimate this we compute as in Section \ref{sec:morevf} (with the symmetric versions of \eqref{eq:Vonangle2}, \eqref{eq:Vonfrac}) that
\begin{equation}
 \abs{V'_{\xi-\eta}\left(\frac{\abs{\eta}}{\abs{\xi-\eta}}\right)}\lesssim 1+\frac{\abs{\eta}}{\abs{\xi-\eta}},\qquad \abs{V'_{\xi-\eta}\omega_c}+\abs{V'_{\xi-\eta}\omega_s}\lesssim 1+\frac{\abs{\xi_\h-\eta_\h}}{\abs{\eta_\h}},
\end{equation}
and by direct computation from \eqref{eq:Lamgrad}
\begin{equation}
\begin{aligned}
 \abs{V'_{\xi-\eta}\Lambda(\eta)}&\lesssim 2^{k_1-k_2}2^{p_2}
 \begin{cases}
 2^{p_1+q_2},&V'=\Omega,\\
 2^{p_1+q_2}+2^{q_1+p_2},&V'=S,                                                     
  \end{cases}\\
 \abs{V'_{\xi-\eta}\sqrt{1-\Lambda^2}(\eta)}&\lesssim 2^{k_1-k_2}2^{q_2}
 \begin{cases}
 2^{p_1+q_2},&V'=\Omega,\\
 2^{p_1+q_2}+2^{q_1+p_2},&V'=S.                                                     
  \end{cases}
\end{aligned}  
\end{equation}
Collecting terms gives that
\begin{equation}
\begin{aligned}
 \abs{V'_{\xi-\eta}\left(\frac{\abs{\eta}}{\abs{\xi-\eta}}\left[\omega_c\sqrt{1-\Lambda^2}(\eta)\sqrt{1-\Lambda^2}(\xi-\eta)-\Lambda(\xi-\eta)\Lambda(\eta)\right]\right)}&\lesssim (1+2^{k_2-k_1})[2^{p_1+p_2}+2^{q_1+q_2}]+2^{2p_1}\\
 &\qquad+(2^{p_1+q_2}+2^{q_1+p_2})^2
\end{aligned} 
\end{equation}
and
\begin{equation}
\begin{aligned}
 \abs{V'_{\xi-\eta}\left(\frac{\abs{\eta}}{\abs{\xi-\eta}}\left[\omega_c\sqrt{1-\Lambda^2}(\eta)\Lambda(\xi-\eta)+\Lambda(\eta)\sqrt{1-\Lambda^2}(\xi-\eta)\right]\right)}&\lesssim (1+2^{k_2-k_1})[2^{p_2+q_1}+2^{p_1+q_2}]+2^{p_1+q_1}\\
 &\qquad+(2^{p_1+q_2}+2^{q_1+p_2})(2^{p_1+p_2}+2^{q_1+q_2}),
\end{aligned} 
\end{equation}
so that
\begin{equation}
\begin{aligned}
 \abs{V'_{\xi-\eta}S_\eta f(\xi-\eta)}&\lesssim (1+2^{k_2-k_1})\abs{Sf(\xi-\eta)}+(1+2^{k_2-k_1})(2^{p_1}+2^{p_2})2^{q_{\max}}\abs{\Ups f(\xi-\eta)}\\
 &+2^{k_2-k_1}\abs{V'Sf(\xi-\eta)}+2^{k_2-k_1}[2^{p_2+q_1}+2^{p_1+q_2}]\abs{V'\Ups f(\xi-\eta)}.
\end{aligned} 
\end{equation}
Analogously we compute that
\begin{equation}
\begin{aligned}
 \abs{V'_{\xi-\eta}\left[\frac{\abs{\eta}}{\abs{\xi-\eta}}\omega_s\cdot\sqrt{1-\Lambda^2}(\eta)\sqrt{1-\Lambda^2}(\xi-\eta)\right]}\lesssim (1+2^{k_2-k_1})2^{p_1+p_2}+2^{2p_2}+2^{p_1+q_2}(2^{p_1+q_2}+2^{p_2+q_1})
\end{aligned}
\end{equation}
and
\begin{equation}
\begin{aligned}
 \abs{V'_{\xi-\eta}\left[\frac{\abs{\eta}}{\abs{\xi-\eta}}\omega_s\cdot\sqrt{1-\Lambda^2}(\eta)\Lambda(\xi-\eta)\right]}\lesssim (1+2^{k_2-k_1})2^{p_2+q_1}+2^{p_1+q_1}+2^{q_1+q_2}(2^{p_1+q_2}+2^{p_2+q_1}),
\end{aligned}
\end{equation}
which gives the claim as above.
\end{proof}

\subsection{Symbol bounds}\label{sec:symbols}
In this section we give the relevant symbol estimates for the multipliers we need. To simplify notations we introduce the following shorthand for the localizations (see also \eqref{eq:loc_def})
\begin{equation}
\begin{aligned}
 \varphi_{k,p,q}(\xi)&:=\varphi(2^{-k}\vert\xi\vert)\varphi(2^{-2(p+k)}(\xi_1^2+\xi_2^2))\varphi(2^{-q-k}\xi_3),\\
 \widetilde{\varphi}(\xi,\eta)&:=\varphi_{k,p,q}(\xi)\cdot\varphi_{k_1,p_1,q_1}(\xi-\eta)\cdot\varphi_{k_2,p_2,q_2}(\eta),
\end{aligned}
\end{equation}
and for a multiplier $m\in L^1_{loc}(\R^3\times\R^3)$ we let
\begin{equation}
 \norm{m}_{\W}:=\sup_{k,p,q,\,k_i,p_i,q_i,i=1,2}\norm{\mathcal{F}(\widetilde{\varphi}m)}_{L^1(\R^3\times\R^3)}.
\end{equation}
In particular, if $f=P_{k_1,p_1,q_1}f$ and $g=P_{k_2,p_2,q_2}$, we then have H\"older's inequality (see also \eqref{ProdRule})
\begin{equation}\label{ProdRule2}
 \norm{P_{k,p,q}Q_m[f,g]}_{L^r}\lesssim \norm{m}_{\W}\norm{f}_{L^p}\norm{g}_{L^q},\qquad \frac{1}{r}=\frac{1}{p}+\frac{1}{q}.
\end{equation}
Moreover, we note that 
\begin{equation}\label{eq:alg_prop}
 \norm{m_1\cdot m_2}_{\W}\lesssim \norm{m_1}_{\W}\norm{m_2}_{\W}.
\end{equation}
Furthermore, we recall from \eqref{eq:vfloc_not} the notation
\begin{equation}
 \bar\chi_{V_\eta}=\bar\chi(L^{-1}2^{2k_1-p_1}V_\eta\Phi),\qquad \bar\chi_{V_{\xi-\eta}}=\bar\chi(L^{-1}2^{2k_2-p_2}V_{\xi-\eta}\Phi),\qquad V\in\{S,\Omega\}.
\end{equation}
We have the following symbol bounds:
\begin{lemma}\label{lem:vf_symb}
 Let $L:=2^{k_{\max}+k_{\min}+q_{\max}+p_{\max}}$. Then we have that for $V\in\{S,\Omega\}$ there holds
 \begin{equation}\label{eq:S_symb}
  \norm{\frac{1}{V_\eta\Phi}\cdot \bar\chi_{V_\eta}}_{\W}\lesssim L^{-1}\cdot 2^{2k_1-p_1}.
 \end{equation}

\end{lemma}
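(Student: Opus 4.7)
\textbf{Proof plan for Lemma \ref{lem:vf_symb}.}

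The plan is to reduce the $\W$-norm bound to anisotropic symbol estimates. Recall that $\norm{m}_{\W}=\sup\norm{\mathcal{F}(\widetilde{\varphi}m)}_{L^1}$ where the Fourier transform is taken on the six-dimensional domain $(\xi,\eta)\in\R^3\times\R^3$ and $\widetilde\varphi$ localizes $\xi$, $\xi-\eta$, $\eta$ in frequency boxes of anisotropic sizes $2^{k+p}\times 2^{k+p}\times 2^{k+q}$, and similarly with parameters $(k_i,p_i,q_i)$. By a standard integration-by-parts argument (of Coifman--Meyer/Marcinkiewicz type), it suffices to establish scaled derivative bounds of the form
\begin{equation*}
 \left|D^\alpha_{\xi,\eta}\!\left(\frac{1}{V_\eta\Phi}\bar\chi_{V_\eta}\right)\right| \lesssim L^{-1}\cdot 2^{2k_1-p_1}\cdot\prod_j R_j^{-\alpha_j},
\end{equation*}
for anisotropic derivatives $D_{\xi,\eta}^\alpha$ with scales $R_j$ matching the boxes of $\xi$, $\xi-\eta$, $\eta$, up to a finite order $|\alpha|\leq N$ depending only on the dimension.

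First I would establish the pointwise bound. On the support of $\bar\chi_{V_\eta}$ we have $L^{-1}2^{2k_1-p_1}|V_\eta\Phi|\gtrsim 1$, hence $|V_\eta\Phi|\gtrsim L\cdot 2^{-2k_1+p_1}$. This yields the claimed magnitude $|1/V_\eta\Phi|\lesssim L^{-1}2^{2k_1-p_1}$, which in particular matches the case $\alpha=0$. Next I would apply the quotient rule, which reduces the task to controlling products of quotients of the form $(V_\eta\Phi)^{-m}\cdot D^\beta V_\eta\Phi$ for $|\beta|\leq|\alpha|$, together with analogous derivatives of the cutoff $\bar\chi_{V_\eta}$; the latter are estimated by expanding the chain rule, which produces factors of $L^{-1}2^{2k_1-p_1}\cdot D V_\eta\Phi$ that are absorbed by the same derivative estimates of $V_\eta\Phi$.

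The core computation is then the derivative bound
\begin{equation*}
 \left|D_{\xi,\eta}^\beta V_\eta\Phi\right|\lesssim |V_\eta\Phi|\cdot\prod_j R_j^{-\beta_j},
\end{equation*}
on the support of $\widetilde\varphi\cdot\bar\chi_{V_\eta}$. For this I would use the explicit formula $V_\eta\Phi=V_\eta\Lambda(\xi-\eta)$ together with the gradient identity \eqref{eq:Lambda_grad} for $\Lambda$, and exploit that $\Lambda$ is a $0$-homogeneous smooth function of $\xi-\eta$ with anisotropic derivative bounds $|\partial^\gamma\Lambda(\xi-\eta)|\lesssim 2^{-|\gamma_h|(k_1+p_1)-\gamma_3(k_1+q_1)}$ (vertical versus horizontal directions). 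The second-order bounds already appearing in \eqref{eq:2ndvfPhi} and the quotient estimate \eqref{eq:vfquotient_apdx} show exactly the type of self-similar scaling needed; higher-order bounds follow by induction using the same algebraic structure.

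The main obstacle will be keeping track of the anisotropy cleanly: $V_\eta$ is a first-order differential operator, and when followed by derivatives one has to sort out which directions get $2^{-(k+p)}$ versus $2^{-(k+q)}$ gains and match them against the size of $\bar\sigma$ appearing in the lower bound for $V_\eta\Phi$. This bookkeeping is handled by systematically splitting each derivative into horizontal and vertical parts and then using the definition $L=2^{k_{\max}+k_{\min}+q_{\max}+p_{\max}}$ to absorb the anisotropic factors: since $|\bar\sigma|\gtrsim L$ on the support, the gain from the numerator matches the corresponding derivative loss in the denominator, closing the induction.
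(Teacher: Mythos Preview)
Your approach is the natural one and is morally correct, but the proposal as written has a real gap at exactly the point you flag as ``the core computation''. The assertion
\[
 |D^\beta_{\xi,\eta} V_\eta\Phi|\lesssim |V_\eta\Phi|\cdot\prod_j R_j^{-\beta_j}
\]
does not follow from the derivative bounds on $\Lambda$ alone, and for naive choices of the scales $R_j$ it is simply false. For instance, with $V=S$ one has $S_\eta\Phi=\bar\sigma\cdot(\xi_\h-\eta_\h)/|\xi-\eta|^3$, and $\partial_{\eta_3}\bar\sigma=-\xi_\h$. This produces a term of size $2^{p+k+p_1-2k_1}$ in $\partial_{\eta_3}S_\eta\Phi$, and controlling it by $|S_\eta\Phi|\cdot 2^{-(k_2+q_2)}$ would require $2^{p+k+k_2+q_2}\lesssim|\bar\sigma|$; when $k_1=k_{\min}$ this would force $2^{k_{\max}-k_{\min}}\lesssim 1$, which is not available. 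Your appeal to \eqref{eq:vfquotient_apdx} does not rescue this: that estimate concerns \emph{vector field} quotients $V_\eta^2\Phi/V_\eta\Phi$, not anisotropic coordinate derivatives, and even there the bound carries extra factors such as $2^{p-p_1}$ rather than pure box-scale inverses. Since you never specify which six scales $R_j$ you use (the boxes of $\xi$, $\xi-\eta$, $\eta$ give nine scales for six coordinates), the ``bookkeeping'' you defer is actually where the whole difficulty lies.

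The paper avoids this entirely by an explicit change of variables rather than a Marcinkiewicz criterion. One sets $(A,a)=(2^{-p_1-k_1}(\xi_\h-\eta_\h),2^{-q_1-k_1}(\xi_3-\eta_3))$, and then chooses $(B,b)$ to rescale the \emph{minimum} of the remaining horizontal and vertical scales (e.g.\ $B=2^{-p_2-k_2}\eta_\h$ if $2^{p_2+k_2}\le 2^{p+k}$, and $b=2^{-q-k}\xi_3$ if $2^{q+k}\le 2^{q_2+k_2}$). In these coordinates $\bar\sigma=L\cdot\Sigma(A,a,B,b)$ with $|\Sigma|\sim 1$ and all derivatives of $\Sigma$ uniformly bounded, because the triangle inequality forces the unchosen scales to dominate the chosen ones. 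The full symbol becomes $L^{-1}2^{2k_1-p_1}$ times a fixed $C^\infty_c$ function of $(A,a,B,b)$, whose Fourier $L^1$ norm is $O(1)$, and the Jacobian factors cancel exactly. The point is that the correct ``scales $R_j$'' are not any of the nine box parameters individually, but the specific minima that this change of variables selects; once you make that choice, your Marcinkiewicz argument would go through, but it amounts to the same computation.
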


\begin{remark}\label{rem:vf_symb}
 The analogous estimates also hold for $V'_{\xi-\eta}$, i.e.\ $\norm{\frac{1}{V'_{\xi-\eta}\Phi}\cdot \bar\chi_{V'_{\xi-\eta}}}_{\W}\lesssim L^{-1}\cdot 2^{2k_2-p_2}$ for $V'\in\{S,\Omega\}$.
\end{remark}

\begin{proof}[Proof of Lemma \ref{lem:vf_symb}]
 It suffices to establish \eqref{eq:S_symb} in the case $V=S$, the case $V=\Omega$ is analogous.
 
 We thus want to compute
 \begin{equation}
 M(x_\h,x_3,y_\h,y_3):= \mathcal{F}\left(\frac{\widetilde{\varphi}}{\abs{\xi-\eta}^{-3}\bar\sigma\cdot(\xi_\h-\eta_\h)}\bar\chi(L^{-1}2^{2k_1-p_1}V_\eta\Phi)\right)(x_\h,x_3,y_\h,y_3).
 \end{equation}
For this we change variables. Firstly, we let
\begin{equation}
 (A,a):=(2^{-p_1-k_1}(\xi_\h-\eta_\h),2^{-q_1-k_1}(\xi_3-\eta_3)),\qquad A\in\R^2,a\in\R.
\end{equation}
We further choose $(B,b)$ such that $B\in\R^2$ is the variable corresponding to $\min\{2^{p+k},2^{p_2+k_2}\}$ rescaled by the inverse of this factor, and analogously for $b\in\R$ with $\min\{2^{q+k},2^{q_2+k_2}\}$. This leaves four options which are all dealt with analogously, so without loss of generality we make the following
\begin{equation}\label{eq:assump_min}
 \textnormal{Assumption: }\quad \min\{2^{p+k},2^{p_2+k_2}\}=2^{p_2+k_2},\quad \min\{2^{q+k},2^{q_2+k_2}\}=2^{q+k}.
\end{equation}
We then let
 \begin{equation}
  (B,b):=(2^{-p_2-k_2}\eta_\h,2^{-q-k}\xi_3).
 \end{equation}
Since we have that
\begin{equation}
 \xi_\h=2^{p_1+k_1}A+2^{p_2+k_2}B,\quad \eta_3=2^{q+k}b-2^{q_1+k_1}a,
\end{equation}
changing variables $(\xi_\h,\xi_3,\eta_\h,\eta_3)\mapsto(A,a,B,b)$ gives that 
\begin{equation}
\begin{aligned}
 M(x_\h,x_3,y_\h,y_3)&=2^{2p_1+2k_1}2^{q_1+k_1}\cdot 2^{2p_2+2k_2}2^{q+k}\cdot \mathcal{F}(I)(2^{p_1+k_1}x_\h,2^{q_1+k_1}x_3,2^{p_2+k_2}y_\h,2^{q+k}y_3),
\end{aligned} 
\end{equation}
where
\begin{equation}
\begin{aligned}
 I(A,a,B,b)&:=\frac{2^{3k_1}\abs{(A,a)}^3}{\bar\sigma(A,a,B,b)\cdot  2^{k_1+p_1}A}\cdot\bar\chi(L^{-1}\abs{(A,a)}^{-3}\abs{\bar\sigma\cdot A})\cdot\varphi(2^{-2p-2k}\abs{2^{p_1+k_1}A+2^{p_2+k_2}B}^2)\varphi(b)\\
 &\qquad\qquad\cdot\varphi(\abs{A}^2)\varphi(a)\cdot\varphi(\abs{B}^2)\varphi(2^{-q_2-k_2}(2^{q+k}b-2^{q_1+k_1}a)).
\end{aligned} 
\end{equation}
Now notice that since $\bar\sigma=(\xi_3-\eta_3)\eta_\h-\eta_3(\xi_\h-\eta_\h)$ we have
\begin{equation}
\begin{aligned}
 \bar\sigma(A,a,B,b)&=2^{q_1+k_1}a2^{p_2+k_2}B-(2^{q+k}b-2^{q_1+k_1}a)2^{p_1+k_1}A\\
 &=2^{k_1+k_2+p_{\max}+q_{\max}}\left[2^{q_1-q_{\max}}a2^{p_2-p_{\max}}B-(2^{q+k-q_{\max}-k_2}b-2^{q_1+k_1-q_{\max}-k_2}a)2^{p_1-p_{\max}}A\right]\\
 &=:L\cdot\Sigma.
\end{aligned} 
\end{equation}
Here we highlight that by assumption \eqref{eq:assump_min} we have $2^{q+k-q_{\max}-k_2}\leq 1$ and $2^{q_1+k_1-q_{\max}-k_2}\leq 1$. It thus follows that $\Sigma$ and its derivatives are uniformly bounded,
\begin{equation}\label{eq:tildesigma_prop}
 \abs{\Sigma}\sim 1,\qquad \abs{\nabla_{A,a,B,b}^N\Sigma}\leq 1, \quad N\in\N,
\end{equation}
and satisfies moreover $\abs{\Sigma\cdot A}\gtrsim 1$. Thus we have
\begin{equation}
\begin{aligned}
 M(x_\h,x_3,y_\h,y_3)&=2^{-k_1-k_2-p_{\max}-q_{\max}}\cdot  2^{2k_1-p_1} \cdot  2^{2p_1+2k_1}2^{q_1+k_1}2^{2p_2+2k_2}2^{q+k}\\
 &\qquad \cdot \mathcal{F}(\widetilde{I})(2^{p_1+k_1}x_\h,2^{q_1+k_1}x_3,2^{p_2+k_2}y_\h,2^{q+k}y_3),
\end{aligned} 
\end{equation}
where now
\begin{equation}
\begin{aligned}
 \widetilde{I}(A,a,B,b)&:=\frac{\abs{(A,a)}^{3}}{\Sigma\cdot A}\cdot \bar\chi(\abs{(A,a)}^{-3}\abs{\Sigma\cdot A})\cdot\varphi(2^{-2p-2k}\abs{2^{p_1+k_1}A+2^{p_2+k_2}B}^2)\varphi(b)\cdot\varphi(\abs{A}^2)\varphi(a)\\
 &\qquad\qquad\cdot\varphi(\abs{B}^2)\varphi(2^{-q_2-k_2}(2^{q+k}b-2^{q_1+k_1}a)).
\end{aligned} 
\end{equation}
By assumption \eqref{eq:assump_min} and the triangle inequality we have that $2^{p+k}\gtrsim 2^{p_1+k_1}+2^{p_2+k_2}$ as well as $2^{q_2+k_2}\gtrsim 2^{q+k}+2^{q_1+k_1}$, so that with \eqref{eq:tildesigma_prop} we have that $\widetilde{I}\in C^\infty_c$ with bound uniformly in $k,p,q$, $k_i,p_i,q_i$, $i=1,2$,
\begin{equation}
 \abs{\nabla_{A,a,B,b}^N\widetilde{I}}\leq C, \quad N\in\N. 
\end{equation}
We then have that
\begin{equation}
\begin{aligned}
 &\int_{(x_\h,x_3,y_\h,y_3)}\abs{M(x_\h,x_3,y_\h,y_3)} d(x_\h,x_3,y_\h,y_3)\\
 &\qquad=2^{-k_1-k_2-p_{\max}-q_{\max}} \cdot 2^{2k_1-p_1} \int_{(x_\h,x_3,y_\h,y_3)} \abs{\mathcal{F}(\widetilde{I})(2^{p_1+k_1}x_\h,2^{q_1+k_1}x_3,2^{p_2+k_2}y_\h,2^{q+k}y_3)}\\
 &\hspace{5cm}\cdot 2^{2p_1+2k_1}2^{q_1+k_1}\cdot 2^{2p_2+2k_2}2^{q+k} d(x_\h,x_3,y_\h,y_3)\\
 &\qquad =2^{-k_1-k_2-p_{\max}-q_{\max}}\cdot 2^{2k_1-p_1}\int_{(x_\h,x_3,y_\h,y_3)}\abs{\mathcal{F}(\widetilde{I})(x_\h,x_3,y_\h,y_3)}d(x_\h,x_3,y_\h,y_3)\\
 &\qquad \leq 2^{-k_1-k_2-p_{\max}-q_{\max}}\cdot 2^{2k_1-p_1}\cdot C,
\end{aligned} 
\end{equation}
by the aforementioned properties of $\widetilde{I}$.

\end{proof}

In a similar spirit one shows the following symbol bound:
\begin{lemma}\label{lem:phasesymb_bd}
 \begin{equation}\label{eq:phasesymb_bd}
 \begin{aligned}
  \norm{\frac{1}{\Phi}\bar\chi(2^{-q_{\max}}\Phi)}_{\W}\lesssim 2^{-q_{\max}}.
 \end{aligned}
 \end{equation}
\end{lemma}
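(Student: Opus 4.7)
The plan is to adapt the strategy of Lemma \ref{lem:vf_symb}: after a rescaling of the Fourier variables that normalizes all frequency localizations to unit size, the symbol $\frac{1}{\Phi}\bar\chi(2^{-q_{\max}}\Phi)\cdot\widetilde\varphi$ becomes a smooth, compactly supported function whose $C^N$ norms are of order $2^{-q_{\max}}$ uniformly in the localization parameters. The $L^1$ bound for its Fourier transform then yields the claim, once the Jacobians of the change of variables are tracked.

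Concretely, I would parametrize $(\xi,\eta)$ via $(A,a,B,b)\in\R^2\times\R\times\R^2\times\R$ defined by $(\xi-\eta)_\h = 2^{p_1+k_1}A$, $(\xi-\eta)_3 = 2^{q_1+k_1}a$, $\eta_\h = 2^{p_2+k_2}B$, $\eta_3 = 2^{q_2+k_2}b$, so that on the support of $\widetilde\varphi$ these new variables range over a compact subset of $\R^6$ that is bounded uniformly (with appropriate non-degenerate lower bounds) across all localization parameters. Using the homogeneity of $\Lambda$, each of the three contributions to $\Phi$ then rescales to a smooth function of $(A,a,B,b)$: for instance
\begin{equation*}
\Lambda(\xi-\eta) = \frac{2^{q_1}\,a}{\sqrt{2^{2p_1}|A|^2 + 2^{2q_1}a^2}},
\end{equation*}
which, since $2^{2p_1}+2^{2q_1}\sim 1$, is smooth on the support with $|\Lambda(\xi-\eta)|\sim 2^{q_1}$ and derivatives of all orders bounded by $C_N\cdot 2^{q_1}$. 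Analogous expressions hold for $\Lambda(\eta)$ (of size $\sim 2^{q_2}$) and $\Lambda(\xi)$ (of size $\sim 2^{q}$, using $\xi = (\xi-\eta)+\eta$). Consequently the rescaled phase $\widetilde\Phi$ is smooth with $C^N$ norms $\lesssim 2^{q_{\max}}$, while on the support of $\bar\chi(2^{-q_{\max}}\widetilde\Phi)$ it satisfies $|\widetilde\Phi|\sim 2^{q_{\max}}$. Combined with the fact that the cutoff $\widetilde\varphi$ itself becomes a uniformly smooth compactly supported function of $(A,a,B,b)$, this shows that the rescaled symbol $g(A,a,B,b)$ has $\|g\|_{C^N}\lesssim 2^{-q_{\max}}$ uniformly in the localization parameters, whence $\|\widehat g\|_{L^1(dX\,dY)}\lesssim 2^{-q_{\max}}$ by integration by parts.

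To finish, one tracks how the change of variables acts on the Fourier transform: writing $x\cdot\xi + y\cdot\eta$ in the new variables produces dual variables $(X,Y)$ that are linear rescalings of $(x, x+y)$ with precisely the same Jacobian $J$ as the map $(\xi,\eta)\mapsto(A,a,B,b)$. The two Jacobians cancel in the computation of the $L^1$ norm, giving $\|\mathcal{F}(\widetilde\varphi\,m)\|_{L^1(dx\,dy)} = \|\widehat g\|_{L^1(dX\,dY)}\lesssim 2^{-q_{\max}}$ as required. The main technical point that requires care is verifying, uniformly in all localization parameters, that the denominators $|\xi|, |\xi-\eta|, |\eta|$ in the various $\Lambda$ factors remain comparable to the correct frequency scales on the rescaled support; this follows from the triangle inequality applied to $\xi = (\xi-\eta)+\eta$ together with $2^{2p_j}+2^{2q_j}\sim 1$, exactly as in the proof of Lemma \ref{lem:vf_symb}.
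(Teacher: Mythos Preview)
Your approach has a genuine gap: with the fixed change of variables $(A,a,B,b)$ encoding rescaled $(\xi-\eta,\eta)$, neither $\Lambda(\xi)$ nor the output localization $\varphi_{k,p,q}(\xi)$ become uniformly smooth functions of the new variables. The problematic regime is $k=k_{\min}$, where $\xi=(\xi-\eta)+\eta$ is a near-cancellation of two much larger vectors. For instance $\partial_a\Lambda(\xi)=\frac{2^{q_1+k_1}}{|\xi|}(1-\Lambda^2(\xi))\sim 2^{q_1+k_1-k}\cdot 2^{2p}$, which is in general not bounded by $2^{q_{\max}}$ (take $k_1=k_2\gg k$ with all $p_j,q_j$ of order one); likewise $\partial_A$ of the cutoff $\varphi(2^{-2(p+k)}|\xi_\h|^2)$ picks up a factor $2^{p_1+k_1-p-k}$ that can be arbitrarily large. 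Your appeal to ``exactly as in the proof of Lemma~\ref{lem:vf_symb}'' is precisely where the argument breaks: in that proof the second pair of variables $(B,b)$ is chosen \emph{adaptively} to correspond to the smallest horizontal and vertical scales (e.g.\ $B=2^{-p_2-k_2}\eta_\h$ but $b=2^{-q-k}\xi_3$), so that all remaining localizations involve only ratios bounded by one via the triangle inequality. Your fixed choice of $(\xi-\eta,\eta)$ does not replicate this.

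The paper's own proof sidesteps any change of variables. It first writes $\frac{1}{\Phi}\bar\chi(2^{-q_{\max}}\Phi)=2^{-q_{\max}}\tilde{\bar\chi}(2^{-q_{\max}}\Phi)$ and then works directly in $(\xi,\eta)$, using the anisotropic derivative bound $|\xi_\h|^{|\alpha_\h|}|\xi_3|^{\alpha_3}|\partial_\xi^\alpha\Lambda(\xi)|\lesssim|\Lambda(\xi)|$. Assuming without loss of generality that $2^{k+q}$ and $2^{p_2+k_2}$ are the smallest vertical and horizontal scales, one integrates by parts with $(1-2^{2(k+p)}\Delta_{\xi_\h})$, $(1-2^{2(k+q)}\partial_{\xi_3}^2)$, $(1-2^{2(k_2+p_2)}\Delta_{\eta_\h})$, $(1-2^{2(k_2+q_2)}\partial_{\eta_3}^2)$; these scales are chosen so that each derivative hitting any of $\Lambda(\xi),\Lambda(\xi-\eta),\Lambda(\eta)$ or any of the three localizations stays uniformly bounded (using the triangle inequality to compare scales). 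Your argument can be repaired by making the analogous adaptive choice of rescaled coordinates, but as written it does not go through.
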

 
\begin{proof}
Writing 
\begin{equation}
 \frac{1}{\Phi}\bar\chi(2^{-q_{\max}}\Phi)=2^{-q_{\max}}\tilde{\bar\chi}(2^{-q_{\max}}\Phi),\qquad \tilde{\bar\chi}(x)=\frac{1}{x}\bar\chi(x),
\end{equation}
where $\tilde{\bar\chi}$ has similar support properties as $\bar\chi$, we see that it suffices to establish the bound
\begin{equation}
 \norm{\tilde{\bar\chi}(2^{-q_{\max}}\Phi)}_{\W}\lesssim 1.
\end{equation}
To this end, since we consider all the possible signs in the phase we may assume that 
\begin{equation}\label{eq:assump_sizes}
 \textnormal{Assumption: }\quad 2^{k+q}\leq\min\{2^{k_1+q_1},2^{k_2+q_2}\},\quad 2^{p_2+k_2}\leq\min\{2^{p_1+k_1},2^{p+k}\},
\end{equation}
the alternative that $2^{p+k}\leq\min\{2^{p_1+k_1},2^{p_2+k_2}\}$ being treated analogously.

We observe that with the notation $\vert\xi^\alpha\vert=\vert\xi_h\vert^{\vert\alpha_1\vert+\vert\alpha_2\vert}\vert\xi_3\vert^{\alpha_3}$ for $\alpha\in\N_0^3$ there holds that
\begin{equation}
 \abs{\xi^\alpha}\abs{\partial^\alpha_{\xi}\Lambda(\xi)}\lesssim \vert\Lambda(\xi)\vert\lesssim 2^{q_{\max}}.
\end{equation} 
By assumption \eqref{eq:assump_sizes} it then follows that
\begin{equation}\label{eq:phi_deriv}
 \abs{\xi^\alpha}\abs{\eta^\beta} \cdot 2^{-q_{\max}}\abs{\partial_\xi^\alpha\partial_\eta^\beta\Phi}\lesssim 1.
\end{equation}
Letting
\begin{equation*}
\begin{split}
M(x_h,x_3,y_h,y_3)&:=\iint_{\R^3\times\R^3}\tilde{\bar\chi}(2^{-q_{max}}\Phi(\xi,\eta))e^{-i\xi\cdot x}e^{-i\eta\cdot y}\widetilde{\varphi}(\xi,\eta)d\xi d\eta
\end{split}
\end{equation*}
we see that
\begin{equation*}
\begin{split}
&\left(1+2^{2(k+p)}\vert x_h\vert^2\right)^2\left(1+2^{2k+2q}\vert x_3\vert^2\right)\cdot \left(1+2^{2(k_2+p_2)}\vert y_h\vert^2\right)^2\left(1+2^{2k_2+2q_2}\vert y_3\vert^2\right)M(x_h,x_3,y_h,y_3)\\
&\qquad=\iint_{\R^3\times\R^3}e^{-i\xi\cdot x}e^{-i\eta\cdot y}{\bf M}(\xi,\eta)d\xi d\eta,
\end{split}
\end{equation*}
where
\begin{equation*}
\begin{split}
{\bf M}(\xi,\eta)&:=\left(1-2^{2(k+p)}\Delta_{\xi_h}\right)^2\left(1-2^{2k+2q}\partial_{\xi_3}^2\right)\\
&\qquad\cdot \left(1-2^{2(k_2+p_2)}\Delta_{\eta_h}\right)^2\left(1-2^{2k_2+2q_2}\partial_{\eta_3}^2\right)\left[\tilde{\bar\chi}(2^{-q_{max}}\Phi(\xi,\eta))\widetilde{\varphi}(\xi,\eta)\right].
\end{split}
\end{equation*}
By virtue of \eqref{eq:phi_deriv} we obtain that
\begin{equation*}
\begin{split}
\vert {\bf M}(\xi,\eta)\vert&\lesssim \widetilde{\overline{\varphi}}(\xi,\eta),
\end{split}
\end{equation*}
where $\widetilde{\overline{\varphi}}(\xi,\eta)$ has similar support properties as $\widetilde{\varphi}(\xi,\eta)$. A crude integration allows us to conclude that
\begin{equation}
 \abs{\iint_{\R^3\times\R^3}e^{-i\xi\cdot x}e^{-i\eta\cdot y}{\bf M}(\xi,\eta)d\xi d\eta}\lesssim 2^{2p+2k}2^{k+q}2^{2p_2+2k_2}2^{k_2+q_2},
\end{equation}
so that
\begin{equation}
\begin{aligned}
 &\norm{\mathcal{F}(\tilde{\bar\chi}(2^{-q_{\max}}\Phi))}_{L^1(\R^3\times\R^3)}=\norm{M}_{L^1(\R^3\times\R^3)}\\
 &\qquad\lesssim 2^{2p+2k}2^{k+q}\norm{(1+2^{2(k+p)}\vert x_h\vert^2)^{-2}(1+2^{2k+2q}\vert x_3\vert^2)^{-1}}_{L^1(\R^3)}\\
 &\qquad\qquad \cdot 2^{2p_2+2k_2}2^{k_2+q_2}\norm{(1+2^{2(k_2+p_2)}\vert y_h\vert^2)^{-2}(1+2^{2k_2+2q_2}\vert y_3\vert^2)^{-1}}_{L^1(\R^3)}\lesssim 1.
\end{aligned} 
\end{equation}

\end{proof}

\section{Some Useful Lemmata}

\subsection{More set size gain}\label{apdx:set_gain} 
The idea here is that in the bilinear estimates we can always gain the smallest of \emph{both $p,p_1$ and $q,q_1$}, since they correspond to different directions.
\begin{lemma}\label{lem:set_gain}
Consider a typical bilinear expression $Q_\mathfrak{m}$ with localizations and a multiplier $\mathfrak{m}$, i.e.\
\begin{equation}
\begin{aligned}
 &\widehat{Q_\mathfrak{m}(f,g)}(\xi):=\int_\eta \chi(\xi,\eta) \mathfrak{m}(\xi,\eta)\hat{f}(\xi-\eta)\hat{g}(\eta) d\eta,\\
 &\chi(\xi,\eta)=\varphi_{k,p,q}(\xi)\varphi_{k_1,p_1,q_1}(\xi-\eta)\varphi_{k_2,p_2,q_2}(\eta).
\end{aligned}
\end{equation}
Then with
\begin{equation}
 \Sz:=\min\{2^{p+k},2^{p_1+k_1},2^{p_2+k_2}\}\cdot\min\{2^{\frac{q+k}{2}},2^{\frac{q_1+k_1}{2}},2^{\frac{q_2+k_2}{2}}\}
\end{equation} 
we have that
\begin{equation}
 \norm{Q_\mathfrak{m}(f,g)}_{L^2}\lesssim \Sz\cdot\norm{\mathfrak{m}}_{L^\infty_{\xi,\eta}} \norm{P_{k_1,p_1,q_1}f}_{L^2}\norm{P_{k_2,p_2,q_2}g}_{L^2}.
\end{equation}

\end{lemma}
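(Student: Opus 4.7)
The plan is to reduce to a pure set-size estimate via Cauchy--Schwarz on the Fourier side, then exploit the three different Cauchy--Schwarz orderings to extract the triple minimum in $\Sz$. Concretely, by Plancherel it suffices to bound $\|\widehat{Q_\mathfrak{m}(f,g)}\|_{L^2}$; passing to absolute values $F:=|\hat f|$, $G:=|\hat g|$ and using $|\mathfrak m|\leq \|\mathfrak m\|_{L^\infty}$ in
\begin{equation*}
 |\widehat{Q_\mathfrak{m}(f,g)}(\xi)|\leq \|\mathfrak m\|_{L^\infty}\int_\eta \chi(\xi,\eta)\,F(\xi-\eta)G(\eta)\,d\eta,
\end{equation*}
one may apply Cauchy--Schwarz in the integration variable. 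The key point is that, at the price of changing variables, we have three equivalent representations of the same integral (obtained by freezing any one of $\xi$, $\eta$, or $\zeta=\xi-\eta$), and we are free to pick whichever is most favorable.

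Carrying out this argument in three ways, I would obtain for any $\star\in\{\xi,\eta,\zeta\}$ a bound
\begin{equation*}
 \|Q_\mathfrak{m}(f,g)\|_{L^2}\lesssim \|\mathfrak m\|_{L^\infty}\cdot \bigl(\sup_\star |A^\star|\bigr)^{1/2}\|f\|_{L^2}\|g\|_{L^2},
\end{equation*}
where $A^\star$ is the slice of the support of $\chi$ transverse to the fixed variable. First, fixing $\xi$ and integrating in $\eta$ gives $|A^\xi|\lesssim \min\{2^{p_1+k_1},2^{p_2+k_2}\}^2\cdot \min\{2^{q_1+k_1},2^{q_2+k_2}\}$ by intersecting the supports of $\varphi_{k_1,p_1,q_1}(\xi-\eta)$ and $\varphi_{k_2,p_2,q_2}(\eta)$. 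Next, after the change of variable $\zeta=\xi-\eta$, fixing $\eta$ and integrating $\zeta$ uses instead the constraint from $\varphi_{k,p,q}(\zeta+\eta)$ and $\varphi_{k_1,p_1,q_1}(\zeta)$, giving $|A^\eta|\lesssim \min\{2^{p+k},2^{p_1+k_1}\}^2\cdot \min\{2^{q+k},2^{q_1+k_1}\}$. The third arrangement, fixing $\zeta$ and integrating $\eta$, analogously yields $|A^\zeta|\lesssim \min\{2^{p+k},2^{p_2+k_2}\}^2\cdot \min\{2^{q+k},2^{q_2+k_2}\}$.

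The final step is to observe that the three upper bounds taken together give exactly $\Sz^2$. Indeed, if $\min_h:=\min\{2^{p+k},2^{p_1+k_1},2^{p_2+k_2}\}$ is attained at index $i^\star\in\{0,1,2\}$ (with $0$ corresponding to $\xi$), then two of the three pairwise minima entering the $A^\star$ above equal $\min_h$ -- namely the two that include the index $i^\star$. Similarly for the vertical minimum at index $j^\star$. Since there are only three arrangements but each optimal index is captured by two of them, the pigeonhole principle guarantees a common arrangement $\star$ for which $|A^\star|\lesssim \min_h^2\cdot \min_v$. Taking the square root yields the claimed bound.

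The ``hard part'' is really just this last bookkeeping observation; each of the three Cauchy--Schwarz estimates is routine, and no cancellation or oscillation is exploited. One should also take a moment to verify that in the arrangements involving the change of variables $\zeta=\xi-\eta$ the Jacobian is trivial, and that the orthogonal splitting of the localizations into horizontal $(\xi_\h,\eta_\h,\zeta_\h)\in\mathbb R^2$ and vertical $\xi_3,\eta_3,\zeta_3\in\mathbb R$ components is preserved by the translations involved, so the horizontal and vertical slice measures factor as claimed.
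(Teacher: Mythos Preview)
Your argument is correct and is essentially the paper's approach. Both proofs pass to the trilinear form by pairing with a test function $h\in L^2$ and apply Cauchy--Schwarz in the double integral; the paper writes this out for one representative ``asymmetric'' case ($p+k<p_1+k_1$ but $q_1+k_1<q+k$), mixing a horizontal cutoff from one variable with a vertical cutoff from another, and declares the remaining cases ``direct''. Your enumeration of the three pairings together with the pigeonhole observation is just a cleaner bookkeeping of that same case split.

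One small point worth making explicit: the arrangements $\star\in\{\eta,\zeta\}$ cannot be reached by a change of variable in the convolution integral alone, since the $L^2$ norm you are computing is in $\xi$ and hence Cauchy--Schwarz in the inner integral always freezes $\xi$. To ``freeze $\eta$'' (and thereby gain the output localization $\varphi_{k,p,q}(\xi)$ in the set size) you must pass through duality against $h$ and then Cauchy--Schwarz in $(\xi,\zeta)$ or $(\zeta,\eta)$ --- exactly what the paper does. This is routine, but your sentence ``after the change of variable $\zeta=\xi-\eta$, fixing $\eta$ and integrating $\zeta$'' hides it.
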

\begin{proof}
 To begin, let us assume that $p+k<p_1+k_1$ and $q+k>q_1+k_1$ (the ``symmetric cases'' of $p+k<p_1+k_1$ with $q+k<q_1+k_1$ and reverse are direct).
 Then, for any $h\in L^2$ we find that
 \begin{equation*}
 \begin{split}
  \abs{\ip{Q_\m(f,g),h}}&\lesssim \iint_{\mathbb{R}^3}\abs{\mathfrak{m}(\xi,\eta)}\varphi(2^{-k-p}\xi_\h)\vert\widehat{f}(\xi-\eta)\vert\varphi(2^{-k_1-q_1}(\xi_3-\eta_3))\vert\widehat{g}(\xi-\eta)h(\xi)\vert d\xi d\eta\\
  &\lesssim \Vert \mathfrak{m}\Vert_{L^\infty}\Vert \widehat{h}(\xi)\widehat{f}(\xi-\eta)\Vert_{L^2_{\xi,\eta}}\Vert \varphi(2^{-k-p}\xi_\h)\varphi(2^{-k_1-q_1}(\xi_3-\eta_3))\widehat{g}(\eta)\Vert_{L^2_{\xi,\eta}}\\
  &\lesssim 2^{k+\frac{k_1}{2}}2^{p+\frac{q_1}{2}}\Vert \mathfrak{m}\Vert_{L^\infty}\Vert f\Vert_{L^2}\Vert g\Vert_{L^2}\Vert h\Vert_{L^2}.
 \end{split}
 \end{equation*}
 The claim then follows upon changing variables $\eta\leftrightarrow\xi-\eta$.
\end{proof}

\subsection{Control of Fourier transform in $L^\infty$}
We record here that our decay norm $D$ also controls the Fourier transform in $L^\infty$:
\begin{lemma}\label{lem:ControlLinfty}
Assume that $f$ is axisymmetric. Then there holds that
\begin{equation*}
\begin{split}
\Vert \widehat{P_{k;p,q}f}\Vert_{L^\infty}&\lesssim 2^{-\frac{3k}{2}}\left[2^{-p}\Vert f\Vert_{L^2}+2^{-p}\Vert Sf\Vert_{L^2}+\Vert \Upsilon f\Vert_{L^2}+\Vert \Upsilon Sf\Vert_{L^2}\right]
\end{split}
\end{equation*}
\end{lemma}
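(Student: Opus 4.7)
The plan is to exploit the axisymmetry of $f$ to reduce the problem to a two-dimensional Sobolev embedding. Writing $\hat f$ in spherical coordinates as $\hat f(\xi) = F(\rho,\phi)$ with $\rho = \abs{\xi}$ and $\phi = \arccos\Lambda$, the support of $P_{k,p,q}$ confines $\rho \in [2^k,2^{k+1}]$ and $\phi$ to an interval $J$ of length $\abs{J} \sim \min(2^p,2^q)$. Since the volume element is $\rho^2\sin\phi\,d\rho\,d\phi\,d\theta$ and $\sin\phi \sim 2^p$ on this set, one has the identification $\norm{P_{k,p,q}f}_{L^2}^2 \sim 2^{2k+p}\norm{F}_{L^2(d\rho d\phi)}^2$. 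Moreover, $S$ acts on $F$ as $\rho\partial_\rho$ and $\Upsilon$ as $\partial_\phi$, so that (modulo zeroth-order commutator terms absorbed into $\norm{f}_{L^2}$) the $L^2(d\rho d\phi)$ norms of $\partial_\rho F$, $\partial_\phi F$ and $\partial_\rho\partial_\phi F$ are comparable to $\norm{Sf}_{L^2}$, $\norm{\Upsilon f}_{L^2}$ and $\norm{\Upsilon Sf}_{L^2}$, with an extra $2^{-k}$ coming from $\rho^{-1}$.

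On the rectangle $R = [2^k,2^{k+1}] \times J$, I would apply the anisotropic Sobolev estimate obtained by iterating the one-dimensional Agmon inequality (equivalently, the embedding $H^{1,1}(\T^2) \hookrightarrow L^\infty$ after rescaling to the unit square):
\begin{equation*}
 \norm{F}_{L^\infty}^2 \lesssim \frac{1}{2^k\abs{J}}\norm{F}_{L^2}^2 + \frac{2^k}{\abs{J}}\norm{\partial_\rho F}_{L^2}^2 + \frac{\abs{J}}{2^k}\norm{\partial_\phi F}_{L^2}^2 + 2^k\abs{J}\norm{\partial_\rho\partial_\phi F}_{L^2}^2.
\end{equation*}
Translating each term through the identifications above yields
\begin{equation*}
 \norm{F}_{L^\infty}^2 \lesssim \frac{2^{-3k-p}}{\abs{J}}\bigl(\norm{f}_{L^2}^2 + \norm{Sf}_{L^2}^2\bigr) + 2^{-3k-p}\abs{J}\bigl(\norm{\Upsilon f}_{L^2}^2 + \norm{\Upsilon Sf}_{L^2}^2\bigr).
\end{equation*}

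Two regimes then need to be distinguished. When $\abs{J} \sim 2^p$ (the support lies near the poles, so that $2^q \sim 1$), the coefficients collapse to $2^{-3k-2p}$ and $2^{-3k}$ respectively, yielding the claimed bound after taking square roots. The main obstacle is the complementary regime $\abs{J} \sim 2^q \ll 2^{2p}$, which forces $2^p \sim 1$ but produces an unwanted factor of $2^{-q}\gg 1$ in front of $\norm{f}^2$ and $\norm{Sf}^2$. To absorb this, I would invoke a Poincar\'e inequality in $\phi$: because $F(\rho,\cdot)$ is supported in an interval of length $2^q$ and vanishes on its boundary (being Fourier-localized), one has $\norm{F(\rho,\cdot)}_{L^2_\phi}^2 \lesssim 2^{2q}\norm{\partial_\phi F(\rho,\cdot)}_{L^2_\phi}^2$, which after integration in $\rho$ and rescaling gives $\norm{f}_{L^2}^2 \lesssim 2^{2q}\norm{\Upsilon f}_{L^2}^2$, and likewise with $f$ replaced by $Sf$. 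This converts the offending $2^{-q}\norm{f}^2$ into $2^{q}\norm{\Upsilon f}^2 \leq \norm{\Upsilon f}^2$, reducing the estimate to the target (noting that in this regime the $2^{-p}\norm{f}$ terms are redundant since $2^{-p}\sim 1$). The most delicate bookkeeping is the Poincar\'e reduction in the equatorial case, together with checking that the commutator corrections between $\widehat{Sf},\widehat{\Upsilon f}$ and $S\hat f,\Upsilon\hat f$ stay at lower order throughout.
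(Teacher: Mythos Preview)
Your overall approach---reducing to a two-dimensional Sobolev/Agmon embedding in $(\rho,\phi)$ after exploiting axisymmetry---is exactly the paper's strategy (the paper writes it as the fundamental theorem of calculus followed by averaging over the base point, which is the same thing). The polar case is fine.

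The gap is in your equatorial fix. You take the $\phi$-interval $J$ of length $\min(2^p,2^q)$ and then appeal to Poincar\'e. But Poincar\'e requires $F$ to vanish on $\partial J$; this forces you to work with the localized $F=\widehat{P_{k,p,q}f}$, and then $\partial_\phi F=(\partial_\phi\varphi_{k,p,q})\hat f+\varphi_{k,p,q}\widehat{\Upsilon f}$. The first piece is \emph{not} a benign zeroth-order commutator: in the equatorial regime the $\xi_3$-cutoff $\varphi(2^{-q-k}\xi_3)$ contributes $|\partial_\phi\varphi_{k,p,q}|\sim 2^{p-q}\sim 2^{-q}$, so that $\|\partial_\phi F\|_{L^2}\lesssim 2^{-q}\|P'f\|_{L^2}+\|\Upsilon f\|_{L^2}$. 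Feeding this back, the factor $2^{q}$ gained from Poincar\'e is exactly cancelled by the $2^{-q}$ from the cutoff derivative, and you recover the original offending term---the argument is circular. (If instead you keep $F=\hat f$ unlocalized to avoid the commutator, then $F$ need not vanish on $\partial J$ and Poincar\'e is unavailable.)

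The repair is simpler than Poincar\'e: do not shrink to $J$. Take instead $J_p=\{\phi:\sin\phi\sim 2^p\}$, which has $|J_p|\sim 2^p$ in \emph{both} regimes (near the equator this is essentially all of $(0,\pi)$), and apply your Agmon inequality on $R_p=[2^{k-1},2^{k+1}]\times J_p$ to the unlocalized $\hat f$. Since $\rho^2\sin\phi\sim 2^{2k+p}$ uniformly on $R_p$, the conversion between $L^2(d\rho\,d\phi)$ and $L^2(d\xi)$ is uniform, and all four terms come out directly with the stated coefficients $2^{-3k/2-p}$ on $\|f\|,\|Sf\|$ and $2^{-3k/2}$ on $\|\Upsilon f\|,\|\Upsilon Sf\|$, with no case splitting needed. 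This is what the paper does: note its integration region is written as $\sin\phi\in[2^{p-2},2^{p+2}]$ and the averaging factor is $2^{-p}$, both signalling $J_p$ rather than the full $(p,q)$-localization.
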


\begin{proof}
We may assume that $f=P_{k,q,p}f$. Switching to spherical coordinates, we simply write that, for any $(\rho_0,\phi_0)$ on the support of $\widehat{f}$,
\begin{equation*}
\begin{split}
\widehat{f}(\rho,\phi)&=\widehat{f}(\rho_0,\phi_0)+\int_{\rho_0}^\rho\partial_\rho\widehat{f}(s,\phi_0)ds+\int_{\phi_0}^\phi\int_{\rho_0}^\rho\partial_\rho\partial_\phi\widehat{f}(s,\theta)dsd\theta
\end{split}
\end{equation*}
On the one hand, for any choice of $(\rho_0,\phi_0)$, we obtain that
\begin{equation*}
\begin{split}
\left\vert \int_{\phi_0}^\phi\int_{\rho_0}^\rho\partial_\rho\partial_\phi\widehat{f}(s,\theta)dsd\theta\right\vert&\lesssim 2^{-\frac{k}{2}}\left(\iint_{\substack{\rho\in [2^k,2^{k+1}],\\\sin\phi\in[2^{p-2},2^{p+2}]}}\vert\partial_\rho\partial_\phi\widehat{f}\vert^2\rho^2\sin\phi d\rho d\phi\right)^\frac{1}{2}
\end{split}
\end{equation*}
and we can now average over $\rho_0$ and $\phi_0$ to obtain similarly
\begin{equation*}
\begin{split}
\left\vert \int_{\rho_0}^\rho\partial_\rho\widehat{f}(s,\phi_0)ds\right\vert&\lesssim 2^{-p}\left\vert \iint_{s,\phi}\partial_\rho\widehat{f}(s,\phi)dsd\phi\right\vert\lesssim 2^{-k/2-p}\Vert \partial_\rho\widehat{f}\Vert_{L^2},\\
\vert \widehat{f}(\rho_0,\phi_0)\vert&\lesssim 2^{-(k+p)}\left\vert \iint_{s,\phi}\vert \widehat{f}(s,\phi)\vert dsd\phi\right\vert\lesssim 2^{-3k/2-p}\Vert \widehat{f}\Vert_{L^2}.
\end{split}
\end{equation*}
\end{proof}

\bibliographystyle{siam}
\bibliography{refs.bib}

\end{document}